\documentclass[10pt]{amsart}
\usepackage{times,amsmath,amsbsy,amssymb,amscd,mathrsfs}
\usepackage{graphicx,subfigure,epstopdf,wrapfig,chemarrow}

\usepackage{algorithm2e} 
\usepackage{multicol,multirow}
\usepackage{mathtools}
\usepackage[usenames,dvipsnames,svgnames,table]{xcolor}
\usepackage[all]{xy}
\usepackage{wrapfig}
\usepackage{tcolorbox}

\usepackage{tikz,tikz-cd}
\usepackage[utf8]{inputenc}
\usepackage{pgfplots} 
\usepackage{pgfgantt}
\usepackage{pdflscape}
\pgfplotsset{compat=newest} 
\pgfplotsset{plot coordinates/math parser=false}
\newlength\fwidth

\definecolor{myBlue}{rgb}{0.0,0.0,0.55}
\usepackage[pdftex,colorlinks=true,citecolor=myBlue,linkcolor=myBlue]{hyperref}

\usepackage[hyperpageref]{backref}

\usepackage{comment,enumerate,multicol,xspace}

  \newcounter{mnote}
  \setcounter{mnote}{0}
  
  \let\oldmarginpar\marginpar
    \renewcommand\marginpar[1]{\-\oldmarginpar[\raggedleft\footnotesize #1]%
    {\raggedright\footnotesize #1}}


%
%
%
%
%



\newtheorem{theorem}{Theorem}[section]
\newtheorem{lemma}[theorem]{Lemma}
\newtheorem{corollary}[theorem]{Corollary}
\newtheorem{proposition}[theorem]{Proposition}

\newtheorem{remark}[theorem]{Remark}

\newcommand{\dx}{\,{\rm d}x}
\newcommand{\dd}{\,{\rm d}}

\newcommand{\bs}{\boldsymbol}

\newcommand{\supp}{\operatorname{supp}}

\renewcommand{\div}{\operatorname{div}}
\newcommand{\tr}{\operatorname{tr}}
\newcommand{\dev}{\operatorname{dev}}
\newcommand{\sym}{\operatorname{sym}}
\newcommand{\skw}{\operatorname{skw}}

\newcommand{\step}[1]{\noindent\raisebox{1.5pt}[10pt][0pt]{\tiny\framebox{$#1$}}\xspace}

\newcommand{\vertiii}[1]{{\left\vert\kern-0.25ex\left\vert\kern-0.25ex\left\vert #1 
    \right\vert\kern-0.25ex\right\vert\kern-0.25ex\right\vert}}

\newcommand{\Oplus}{\ensuremath{\vcenter{\hbox{\scalebox{1.5}{$\oplus$}}}}}
\usepackage{booktabs}
\newcommand{\TT}{\texttt{T}}
\newcommand{\TS}{\texttt{S}}

\begin{document}
\title[Hybridizable Symmetric Stress Element]{Hybridizable Symmetric stress elements on the barycentric refinement in Arbitrary Dimensions}
\author{Long Chen}%
 \address{Department of Mathematics, University of California at Irvine, Irvine, CA 92697, USA}%
 \email{chenlong@math.uci.edu}%
 \author{Xuehai Huang}%
 \address{Corresponding author. School of Mathematics, Shanghai University of Finance and Economics, Shanghai 200433, China}%
 \email{huang.xuehai@sufe.edu.cn}%

 \thanks{The first author was supported by NSF DMS-2309777 and DMS-2309785. The second author was supported by the National Natural Science Foundation of China Project 12171300.}

\makeatletter
\@namedef{subjclassname@2020}{\textup{2020} Mathematics Subject Classification}
\makeatother
\subjclass[2020]{
65N30;   
65N12;   
65N15;   
15A72;   
}

\maketitle


\begin{abstract}
Hybridizable \(H(\div)\)-conforming finite elements for symmetric tensors on simplices with barycentric refinement are developed in this work for arbitrary dimensions and any polynomial order. By employing barycentric refinement and an intrinsic tangential-normal (\(t\)-\(n\)) decomposition, novel basis functions are constructed to redistribute degrees of freedom while preserving \(H(\div)\)-conformity and symmetry, and ensuring inf-sup stability. These hybridizable elements enhance computational flexibility and efficiency, with applications to mixed finite element methods for linear elasticity.
\end{abstract}

\section{Introduction}
In this paper, we construct hybridizable $H(\div)$-conforming finite elements for symmetric tensors on simplices in arbitrary dimensions. These elements play a critical role in mixed finite element methods for the stress-displacement (Hellinger-Reissner) formulation of the elasticity system. Several finite elements have been developed in the literature~\cite{ArnoldWinther2002,Adams;Cockburn:2005Finite,ArnoldAwanouWinther2008,HuZhang2016,Hu2015a,HuZhang2015,ChenHuang2022,ChenHuang2024,HuangZhangZhouZhu2024}. However, a common characteristic of all these elements is the presence of vertex degrees of freedom (DoFs), which inherently makes them non-hybridizable. 

To address this limitation, we use the barycentric refinement of a simplicial mesh, also called the Alfeld split~\cite{LaiSchuma2007Spline}. Let \(\texttt{v}_{c}\) be the barycenter of the \(d\)-dimensional simplex \(T\).  
Connecting $\texttt{v}_{c}$ to the vertices of $T$ divides \(T\) into \(d+1\) smaller simplices, each with the barycenter \(\texttt{v}_c\) as a common vertex. We call $T$ the coarse element, and denote the barycentric split by $T^{\rm R}$.

To eliminate vertex DoFs, hybridizable $H(\div)$-conforming symmetric stress elements on the barycentric refinement in two dimensions were developed in~\cite{JohnsonMercier1978,ArnoldDouglasGupta1984,ChristiansenHu2023}. The lowest-order hybridizable $H(\div)$-conforming symmetric stress elements on the barycentric refinement were proposed in~\cite{Krizek1982} for three dimensions and recently extended to arbitrary dimensions in~\cite{GopalakrishnanGuzmanLee2024}. Further reduced stress elements have been developed in two dimensions \cite{ChristiansenHu2023} and in three dimensions \cite{GopalakrishnanGuzmanLee2024}. However, the $H(\div)$-conforming symmetric stress elements on the barycentric refinement in~\cite{ChristiansenGopalakrishnanGuzmanHu2024} still involve vertex DoFs. Hybridizable $H(\div)$-conforming symmetric stress elements have also been developed on the Worsey-Farin split in three dimensions, dividing each tetrahedron into twelve sub-tetrahedra~\cite{GongGopalakrishnanGuzmanNeilan2023}. Hybridizable symmetric stress elements with rational shape functions were given in~\cite{GuzmanNeilan2014}, while hybridizable virtual elements for symmetric tensors were discussed in~\cite{DassiLovadinaVisinoni2020}. A hybridizable elasticity element method was also developed in~\cite{GongWuXu2019}, whose stability depends on the stability of the Scott-Vogelius element~\cite{ScottVogelius1985,ArnoldQin1992,Zhang2005} for the Stokes equation on some special grids.

In Table~\ref{tab:barycentric-elements}, we summarize the existing hybridizable elasticity elements on barycentric subdivision and the finite elements developed in this paper. The table shows the stress and displacement spaces, the number of local DoFs for stress, and the convergence rates of $\|\boldsymbol{\sigma}-\boldsymbol{\sigma}_h\|$. These elements can be classified into two categories: a pair defined on the split mesh $T^{\rm R}$ or on the coarse mesh $T$, with the latter viewed as a macro element. The existing elements include the Johnson-Mercier (JM) element~\cite{JohnsonMercier1978}, the Arnold-Douglas-Gupta (ADG) element~\cite{ArnoldDouglasGupta1984}, the Christiansen-Hu (CH) element~\cite{ChristiansenHu2023}, the K\v{r}\'i\v{z}ek element~\cite{Krizek1982}, and the Gopalakrishnan-Guzman-Lee (GGL) element~\cite{GopalakrishnanGuzmanLee2024}.
The new elements in this paper are the spaces $\Sigma_{\mathrm{RM}}^{\div}(T; \mathbb{S})$ in \eqref{eq:SigmaRM}, $\Sigma_{k,\phi}^{\div}(T^{\rm R}; \mathbb{S})$ in \eqref{eq:localsigmadivk}, $\Sigma_{k,\phi,nn}^{\div}(T^{\rm R}; \mathbb{S})$ in \eqref{eq:localsigmadivknn}, and $\Sigma_{k,\psi}^{\div}(T; \mathbb{S})$ in \eqref{eq:Sigmakpsi}. The pair $\Sigma_{k,\phi}^{\div}(T^{\rm R}; \mathbb{S})$--$\mathbb{P}_{k-1}^{-1}(T^{\rm R}; \mathbb{R}^d)$ is stable on the refined mesh $T^{\rm R}$, 
while $\Sigma_{k,\psi}^{\div}(T; \mathbb{S})$--$\mathbb{P}_{k-1}^{-1}(T; \mathbb{R}^d)$ is a macroelement pair on the coarse mesh. We also use our notation for existing elements of the same type, though they are not identical. A more precise definition of these spaces can be found in the corresponding references.

\begin{table}[htbp]
  \centering
  \caption{Elasticity elements on barycentric subdivisions for $k\geq 2$.}
  \label{tab:barycentric-elements}
  \renewcommand{\arraystretch}{1.25}
    \begin{tabular}{@{} c c c c c c @{}}
      \toprule
      \textbf{Refs} & \textbf{$\mathbb R^d$} & \textbf{Stress Element} & \textbf{\# DoFs for Stress} & \textbf{Displacement} & $\|\boldsymbol{\sigma} - \boldsymbol{\sigma}_h\|$ \\
      \midrule
JM~\cite{JohnsonMercier1978} & 2D & $\Sigma_{1,\phi}^{\div}(T^{\rm R}; \mathbb{S})$ & $15$ & $\mathbb{P}_{1}(T;\mathbb{R}^2)$ & $h^2$ \\
ADG~\cite{ArnoldDouglasGupta1984} & 2D & $\Sigma_{k,\phi}^{\div}(T^{\rm R}; \mathbb{S})$ & $\tfrac{3}{2}(3k^2 + 5k + 2)$ & $\mathbb{P}_{k-1}^{-1}(T^{\rm R};\mathbb{R}^2)$ & $h^{k+1}$ \\
ADG~\cite{ArnoldDouglasGupta1984} & 2D & $\Sigma_{k,\psi}^{\div}(T; \mathbb{S})$ & $\tfrac{3}{2}(k^2 + 3k + 4)$ & $\mathbb{P}_{k-1}(T;\mathbb{R}^2)$ & $h^{k+1}$ \\
CH~\cite{ChristiansenHu2023} & 2D & Reduced space & $9$ & $\mathbb{P}_{0}^{\div}(T^{\rm R};\mathbb{R}^2)$ & $h$ \\
K\v{r}\'i\v{z}ek~\cite{Krizek1982} & 3D & $\Sigma_{1,\phi}^{\div}(T^{\rm R}; \mathbb{S})$ & $42$ & $\mathbb{P}_{1}(T;\mathbb{R}^3)$ & $h^2$ \\
GGL~\cite{GopalakrishnanGuzmanLee2024} & 3D & $\Sigma_{1,\phi}^{\div}(T^{\rm R}; \mathbb{S})$ & $42$ & $\mathbb{P}_{0}^{-1}(T^{\rm R}; \mathbb{R}^d)$ & $h^2$ \\
GGL~\cite{GopalakrishnanGuzmanLee2024} & 3D & Reduced space & $24$ & $\mathrm{RM}(T)$ & $h$ \\
GGL~\cite{GopalakrishnanGuzmanLee2024} & $d$D & $\Sigma_{1,\phi}^{\div}(T^{\rm R}; \mathbb{S})$ & $\tfrac{1}{2}(d^2 + d)(2d + 1)$ & $\mathbb{P}_{1}(T;\mathbb{R}^d)$ & $h^2$ \\
      New & $d$D & $\Sigma_{\mathrm{RM}}^{\div}(T; \mathbb{S})$ & $\tfrac{1}{2}d(d+1)^2$ & $\mathrm{RM}(T)$ & $h$ \\
      New & $d$D & $\Sigma_{k,\phi}^{\div}(T^{\rm R}; \mathbb{S})$ & \eqref{eq:dimSigmaDivk} & $\mathbb{P}_{k-1}^{-1}(T^{\rm R};\mathbb{R}^d)$ & $h^{k+1}$ \\
      New & $d$D & $\Sigma_{k,\phi,nn}^{\div}(T^{\rm R}; \mathbb{S})$ & \eqref{eq:dimSigmaDivknn} & $\mathbb{P}_{k-1}^{-1}(T^{\rm R}; \mathbb{R}^d)$ & $h^{k+1}$ \\
      New & $d$D & $\Sigma_{k,\psi}^{\div}(T; \mathbb{S})$ & \eqref{eq:dimSigmaDivkpsi} & $\mathbb{P}_{k-1}^{-1}(T;\mathbb{R}^d)$ & $h^{k+1}$ \\
      \bottomrule
    \end{tabular}%
\end{table}

The Arnold-Douglas-Gupta element~\cite{ArnoldDouglasGupta1984} covers all $k \geq 1$ but is limited to $d = 2$, while the Gopalakrishnan-Guzm\'{a}n-Lee element~\cite{GopalakrishnanGuzmanLee2024} applies to arbitrary $d$ but is restricted to $k = 1$. Our contribution is the construction of hybridizable $H(\div)$-conforming symmetric stress elements on the barycentric refinement for any polynomial order $k \geq 1$ in arbitrary dimensions $\mathbb{R}^d$ with $d \geq 2$. For the first-order ($h$) element, ours are $9$ ($d=2$) and $24$ ($d=3$), matching the dimension of the reduced spaces in~\cite{ChristiansenHu2023,GopalakrishnanGuzmanLee2024}. 

We employ the tangential-normal ($t$-$n$) decomposition framework developed in our recent work~\cite{ChenHuang2024}.
Specifically, the polynomial space of symmetric tensors on a simplex $T$ of degree $k$ can be expressed as:
\[
\mathbb{P}_k(T; \mathbb{S}) = \Oplus_{\ell = 0}^d \Oplus_{f \in \Delta_{\ell}(T)} \left[ \mathbb{B}_k \mathscr{T}^f(\mathbb{S}) \oplus \mathbb{B}_k \mathscr{N}^f(\mathbb{S}) \right],
\]
where $\mathbb{S} = \mathscr{T}^f(\mathbb{S}) \oplus \mathscr{N}^f(\mathbb{S})$ is a tangential-normal decomposition of the symmetric tensor space $\mathbb S$ and $\mathbb{B}_k \mathscr{T}^f(\mathbb{S}) = b_f\mathbb{P}_{k-(\ell+1)}(f)\otimes \mathscr{T}^f(\mathbb{S})$, with $b_f$ being the $(\ell+1)$th degree bubble function associated with the subsimplex \(f\). 
The tangential component $\mathbb{B}_k \mathscr{T}^f(\mathbb{S})$ contributes to the div bubble space which can be determined by DoFs interior to $T$.

The normal component $\mathbb{B}_k \mathscr{N}^f(\mathbb{S})$ will determine the trace on $\partial T$. To impose the symmetry constraint on the normal tensor space $\mathscr{N}^f \otimes \mathscr{N}^f$, a global basis for the normal plane $\mathscr{N}^f$ over subsimplices of dimensions $0, 1, \ldots, d-2$ is usually used in existing construction. For off-diagonal components $\sym(\bs{n}_{F_i} \otimes \bs{n}_{F_j}), i \neq j$, the symmetry restriction ensures that these components can only be distributed to either face $F_i$ or $F_j$ but not both, resulting in a missing lower or upper triangular part. 

Using the barycentric refinement of the simplex, we construct an $H(\div)$-conforming piecewise polynomial element at a subsimplex \(f \in \Delta_{\ell}(T)\) for \(\texttt{v}_i, \texttt{v}_j \not\in f\) with \(i < j\) as:
\[
b^{\rm R}_f \boldsymbol{\phi}_{ij}^{f} := b^{\rm R}_f \left[ \chi_{T_{i}} \sym(\boldsymbol{t}_{f(0), c} \otimes \boldsymbol{t}_{f(0), j}) - \chi_{T_{j}} \sym(\boldsymbol{t}_{f(0), c} \otimes \boldsymbol{t}_{f(0), i}) \right],
\]
where $b^{\rm R}_f$ denotes the bubble polynomial associated with $f$ on $T^{\rm R}$, and $\chi_{T_i}$ and $\chi_{T_j}$ are the characteristic functions of $T_i$ and $T_j$, respectively.

Denote:
\[
\mathbb{B}_k \Phi^f(\mathbb{S}) = \mathbb{P}_{k-(\ell+1)}(f) \otimes {\rm span} \{ b^{\rm R}_f \boldsymbol{\phi}_{ij}^{f}, \texttt{v}_i, \texttt{v}_j \not\in f, i < j \}.
\]
We enrich the polynomial space on $T$ by the piecewise polynomial space $\mathbb{B}_k \Phi^f(\mathbb{S})$ on the barycentric refinement $T^{\rm R}$, and define
\begin{align*}
\Sigma_{k,\phi}^{\operatorname{div}}(\mathcal{T}_h; \mathbb{S}) &= \{\boldsymbol{\tau}_h \in H(\div, \Omega; \mathbb{S}) : \\
&\qquad \boldsymbol{\tau}_h|_T \in \mathbb{P}_k(T; \mathbb{S}) \oplus \Oplus_{\ell = 0}^{d-2} \Oplus_{f \in \Delta_{\ell}(T)} \mathbb{B}_k \Phi^f(\mathbb{S}) \text{ for } T \in \mathcal{T}_h \}.
\end{align*}
This enrichment  leads to the facewise redistribution of DoFs, ensuring the hybridization capability of the element. 
The following DoFs determine $\Sigma_{k,\phi}^{\operatorname{div}}(\mathcal{T}_h; \mathbb{S})$, for $k \geq 1$:
\begin{equation}\label{eq:faceDoF}
\int_F \bs{\tau} \bs{n}_F \cdot \bs{q} \dd s, \quad F \in \Delta_{d-1}(\mathcal{T}_h), \bs{q} \in \mathbb{P}_{k}(F; \mathbb{R}^d),
\end{equation}
\[
\int_T \bs{\tau} : \bs{q} \dx, \quad T \in \Delta_d(\mathcal{T}_h), \; \bs{q} \in \mathbb{P}_{k-2}(T; \mathbb{S}).
\]
The facewise DoFs \eqref{eq:faceDoF} enable hybridization~\cite{fraeijs1965displacement,ArnoldBrezzi1985}, relaxing the normal continuity of the stress element via Lagrange multipliers.

In view of the face DoF \eqref{eq:faceDoF}, the element $\Sigma_{k, \phi}^{\operatorname{div}}(\mathcal T_h; \mathbb{S})$ is the generalization of Brezzi-Douglas-Marini/N\'ed\'elec (2nd kind) div-conforming vector element~\cite{BrezziDouglasMarini1985,Nedelec:1986family,BrezziDouglasDuranFortin1987} to div-conforming symmetric stress element. Such a construction is not possible using $\mathbb P_k(T; \mathbb S)$ alone, but can be achieved by enriching it with $\mathbb B_k\Phi^f(\mathbb S)$.
By increasing the interior DoFs, we can construct a Raviart-Thomas (RT)-type element~\cite{RaviartThomas1977} with an enriched range.

We establish the inf-sup condition for $Q_{k-1,h}\div: \Sigma_{k,\phi}^{\operatorname{div}}(\mathcal{T}_h; \mathbb{S}) \to \mathbb P_{k-1}^{-1}(\mathcal T_h;\mathbb R^d)$ for all $k\geq 2$ without the constraint $k \geq d+1$. The space $\mathbb{B}_k \Phi^f(\mathbb{S})$ can be modified to $\mathbb{B}_k \Psi^f(\mathbb{S})$ so that it preserves the trace while changing the range: $\div \mathbb{B}_k \Psi^f(\mathbb{S}) \subset \mathbb P_{k-1}^{-1}(\mathcal T_h;\mathbb R^d)$. 

Finite element spaces on the barycentric refinement mesh $\mathcal T_h^{\rm R}$ can also be constructed, together with the corresponding inf-sup conditions.

The rest of this paper is organized as follows. Section~\ref{sec:preliminary} introduces simplices, barycentric refinement, and tangential-normal bases. The intrinsic construction of linear symmetric stress elements on the barycentric refinement is presented in Section~\ref{sec:linelement}. High-order elements on the barycentric refinement are developed in Section~\ref{sec:highorderelement}. Several discrete inf-sup conditions are established in Section~\ref{sec:infsup}. Finally, in Section~\ref{sec:discretization}, the symmetric stress elements on the barycentric refinement are applied to solve the linear elasticity equation.


\section{Preliminary}\label{sec:preliminary}
In this section, we present notation on simplexes and sub-simplexes, spaces, barycentric refinement, and the tangential-normal bases.

\subsection{Simplices, Complexes, and Triangulations}
For a $d$-dimensional simplex $T$, we let $\Delta(T)$ denote all the subsimplices of $T$, while $\Delta_{\ell}(T)$ denotes the set of subsimplices of dimension $\ell$, for $0\leq \ell \leq d$. Elements of $\Delta_0(T) = \{\texttt{v}_0, \ldots, \texttt{v}_d\}$ are $d+1$ vertices of $T$. 

To distinguish combinatorial and geometric structures, we introduce the abstract $d$-simplex $\TT$, a finite set with $d+1$ elements. The standard $d$-simplex is $\texttt{S}_d := \{0, 1, \ldots, d\}$. Any $\TT = \{\TT(0), \ldots, \TT(d)\}$ is combinatorially isomorphic to $\texttt{S}_d$ via $i \mapsto \TT(i)$.

A $d$-simplex $T$ with vertices $\texttt{v}_0, \ldots, \texttt{v}_d$ is a geometric realization of abstract simplex $\TT$ through $\TT(i) \mapsto \texttt{v}_i$, or of $\texttt{S}_d$ via $i \mapsto \texttt{v}_i$. The subset notation extends naturally: $\Delta_{\ell}(\TT)$ denotes the set of subsets of $\TT$ of cardinality $\ell+1$.

We use $f$ to denote both an abstract subset and its geometric realization. Algebraically, $f \in \Delta_{\ell}(\texttt{S}_d)$; geometrically,
$f$ is the $\ell$-simplex spanned by the corresponding vertices.
%
For $0\leq \ell \leq d-1$, the complement $f^* \in \Delta_{d-\ell -1} (\TS_d)$ satisfies $f \sqcup f^* = \{0, \ldots, d\}$ with disjoint union $\sqcup$. Geometrically, $f^*$ is the sub-simplex formed by the vertices not in $f$.

This notation simplifies indexing under the implicit embedding $i \mapsto \texttt{v}_i$. For example, $F_i := \{i\}^*$ denotes the $(d-1)$-dimensional face opposite to $\texttt{v}_i$, more concisely than $F_{\texttt{v}_i}$. Likewise, the tangential vector $\boldsymbol{t}_{i,j} := \texttt{v}_j - \texttt{v}_i$ is lighter than $\boldsymbol{t}_{\texttt{v}_i, \texttt{v}_j}$. A useful fact is that if $i,j\in f$, then $\bs t_{i,j}$ is tangent to $f$ and $\bs n_f\cdot \bs t_{i,j} = 0$, where $\bs n_f$ is a normal vector of $f$.

Let $\Omega \subset \mathbb{R}^d$ be a polyhedral domain with $d \geq 1$. A geometric triangulation $\mathcal{T}_h$ of $\Omega$ is a collection of $d$-simplices such that
\[
\bigcup_{T \in \mathcal{T}_h} T = \Omega, \qquad \mathring{T}_i \cap \mathring{T}_j = \emptyset \quad \text{for all } T_i \neq T_j \in \mathcal{T}_h,
\]
where $\mathring{T}$ denotes the interior of $T$. The subscript $h$ refers to the mesh size, i.e., the maximum diameter of all elements. We restrict to \emph{conforming triangulations}, where the intersection of any two simplices is either empty or a common subsimplex.

We adopt a topological viewpoint based on simplicial complexes to clarify the combinatorial structure~\cite{hatcher2005algebraic}.  
A \emph{simplicial complex} $\mathcal{S}$ over a finite vertex set $\mathcal V$ is a collection of subsets of $\mathcal V$ such that if $\TT \in \mathcal{S}$, then all subsets $\Delta(\TT)$ also belong to $\mathcal{S}$. Elements of $\mathcal V$ are \emph{vertices}, and elements of $\mathcal{S}$ are \emph{simplices}. Let $\Delta_\ell(\mathcal{S})$ be the set of all $\ell$-simplices in $\mathcal{S}$. A simplex $\TT$ is \emph{maximal} if it is not contained in any other simplex. The complex $\mathcal{S}$ is \emph{pure of dimension $d$} if all maximal simplices are $d$-simplices.
%

The geometric realization of the maximal simplices $\Delta_d(\mathcal{S})$ defines the triangulation $\mathcal{T}_h$. Following the finite element convention, we work with $\mathcal{T}_h$ and use $\Delta_\ell(\mathcal{T}_h)$ to denote the set of all $\ell$-simplices in the mesh.

\subsection{Spaces}
Set $\mathbb M:=\mathbb R^{d\times d}$.
Denote by $\mathbb{S}$ and $\mathbb{K}$ the subspaces of $\mathbb{M}$ consisting of symmetric and skew-symmetric matrices, respectively.
Any matrix $\boldsymbol{\tau}\in\mathbb M$ admits the decomposition 
$$\boldsymbol{\tau}=\sym\boldsymbol{\tau}+\skw\boldsymbol{\tau},$$ where the symmetric part $\sym\boldsymbol{\tau}=(\boldsymbol{\tau}+\boldsymbol{\tau}^{\intercal})/2$ and the skew-symmetric part $\skw\boldsymbol{\tau}=(\boldsymbol{\tau}-\boldsymbol{\tau}^{\intercal})/2$.
For a subspace $V\subset \mathbb R^d$, denote by $\mathbb S(V)$ and $\mathbb K(V)$ the spaces of the symmetric matrices and the skew-symmetric matrices restricted to $V$, respectively:
\begin{align*}
\mathbb S(V) &:= \sym (V\otimes V) = {\rm span}\{\sym(\boldsymbol{v}_i\otimes\boldsymbol{v}_j): 1\leq i\leq j\leq m\}, \\
\mathbb K(V) &:= \skw (V\otimes V) = {\rm span}\{\skw(\boldsymbol{v}_i\otimes\boldsymbol{v}_j): 1\leq i< j\leq m\},
\end{align*}
where $\{\boldsymbol{v}_1,\ldots, \boldsymbol{v}_m\}$ is a basis of $V$. 
For a space $B(D)$ defined on $D$,
let $B(D; \mathbb{X}):=B(D)\otimes\mathbb{X}$ be its vector or tensor version for $\mathbb{X}$ being $\mathbb{R}^d$, $\mathbb{S}$ and $\mathbb{K}$.

 
Denote by $\mathbb P_k(\Omega)$ the space of polynomials of degree $k$ on a domain $\Omega$. Denote by $\mathbb P_k^{-1}(\mathcal T_h) = \{ v\in L^2(\Omega): v\mid_T\in \mathbb P_k(T), \forall\,T\in \mathcal T_h\}$ the discontinuous polynomial space of degree $k$ on $\mathcal T_h$. Let $Q_{k,\Omega}: L^2(\Omega) \to \mathbb{P}_k(\Omega)$ and $Q_{k,h}: L^2(\Omega) \to \mathbb{P}_k^{-1}(\mathcal{T}_h)$ denote the $L^2$-projection operators, extended in the natural way to vector- and tensor-valued functions.

Introduce the rigid motion space on simplex $T$~\cite{Nedelec1980}
$$
\textrm{RM}(T)=\mathbb{P}_0(T; \mathbb{R}^d) + \mathbb{P}_0(T; \mathbb{K})\boldsymbol{x},
$$
where $\boldsymbol{x}$ is the position vector on $T$. Let $Q_{\rm RM}: L^2(T; \mathbb{R}^d)\to \textrm{RM}(T)$ be the $L^2$ projection operator.

The space $H(\div, \Omega; \mathbb S):=\{\bs \tau\in L^2(\Omega;\mathbb S): \div \bs \tau\in L^2(\Omega; \mathbb R^d)\}.$ For a subdomain $D \subseteq \Omega$, the trace operator for the div operator is
$$
{\rm tr}_D^{\div} \boldsymbol \tau = \boldsymbol \tau \boldsymbol n|_{\partial D} \quad \textrm{ for }\;\;\boldsymbol{\tau}\in C(D;\mathbb S),
$$
where $\bs n$ denotes the outwards unit normal vector of $\partial D$, and $C(D; \mathbb{S}) := C(D) \otimes \mathbb{S}$, with $C(D)$ denoting the space of continuous functions on $D$. The trace operator ${\rm tr}_D^{\div}$ can be continuously extended to ${\rm tr}_D^{\div}: H(\div, D; \mathbb S) \to H^{-1/2}(\partial D; \mathbb R^d)$.

We define the space
$$
H_0(\div, D; \mathbb S) := H(\div, D; \mathbb S)\cap \ker({\rm tr}_D^{\div})=\{\bs \tau\in H(\div, D; \mathbb S): {\rm tr}_D^{\div} \boldsymbol \tau=0\}.
$$ 
Given a conforming triangulation $\mathcal T_h$ of $\Omega$ and a piecewise smooth function $\bs \tau$, it is well known that $\bs \tau\in H(\div, \Omega; \mathbb S)$ if and only if $\bs \tau \bs n_F$ is continuous across all faces $F\in \Delta_{d-1}(\mathcal T_h)$, where $\bs n_F$ is a fixed unit normal vector of $F$. 

Given a $(d-1)$-dimensional face $F$ and a vector $\boldsymbol v\in \mathbb R^d$, define 
$$
\Pi_F\boldsymbol v:= 
 (\boldsymbol I - \boldsymbol n_F\boldsymbol n_F^{\intercal})\boldsymbol v
$$ 
as the projection of $\boldsymbol v$ onto the face $F$.

\subsection{Barycentric refinement}
Algebraically a barycentric refinement of an abstract simplex $\texttt{T}$ is obtained by adding one more vertex, indexed by $\TT(c)$. Let $\TT_i = \TT\setminus \{\TT(i)\} \cup \{\TT(c)\}$ be the abstract simplex by replacing $\TT(i)$ by $\TT(c)$. The abstract split $\TT^{\rm R} := \{ \TT_i \mid i=0, 1, \ldots, d\}$.

The index set is better described by an abstract simplex and its split. We extend the complement notation. For $f\subseteq \{0, 1, \ldots, d, c\}$, define $f^c$ s.t.
$$
f \sqcup f^c =  \{0, 1, \ldots, d, c\}.
$$
When $f\in \Delta_{\ell}(\TT)$, $f^c = f^* \cup \{c\}.$ In this notation, $\TT_i = \{i\}^{c}= F_i\cup \{c\}$  for $i=0, 1, \ldots, d$. 

Geometrically, the barycentric refinement $T^{\rm R}$ of $T$ is obtained by mapping $\TT(c)$ to $\texttt{v}_{c}$, the barycenter of a $d$-dimensional simplex $T$ with vertices $\texttt{v}_0, \texttt{v}_1, \ldots, \texttt{v}_d$, i.e., $\texttt{v}_c = (d+1)^{-1}\sum_{i=0}^d \texttt{v}_i$. The corresponding geometric embedding of $\TT_i$ will be denoted by $T_i$.
For $i = 0, 1, \ldots, d$, we have 
$$
\boldsymbol{t}_{i,c}=\texttt{v}_c-\texttt{v}_i=\frac{1}{d+1}\sum_{j=0}^d(\texttt{v}_{j}-\texttt{v}_{i})=\frac{1}{d+1}\sum_{j=0}^d\boldsymbol{t}_{i,j},
$$
and thus
\begin{equation*}
\sum_{i=0}^d \bs t_{i,c} = \frac{1}{d+1}\sum_{i,j=0}^d\boldsymbol{t}_{i,j} = 0.
\end{equation*}



We will refine our notation on $(d-1)$-dimensional faces by including a simplex. For $i=0,\ldots, d$, denote by $F_i(T)$ the $(d-1)$-dimensional face of $T$ opposite to $\texttt{v}_i$, and  by $\boldsymbol{n}_{F_i}$ its unit normal vector outward to $T$. Algebraically, $F_i(\TT) = \{i\}^* =  \texttt{S}_d\backslash \{ i\}$. By changing the simplex to $T_0$, $F_i(\TT_0) = \{0\}^c\cap \{i\}^c = \{0, i\}^c$ whose geometric realization is the face of $T_0$ opposite to $\texttt{v}_i$ for $i\in \{0\}^*=\{1,\ldots, d\}$.

For $f\in \Delta_{\ell}(\TT)$ with $0\leq \ell\leq d-1$, and $i\in f^*$, we have $F_i(\TT) = \{i\}^* \supseteq (f^*)^* = f$, i.e., $f\in \Delta_{\ell}(F_i(\TT))$ for $i\in f^*$, and similarly $\TT_i = \{i\}^c \supseteq (f^c)^c =  f$, i.e. $f \in \Delta_{\ell}(\TT_i)$ for $i\in f^*$. For $i,j = 0,\ldots, d$, the intersection $F_{ij}:= T_i\cap T_j$ is a $(d-1)$-dimensional face containing $\texttt{v}_c$ but not $\texttt{v}_{i}, \texttt{v}_{j}$. Algebraically, $F_{ij} = \{i\}^c\cap \{j\}^c  = \{i,j\}^c$. Treating a sub-simplex as a subset clarifies the geometry through algebraic operations, in the spirit of Descartes.

\begin{lemma}
Let $F_{ij}:= T_i\cap T_j$ be the $(d-1)$-dimensional face containing $\texttt{v}_c$ but not $\texttt{v}_{i}, \texttt{v}_{j}$, for $0\leq i< j \leq d$, and $\bs n_{F_{ij}}$ be a normal vector of $F$. Then
 \begin{equation}\label{eq:ti+tj}
(\boldsymbol{t}_{i,c}+\boldsymbol{t}_{j,c})\cdot\boldsymbol{n}_{F_{ij}} = 0.
\end{equation}
\end{lemma}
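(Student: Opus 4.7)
\medskip
\noindent\textbf{Proof plan.}
The plan is to reduce the claim to the statement that $\boldsymbol{t}_{i,c}+\boldsymbol{t}_{j,c}$ lies in the tangent space of $F_{ij}$, and to exhibit this vector explicitly as a sum of edge vectors of $F_{ij}$ using the zero-sum identity for the barycentric edges that was already established just above the lemma.

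First, I would identify the geometry: by the algebraic description $F_{ij}=\{i,j\}^c$, the face $F_{ij}$ has vertex set $\{\texttt{v}_c\}\cup\{\texttt{v}_k:k\neq i,j\}$, so the vectors $\boldsymbol{t}_{c,k}=\texttt{v}_k-\texttt{v}_c$ with $k\neq i,j$ are all tangent to $F_{ij}$ and hence satisfy $\boldsymbol{t}_{c,k}\cdot\boldsymbol{n}_{F_{ij}}=0$.

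Next, starting from the identity
\[
\sum_{k=0}^{d}\boldsymbol{t}_{k,c}=0
\]
displayed in the paragraph preceding the lemma, I would split off the two indices $i,j$ and rewrite
\[
\boldsymbol{t}_{i,c}+\boldsymbol{t}_{j,c}=-\sum_{k\neq i,j}\boldsymbol{t}_{k,c}=\sum_{k\neq i,j}\boldsymbol{t}_{c,k}.
\]
Taking the inner product with $\boldsymbol{n}_{F_{ij}}$ and invoking the previous step term by term then gives \eqref{eq:ti+tj}. If one prefers a self-contained argument that does not invoke the zero-sum identity, the same formula can be obtained by substituting $\texttt{v}_c=(d+1)^{-1}\sum_{k=0}^{d}\texttt{v}_k$ into $2\texttt{v}_c-\texttt{v}_i-\texttt{v}_j$ and collecting the remaining $d-1$ terms as $\sum_{k\neq i,j}(\texttt{v}_k-\texttt{v}_c)$.

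There is essentially no obstacle here; the only point requiring a moment of care is matching the algebraic description of $F_{ij}=\{i,j\}^c$ with its geometric vertex set, so that the $\bs t_{c,k}$ with $k\neq i,j$ are correctly recognized as tangent to $F_{ij}$. Everything else is a one-line manipulation of barycentric coordinates.
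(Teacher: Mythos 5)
Your proposal is correct and follows essentially the same route as the paper's proof: both use the zero-sum identity $\sum_{k=0}^{d}\boldsymbol{t}_{k,c}=0$ to rewrite $\boldsymbol{t}_{i,c}+\boldsymbol{t}_{j,c}$ as $-\sum_{\ell\in\{i,j\}^*}\boldsymbol{t}_{\ell,c}$ and then observe that each $\boldsymbol{t}_{\ell,c}$ with $\ell\in\{i,j\}^*$ is tangent to $F_{ij}=\{i,j\}^c$ and hence orthogonal to $\boldsymbol{n}_{F_{ij}}$. No gaps.
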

\begin{proof}
Algebraically $F_{ij} = \{i\}^c\cap \{j\}^c  = \{i,j\}^c$. As $c\in F_{ij}$ and $\ell \in F_{ij}$ for $\ell\in \{i,j\}^*$, we have
\begin{equation}\label{eq:nFtlc}
\boldsymbol{n}_{F_{ij}} \cdot \boldsymbol{t}_{\ell,c} = 0, \quad \ell \in \{i,j\}^*.
\end{equation}

By $\boldsymbol{t}_{0, c}+\boldsymbol{t}_{1, c}+\ldots+\boldsymbol{t}_{d, c}=0$ and \eqref{eq:nFtlc}, it follows 
$$
(\boldsymbol{t}_{i,c}+\boldsymbol{t}_{j,c})\cdot\boldsymbol{n}_{F_{ij}} = -\sum_{\ell\in \{ i,j\}^*}\boldsymbol{t}_{\ell,c}\cdot\boldsymbol{n}_{F_{ij}} = 0.
$$
\end{proof}

Denote by $\Delta_{\ell}(\mathring{T}^{\rm R}) = \Delta_{\ell}(T^{\rm R}) \setminus \Delta_{\ell}(T)$ the set of all $\ell$-dimensional subsimplices inside $T^{\rm R}$ that contain the barycenter $\texttt{v}_c$. For a conforming mesh $\mathcal T_h$, let $\mathcal T_h^{\rm R}$ be the barycentric refinement of $\mathcal T_h$. Denote by $\Delta_{\ell}(\mathring{\mathcal{T}}_h^{\rm R}) = \Delta_{\ell}(\mathcal{T}_h^{\rm R})\backslash \Delta_{\ell}(\mathcal{T}_h)$. 

We use $\lambda_i$ to denote the barycentric coordinate of $T$ corresponding to $\texttt{v}_i$. Then $\nabla \lambda_i = - h_i^{-1} \bs n_{F_i}$, where $h_i$ is the distance of $\texttt{v}_i$ to the face $F_i(T)$. For $f\in \Delta_{\ell}(T)$, the bubble polynomial $b_f := \prod_{i\in f}\lambda_i\in \mathbb P_{\ell+1}(f)$ and can be extended to $\mathbb P_{\ell+1}(T)$ using the barycentric coordinate. 

Introduce the linear Lagrange space 
$$
V^{L}_1(T^{\rm R}) := \mathbb{P}_1^{-1}(T^{\rm R}) \cap H^1(T)
= \{ v \in C(T): v|_{T_i} \in \mathbb{P}_1(T_i),\; T_i \in T^{\rm R},\; i = 0, \ldots, d \}.
$$
For the refined element $T^{\rm R}$, let $\lambda_i^{\rm R} \in V^{L}_1(T^{\rm R})$ denote the piecewise linear function such that 
$\lambda_i^{\rm R}(\texttt{v}_j) = \delta_{i,j}$ for vertices $\texttt{v}_i, \texttt{v}_j \in \Delta_0(T^{\rm R})$, where \(\delta_{i,j}\) denotes the Kronecker delta for \(i, j = 0, 1, \ldots, d, c\). 
On each subelement $T_i$ of $T^{\rm R}$ $(i = 0, 1, \ldots, d)$, it agrees with the barycentric coordinate on $T_i$.

For $F\in \Delta_{d-1}(\mathring{\mathcal{T}}_h^{\rm R})$, let $T_1, T_2 \in \mathcal T_h^{\rm R}$ so that $F=\partial T_1 \cap \partial T_2$ and the fixed normal vector $\boldsymbol{n}_F$ coincides with the outward unit normal to $\partial T_1$.
For piecewise smooth function $v$ defined on $\Omega$, the jump of $v$ on face $F$ is defined by
\begin{equation*}
[v]|_F= (v|_{T_1})|_{F} - (v|_{T_2})|_{F}.
\end{equation*}

\subsection{Tangential-normal ($t$-$n$) bases}  
For a subsimplex \( f \in \Delta_{\ell}(T) \), let us select \(\ell\) linearly independent tangential vectors \(\{\boldsymbol{t}_1^f, \ldots, \boldsymbol{t}_{\ell}^f\}\) along \(f\) and \(d - \ell\) linearly independent normal vectors \(\{\boldsymbol{n}_1^f, \ldots, \boldsymbol{n}_{d-\ell}^f\}\) orthogonal to \(f\). While the vectors can be normalized, the sets \(\{\boldsymbol{t}_i^f\}\) and \(\{\boldsymbol{n}_i^f\}\) are not necessarily orthonormal. Together, these \(d\) vectors \(\{\boldsymbol{t}_1^f, \ldots, \boldsymbol{t}_{\ell}^f, \boldsymbol{n}_1^f, \ldots, \boldsymbol{n}_{d-\ell}^f\}\) form a basis for \(\mathbb{R}^d\).  

The tangent space and normal space of \(f\) are defined, respectively, as follows:  
\[
\mathscr{T}^f := \operatorname{span} \{\boldsymbol{t}_1^f, \ldots, \boldsymbol{t}_{\ell}^f\},  
\quad  
\mathscr{N}^f := \operatorname{span} \{\boldsymbol{n}_1^f, \ldots, \boldsymbol{n}_{d-\ell}^f\}.
\]  
These subspaces satisfy \(\mathbb{R}^d = \mathscr{T}^f \oplus \mathscr{N}^f\). If the normal basis \(\{\boldsymbol{n}_j^f\}\) is determined solely by \(f\) and does not vary with either the \((d-1)\)-dimensional face \(F\) or the element \(T\) containing \(f\), it is referred to as a global normal basis.  

Within \(\mathscr{T}^f\), we can define a dual basis \(\{\hat{\boldsymbol{t}}_1^f, \ldots, \hat{\boldsymbol{t}}_{\ell}^f\}\) such that \(\hat{\boldsymbol{t}}_i^f \in \mathscr{T}^f\) and \(\hat{\boldsymbol{t}}_i^f \cdot \boldsymbol{t}_j^f = \delta_{i,j}\). Similarly, a dual basis \(\{\hat{\boldsymbol{n}}_1^f, \ldots, \hat{\boldsymbol{n}}_{d-\ell}^f\}\) can be identified within \(\mathscr{N}^f\) such that \((\hat{\boldsymbol{n}}_i^f, \boldsymbol{n}_j^f) = \delta_{i,j}\) for \(i, j = 1, \ldots, d-\ell\). Since \(\mathscr{T}^f \perp \mathscr{N}^f\), the combined set \(\{\hat{\boldsymbol{t}}_1^f, \ldots, \hat{\boldsymbol{t}}_{\ell}^f, \hat{\boldsymbol{n}}_1^f, \ldots, \hat{\boldsymbol{n}}_{d-\ell}^f\}\) serves as the dual basis of \(\{\boldsymbol{t}_1^f, \ldots, \boldsymbol{t}_{\ell}^f, \boldsymbol{n}_1^f, \ldots, \boldsymbol{n}_{d-\ell}^f\}\). When $\delta_{i,j}$ is replaced by $\delta_{i,j}c_{i}$ with $c_i\neq 0$, those two bases are called biorthogonal or scaled dual bases. 

Focusing on the subspace \(\mathscr{N}^f\), two distinct bases are useful. A basis for \(\mathscr{N}^f\) can be formed using the unit normal vectors associated with faces $F_i$ containing $f$:  
\[
\mathscr{N}^f = {\rm span} \{\boldsymbol{n}_{F_i} \mid i \in f^*\},
\]  
which we term the face normal basis.

For \(f \in \Delta_{\ell}(T)\) with \(\ell = 0, 1, \ldots, d-1\), and \(i \in f^*\), let \(f \cup \{i\}\) denote the \((\ell+1)\)-dimensional face containing \(f\) and vertex \(\texttt{v}_i\). Let \(\boldsymbol{n}_{f \cup \{i\}}^f\) be the unit vector normal to \(f\) but tangential to \(f \cup \{i\}\), inheriting its orientation. Then,  
\[
\mathscr{N}^f = {\rm span} \{\boldsymbol{n}_{f \cup \{i\}}^f \mid i \in f^*\}
\]  
forms a basis for \(\mathscr{N}^f\) which is called the tangential-normal basis.  

The face normal $\bs n_{F_i}$ is a normlization of $\nabla \lambda_i$ and $\boldsymbol{n}_{f \cup \{i\}}^f$ is a normalization of $\nabla_{f\cup \{i\}}\lambda_i$, where $\nabla_{f\cup \{i\}}$ is the surface gradient.

The following result, detailed in~\cite{ChenHuang2024}, establishes the relationship between these bases. Illustrations for the three-dimensional case can be found in~\cite[Fig. 1]{ChenHuang2024} and~\cite{ChenHuang2024Tangential-Normal}.  

\begin{lemma}
 For $f \in \Delta_{\ell}(T)$ with $\ell = 0, 1, \ldots, d-1$, the scaled tangential-normal basis  
\[
\left\{ \frac{\boldsymbol{n}_{f \cup \{i\}}^f}{\boldsymbol{n}_{f \cup \{i\}}^f \cdot \boldsymbol{n}_{F_i}} \mid i \in f^* \right\}  
\]  
is dual to the face normal basis \(\{\boldsymbol{n}_{F_i} \mid i \in f^*\}\) in \(\mathscr{N}^f\).  
\end{lemma}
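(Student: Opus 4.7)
My plan is to verify the duality pairing coordinatewise. For $i, j \in f^*$, I need to establish
\[
\frac{\boldsymbol{n}_{f \cup \{i\}}^f \cdot \boldsymbol{n}_{F_j}}{\boldsymbol{n}_{f \cup \{i\}}^f \cdot \boldsymbol{n}_{F_i}} = \delta_{i,j},
\]
together with the nontriviality of the denominator. The diagonal case $i=j$ is a tautology, so the substantive content reduces to the off-diagonal vanishing $\boldsymbol{n}_{f \cup \{i\}}^f \cdot \boldsymbol{n}_{F_j} = 0$ for $i \neq j$, plus a separate well-definedness check.

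For the off-diagonal case I would avoid any explicit coordinate computation and argue purely from the subsimplex inclusion that is emphasized by the combinatorial viewpoint of this section. Since $i, j \in f^* = \texttt{S}_d \setminus f$ with $i \neq j$, the index $j$ lies neither in $f$ nor in $\{i\}$, hence $j \notin f \cup \{i\}$. Because $F_j = \{j\}^*$ contains every vertex except $\texttt{v}_j$, this gives the sub-simplex inclusion $f \cup \{i\} \subseteq F_j$. Passing to tangent spaces, the tangent space of $f \cup \{i\}$ sits inside the tangent space of $F_j$. By construction $\boldsymbol{n}_{f \cup \{i\}}^f$ is tangential to $f \cup \{i\}$, hence tangential to $F_j$, and therefore orthogonal to $\boldsymbol{n}_{F_j}$, as required.

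It remains to confirm that the normalization scalar $\boldsymbol{n}_{f \cup \{i\}}^f \cdot \boldsymbol{n}_{F_i}$ is nonzero so that the scaled basis is well defined. Here I would invoke the identifications $\boldsymbol{n}_{F_i} \parallel \nabla \lambda_i$ and $\boldsymbol{n}_{f \cup \{i\}}^f \parallel \nabla_{f \cup \{i\}} \lambda_i$ recorded just before the lemma. Since $\nabla_{f \cup \{i\}} \lambda_i$ is the orthogonal projection of $\nabla \lambda_i$ onto the tangent space of $f \cup \{i\}$, the inner product reduces to a positive multiple of $|\nabla_{f \cup \{i\}} \lambda_i|^2$. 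This quantity is strictly positive because $\texttt{v}_i \in f \cup \{i\}$ with $\lambda_i(\texttt{v}_i) = 1$ while $\lambda_i$ vanishes at the remaining vertices of $f \cup \{i\}$, so $\lambda_i$ is nonconstant along $f \cup \{i\}$. The only subtlety I anticipate is the orientation bookkeeping: one must verify that the unit-normal conventions for $\boldsymbol{n}_{f \cup \{i\}}^f$ and $\boldsymbol{n}_{F_i}$ produce a consistent sign so that the multiple is genuinely positive and the denominator is unambiguously defined. Once that sign check is carried out, the two pieces combine to yield the claimed duality.
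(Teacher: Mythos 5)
Your proof is correct and follows essentially the same route as the paper: the off-diagonal vanishing is obtained from exactly the same combinatorial inclusion $f\cup\{i\}\subseteq F_j$ (equivalently $\{j\}\subseteq f^*\cap\{i\}^*$ followed by complementation), which forces $\boldsymbol{n}^f_{f\cup\{i\}}\in\mathscr{T}^{f\cup\{i\}}\perp\boldsymbol{n}_{F_j}$. Your additional verification that $\boldsymbol{n}^f_{f\cup\{i\}}\cdot\boldsymbol{n}_{F_i}\neq 0$ via the surface-gradient identification is a welcome completeness check that the paper leaves implicit; note that for the duality itself only nonvanishing of this scalar is needed, since the diagonal entry equals $1$ by the normalization regardless of sign.
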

\begin{proof}
For $i\neq j$ and $i,j\in f^*$, we conclude from $\{j\}\subseteq f^*$ and $\{j\}\subseteq \{i\}^*$ that $\{j\}\subseteq (f^*\cap \{i\}^*)$. Taking the complement to get $(f\cup \{i\})\subseteq F_j$. So the normal vector $\bs n_{F_j}$ is orthogonal to $\mathscr T^{f\cup \{i\}}$ which contains $\boldsymbol{n}_{f \cup \{i\}}^f$. That is $ \bs n_{F_j} \cdot \boldsymbol{n}_{f \cup \{i\}}^f=0$ for  $i\neq j$ and $i,j\in f^*$.
\end{proof}

An important example is taking $f$ as a vertex. Without loss of generality, take $f = \{0\}$. Then the non-normalized tangential-normal vector is $\bs t_{0,i}$ and the non-normalized face normal vector is $\nabla \lambda_i$. The duality reads as
\begin{equation}\label{eq:duality0}
\nabla \lambda_i \cdot \bs t_{0,j} = \delta_{i,j}, \quad 1\leq i, j\leq d,
\end{equation}
which can be easily verified by evaluating the constant $\nabla \lambda_i \cdot \bs t_{0,j}$ at ending vertices.

\subsection{$t$-$n$ decomposition of symmetric tensors}\label{sec:tnS}
For $f\in\Delta_{\ell}(T)$ with $\ell=0, \ldots, d$, we choose a \(t\)-\(n\) basis \(\{\boldsymbol{t}_i^f, \boldsymbol{n}_j^f\}_{i=1,\ldots,\ell}^{j=1,\ldots,d-\ell}\). 
%
It is straightforward to verify the direct decomposition:  
\begin{equation}\label{Sdecomp}
\mathbb{S} = \underbrace{\mathbb S( \mathscr T^f )}_{\mathscr{T}^f(\mathbb{S})} \oplus \underbrace{\mathbb S( \mathscr N^f )\oplus \sym ( \mathscr T^f\otimes \mathscr N^f)}_{\mathscr{N}^f(\mathbb{S})},
\end{equation}
where in view of $t$-$n$ basis
\begin{align*}
\mathscr T^f(\mathbb S) &= \mathbb S( \mathscr T^f ) = \textrm{span}\big\{\sym(\bs t_i^f\otimes \bs t_j^f),  1\leq i\leq j\leq \ell\big\},\\
\mathscr N^f(\mathbb S) &= \textrm{span}\big\{\sym(\bs n_i^f\otimes \bs n_j^f),  1\leq i\leq j\leq d-\ell\big\} \\
&\quad\;\oplus\textrm{span}\big\{\sym(\bs t_i^f\otimes \bs n_j^f),  1\leq i\leq\ell,1\leq j\leq d-\ell\big\}. 
\end{align*}

We refer to~\cite[Fig. 4]{ChenHuang2024} for graphical illustrations of this decomposition for \(f\) being vertices, edges, and faces in three dimensions. Note that there is no \(\mathscr{T}^f(\mathbb{S})\) for \(\dim f = 0\) and no \(\mathscr{N}^f(\mathbb{S})\) for \(\dim f = d\).

\section{Linear Elements}\label{sec:linelement}

We present the intrinsic characterization and construction of linear symmetric stress elements on the barycentric refinement.

\subsection{Piecewise constant element}

With the barycentric split, the space $\mathbb{P}_0(T;\mathbb{S})$ can be enlarged to the piecewise constant symmetric tensor space $\mathbb{P}_0^{-1}(T^{\rm R}; \mathbb{S})$. However, due to the normal continuity required for membership in $H(\div, T; \mathbb{S})$, no additional functions are admissible.

We first perform a dimension count: $\dim \mathbb{P}_0^{-1}(T^{\rm R}; \mathbb{S}) = (d+1)\times \frac{1}{2}d(d+1)$. There are $\frac{1}{2}d(d+1)$ interior $(d-1)$-dimensional faces $F_{ij}, 0\leq i < j\leq d$, each imposing $d$ constraints to enforce the continuity of $\boldsymbol{\sigma} \boldsymbol{n}$. Subtracting the number of constraints gives
\[
\dim \mathbb{P}_0^{-1}(T^{\rm R}; \mathbb{S}) - \# \text{constraints} = \frac{1}{2}d(d+1) = \dim \mathbb{P}_0(T;\mathbb{S}).
\]
This dimension count is not a rigorous proof, as the constraints must also be shown to be linearly independent.

A sketch of a rigorous proof is as follows: assuming $\boldsymbol{\tau} \boldsymbol{n} |_F = 0$, we can expand $\boldsymbol{\tau}$ in $\mathbb{S}(\mathscr{T}^F)$. Then, considering two intersecting faces, we apply the normal continuity and symmetry of $\boldsymbol{\tau}$ and use properties of the barycenter to show that all expansion coefficients vanish.


\begin{lemma}\label{lem:Stensor1}
We have $H(\div,T; \mathbb S)\cap \mathbb P_{0}^{-1}(T^{\rm R}; \mathbb S)=\mathbb P_0(T;\mathbb S)$.
\end{lemma}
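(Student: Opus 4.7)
The inclusion $\mathbb{P}_0(T;\mathbb{S}) \subseteq H(\div, T; \mathbb{S}) \cap \mathbb{P}_0^{-1}(T^{\rm R}; \mathbb{S})$ is immediate: any global constant is trivially normal-continuous across the interior faces $F_{ij}$ of $T^{\rm R}$. So the plan is to establish the reverse inclusion by a coefficient analysis in the tangent basis afforded by the barycentric refinement.

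Let $\boldsymbol\tau$ lie in the intersection and set $\boldsymbol\tau_i := \boldsymbol\tau|_{T_i} \in \mathbb S$. Normal continuity across $F_{ij} = T_i \cap T_j$ gives $(\boldsymbol\tau_i - \boldsymbol\tau_j)\boldsymbol n_{F_{ij}} = 0$, which combined with symmetry forces $\boldsymbol\tau_i - \boldsymbol\tau_j \in \mathbb S(\mathscr T^{F_{ij}})$. Pivoting at $T_0$, I would introduce $\boldsymbol\alpha_i := \boldsymbol\tau_i - \boldsymbol\tau_0$ for $i = 1, \ldots, d$; it then suffices to show each $\boldsymbol\alpha_i = 0$.

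Since $\boldsymbol\alpha_i \in \mathbb S(\mathscr T^{F_{0i}})$ with $\mathscr T^{F_{0i}} = \operatorname{span}\{\boldsymbol t_{\ell, c} : \ell \in \{0, i\}^*\}$, expand
\[
\boldsymbol\alpha_i = \sum_{\substack{k \le l \\ k, l \in \{0, i\}^*}} a^i_{kl}\,\sym(\boldsymbol t_{k, c} \otimes \boldsymbol t_{l, c}).
\]
For each pair $i \neq j$ in $\{1,\ldots,d\}$, apply the cross-face relation $(\boldsymbol\alpha_i - \boldsymbol\alpha_j)\boldsymbol n_{F_{ij}} = 0$. By the orthogonality $\boldsymbol n_{F_{ij}} \cdot \boldsymbol t_{\ell,c} = 0$ for $\ell \in \{i,j\}^*$ from \eqref{eq:nFtlc}, only summands in $\boldsymbol\alpha_i$ containing the index $j$ survive the contraction, and symmetrically for $\boldsymbol\alpha_j$ with index $i$. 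After dividing by the nonzero scalar $\boldsymbol t_{j, c} \cdot \boldsymbol n_{F_{ij}}$ (nonzero since $\texttt{v}_j \notin F_{ij}$), the identity becomes a vanishing linear combination of $\boldsymbol t_{1,c}, \ldots, \boldsymbol t_{d,c}$, which is a basis of $\mathbb R^d$ (any $d$ of the $d+1$ vectors $\boldsymbol t_{\ell,c}$ are linearly independent). Matching coefficients yields, for each pair $i \neq j$,
\[
a^j_{ii} = 0, \quad a^i_{jj} = 0, \quad a^i_{jl} + a^j_{il} = 0 \;\text{for } l \in \{0, i, j\}^*.
\]

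The first two identities kill every diagonal coefficient. In dimension $d = 2$ the space $\mathscr T^{F_{0i}}$ is one-dimensional, so no off-diagonal coefficients exist and we are done. For $d \ge 3$, any triple $\{i, j, l\} \subset \{1, \ldots, d\}$ together with the three instances of the third identity over the pairs $(i,j),(i,l),(j,l)$ combines (using the symmetry $a^i_{kl}=a^i_{lk}$) to force both $a^i_{jl} = a^l_{ij}$ and $a^i_{jl} = -a^l_{ij}$, hence $a^i_{jl} = 0$. The main obstacle I expect is the careful bookkeeping when evaluating $\boldsymbol\alpha_i \boldsymbol n_{F_{ij}}$: one must track which tangent components survive contraction with $\boldsymbol n_{F_{ij}}$ and combine \eqref{eq:nFtlc} with \eqref{eq:ti+tj} to reduce cleanly to a vanishing linear combination of an explicit basis of $\mathbb R^d$. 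Once all $a^i_{kl}$ are shown to vanish, $\boldsymbol\alpha_i = 0$ for every $i$, so $\boldsymbol\tau \equiv \boldsymbol\tau_0$ is a single constant symmetric tensor.
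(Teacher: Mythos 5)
Your proposal is correct and follows essentially the same route as the paper's proof: pivoting at $T_0$, expanding each difference in the tangent basis $\{\boldsymbol{t}_{m,c}\}$ of $\mathscr{T}^{F_{0i}}$, using \eqref{eq:nFtlc} and \eqref{eq:ti+tj} to extract the diagonal vanishing and the skew-symmetry relation in the first two indices, and closing with the same cyclic three-identity argument on distinct triples. No substantive differences.
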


\begin{proof}
It is evident that $\mathbb P_0(T;\mathbb S)\subseteq (H(\div, T; \mathbb S)\cap \mathbb P_{0}^{-1}(T^{\rm R}; \mathbb S))$. We now prove the reverse inclusion.

Take $\boldsymbol{\sigma}\in (H(\div,T; \mathbb S)\cap \mathbb P_{0}^{-1}(T^{\rm R}; \mathbb S))$. Define $\boldsymbol{\tau}\in (H(\div,T; \mathbb S)\cap \mathbb P_{0}^{-1}(T^{\rm R}; \mathbb S))$ by setting $\boldsymbol{\tau}|_{T_i}=\boldsymbol{\tau}_i := \boldsymbol{\sigma}|_{T_i}-\boldsymbol{\sigma}|_{T_0}$ for $i=0,1,\ldots,d$. Clearly, $\boldsymbol{\tau}_0 = 0$, and $(\boldsymbol{\tau}\boldsymbol{n})|_{F_i(T_0)} = 0$ for $i=1, 2, \ldots, d$. We shall prove $\boldsymbol{\tau} = 0$ and consequently $\boldsymbol{\sigma}|_{T_i} = \boldsymbol{\sigma}|_{T_0}$, i.e., $\boldsymbol{\sigma} \in \mathbb P_0(T;\mathbb S)$.

\begin{figure}[htbp]
\subfigure[3D]{
\begin{minipage}[t]{0.5\linewidth}
\centering
\includegraphics*[width=4.25cm]{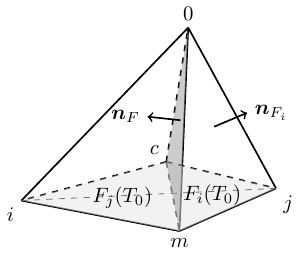}
\end{minipage}}
\subfigure[2D]
{\begin{minipage}[t]{0.5\linewidth}
\centering
\includegraphics*[width=4.125cm]{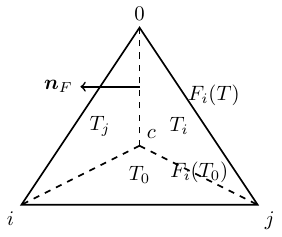}
\end{minipage}}
\caption{Barycentric refinement.}
\label{fig:refinement}
\end{figure}

We illustrate the idea using a 2D example before generalizing to higher dimensions; see Fig.~\ref{fig:refinement}~(b). Inside $T_i$, we use the $t$-$n$ basis $\{\bs t\otimes \bs t, \sym(\bs t\otimes \bs n), \bs n\otimes \bs n\}$ of the face $F_i(T_0)$ to expand $\mathbb S$. Since $\boldsymbol{\tau}\boldsymbol{n}|_{F_i(T_0)} = 0$, we deduce that only the tangential-tangential component remains, i.e., $\boldsymbol{\tau}_i = \tau_{i,j,j}\boldsymbol{t}_{j,c}\otimes \boldsymbol{t}_{j,c}$ and $\boldsymbol{\tau}_j = \tau_{j,i,i}\boldsymbol{t}_{i,c}\otimes \boldsymbol{t}_{i,c}$. 

Let $F = F_{ij} = \{i, j\}^c$ be the $(d-1)$-dimensional face without vertices $\texttt{v}_i$ and $\texttt{v}_j$. Its normal vector is denoted by $\boldsymbol{n}_F$. The continuity condition $(\boldsymbol{\tau}_i\boldsymbol{n}_F)(\texttt{v}_c) = (\boldsymbol{\tau}_j\boldsymbol{n}_F)(\texttt{v}_c)$ implies
\[
\tau_{i,j,j} \boldsymbol{t}_{j,c} (\boldsymbol{t}_{j,c}\cdot \boldsymbol{n}_F) = \tau_{j,i,i} \boldsymbol{t}_{i,c} (\boldsymbol{t}_{i,c}\cdot \boldsymbol{n}_F).
\]
As $\boldsymbol{t}_{j,c}$ and $\boldsymbol{t}_{i,c}$ are linearly independent and $(\boldsymbol{t}_{j,c}\cdot \boldsymbol{n}_F)(\boldsymbol{t}_{i,c}\cdot \boldsymbol{n}_F) \neq 0$, we conclude $\tau_{i,j,j} = \tau_{j,i,i} = 0$, i.e., $\boldsymbol{\tau}_i = \boldsymbol{\tau}_j = 0$.

Now consider the general case in $d$ dimensions. 
We choose $\{\boldsymbol{t}_{m,c} \mid m \in \{0, i\}^*\}$ as a basis of the tangential plane of $F_i(T_0)$.
As $(\boldsymbol{\tau}\boldsymbol{n})|_{F_i(T_0)}=0$, we have $\boldsymbol{\tau}_i|_{F_i(T_0)}\in \mathbb S(\mathscr T^{F_i(T_0)})$ and can express $\boldsymbol{\tau}_i$ as
\begin{equation*}
\boldsymbol{\tau}_i =  \sum_{m,n\in \{0, i\}^*}{\tau}_{i, m, n}\boldsymbol{t}_{m,c}\otimes\boldsymbol{t}_{n,c}.
\end{equation*}
Clearly ${\tau}_{i, m, n}={\tau}_{i, n, m}$, for $m,n\in \{0, i\}^*$, as $\bs \tau_i$ is symmetric. We will use the normal continuity to conclude all coefficients ${\tau}_{i, m, n}$ vanish.

Let $F = F_{ij} = \{i, j\}^c$ be the $(d-1)$-dimensional face shared by $T_i$ and $T_j$. Fix a unit normal vector $\boldsymbol{n}_F$. 
Evaluating $\bs \tau\bs n_F$ at $\texttt{v}_c$ and using \eqref{eq:nFtlc}, we find
\[
(\boldsymbol{\tau}_i\boldsymbol{n}_F)(\texttt{v}_c) = \sum_{m \in \{ 0, i\}^*} \tau_{i,m,j}\boldsymbol{t}_{m,c} (\boldsymbol{t}_{j,c}\cdot \boldsymbol{n}_F),
\]
and similarly,
\[
(\boldsymbol{\tau}_j\boldsymbol{n}_F)(\texttt{v}_c) = \sum_{m \in \{ 0, j\}^*} \tau_{j,m,i}\boldsymbol{t}_{m,c}(\boldsymbol{t}_{i,c}\cdot \boldsymbol{n}_F).
\]
Expanding the identity $(\boldsymbol{\tau}_i\boldsymbol{n}_F)(\texttt{v}_c) = (\boldsymbol{\tau}_j\boldsymbol{n}_F)(\texttt{v}_c)$ in the basis $\{\boldsymbol{t}_{m,c}, m=1, \ldots, d\}$, we conclude that all coefficients vanish as follows.

Like the 2D case, we have all diagonal entries ${\tau}_{i, j, j}={\tau}_{j, i,i}=0$ as $(\boldsymbol{t}_{j,c}\cdot \boldsymbol{n}_F)(\boldsymbol{t}_{i,c}\cdot \boldsymbol{n}_F) \neq 0$. Moreover, as the coefficient of $\bs t_{m,c}$, 
$$
{\tau}_{i, m,j}(\boldsymbol{t}_{j,c}\cdot\boldsymbol{n}_F)
={\tau}_{j, m,i}(\boldsymbol{t}_{i,c}\cdot\boldsymbol{n}_F),\quad i\neq j, i\neq m, j\neq m.
$$
Using the relation \eqref{eq:ti+tj}, we get
$$
(\boldsymbol{t}_{j,c}\cdot\boldsymbol{n}_F)({\tau}_{i, m,j}+{\tau}_{j, m,i})=0,\quad i\neq j, i\neq m, j\neq m.
$$
By $\boldsymbol{t}_{j,c}\cdot\boldsymbol{n}_F\neq0$ and the symmetry of the last two indices in ${\tau}_{i, m,j}$ and ${\tau}_{j, m,i}$, we acquire the skew-symmetry of the first two indices
$$
{\tau}_{i, j,m}+{\tau}_{j, i,m}=0\quad\textrm{ for } i,j,m\neq0, i\neq j, i\neq m, j\neq m.
$$
Fix pairwise distinct $i,j,m\neq 0$ and set
\[
a:=\tau_{i,j,m},\qquad b:=\tau_{j,m,i},\qquad c:=\tau_{m,i,j}.
\]
Using skew-symmetry in the first two indices and symmetry in the last two, we obtain the linear relations
\[
a+b = \tau_{i,j,m} + \tau_{j,m,i}
= \tau_{i,j,m} + \tau_{j,i,m} = 0,
\]
\[
b+c = \tau_{j,m,i} + \tau_{m,i,j}
= \tau_{j,m,i} + \tau_{m,j,i} = 0,
\]
\[
a+c = \tau_{i,j,m} + \tau_{m,i,j}
= \tau_{i,m,j} + \tau_{m,i,j} = 0,
\]
where each equality uses symmetry to swap the last two indices and skew-symmetry to swap the first two.
%
Thus $a = b = c = 0$, i.e.,
\[
\tau_{i,j,m}=\tau_{j,m,i}=\tau_{m,i,j}=0
\]
for all pairwise distinct $i,j,m\neq 0$, as claimed.

%
Hence,
$\boldsymbol{\tau}_i(\texttt{v}_c)=0$ for $i=1,\ldots, d$. As a piecewise constant function, this means $\boldsymbol{\tau}|_{T_i}=0$ and $\boldsymbol{\sigma}\in\mathbb P_0(T;\mathbb S)$.
\end{proof}

At vertex \(\texttt{v}_0\), define the piecewise constant function as follows:  
\begin{equation}\label{eq:phi}
\boldsymbol{\phi}_{ij}^{0} := \chi_{T_{i}}\sym(\boldsymbol{t}_{0, c} \otimes \boldsymbol{t}_{0, j}) - \chi_{T_{j}}\sym(\boldsymbol{t}_{0, c} \otimes \boldsymbol{t}_{0, i}), \quad 1\leq i, j\leq d.
\end{equation}
Notice that $\boldsymbol{\phi}^{0}_{ii} = 0$ and $\boldsymbol{\phi}^{0}_{ij} = - \boldsymbol{\phi}^{0}_{ji}$.
The associated space is given by  
\[
\Phi^{0}(\mathbb{S}) := \text{span} \left\{ \boldsymbol{\phi}_{ij}^{0} \mid 1\leq i<j\leq d \right\}.
\]
In the following proof, we still recommend using Fig. \ref{fig:refinement} for tracking the index. 

\begin{lemma}\label{lem:phiijv}
For $1\leq i,j\leq d$, $i\neq j$, 
we have $\supp(\boldsymbol{\phi}_{ij}^{0}) = T_i\cup T_j$, and $\boldsymbol{\phi}_{ij}^{0}|_{T\setminus T_0}\in H(\div,T\setminus T_0; \mathbb S)$.
\end{lemma}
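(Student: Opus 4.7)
The support claim is immediate from the piecewise definition~\eqref{eq:phi}: on $T_i$ we have the nonzero constant tensor $\sym(\bs t_{0,c}\otimes\bs t_{0,j})$ and on $T_j$ we have $-\sym(\bs t_{0,c}\otimes\bs t_{0,i})$, each nonzero because $\sym(\bs u\otimes\bs v)\neq 0$ whenever $\bs u,\bs v$ are linearly independent; the function vanishes elsewhere. Hence $\supp(\boldsymbol{\phi}_{ij}^{0})=T_i\cup T_j$.

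For the $H(\div)$-conformity on $T\setminus T_0=\bigcup_{k=1}^{d}T_k$, since $\boldsymbol{\phi}_{ij}^{0}$ is piecewise constant, it suffices to verify continuity of the normal trace across every interior face $F_{kl}=T_k\cap T_l=\{k,l\}^c$ with $1\le k<l\le d$. The calculational workhorse is
\[
\sym(\bs u\otimes \bs v)\,\bs n=\tfrac12\bigl[(\bs v\cdot \bs n)\,\bs u+(\bs u\cdot \bs n)\,\bs v\bigr].
\]
I would split into cases according to $|\{k,l\}\cap\{i,j\}|$. If the intersection is empty, both one-sided traces vanish. If exactly one index is shared, say $k=i$ with $l\notin\{i,j\}$, then $\boldsymbol{\phi}_{ij}^{0}|_{T_l}=0$, and on the other side $0,c,j\in\{i,l\}^c=F_{il}$ makes both $\bs t_{0,c}$ and $\bs t_{0,j}$ tangent to $F_{il}$, so the identity above yields zero.

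The only substantive case is $\{k,l\}=\{i,j\}$. Summing the two one-sided normal traces, using bilinearity of $\sym(\bs t_{0,c}\otimes\cdot)$, continuity reduces to
\[
\sym\bigl(\bs t_{0,c}\otimes(\bs t_{0,i}+\bs t_{0,j})\bigr)\bs n_{F_{ij}}=0.
\]
The vector $\bs t_{0,c}$ is tangent to $F_{ij}$ (since $0,c\in F_{ij}$), killing one term of the identity. The remaining term requires $(\bs t_{0,i}+\bs t_{0,j})\cdot\bs n_{F_{ij}}=0$; this is where I would invoke~\eqref{eq:ti+tj}, after the rewrite $\bs t_{0,i}+\bs t_{0,j}=2\bs t_{0,c}-(\bs t_{i,c}+\bs t_{j,c})$, together with $\bs t_{0,c}\cdot\bs n_{F_{ij}}=0$. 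This last reduction is the only real content of the proof; everything else is index bookkeeping made painless by the complement notation $F_{kl}=\{k,l\}^c$, which lets one read off immediately which tangent directions lie in each interior face.
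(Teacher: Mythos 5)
Your proposal is correct and follows essentially the same route as the paper's proof: reduce to checking the normal-trace jump on the interior faces $F_{kl}=\{k,l\}^c$ with $1\le k<l\le d$, dispose of the cases with $|\{k,l\}\cap\{i,j\}|\le 1$ by tangency of $\boldsymbol{t}_{0,c}$ and $\boldsymbol{t}_{0,j}$ (resp.\ $\boldsymbol{t}_{0,i}$) to the face, and settle the case $\{k,l\}=\{i,j\}$ by rewriting $\boldsymbol{t}_{0,i}+\boldsymbol{t}_{0,j}$ in terms of $\boldsymbol{t}_{i,c}+\boldsymbol{t}_{j,c}$ and invoking \eqref{eq:ti+tj}. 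The only (welcome) addition is that you explicitly justify the nonvanishing of the two constant tensors on $T_i$ and $T_j$, which the paper leaves implicit.
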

\begin{proof}
First, $\boldsymbol{\phi}_{ij}^{0}\big|_{T\setminus (T_i\cup T_j)} = 0$ follows from the definition of the characteristic functions $\chi_{T_{i}}$ and $\chi_{T_{j}}$. Next, we show that $\boldsymbol{\phi}_{ij}^{0}|_{T\setminus T_0}\in H(\div,T\setminus T_0; \mathbb S)$.
It is equivalent to proving that 
\begin{equation}\label{eq:phivijdivconformity0}
[\boldsymbol{\phi}_{ij}^{0} \boldsymbol{n}]|_F = 0, \quad \forall~ F \in \Delta_{d-1}(\mathring{T}^{\rm R}) \setminus \Delta_{d-1}(T_0).
\end{equation}

For face $F \in \Delta_{d-1}(\mathring{T}^{\rm R})\setminus \Delta_{d-1}(T_0)$, clearly $\{0, c\} \in F$ and consequently $\boldsymbol{t}_{0, c} \cdot \boldsymbol{n}_F = 0$. We then verify \eqref{eq:phivijdivconformity0} by considering the following cases.

\smallskip

\step{1}  \(i, j\in F\).  Then \(\boldsymbol{t}_{0, c} \cdot \boldsymbol{n}_F = \boldsymbol{t}_{0, i} \cdot \boldsymbol{n}_F = \boldsymbol{t}_{0, j} \cdot \boldsymbol{n}_F = 0\), \eqref{eq:phivijdivconformity0} follows from the definition of $\phi^0_{ij}$ given in~\eqref{eq:phi}.

\smallskip

\step{2} \( j\in F\) and \(i \notin F\).  
As $\TT_j = \{ j\}^c$, $j\not\in \TT_j$, and $j\in F$ implies $F\not\in \Delta_{d-1}(\TT_j)$. We write $i \notin F$ as $i\in F^c$ which implies $F\subset \{i\}^c = \TT_i$, i.e. $F\in \Delta_{d-1}(\TT_i)$. Therefore two simplices containing $F$ are $\TT_i$ and $\TT_{\ell}$ for some $\ell \neq i, j$.
From \(\boldsymbol{t}_{0, c} \cdot \boldsymbol{n}_F = \boldsymbol{t}_{0, j} \cdot \boldsymbol{n}_F = 0\), we conclude 
\[
\boldsymbol{\phi}_{ij}^{0}|_{T_i} \boldsymbol{n}_F = \frac{1}{2} \boldsymbol{t}_{0, c} (\boldsymbol{t}_{0, j} \cdot \boldsymbol{n}_F) = 0 = \boldsymbol{\phi}_{ij}^{0}|_{T_{\ell}} \boldsymbol{n}_F \quad \text{on } F.
\]
Thus, \eqref{eq:phivijdivconformity0} holds. A similar argument applies when $i\in F$ but $j\not\in F$.

\smallskip

\step{3} \( i,j \notin F \). Then $F = F_{ij} = \{i, j\}^c$. 
Using the relation \(\boldsymbol{t}_{0, i} = \boldsymbol{t}_{0, c} + \boldsymbol{t}_{c, i}\) and the identity \(\boldsymbol{t}_{c, 0} + \boldsymbol{t}_{c, 1} + \cdots + \boldsymbol{t}_{c, d} = 0\), it follows from \eqref{eq:ti+tj} that
\[
\frac{1}{2}(\boldsymbol{t}_{0, i} + \boldsymbol{t}_{0, j}) \cdot \boldsymbol{n}_{F} = \frac{1}{2}(\boldsymbol{t}_{c, i} + \boldsymbol{t}_{c, j}) \cdot \boldsymbol{n}_{F} = 0.
\]
Hence,  
\[
\boldsymbol{\phi}_{ij}^{0}|_{T_i} \boldsymbol{n}_{F} - \boldsymbol{\phi}_{ij}^{0}|_{T_j} \boldsymbol{n}_{F} = \frac{1}{2} \boldsymbol{t}_{0, c} (\boldsymbol{t}_{0, j} \cdot \boldsymbol{n}_{F} + \boldsymbol{t}_{0, i} \cdot \boldsymbol{n}_{F}) = 0 \quad \text{on } F.
\]

Thus, \eqref{eq:phivijdivconformity0} holds in all cases.  
\end{proof}



From $\boldsymbol{\phi}_{ij}^{0}\big|_{T\setminus (T_i\cup T_j)} = 0$, we observe that \( \boldsymbol{\phi}_{ij}^{0}\bs n_F|_F \neq 0 \) on the outer faces \(F_i(T) \cup F_j(T)\) and the interior faces $F_{\ell}(T_i) \cup F_{\ell}(T_j)$ for $\ell\in\{i,j\}^*$. At vertex $\texttt{v}_0$, the dimension of \(\mathbb{S}\) is \(d(d+1)/2\). The trace \((\boldsymbol{\sigma} \boldsymbol{n})|_F \in \mathbb{R}^d\) contributes \(d \times d\) degrees of freedom on the \(d\) faces \(F \in \Delta_{d-1}(T)\) containing \(\texttt{v}_0\). By introducing one \(\boldsymbol{\phi}_{ij}^{0}\) for each pair \(i < j, i, j=1,\ldots, d\), we achieve a sufficient number of basis functions to match the facewise degrees of freedom.  

\begin{lemma}\label{lem:vertexDoF}
Let $\boldsymbol{\tau}\in H(\div,T\setminus T_0; \mathbb S)\cap \mathbb P_{0}^{-1}(T^{\rm R}\setminus T_0; \mathbb S)$.
If
\[
\boldsymbol{\tau} \boldsymbol{n}_F(\texttt{v}_0) = 0 \quad \forall~ F \in \Delta_{d-1}(\TT), ~ 0\in F,
\]  
then  
\[
\boldsymbol{\tau} = 0.
\]
\end{lemma}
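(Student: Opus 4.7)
The plan is to adapt the constant-tensor argument from Lemma~\ref{lem:Stensor1}, with the face $F_i(T_0)$ used there replaced by the external face $F_i=F_i(T)$. Writing $\boldsymbol{\tau}_i:=\boldsymbol{\tau}|_{T_i}$, each $\boldsymbol{\tau}_i$ is a constant symmetric tensor. Since $F_i$ is a $(d-1)$-face of $T_i$ containing $\texttt{v}_0$ and $\boldsymbol{\tau}_i$ is constant, the hypothesis $\boldsymbol{\tau}\boldsymbol{n}_{F_i}(\texttt{v}_0)=0$ is equivalent to $\boldsymbol{\tau}_i\boldsymbol{n}_{F_i}=0$, forcing $\boldsymbol{\tau}_i\in\mathbb{S}(\mathscr{T}^{F_i})$. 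Using the tangent basis $\{\boldsymbol{t}_{0,m}:m\in\{0,i\}^*\}$ of $\mathscr{T}^{F_i}$, I would expand
$$
\boldsymbol{\tau}_i=\sum_{m,n\in\{0,i\}^*}\tau_{i,m,n}\,\boldsymbol{t}_{0,m}\otimes\boldsymbol{t}_{0,n},\qquad\tau_{i,m,n}=\tau_{i,n,m},
$$
and aim to show that every coefficient vanishes.

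The next step is to exploit the normal continuity of $\boldsymbol{\tau}$ across the interior faces $F_{ij}=T_i\cap T_j$ of $T\setminus T_0$ for $1\leq i<j\leq d$. To evaluate $\boldsymbol{t}_{0,m}\cdot\boldsymbol{n}_{F_{ij}}$, the rewrite $\boldsymbol{t}_{0,m}=\boldsymbol{t}_{0,c}-\boldsymbol{t}_{m,c}$ combined with \eqref{eq:nFtlc} and the fact that $0\in\{i,j\}^*$ yields $\boldsymbol{t}_{0,m}\cdot\boldsymbol{n}_{F_{ij}}=0$ for every $m\in\{i,j\}^*$, so only $m\in\{i,j\}$ contributes. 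A short computation using \eqref{eq:ti+tj} then gives $\boldsymbol{t}_{0,j}\cdot\boldsymbol{n}_{F_{ij}}=-\boldsymbol{t}_{0,i}\cdot\boldsymbol{n}_{F_{ij}}\neq 0$, and dividing the jump condition $\boldsymbol{\tau}_i\boldsymbol{n}_{F_{ij}}=\boldsymbol{\tau}_j\boldsymbol{n}_{F_{ij}}$ by this common nonzero scalar collapses it to
$$
\sum_{m\in\{0,i\}^*}\tau_{i,m,j}\,\boldsymbol{t}_{0,m}=-\sum_{m\in\{0,j\}^*}\tau_{j,m,i}\,\boldsymbol{t}_{0,m}.
$$

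Finally, comparing coefficients in the basis $\{\boldsymbol{t}_{0,1},\ldots,\boldsymbol{t}_{0,d}\}$ of $\mathbb{R}^d$ yields $\tau_{i,j,j}=\tau_{j,i,i}=0$ and $\tau_{i,m,j}+\tau_{j,m,i}=0$ for $m\in\{i,j\}^*\setminus\{0\}$. After swapping the last two indices by symmetry, this is a skew-symmetry in the first two indices for entries indexed by $\{1,\ldots,d\}$. The triple identity $a+b=b+c=a+c=0$ used at the end of the proof of Lemma~\ref{lem:Stensor1} then forces every off-diagonal coefficient with three pairwise distinct indices in $\{1,\ldots,d\}$ to vanish, so $\boldsymbol{\tau}_i=0$ and hence $\boldsymbol{\tau}=0$. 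I expect no genuine obstacle beyond bookkeeping: the only real subtlety is the change of tangent basis from $\{\boldsymbol{t}_{m,c}\}$, suited to the interior face $F_i(T_0)$ in Lemma~\ref{lem:Stensor1}, to $\{\boldsymbol{t}_{0,m}\}$, suited to the exterior face $F_i(T)$ here, and keeping track of the sign that appears when the normal-component relation collapses on $F_{ij}$.
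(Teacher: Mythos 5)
Your proposal is correct and is exactly the argument the paper intends: the paper's proof is a one-line reduction to Lemma~\ref{lem:Stensor1} via the substitution $F_i(T_0)\to F_i(T)$, $\texttt{v}_c\to\texttt{v}_0$, and you have carried out that substituted argument in full, including the correct replacement of the tangent basis $\{\boldsymbol{t}_{m,c}\}$ by $\{\boldsymbol{t}_{0,m}\}$ and the sign identity $\boldsymbol{t}_{0,j}\cdot\boldsymbol{n}_{F_{ij}}=-\boldsymbol{t}_{0,i}\cdot\boldsymbol{n}_{F_{ij}}\neq 0$ that drives the collapse of the jump condition. The coefficient comparison, the resulting skew-symmetry in the first two indices, and the triple identity all match the paper's Lemma~\ref{lem:Stensor1} verbatim, so there is nothing to correct.
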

\begin{proof}
Denote by \(\bs \tau_i = \boldsymbol{\tau}|_{T_i}(\texttt{v}_0)\) for \(i = 1, \ldots, d\). By assumption, \(\boldsymbol{\tau}_i \boldsymbol{n}_{F_i} = 0\). This reduces to the setting in Lemma~\ref{lem:Stensor1} by symbolically substituting \(F_i(T_0)\) with \(F_i(T)\) and \(\texttt{v}_c\) with \(\texttt{v}_0\). Consequently, we conclude that \(\boldsymbol{\tau}_i(\texttt{v}_0) = 0\) for \(i = 1, \ldots, d\), which implies \(\boldsymbol{\tau} = 0\).  
\end{proof}

\subsection{Linear element}

The function $\boldsymbol{\phi}_{ij}^{0}|_{T\setminus T_0}\in H(\div,T\setminus T_0; \mathbb S)$, but $\boldsymbol{\phi}_{ij}^{0}\not\in H(\div,T; \mathbb S)$, since $\boldsymbol{\phi}_{ij}^{0}\bs n_F|_F \neq 0$ for $F\in \Delta_{d-1}(\mathring{T}^{\rm R}) \cap \Delta_{d-1}(T_0)$, whereas $\boldsymbol{\phi}_{ij}^{0}|_{T_0} = 0$.
We incorporate \(\lambda_{0}^{\rm R}\) which vanishes on $F\in \Delta_{d-1}(\mathring{T}^{\rm R}) \cap \Delta_{d-1}(T_0)$.

\begin{lemma}\label{lem:phivijdivconformity}
For $1\leq i,j\leq d$, the function  
\[
\lambda_{0}^{\rm R} \boldsymbol{\phi}_{ij}^{0} = \lambda_{0}^{\rm R} \big(\chi_{T_i} \sym(\boldsymbol{t}_{0, c} \otimes \boldsymbol{t}_{0, j}) - \chi_{T_j} \sym(\boldsymbol{t}_{0, c} \otimes \boldsymbol{t}_{0, i})\big)  
\]  
belongs to \(H(\div, T; \mathbb{S}) \cap \mathbb{P}_1^{-1}(T^{\rm R}; \mathbb{S})\).
\end{lemma}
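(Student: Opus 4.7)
The plan is to verify the two claimed memberships separately: the piecewise polynomial degree is essentially free, while the $H(\div)$ conformity requires checking normal continuity across all interior $(d-1)$-faces of $T^{\rm R}$.

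First, observe that $\boldsymbol{\phi}_{ij}^0$ is piecewise constant on $T^{\rm R}$ (a sum of characteristic functions of the $T_\ell$ times constant tensors), while $\lambda_0^{\rm R} \in V_1^L(T^{\rm R})$ is piecewise linear. Hence $\lambda_0^{\rm R}\boldsymbol{\phi}_{ij}^0 \in \mathbb{P}_1^{-1}(T^{\rm R};\mathbb{S})$. The interesting claim is the $H(\div)$ conformity.

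Next, I would use the factorization of jumps: since $\lambda_0^{\rm R}$ is continuous across every interior face $F \in \Delta_{d-1}(\mathring{T}^{\rm R})$, we have
\[
[\lambda_0^{\rm R}\boldsymbol{\phi}_{ij}^0 \boldsymbol{n}_F]|_F = \lambda_0^{\rm R}|_F\,[\boldsymbol{\phi}_{ij}^0 \boldsymbol{n}_F]|_F .
\]
I would then split the interior faces into two disjoint classes and show at least one factor vanishes on each class:
\begin{itemize}
\item[(i)] $F \in \Delta_{d-1}(\mathring{T}^{\rm R}) \setminus \Delta_{d-1}(T_0)$: Lemma~\ref{lem:phiijv} gives $\boldsymbol{\phi}_{ij}^0|_{T\setminus T_0} \in H(\div, T\setminus T_0;\mathbb{S})$, hence the jump $[\boldsymbol{\phi}_{ij}^0 \boldsymbol{n}_F]|_F = 0$.
\item[(ii)] $F \in \Delta_{d-1}(\mathring{T}^{\rm R}) \cap \Delta_{d-1}(T_0)$: such a face lies inside $\overline{T_0}$. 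Since $T_0 = F_0 \cup \{c\}$ does not contain the vertex $\texttt{v}_0$, the piecewise linear function $\lambda_0^{\rm R}$ vanishes at every vertex of $T_0$ and therefore $\lambda_0^{\rm R}|_{T_0} \equiv 0$. In particular $\lambda_0^{\rm R}|_F = 0$.
\end{itemize}
Both cases yield zero jump across $F$, so the normal component of $\lambda_0^{\rm R}\boldsymbol{\phi}_{ij}^0$ is continuous on every interior face of $T^{\rm R}$. Together with piecewise smoothness, this gives $\lambda_0^{\rm R}\boldsymbol{\phi}_{ij}^0 \in H(\div, T;\mathbb{S})$.

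There is no genuine obstacle; the only care needed is the bookkeeping in case (ii), namely recognizing that $\texttt{v}_0 \notin T_0$ forces $\lambda_0^{\rm R}$ to vanish identically on $T_0$, so the Lagrange multiplier $\lambda_0^{\rm R}$ precisely kills the boundary trace of $\boldsymbol{\phi}_{ij}^0$ on those faces where Lemma~\ref{lem:phiijv} does not already provide continuity.
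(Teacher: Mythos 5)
Your proof is correct and follows exactly the mechanism the paper intends (the paper states the lemma without a formal proof, but the surrounding text identifies precisely your two cases: Lemma~\ref{lem:phiijv} handles the interior faces outside $\partial T_0$, and the factor $\lambda_0^{\rm R}$, which vanishes identically on $T_0$ since $\texttt{v}_0\notin T_0$, kills the remaining jumps on $\Delta_{d-1}(\mathring{T}^{\rm R})\cap\Delta_{d-1}(T_0)$). The jump factorization using the continuity of $\lambda_0^{\rm R}$ and the degree count are both sound, so nothing is missing.
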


We then generalize the construction to any vertex, i.e., change index $0$ to $0\leq m\leq d$: 
\[
\Phi^{m}(\mathbb{S}) := \text{span} \left\{ \boldsymbol{\phi}_{ij}^{m} \mid i,j\in \{m\}^*, i < j \right\}.
\]
Let  
\begin{equation*}
\begin{aligned}
\Sigma_{1,\phi}^{\div}(T^{\rm R}; \mathbb{S}) ={} & V_1^{L}(T^{\rm R}; \mathbb{S}) + \sum_{m=0}^d \lambda_{m}^{\rm R}\Phi^{m}(\mathbb{S}) \\  
={} & \lambda_{c}^{\rm R}\mathbb P_0(T;\mathbb S) + \sum_{m=0}^d \lambda_{m}^{\rm R}\left [\mathbb P_0(T;\mathbb S) +\Phi^{m}(\mathbb{S})\right ].
\end{aligned}
\end{equation*}
As $\lambda_{m}^{\rm R}(\texttt{v}_c)=0$ for $m=0,1,\ldots, d$, any tensor-valued function in space $\Sigma_{1,\phi}^{\div}(T^{\rm R}; \mathbb{S})$ is single-value at $\texttt{v}_c$.

We will show these subspaces form a direct sum. It is well known that the dual basis of $V_1^{L}(T^{\rm R})$ is the nodal value at the vertices of $T^{\rm R}$. That is $\lambda_i^{\rm R}(\texttt{v}_j) = \delta_{i,j}$ for $i,j\in \{0, 1, \ldots, d, c\}$.

\begin{lemma}\label{lem:geodecP1}
We have
the geometric decomposition
\begin{equation}\label{eq:geodecP1}
\begin{aligned}
\Sigma_{1,\phi}^{\div}(T^{\rm R}; \mathbb{S}) ={} & V_1^{L}(T^{\rm R}; \mathbb{S}) \oplus \Oplus_{m=0}^d \lambda_{m}^{\rm R}\Phi^{m}(\mathbb{S}) \\  
={} & (\lambda_{c}^{\rm R}\mathbb P_0(T;\mathbb S)) \oplus \Oplus_{m=0}^d \big((\lambda_{m}^{\rm R}\mathbb P_0(T;\mathbb S)) \oplus \lambda_{m}^{\rm R}\Phi^{m}(\mathbb{S})\big).
\end{aligned}
\end{equation}
\end{lemma}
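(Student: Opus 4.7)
The plan is to reduce both displayed equalities to the single claim that the sum $V_1^{L}(T^{\rm R}; \mathbb{S}) + \sum_{m=0}^d \lambda_m^{\rm R}\Phi^m(\mathbb{S})$ is a direct sum. Since $\{\lambda_i^{\rm R} : i \in \{0, \ldots, d, c\}\}$ is the nodal Lagrange basis of $V_1^{L}(T^{\rm R})$, tensoring with $\mathbb{S}$ automatically gives $V_1^{L}(T^{\rm R}; \mathbb{S}) = \Oplus_{i \in \{0,\ldots,d,c\}} \lambda_i^{\rm R}\mathbb{P}_0(T;\mathbb{S})$, so once global directness is shown, the second equality in \eqref{eq:geodecP1} follows by regrouping the constant-tensor pieces.

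To prove directness, suppose
$$\bs\tau := \sum_{i \in \{0,\ldots,d,c\}} \lambda_i^{\rm R}\bs\sigma_i + \sum_{m=0}^d \lambda_m^{\rm R}\bs\phi_m = 0, \qquad \bs\sigma_i \in \mathbb{S},\ \bs\phi_m \in \Phi^m(\mathbb S).$$
Evaluating at the barycenter, $\lambda_i^{\rm R}(\texttt{v}_c) = \delta_{i,c}$ and $\lambda_m^{\rm R}(\texttt{v}_c) = 0$ for $m \in \{0, \ldots, d\}$, so $\bs\sigma_c = 0$. Next, fix $n \in \{0, \ldots, d\}$. For any $k \in \{n\}^*$ the subelement $T_k$ contains $\texttt{v}_n$, and $\lambda_j^{\rm R}(\texttt{v}_n) = \delta_{j,n}$. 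Evaluating the polynomial $\bs\tau|_{T_k}$ at $\texttt{v}_n$ gives
$$0 = \bs\tau|_{T_k}(\texttt{v}_n) = \bs\sigma_n + \bs\phi_n|_{T_k},$$
so $\bs\phi_n|_{T_k} = -\bs\sigma_n$ is the \emph{same} constant tensor for every $k \in \{n\}^*$.

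Finally, writing $\bs\phi_n = \sum_{i<k,\ i,k\in\{n\}^*} c_{ik}\bs\phi_{ik}^n$ and extending $c_{k\ell}$ antisymmetrically with $c_{kk} = 0$, the definition of $\bs\phi_{ik}^n$ gives, for each $k \in \{n\}^*$,
$$\bs\phi_n|_{T_k} = \sum_{\ell \in \{n\}^*} c_{k\ell}\, \sym(\bs t_{n,c} \otimes \bs t_{n,\ell}).$$
The vectors $\{\bs t_{n,\ell} : \ell \in \{n\}^*\}$ form a basis of $\mathbb R^d$, and the identity $\bs t_{n,c} = \tfrac{1}{d+1}\sum_{\ell \in \{n\}^*}\bs t_{n,\ell} \neq 0$ combined with a short contraction argument shows that the $d$ symmetric tensors $\{\sym(\bs t_{n,c} \otimes \bs t_{n,\ell}) : \ell \in \{n\}^*\}$ are linearly independent. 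Therefore equating $\bs\phi_n|_{T_{k_1}} = \bs\phi_n|_{T_{k_2}}$ for distinct $k_1, k_2 \in \{n\}^*$ forces $c_{k_1 \ell} = c_{k_2 \ell}$ for every $\ell$; taking $\ell = k_2$ and using $c_{k_2 k_2} = 0$ gives $c_{k_1 k_2} = 0$. Varying $k_1, k_2$ kills every coefficient of $\bs\phi_n$, so $\bs\phi_n = 0$ and then $\bs\sigma_n = -\bs\phi_n|_{T_k} = 0$.

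The main obstacle is the final step: one must correctly track the antisymmetric extension so that the seemingly unbalanced restrictions $\bs\phi_{ik}^n|_{T_i}$ and $\bs\phi_{ik}^n|_{T_k}$ assemble into a clean single-index sum on each $T_k$, and then verify the linear independence of the $d$ tensors $\{\sym(\bs t_{n,c} \otimes \bs t_{n,\ell})\}_{\ell \in \{n\}^*}$ inside the $\tfrac{d(d+1)}{2}$-dimensional space $\mathbb S$.
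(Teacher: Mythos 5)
Your proof is correct and follows the same skeleton as the paper's: directness is established by evaluating the vanishing combination at the barycenter $\texttt{v}_c$ (which kills the $\lambda_c^{\rm R}$ component) and then at each vertex $\texttt{v}_n$ from within every surrounding subelement $T_k$, $k\in\{n\}^*$, reducing everything to a linear-algebra statement about the skew-symmetric coefficient matrix of $\bs\phi_n$. Where you genuinely diverge is the endgame. The paper contracts the relation on each $T_j$ with the dual pair $\nabla\lambda_i$, $\nabla\lambda_j$ via the duality \eqref{eq:duality0} to obtain $c_{ij}=(d+1)(\nabla\lambda_i)^{\intercal}\bs\sigma_0\nabla\lambda_j$, and concludes from the clash between the skew-symmetry of $(c_{ij})$ and the symmetry of $\bs\sigma_0$. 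You instead note that the rows $(c_{k\ell})_{\ell}$ all represent the same tensor $-\bs\sigma_n$ in the linearly independent family $\{\sym(\bs t_{n,c}\otimes\bs t_{n,\ell})\}_{\ell\in\{n\}^*}$, hence coincide, and then $c_{k_1k_2}=c_{k_2k_2}=0$. Both routes are sound; yours trades the dual-basis computation for the injectivity of $\bs b\mapsto\sym(\bs t_{n,c}\otimes\bs b)$ (which your contraction argument correctly supplies, using $\bs t_{n,c}\neq 0$), and has the small advantage of never needing to invoke Lemma~\ref{lem:vertexDoF} or the explicit value $\bs t_{0,c}\cdot\nabla\lambda_j=\tfrac{1}{d+1}$. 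Your careful bookkeeping of the antisymmetric extension, which collapses $\bs\phi_n|_{T_k}$ into the single-index sum $\sum_{\ell}c_{k\ell}\sym(\bs t_{n,c}\otimes\bs t_{n,\ell})$, is exactly right and matches (up to the factor of $2$ coming from the paper's summation over ordered pairs) the expansion used in the paper.
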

\begin{proof}

Assume
\begin{equation*}
\bs \sigma = \lambda_{c}^{\rm R}\boldsymbol{\tau}_c + \sum_{m=0}^d\lambda_{m}^{\rm R}\boldsymbol{\tau}_m =0,
\end{equation*}    
where $\boldsymbol{\tau}_c\in\mathbb P_0(T;\mathbb S)$, and $\boldsymbol{\tau}_m\in\mathbb P_0(T;\mathbb S)+\Phi^{m}(\mathbb{S})$. We will show $\bs \tau_c=\bs \tau_m = 0$. 

First of all, evaluate $\bs \sigma$ at $\texttt{v}_c$. As $\lambda_{m}^{\rm R}(\texttt{v}_c)=0$ for $m=0,1,\ldots, d$ and $\lambda_{c}^{\rm R}( \texttt{v}_c) = 1$, we get $\bs \tau_c = \bs \sigma(\texttt{v}_c) = 0$. 

Then evaluate $\bs \sigma$ at $\texttt{v}_0$, we get $\bs \tau_0(\texttt{v}_0) = 0$. Notice that $\bs \tau_0|_{T\setminus T_0}\in  H(\div,T\setminus T_0; \mathbb S)\cap \mathbb P_{0}^{-1}(T^{\rm R}\setminus T_0; \mathbb S)$. We can apply Lemma \ref{lem:vertexDoF} to conclude $\bs \tau_0|_{T\setminus T_0} = 0$. 

We expand $\bs \tau_0 = \bs \sigma_0 + \sum_{i,j=1}^d c_{ij}\boldsymbol{\phi}_{ij}^0$, $\bs \sigma_0\in \mathbb P_0(T;\mathbb S)$. In the expansion, we did not impose $i<j$, and thus the coefficient is skew-symmetric, i.e., $c_{ij} = -c_{ji}$. Consider the restriction $\bs \tau_0|_{T_j} = 0$ which implies 
\begin{equation*}
\boldsymbol{\sigma}_0 - 2\sum_{i=1, i\neq j}^dc_{ij}\sym(\boldsymbol{t}_{0, c} \otimes \boldsymbol{t}_{0, i})=0,\quad 1\leq j\leq d.
\end{equation*}
Multiplying the last equation by $\nabla\lambda_{j}$ from the right, and using the fact $\boldsymbol{t}_{0, c}\cdot\nabla\lambda_{j}=\frac{1}{d+1}$ and $\nabla \lambda_i\cdot \bs t_{0,j} = 0$, we obtain
\begin{equation*}
(d+1)\boldsymbol{\sigma}_0\nabla\lambda_{j} = \sum_{i\neq j}c_{ij}\boldsymbol{t}_{0, i},\quad 1\leq j\leq d.
\end{equation*}
By the duality \eqref{eq:duality0}, we multiply $(\nabla \lambda_i)^{\intercal}$ from the left to get
\begin{equation*}
c_{ij} = (d+1)(\nabla\lambda_i)^{\intercal}\boldsymbol{\sigma}_0\nabla\lambda_{j},\quad \forall~1\leq j,i\leq d.
\end{equation*}
Noting that $c_{ij}$ is skew-symmetric while $\boldsymbol{\sigma}_0$ is symmetric, so it is only possible that $\boldsymbol{\sigma}_0=0$ and $c_{ij}=0$ for all $1\leq i,j\leq d$.

Repeat this argument at every vertex $\texttt{v}_m$, we conclude $\bs \tau_m = 0$ for $m = 0, 1, \ldots, d$. 
\end{proof}

As a corollary, we can compute the dimension of the space:
\begin{equation*}
\dim \Sigma_{1,\phi}^{\div}(T^{\rm R}; \mathbb{S}) = \frac{1}{2}d(d+1)(2d+1), \qquad 15 (d =2), \quad 42 (d=3).
\end{equation*}
Next, we will give a basis of its dual space, i.e., degrees of freedom (DoF). 


\begin{lemma}\label{lem:P1Sunisolvent}
The space \(\Sigma_{1,\phi}^{\div}(T^{\rm R}; \mathbb{S})\) is uniquely determined by the degrees of freedom:
\begin{subequations}\label{HdivSfemk1dof}
\begin{align}
\boldsymbol{\tau} \boldsymbol{n}_F(\texttt{v}), & \quad \texttt{v} \in \Delta_0(T), \, F \in \Delta_{d-1}(T) \text{ containing }\texttt{v}, \label{HdivSfemk1dof1} \\
\boldsymbol{\tau}(\texttt{v}_c). &  \label{HdivSfemk1dof2}
\end{align}
\end{subequations}
\end{lemma}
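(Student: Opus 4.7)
The plan is to establish unisolvence in the standard way: match the DoF count with the already-known dimension $\tfrac{1}{2}d(d+1)(2d+1)$ of $\Sigma_{1,\phi}^{\div}(T^{\rm R};\mathbb{S})$, then show that vanishing of all DoFs forces $\boldsymbol{\tau}=0$. The DoFs \eqref{HdivSfemk1dof1} contribute $d$ scalars per incident face (since $\boldsymbol{\tau}\boldsymbol{n}_F(\texttt{v})\in\mathbb{R}^d$), with $d$ incident faces per vertex and $d+1$ vertices, totalling $d^2(d+1)$; the DoF \eqref{HdivSfemk1dof2} at $\texttt{v}_c$ contributes $\dim\mathbb{S}=\tfrac{1}{2}d(d+1)$. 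The two sums agree with $\dim\Sigma_{1,\phi}^{\div}(T^{\rm R};\mathbb{S})$, so it suffices to establish the vanishing implication.

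For this, I would invoke the geometric decomposition from Lemma~\ref{lem:geodecP1} to write $\boldsymbol{\tau}=\lambda_c^{\rm R}\boldsymbol{\tau}_c+\sum_{m=0}^d \lambda_m^{\rm R}\boldsymbol{\tau}_m$ with $\boldsymbol{\tau}_c\in\mathbb{P}_0(T;\mathbb{S})$ and $\boldsymbol{\tau}_m\in\mathbb{P}_0(T;\mathbb{S})+\Phi^{m}(\mathbb{S})$. The key combinatorial feature is the nodal structure of $\{\lambda_i^{\rm R}\}$: evaluating at $\texttt{v}_c$ kills every summand with $m\in\{0,\ldots,d\}$, so \eqref{HdivSfemk1dof2} gives $\boldsymbol{\tau}_c=\boldsymbol{\tau}(\texttt{v}_c)=0$. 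For a coarse vertex $\texttt{v}_m$, since $\lambda_c^{\rm R}(\texttt{v}_m)=0$ and $\lambda_j^{\rm R}(\texttt{v}_m)=\delta_{j,m}$, on each sub-simplex $T_\ell$ containing $\texttt{v}_m$ (that is, $\ell\neq m$) we have $(\boldsymbol{\tau}|_{T_\ell})(\texttt{v}_m)=\boldsymbol{\tau}_m|_{T_\ell}$. For any coarse face $F=F_i(T)$ through $\texttt{v}_m$ (so $i\neq m$), only the sub-simplex $T_i$ lies along $F$ inside $T^{\rm R}$, so the facewise DoF \eqref{HdivSfemk1dof1} translates into $\boldsymbol{\tau}_m\boldsymbol{n}_F(\texttt{v}_m)=0$ for every coarse face through $\texttt{v}_m$.

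At this point I would invoke Lemma~\ref{lem:vertexDoF}, applied at $\texttt{v}_m$ rather than $\texttt{v}_0$ (the lemma is symmetric in the role of the vertex, and Lemma~\ref{lem:phiijv} guarantees $\boldsymbol{\tau}_m|_{T\setminus T_m}\in H(\div,T\setminus T_m;\mathbb{S})\cap\mathbb{P}_0^{-1}(T^{\rm R}\setminus T_m;\mathbb{S})$), to obtain $\boldsymbol{\tau}_m|_{T\setminus T_m}=0$. To upgrade this to $\boldsymbol{\tau}_m=0$ on all of $T$, I would reuse the algebraic argument from the proof of Lemma~\ref{lem:geodecP1}: expanding $\boldsymbol{\tau}_m=\boldsymbol{\sigma}_m+\sum c_{ij}\boldsymbol{\phi}_{ij}^{m}$ with $c_{ij}=-c_{ji}$, imposing $\boldsymbol{\tau}_m|_{T_j}=0$ for $j\neq m$, then multiplying by $\nabla\lambda_j$ on the right and $(\nabla\lambda_i)^{\intercal}$ on the left and using the duality \eqref{eq:duality0} yields $c_{ij}=(d+1)(\nabla\lambda_i)^{\intercal}\boldsymbol{\sigma}_m\nabla\lambda_j$; the clash between the skew-symmetry of $c_{ij}$ and the symmetry of $\boldsymbol{\sigma}_m$ forces both to vanish. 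The main obstacle is correctly interpreting the facewise DoF at $\texttt{v}_m$ as a condition on the single piece $\boldsymbol{\tau}_m|_{T_i}$, so that the hypothesis of Lemma~\ref{lem:vertexDoF} can be verified at each coarse vertex by symmetry; once that reduction is made, everything else is a direct reuse of the already-proved lemmas.
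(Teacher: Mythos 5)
Your proposal is correct and follows essentially the same route as the paper: dimension count, then the geometric decomposition of Lemma~\ref{lem:geodecP1}, evaluation at $\texttt{v}_c$ to kill $\boldsymbol{\tau}_c$, and Lemma~\ref{lem:vertexDoF} at each coarse vertex. Your final step of rerunning the Lemma~\ref{lem:geodecP1} algebra to force $\boldsymbol{\tau}_m|_{T_m}=0$ is a harmless extra precaution (since $\lambda_m^{\rm R}|_{T_m}=0$, the conclusion $\boldsymbol{\tau}=0$ already follows from $\boldsymbol{\tau}_m|_{T\setminus T_m}=0$), but it fills in detail the paper leaves implicit.
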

\begin{proof}
The total number of DoFs in \eqref{HdivSfemk1dof} is given by
\begin{equation*}
(d+1)d^2 + \frac{1}{2}d(d+1) = \frac{1}{2}d(d+1)(2d+1),
\end{equation*}
which equals the dimension of \(\Sigma_{1,\phi}^{\div}(T^{\rm R}; \mathbb{S})\). 

To show the uni-solvence, take \(\boldsymbol{\tau} \in \Sigma_{1,\phi}^{\div}(T^{\rm R}; \mathbb{S})\) such that all the DoFs \eqref{HdivSfemk1dof} vanish. 
Thanks to the geometric decomposition \eqref{eq:geodecP1}, it follows from the vanishing of \eqref{HdivSfemk1dof2} that \(\boldsymbol{\tau} \in \Oplus_{m=0}^d \big((\lambda_{m}^{\rm R}\mathbb P_0(T;\mathbb S)) \oplus \lambda_{m}^{\rm R}\Phi^{m}(\mathbb{S})\big)\). Restricting to each vertex \(\texttt{v} \in \Delta_0(T)\), by Lemma \ref{lem:vertexDoF}, the vanishing of \eqref{HdivSfemk1dof1} implies \(\boldsymbol{\tau}(\texttt{v}) = 0\). Thus, \(\boldsymbol{\tau} = 0\).
\end{proof}

\begin{remark}\rm
As $\bs \tau \bs n_F|_F\in \mathbb P_1(F; \mathbb{R}^d)$, we can redistribute the DoFs \eqref{HdivSfemk1dof} to each face $F$~\cite[Example 3.1]{ChenHuang2024}, and obtain the following moment-based DoFs:
\begin{subequations}\label{HdivSfemk1momentdof}
\begin{align}
\int_F (\boldsymbol{\tau} \boldsymbol{n}_F) \cdot \boldsymbol{q} \, \mathrm{d}s, & \quad \boldsymbol{q} \in \mathbb{P}_1(F; \mathbb{R}^d), \, F \in \Delta_{d-1}(T), \label{HdivSfemk1momentdof1} \\
\int_T \boldsymbol{\tau} \, \mathrm{d}x. &  \label{HdivSfemk1momentdof2}
\end{align}
\end{subequations}
These moment-based DoFs are advantageous for interpolation and error analysis.
\end{remark}

Define the \(H(\div)\)-bubble polynomial space on $T$
\[
\mathbb{B}_1(\div, T; \mathbb{S}) = H_0(\div, T; \mathbb{S}) \cap \Sigma_{1,\phi}^{\div}(T^{\rm R}; \mathbb{S}),
\]
which, from the geometric decomposition \eqref{eq:geodecP1}, is characterized by
\begin{equation*}
\mathbb{B}_1(\div, T; \mathbb{S}) = \lambda_c^{\rm R}\mathbb P_0(T;\mathbb S).
\end{equation*}
That is the hat function on the barycenter is the only bubble polynomial in $\Sigma_{1,\phi}^{\div}(T^{\rm R}; \mathbb{S})$. The added sub-space $\sum_{m=0}^d \lambda_{m}^{\rm R}\Phi^{m}(\mathbb{S})$ is to add face-wise DoFs.

\begin{lemma}\label{lem:P1Sequiv}
It holds
\begin{equation*}
\Sigma_{1,\phi}^{\div}(T^{\rm R}; \mathbb{S})=H(\div, T; \mathbb{S}) \cap \mathbb{P}_1^{-1}(T^{\rm R}; \mathbb{S}).
\end{equation*}
\end{lemma}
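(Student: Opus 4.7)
The plan is to prove the two inclusions separately. The forward direction $\Sigma_{1,\phi}^{\div}(T^{\rm R}; \mathbb{S}) \subseteq H(\div, T; \mathbb{S}) \cap \mathbb{P}_1^{-1}(T^{\rm R}; \mathbb{S})$ follows immediately from the construction: elements of $V_1^L(T^{\rm R}; \mathbb{S})$ are globally continuous hence $H(\div)$-conforming, and each generator $\lambda_m^{\rm R}\boldsymbol{\phi}_{ij}^m$ lies in $H(\div, T; \mathbb{S})$ by Lemma~\ref{lem:phivijdivconformity}.

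For the reverse inclusion, since $\dim \Sigma_{1,\phi}^{\div}(T^{\rm R}; \mathbb{S}) = \tfrac{1}{2}d(d+1)(2d+1)$ is already known, I plan to match dimensions by showing the degrees of freedom \eqref{HdivSfemk1dof} from Lemma~\ref{lem:P1Sunisolvent} remain well-defined and unisolvent on the a priori larger space $H(\div, T; \mathbb{S}) \cap \mathbb{P}_1^{-1}(T^{\rm R}; \mathbb{S})$. The delicate point is the well-definedness of \eqref{HdivSfemk1dof2}: the value $\boldsymbol{\tau}(\texttt{v}_c)$ at the interior vertex $\texttt{v}_c$ is a priori $(d+1)$-valued over the subsimplices $T_i$. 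The key step is to form the piecewise constant tensor $\boldsymbol{\mu}_c := \sum_{i=0}^d \chi_{T_i}\, \boldsymbol{\sigma}|_{T_i}(\texttt{v}_c)$ and observe that it lies in $H(\div, T;\mathbb{S}) \cap \mathbb{P}_0^{-1}(T^{\rm R};\mathbb{S})$: since $\texttt{v}_c$ lies on every interior face $F_{ij}$, the matching of $\boldsymbol{\sigma}\boldsymbol{n}_{F_{ij}}$ at $\texttt{v}_c$ gives the normal continuity of $\boldsymbol{\mu}_c$ across each $F_{ij}$. Lemma~\ref{lem:Stensor1} then forces $\boldsymbol{\mu}_c \in \mathbb{P}_0(T;\mathbb{S})$, so $\boldsymbol{\sigma}|_{T_i}(\texttt{v}_c)$ is independent of $i$ and \eqref{HdivSfemk1dof2} is a legitimate linear functional on the ambient space.

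Injectivity then proceeds as in the proof of Lemma~\ref{lem:P1Sunisolvent}. If all DoFs \eqref{HdivSfemk1dof} of some $\boldsymbol{\sigma} \in H(\div, T; \mathbb{S}) \cap \mathbb{P}_1^{-1}(T^{\rm R}; \mathbb{S})$ vanish, then on each $T_i$ the restriction $\boldsymbol{\sigma}|_{T_i}$ is a linear combination of the coarse-vertex hat functions $\lambda_j^{\rm R}|_{T_i}$ for $j \neq i$. At every coarse vertex $\texttt{v}_m \in \Delta_0(T)$, the piecewise constant tensor $\sum_{i \neq m} \chi_{T_i}\boldsymbol{\sigma}|_{T_i}(\texttt{v}_m)$ belongs to $H(\div,T \setminus T_m; \mathbb{S}) \cap \mathbb{P}_0^{-1}(T^{\rm R}\setminus T_m;\mathbb{S})$ with all normal traces at $\texttt{v}_m$ vanishing, so Lemma~\ref{lem:vertexDoF} yields $\boldsymbol{\sigma}|_{T_i}(\texttt{v}_m) = 0$ for every $i \neq m$. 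Every vertex value of every $\boldsymbol{\sigma}|_{T_i}$ is then zero, forcing $\boldsymbol{\sigma} = 0$. Since the DoF count $d^2(d+1) + \tfrac{1}{2}d(d+1) = \tfrac{1}{2}d(d+1)(2d+1)$ equals $\dim \Sigma_{1,\phi}^{\div}(T^{\rm R}; \mathbb{S})$, the two spaces agree in dimension and equality follows. The main hurdle is exactly the single-valuedness at $\texttt{v}_c$, which is not automatic on $\mathbb{P}_1^{-1}(T^{\rm R};\mathbb{S})$ and is the content of the preceding application of Lemma~\ref{lem:Stensor1}.
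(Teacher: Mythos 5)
Your proof is correct and follows essentially the same strategy as the paper: both establish unisolvence of a DoF set of cardinality $\tfrac{1}{2}d(d+1)(2d+1)$ on the a priori larger space $H(\div, T; \mathbb{S}) \cap \mathbb{P}_1^{-1}(T^{\rm R}; \mathbb{S})$ and conclude by dimension matching, using Lemma~\ref{lem:vertexDoF} at the coarse vertices and Lemma~\ref{lem:Stensor1} for the piece associated with $\texttt{v}_c$. The only difference is that the paper works with the moment DoFs \eqref{HdivSfemk1momentdof}, whose well-definedness is automatic, whereas you work with the point-value DoFs \eqref{HdivSfemk1dof} and correctly supply the one extra step this requires, namely that $\boldsymbol{\tau}(\texttt{v}_c)$ is single-valued (via Lemma~\ref{lem:Stensor1} applied to the piecewise constant tensor of values at $\texttt{v}_c$).
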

\begin{proof}
By Lemma~\ref{lem:phivijdivconformity}, $\Sigma_{1,\phi}^{\div}(T^{\rm R}; \mathbb{S})\subseteq H(\div, T; \mathbb{S}) \cap \mathbb{P}_1^{-1}(T^{\rm R}; \mathbb{S})$. Then it suffices to show that 
DoFs \eqref{HdivSfemk1momentdof} are unisolvent for \(H(\div, T; \mathbb{S}) \cap \mathbb{P}_1^{-1}(T^{\rm R}; \mathbb{S})\).

Take $\boldsymbol{\tau}\in H(\div, T; \mathbb{S}) \cap \mathbb{P}_1^{-1}(T^{\rm R}; \mathbb{S})$ and assume that all the DoFs \eqref{HdivSfemk1momentdof} vanish. The vanishing DoF \eqref{HdivSfemk1momentdof1} indicates that $(\boldsymbol{\tau}\boldsymbol{n})|_{\partial T}=0$.
Apply Lemma~\ref{lem:vertexDoF} to obtain \(\boldsymbol{\tau}(\texttt{v}) = 0\) for each vertex \(\texttt{v} \in \Delta_0(T)\). Since $\boldsymbol{\tau}|_{F}\in\mathbb P_1(F;\mathbb S)$ for $F\in\Delta_{d-1}(T)$, we have $\boldsymbol{\tau}|_{\partial T}=0$, which implies $\bs \tau = \lambda^{\rm R}_c \bs \tau_0$ with some $\bs \tau_0\in \mathbb{P}_0^{-1}(T^{\rm R}; \mathbb{S})$. This together with the fact $\bs \tau\in H(\div, T; \mathbb{S})$ and Lemma~\ref{lem:Stensor1} means $\bs \tau_0\in H(\div,T; \mathbb S)\cap \mathbb P_{0}^{-1}(T^{\rm R}; \mathbb S)=\mathbb P_0(T;\mathbb S)$.
Thus, $\boldsymbol{\tau}=0$ holds from the vanishing DoF \eqref{HdivSfemk1momentdof2}.
\end{proof}

The div-conforming subspace $H(\div, T; \mathbb{S}) \cap \mathbb{P}_1^{-1}(T^{\rm R}; \mathbb{S})$ is defined by the constraint $[\boldsymbol{\sigma} \bs n]|_F = 0$ for all interior faces $F \in \Delta_{d-1}(\mathring{T}^{\rm R})$. Lemma \ref{lem:P1Sequiv} gives an explicit characterization for this subspace.

We also have the dimension identity
\[
\dim \mathbb{P}_1^{-1}(T^{\rm R}; \mathbb{S}) - \# \text{constraints} = \dim \Sigma_{1,\phi}^{\div}(T^{\rm R}; \mathbb{S}),
\]
where $ \# \text{constraints}$ is $d^2 = \dim \mathbb P_1(F; \mathbb R^d)$ for $[\bs \sigma\boldsymbol{n}]|_F$ on $d(d+1)/2$ interior faces. 

\subsection{Reduced linear element}
When considering the coarse mesh, the space \(\Sigma_{1,\phi}^{\div}(T^{\rm R}; \mathbb{S})\) can be further reduced by eliminating the interior DoF \eqref{HdivSfemk1dof2}. We define the reduced space of shape functions as  
\[
\begin{aligned}
\Sigma_{1,\phi}^{\div}(T; \mathbb S) :=& \{\boldsymbol{\tau} \in \Sigma_{1,\phi}^{\div}(T^{\rm R}; \mathbb{S}) : (\div \boldsymbol{\tau}, \boldsymbol{q})_T = 0 \text{ for } \boldsymbol{q} \in \mathbb{P}_1(T; \mathbb{R}^d)/\mathrm{RM}(T)\}\\
= & \{\boldsymbol{\tau} \in \Sigma_{1,\phi}^{\div}(T^{\rm R}; \mathbb{S}) : Q_{1,T}(\div \boldsymbol{\tau}) \in \mathrm{RM}(T)\}.
\end{aligned}
\]
As $\div \mathbb P_1(T; \mathbb S)\subset \mathrm{RM}(T)$, we have 
$ \mathbb P_1(T; \mathbb S)\subset \Sigma_{1,\phi}^{\div}(T; \mathbb S),$
which ensures the approximation property of $\Sigma_{1,\phi}^{\div}(T; \mathbb S)$. The interior basis $\lambda_c^{\rm R}\mathbb S$ can be used to impose the orthogonality or the range condition in the definition.

\begin{lemma}
The DoF \eqref{HdivSfemk1dof1} or \eqref{HdivSfemk1momentdof1} are unisolvent for \(\Sigma_{1,\phi}^{\div}(T; \mathbb S)\).
\end{lemma}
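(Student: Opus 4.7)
My plan is in three stages: dimension matching, reduction to an interior bubble, and then exploiting the range constraint.

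For dimensions, subtracting the $\tfrac12 d(d+1)$ constraints $Q_{1,T}(\div \bs \tau) \in \mathrm{RM}(T)$ from $\dim \Sigma_{1,\phi}^{\div}(T^{\rm R}; \mathbb{S}) = \tfrac12 d(d+1)(2d+1)$ leaves $d^2(d+1)$, which equals $(d+1)\cdot d \cdot d$, the count of \eqref{HdivSfemk1dof1}. Independence of these constraints will come out of the uni-solvence argument itself.

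For the main argument, let $\bs \tau \in \Sigma_{1,\phi}^{\div}(T; \mathbb{S})$ annihilate every DoF in \eqref{HdivSfemk1dof1}. The critical observation is that each coarse face $F_i(T) \in \Delta_{d-1}(T)$ is an unrefined face of the single subelement $T_i$, so $\bs \tau \bs n_F|_F$ is a genuine linear polynomial in $\mathbb{P}_1(F; \mathbb{R}^d)$ whose vanishing at the $d$ vertices of $F$ already forces it to vanish on $F$. Hence $\bs \tau \bs n|_{\partial T} = 0$. Proceeding exactly as in the proof of Lemma~\ref{lem:P1Sequiv}, Lemma~\ref{lem:vertexDoF} applied at each vertex of $T$ together with Lemma~\ref{lem:Stensor1} for the constant pieces on $T^{\rm R} \setminus \{\texttt{v}_c\}$ reduces $\bs \tau$ to the form $\bs \tau = \lambda_c^{\rm R} \bs \sigma_0$ with a single constant symmetric tensor $\bs \sigma_0 \in \mathbb{P}_0(T; \mathbb{S})$.

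Finally I invoke the reduced-space constraint. Since $\bs \tau \bs n|_{\partial T} = 0$, integration by parts yields, for every $\bs q \in \mathbb{P}_1(T; \mathbb{R}^d)$,
\[
(\div \bs \tau, \bs q)_T = -(\lambda_c^{\rm R} \bs \sigma_0, \nabla \bs q)_T = -(\lambda_c^{\rm R}, 1)_T\,\bs \sigma_0 : \sym \nabla \bs q,
\]
where symmetry of $\bs \sigma_0$ kills the skew part of $\nabla \bs q$. Rigid motions already make this vanish automatically since $\sym \nabla \mathrm{RM}(T) = 0$, while the defining constraint of $\Sigma_{1,\phi}^{\div}(T; \mathbb{S})$ makes it vanish on a complement of $\mathrm{RM}(T)$. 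Since $\sym \nabla : \mathbb{P}_1(T; \mathbb{R}^d) \to \mathbb{S}$ is surjective with kernel $\mathrm{RM}(T)$, as $\bs q$ varies $\sym \nabla \bs q$ sweeps all of $\mathbb{S}$, and nondegeneracy of the Frobenius pairing forces $\bs \sigma_0 = 0$. The main obstacle is the middle stage, namely isolating $\bs \tau = \lambda_c^{\rm R} \bs \sigma_0$, which rests on the observation that a coarse face is not subdivided together with Lemmas~\ref{lem:vertexDoF} and~\ref{lem:Stensor1}; the concluding Frobenius-pairing step is short but essential, since it is exactly where symmetry of $\bs\sigma_0$ and the identification $\mathrm{RM}(T) = \ker(\sym \nabla)$ bridge the matching-dimension gap between the $\tfrac{d(d+1)}{2}$-dimensional bubble space $\lambda_c^{\rm R}\mathbb{S}$ and the $\tfrac{d(d+1)}{2}$-dimensional range constraint.
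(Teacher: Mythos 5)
Your proof is correct and follows essentially the same route as the paper: vanishing face DoFs give $(\boldsymbol{\tau}\boldsymbol{n})|_{\partial T}=0$, integration by parts combined with the range constraint $Q_{1,T}(\div\boldsymbol{\tau})\in\mathrm{RM}(T)$ and the fact that $\ker(\sym\nabla)|_{\mathbb{P}_1}=\mathrm{RM}(T)$ forces $(\div\boldsymbol{\tau},\boldsymbol{q})_T=0$ for all $\boldsymbol{q}\in\mathbb{P}_1(T;\mathbb{R}^d)$, and the machinery of Lemma~\ref{lem:P1Sequiv} finishes the job. The only difference is cosmetic: the paper deduces that the interior moment DoF \eqref{HdivSfemk1momentdof2} vanishes and then cites Lemma~\ref{lem:P1Sequiv}, whereas you first reduce $\boldsymbol{\tau}$ to the bubble form $\lambda_c^{\rm R}\boldsymbol{\sigma}_0$ and then kill $\boldsymbol{\sigma}_0$ by the Frobenius pairing.
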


\begin{proof}
The number of DoF \eqref{HdivSfemk1dof1} or \eqref{HdivSfemk1momentdof1} is \(d^2(d+1)\), which does not exceed the dimension \(\dim \Sigma_{1,\phi}^{\div}(T; \mathbb S)\).

Suppose \(\boldsymbol{\tau} \in \Sigma_{1,\phi}^{\div}(T; \mathbb S)\) and the DoF \eqref{HdivSfemk1dof1} or \eqref{HdivSfemk1momentdof1} vanish. Then \((\boldsymbol{\tau} \boldsymbol{n})|_{\partial T} = 0\). Integration by parts yields  
\[
(\div \boldsymbol{\tau}, \boldsymbol{q})_T = 0, \quad \boldsymbol{q} \in \mathrm{RM}(T),
\]
and, by the definition of \(\Sigma_{1,\phi}^{\div}(T; \mathbb S)\),  
\[
(\div \boldsymbol{\tau}, \boldsymbol{q})_T = 0, \quad \boldsymbol{q} \in \mathbb{P}_1(T; \mathbb{R}^d).
\]
Thus, the DoF \eqref{HdivSfemk1momentdof2} also vanishes, and the proof of Lemma~\ref{lem:P1Sequiv} implies \(\boldsymbol{\tau} = 0\).
\end{proof}
The reduced finite element \(\Sigma_{1,\phi}^{\div}(T; \mathbb S)\) has less dimension 
$$
\dim \Sigma_{\mathrm{1,\phi}}^{\div}(T; \mathbb S) = d^2(d+1),\qquad 12 (d =2), \quad 36 (d=3).
$$

An additional reduction of \(\Sigma_{1,\phi}^{\div}(T; \mathbb S)\) is defined as  
\begin{equation}\label{eq:SigmaRM}
\begin{aligned}
\Sigma_{\mathrm{RM}}^{\div}(T; \mathbb S) &:= \{\boldsymbol{\tau} \in \Sigma_{1,\phi}^{\div}(T; \mathbb S) : \\
&\qquad\qquad (\boldsymbol{\tau} \boldsymbol{n})|_F \in (\mathbb{P}_1(F) \boldsymbol{n}_F \oplus \mathrm{RM}(F)) \text{ for } F \in \Delta_{d-1}(T)\},
\end{aligned}
\end{equation}
where the face rigid motion space is  
\[
\mathrm{RM}(F) = \mathbb{P}_0(F; \mathscr{T}^F) + \mathbb{P}_0(F; \mathbb{K}(\mathscr{T}^F)) \Pi_F \boldsymbol{x}.
\]
Here, \(\mathbb{K}(\mathscr{T}^F)\) denotes the space of skew-symmetric matrices on the tangential plane of \(F\). The dimension of this space is
$$
\dim \Sigma_{\mathrm{RM}}^{\div}(T; \mathbb S) = \frac{1}{2}d(d+1)^2,\qquad 9 (d =2), \quad 24 (d=3).
$$
The space $\Sigma_{\mathrm{RM}}^{\div}(T; \mathbb S)$ is uniquely determined by the DoFs  
\[
\int_F (\boldsymbol{\tau} \boldsymbol{n}) \cdot \boldsymbol{q} \, \mathrm{d}s, \quad \boldsymbol{q} \in (\mathbb{P}_1(F) \boldsymbol{n}_F \oplus \mathrm{RM}(F)), \, F \in \Delta_{d-1}(T).
\]

In Section \ref{sec:divlinear}, we will show
$$
\begin{aligned}
\div\Sigma_{1, \phi}^{\operatorname{div}}(\mathcal T_h; \mathbb{S}) = \div \Sigma_{\rm RM}^{\div}(\mathcal T_h; \mathbb S) &= {\rm RM}(\mathcal T_h),
\end{aligned}
$$
where
\begin{align*}
\Sigma_{1, \phi}^{\div}(\mathcal T_h; \mathbb S)&:=\{\boldsymbol{\tau}_h\in H(\div,\Omega;\mathbb S):  \boldsymbol{\tau}_h|_T\in \Sigma_{1,\phi}^{\div}(T; \mathbb S) \,\textrm{ for } T\in\mathcal T_h\}, \\
\Sigma_{\rm RM}^{\div}(\mathcal T_h; \mathbb S)&:=\{\boldsymbol{\tau}_h\in H(\div,\Omega;\mathbb S):  \boldsymbol{\tau}_h|_T\in \Sigma_{\rm RM}^{\div}(T; \mathbb S)\, \textrm{ for } T\in\mathcal T_h\}, \\
{\rm RM}(\mathcal T_h)&:=\{\boldsymbol{v}_h\in\mathbb P_{1}^{-1}(\mathcal T_h;\mathbb R^d): \boldsymbol{v}_h|_T\in {\rm RM}(T)\; \textrm{ for } T\in\mathcal T_h\}.
\end{align*}
The inf-sup conditions for the space pairs $\Sigma_{1, \phi}^{\div}(\mathcal T_h; \mathbb S)\times {\rm RM}(\mathcal T_h)$ and $\Sigma_{\rm RM}^{\div}(\mathcal T_h; \mathbb S)\times {\rm RM}(\mathcal T_h)$ are stated in Lemma~\ref{lem:infsupreducedP1}.

\begin{remark}\rm
Although the dimensions are matched, the reduced linear elements \(\Sigma_{1,\phi}^{\div}(T; \mathbb S)\) and \(\Sigma_{\mathrm{RM}}^{\div}(T; \mathbb S)\) differ from those in~\cite{ChristiansenHu2023,GopalakrishnanGuzmanLee2024} when restricted to $d=2,3$.
\end{remark}

\section{High Order Elements}\label{sec:highorderelement}
We first recall the construction of \(H(\div)\)-conforming symmetric finite elements using the \(t\)-\(n\) decomposition from~\cite{ChenHuang2024}. We then enrich the normal components to redistribute the degrees of freedom facewisely. By coupling with the bubble polynomials, we construct several \(H(\div)\)-conforming symmetric matrix elements.

\subsection{$t$-$n$ decomposition of symmetric tensor element}
We begin with the tensor product of the Lagrange element and the symmetric matrix \(\mathbb{S}\):  
\begin{align}
\label{eq:SLagrange}
\mathbb{P}_k(T; \mathbb{S})  &= \Oplus_{\ell = 0}^d \Oplus_{f \in \Delta_{\ell}(T)} \left[ b_f \mathbb{P}_{k - (\ell + 1)}(f) \otimes \mathbb{S} \right].
\end{align}

For sub-simplex \( f \in \Delta_{\ell}(T) \) with \( 0 \leq \ell \leq d \) and a linear space $V^f$ associated with $f$, define
\begin{equation}\label{eq:BkV}
\mathbb{B}_kV^f: =  b_{f}\mathbb{P}_{k-\ell-1}(f) \otimes V^f. 
\end{equation}
%
We will apply the notation $\mathbb{B}_kV^f$ to $V^f = \mathscr T^f(\mathbb S)$ and $\mathscr N^f(\mathbb S)$ in the $t$-$n$ decomposition~\eqref{Sdecomp}. Then \eqref{eq:SLagrange} can be rewritten as  
\begin{align}
\notag \mathbb P_k(T; \mathbb S)  &= \Oplus_{\ell = 0}^d\Oplus_{f\in \Delta_{\ell}(T)} \left [\mathbb B_k\mathscr T^f(\mathbb S)\oplus \mathbb B_k\mathscr N^f(\mathbb S) \right ] \\
\notag
&= \mathbb B_k(\div, T;\mathbb S) \oplus \Oplus_{\ell = 0}^{d-1}\Oplus_{f\in \Delta_{\ell}(T)} \mathbb B_k\mathscr N^f(\mathbb S),
\end{align}
where the div bubble polynomial space is, for $k\geq 2$,  
\begin{equation}\label{eq:divbubble}
\mathbb B_k(\div, T;\mathbb S) := \Oplus_{\ell = 1}^d\Oplus_{f\in \Delta_{\ell}(T)}\mathbb B_k\mathscr T^f(\mathbb S) = H_0(\div, T; \mathbb S)\cap \mathbb P_k(T; \mathbb S).
\end{equation}
The inclusion $\mathbb B_k(\div, T;\mathbb S) \subseteq H_0(\div, T; \mathbb S)\cap \mathbb P_k(T; \mathbb S)$ is relatively straightforward: for $F\supset f$, $\bs t^f\cdot \bs n_{F} = 0$, and for $F$ not containing $f$, $b_f|_{F} = 0$. The less trivial fact is $H_0(\div, T; \mathbb S)\cap \mathbb P_k(T; \mathbb S)\subseteq \mathbb B_k(\div, T;\mathbb S)$, which can be found in~\cite{ChenHuang2024}.  

The tangential-normal component $\sym ( \mathscr T^f\otimes \mathscr N^f)$ can be redistributed to $(d-1)$-dimensional faces by choosing the face normal basis $\{\bs n_{F_i}, i \in f^*\}$ and group the DoFs facewisely. The symmetric constraint in the component $\mathbb S( \mathscr N^f )$ are enforced with a global normal basis $\{\bs n_i^f\}$ that depends only on $f$. For the following DoFs, we note that for a vertex $f = \texttt{v}$, $\int_\texttt{v} u \dd s = u(\texttt{v})$, and $\mathbb P_k(\texttt{v}) = \mathbb R$ for any $k \geq 0$.  

\begin{theorem}[$H(\div; \mathbb S)$-conforming finite elements~\cite{ChenHuang2024}]  
\label{thm:femStensor}  
For each $f\in \Delta_{\ell}(\mathcal T_h)$, we choose a global $t$-$n$ basis $\{\bs t^f_1, \ldots, \bs t_{\ell}^f, \bs n^f_{1}, \ldots, \bs n^f_{d-\ell}\}$. Then the {\rm DoFs}  
\begin{subequations}\label{eq:divfemSkdof}
\begin{align}
\label{eq:divfemSkdof2}
\int_f((\bs n^f_i)^{\intercal}\boldsymbol{\tau}\bs n^f_j)\,{q}\dd s,
&\quad f\in \Delta_{\ell}(\mathcal{T}_h), {q}\in \mathbb P_{k-(\ell +1)}(f), \\
&\quad 1\leq i\leq j\leq d-\ell, \; \ell = 0,\ldots, d-1, \notag\\
\label{eq:divfemSkdof4}
\int_f ((\bs t^f_i)^{\intercal}\boldsymbol{\tau}\bs n_F)|_F\,{q}\dd s, &\quad F\in \Delta_{d-1}(\mathcal T_h), \; f\in \Delta_{\ell}(F), {q}\in \mathbb P_{k-(\ell +1)}(f),\\
&\quad i=1,\ldots,\ell, \; \ell = 1,\ldots, d-1, \notag\\
\label{eq:divfemSkdof5}
\int_T \boldsymbol{\tau}:\boldsymbol{q} \dx, &\quad   T\in \mathcal T_h, \; \bs q\in \mathbb B_k(\div, T;\mathbb S),
\end{align}
\end{subequations}
will determine a space $\Sigma_k^{\operatorname{div}}(\mathcal T_h;\mathbb S)\subset H(\div, \Omega; \mathbb S)$, where  
\begin{align*}
\Sigma_k^{\operatorname{div}}(\mathcal T_h;\mathbb S):=\{&\boldsymbol{\tau}\in L^2(\Omega;\mathbb S): \boldsymbol{\tau}|_T\in\mathbb P_k(T; \mathbb S) \quad\forall~T\in\mathcal T_h, \\
&\textrm{DoF \eqref{eq:divfemSkdof2} is single-valued across $f\in \Delta_{\ell}(\mathcal{T}_h)$ for $\ell=0,\ldots, d-1$}, \\
&\textrm{DoF \eqref{eq:divfemSkdof4} is single-valued across $F\in \Delta_{d-1}(\mathcal{T}_h)$} \}.
\end{align*}
\end{theorem}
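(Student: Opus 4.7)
The plan is to establish local unisolvence on each $T$ and then read off the inter-element $H(\div)$-conformity from which DoFs are declared single-valued. The backbone is the $t$-$n$ decomposition already spelled out in the excerpt:
\[
\mathbb{P}_k(T;\mathbb{S}) \;=\; \mathbb{B}_k(\div,T;\mathbb{S}) \;\oplus\; \Oplus_{\ell=0}^{d-1}\Oplus_{f\in\Delta_\ell(T)} \mathbb{B}_k\mathscr{N}^f(\mathbb{S}),
\]
together with the refinement $\mathscr{N}^f(\mathbb{S}) = \mathbb{S}(\mathscr{N}^f)\oplus \sym(\mathscr{T}^f\otimes\mathscr{N}^f)$. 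I would split \eqref{eq:divfemSkdof} along this decomposition: \eqref{eq:divfemSkdof2} pairs with the normal-normal block $\mathbb{S}(\mathscr{N}^f)$, \eqref{eq:divfemSkdof4} pairs with the tangential-normal block $\sym(\mathscr{T}^f\otimes\mathscr{N}^f)$ after facewise redistribution via the face normal basis $\{\bs n_{F_i}:i\in f^*\}$, and \eqref{eq:divfemSkdof5} pairs with the tangential bubble block $\mathbb{B}_k(\div,T;\mathbb{S})$.

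First I would perform the dimension count. On each $f\in\Delta_\ell(T)$, the contribution $\dim\mathbb{B}_k\mathscr{N}^f(\mathbb{S})$ equals $\dim\mathbb{P}_{k-\ell-1}(f)\cdot\bigl[\tfrac12(d-\ell)(d-\ell+1)+\ell(d-\ell)\bigr]$. The $\mathbb{S}(\mathscr{N}^f)$-part, of dimension $\tfrac12(d-\ell)(d-\ell+1)$, is accounted for by the $1\le i\le j\le d-\ell$ independent scalar moments in \eqref{eq:divfemSkdof2} on $f$ itself. The cross term of dimension $\ell(d-\ell)$ is redistributed across the $d-\ell$ faces $F\in\Delta_{d-1}(T)$ containing $f$, each contributing $\ell$ tangent directions $(\bs t_i^f)^\intercal\bs\tau\bs n_F$ as in \eqref{eq:divfemSkdof4}. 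Adding $\dim\mathbb{B}_k(\div,T;\mathbb{S})$ from \eqref{eq:divfemSkdof5} restores exactly $\dim\mathbb{P}_k(T;\mathbb{S})$, so the total DoF count matches.

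The heart of the proof is local unisolvence, which I would do by an induction on $\ell=0,1,\ldots,d$ that peels off one $t$-$n$ layer at a time. Suppose $\bs\tau\in\mathbb{P}_k(T;\mathbb{S})$ and all DoFs in \eqref{eq:divfemSkdof} vanish. For the base case $\ell=0$, the nodal values from \eqref{eq:divfemSkdof2} together with the biorthogonality of the tangential-normal and face-normal bases (Lemma cited in the $t$-$n$ subsection) force the $\mathbb{B}_k\mathscr{N}^{\texttt{v}}(\mathbb{S})$-component to vanish at every vertex $\texttt{v}$, so this summand drops out. Inductively, assume all $\mathbb{B}_k\mathscr{N}^g(\mathbb{S})$-components for $\dim g<\ell$ are already zero. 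For fixed $f\in\Delta_\ell(T)$, the remaining contributions to the moments on $f$ come only from $\mathbb{B}_k\mathscr{N}^f(\mathbb{S})$, since components attached to strictly larger simplices vanish on $f$ because of their bubble factors. The moments \eqref{eq:divfemSkdof2} on $f$ then annihilate the $\mathbb{S}(\mathscr{N}^f)$ coefficients (a $\mathbb{P}_{k-\ell-1}(f)$-mass matrix against a basis of $\mathbb{S}(\mathscr{N}^f)$), and after that the face-grouped moments \eqref{eq:divfemSkdof4} annihilate the $\sym(\mathscr{T}^f\otimes\mathscr{N}^f)$ coefficients by the same mass-matrix argument on each face $F\supset f$. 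After the induction, only the bubble component $\bs\tau\in\mathbb{B}_k(\div,T;\mathbb{S})$ survives, and \eqref{eq:divfemSkdof5} is exactly its $L^2$-dual, so $\bs\tau=0$.

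Finally, for $H(\div)$-conformity I would argue that on each face $F\in\Delta_{d-1}(\mathcal T_h)$ the trace $\bs\tau\bs n_F|_F\in\mathbb{P}_k(F;\mathbb{R}^d)$ is uniquely determined by the DoFs \eqref{eq:divfemSkdof2} on subsimplices $f\subseteq F$ with $\bs n_j^f\equiv\bs n_F$ and by \eqref{eq:divfemSkdof4} on $f\subseteq F$; this is precisely a Brezzi-Douglas-Marini-style unisolvence for the vector-valued $\bs\tau\bs n_F$ on the simplex $F$, which follows from the same inductive argument applied to $F$ in place of $T$. Since these DoFs are declared single-valued across $F$, the normal trace is continuous from both sides, which is the standard characterization of $H(\div,\Omega;\mathbb S)$ for piecewise smooth tensors recalled at the end of the spaces subsection. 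The main obstacle I anticipate is the inductive peeling step: one must ensure that at level $\ell$, lower-level components have already been eliminated so the moments on $f$ see only $\mathbb{B}_k\mathscr{N}^f(\mathbb{S})$, which relies crucially on the duality between tangential-normal and face-normal bases within $\mathscr{N}^f$ and on the bubble factor $b_f$ vanishing outside the star of $f$.
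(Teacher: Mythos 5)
The paper does not prove this theorem: it is stated with a citation to \cite{ChenHuang2024} and imported as a known result, so there is no in-paper proof to compare against. Your reconstruction --- dimension count against the $t$-$n$ geometric decomposition, hierarchical unisolvence by peeling off subsimplices in increasing dimension, and $H(\div)$-conformity from the single-valuedness of the face-related DoFs --- is the standard argument behind this family of elements and is, in outline, correct; it is also exactly the machinery the surrounding text sets up (the decomposition \eqref{Sdecomp}, the identity \eqref{eq:divbubble}, and the duality between face-normal and tangential-normal bases). Two points deserve to be made explicit. First, in the peeling step you should note that the tests in \eqref{eq:divfemSkdof2} and \eqref{eq:divfemSkdof4} automatically annihilate the tangential-tangential blocks $\mathscr T^g(\mathbb S)$ of all $g\subseteq f$ (which do \emph{not} vanish on $f$) because $\mathscr T^f\perp\mathscr N^f$; only then do the moments on $f$ reduce to an invertible Gram-type system for the $\mathbb S(\mathscr N^f)$ and $\sym(\mathscr T^f\otimes\mathscr N^f)$ coefficients. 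Second, for conformity the phrase ``with $\bs n_j^f\equiv\bs n_F$'' is too restrictive: one must expand $\bs n_F\in\mathscr N^f$ in the global basis $\{\bs n_j^f\}$, so single-valuedness of \emph{all} pairs $1\le i\le j\le d-\ell$ in \eqref{eq:divfemSkdof2} is what makes $(\bs n_i^f)^\intercal\bs\tau\bs n_F$ continuous; this is also why a global (element-independent) normal basis is required in the hypothesis.
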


Following~\cite{ChenChenHuangWei2024}, the moment DoFs can be replaced by function values at lattice points, and using the $t$-$n$ basis dual to that in DoFs, Lagrange type basis functions of $H(\div; \mathbb S)$-conforming finite elements can be derived.  

\begin{remark}\label{rm:tangential-normal}\rm
 DoF \eqref{eq:divfemSkdof4} can be further merged into one, leading to the modification in~\cite[Lemma 4.5]{ChenHuang2022}:  
\begin{equation}\label{eq:divStndof}
\int_F (\Pi_F\boldsymbol{\tau}\bs n_F)\cdot\boldsymbol{q}\dd s, \quad F\in \Delta_{d-1}(\mathcal T_h), \boldsymbol{q}\in \textrm{ND}_{k-2}(F),
\end{equation}
where $\textrm{ND}_{k-2}(F):=\{\boldsymbol{q}\in \mathbb P_{k-1}(F;\mathbb R^{d-1}): \boldsymbol{q}\cdot\boldsymbol{x}\in\mathbb P_{k-1}(F)\}$, and $\Pi_F$ is the projection of a vector onto the plane $\mathscr T^F$.  
\end{remark}

\subsection{Enrich the normal-normal component}
The tangential-normal components can be redistributed to each face $F\in \Delta_{d-1}(T)$ to get \eqref{eq:divStndof}. 
%
%
The normal-normal component $\{\bs n_{F_i} \otimes \bs n_{F_i}, i \in f^*\}$ can be redistributed to the face $F_i$. This redistribution can be used to construct a normal-normal continuous symmetric element~\cite{ChenHuang2025div-div-conforming,PechsteinSchoberl2011,PechsteinSchoeberl2018}.

For the off-diagonal components $\sym(\bs n_{F_i} \otimes \bs n_{F_j})$, these components can only be distributed to either $F_i$ or $F_j$, but not both. It can be redistributed to the edge $e_{ij}\in \Delta_{d-2}(T)$ and used to define hybridizable $H(\div\div)$-conforming finite elements; see our recent work~\cite{ChenHuang2025div-div-conforming}.

The symmetric constraint in $\mathbb S( \mathscr N^f )$ is imposed by choosing a global basis for the normal plane $\mathscr N^f$ for sub-simplices $f$ of dimension $0, 1, \ldots, d-2$ which introduces additional smoothness; see \eqref{eq:divfemSkdof2}. For example, all existing symmetric stress finite elements~\cite{ArnoldWinther2002,Adams;Cockburn:2005Finite,ArnoldAwanouWinther2008,HuZhang2016,Hu2015a,HuZhang2015,ChenHuang2022,ChenHuang2024,HuangZhangZhouZhu2024} are continuous at vertices. 

With the barycentric refinement, at a sub-simplex $f\in \Delta_{\ell}(T)$ with $\ell=0,1,\ldots, d-2$, for $i, j \in f^*$, we introduce  
\begin{equation*}
\boldsymbol{\phi}_{ij}^{f}:=\boldsymbol{\phi}_{ij}^{f(0)}= \chi_{T_{i}}\sym(\boldsymbol{t}_{f(0), c}\otimes\boldsymbol{t}_{f(0), j})-\chi_{T_{j}}\sym(\boldsymbol{t}_{f(0), c}\otimes\boldsymbol{t}_{f(0), i}).
\end{equation*}  
By Lemma~\ref{lem:phiijv}, $\boldsymbol{\phi}_{ij}^{f}|_{T\setminus T_{f(0)}} \in H(\div, T\setminus T_{f(0)}; \mathbb S)$. The vertex $f(0)$ can be replaced by any vertex of $f$, giving subspaces with the same trace on outer faces. 

\begin{figure}[htbp]
\subfigure[$t$-$n$ decomposition on $f$. The normal constraint is imposed by choosing a global normal plane basis.]{
\begin{minipage}[t]{0.45\linewidth}
\centering
\includegraphics*[width=3.2cm]{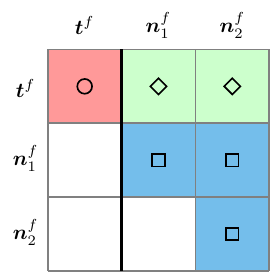}
\end{minipage}}
\qquad
\subfigure[$t$-$n$ decomposition on $f$ after the barycentric refinement which can be redistributed facewisely.]
{\begin{minipage}[t]{0.45\linewidth}
\centering
\includegraphics*[width=3.2cm]{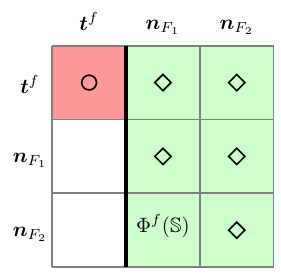}
\end{minipage}}
\caption{Red block ($\circ$): $H(\div)$-bubble polynomial basis. Green ($\diamond$): redistributed basis. Blue ($\scalebox{0.65}{$\square$}$): basis of $\mathbb S(\mathscr N^f)$ with a global normal plane basis. Here, $f$ is an edge of a tetrahedron, so $\dim \mathscr N^f = 2$. The vectors $\{\boldsymbol{n}_{1}^{f}, \boldsymbol{n}_{2}^{f}\}$ in (a) form a global basis of $\mathscr N^f$ used to impose the symmetric constraints. With the enrichment, we may instead use the face normals $\{\boldsymbol{n}_{F_{1}}, \boldsymbol{n}_{F_{2}}\}$, as shown in (b), so that the DoFs are redistributed to the two faces $F_1$ and $F_2$ containing $f$.}
\label{fig:divS1}
\end{figure}

We introduce 
$$
\Phi^f(\mathbb S) = {\rm span} \{ \boldsymbol{\phi}_{ij}^{f}: i,j\in f^*, 0\leq i<j\leq d\},
$$
and will enrich $\mathbb P_0(T;\mathscr N^f(\mathbb S))$ to
$$
\mathbb P_0(T;\mathscr N^f(\mathbb S)) + \Phi^f(\mathbb S).
$$
By Lemma~\ref{lem:geodecP1}, we have
$\mathbb P_0(T;\mathbb S) \cap \Phi^f(\mathbb S) = \{0\}$, and $\dim\Phi^f(\mathbb S) = \frac{1}{2}(d-\ell)(d-\ell-1)$ for $f\in\Delta_{\ell}(T)$ with $\ell=0,1,\ldots, d-2$.  
Then
\begin{equation*}
\dim\big(\mathbb P_0(T;\mathscr N^f(\mathbb S)) \oplus \Phi^f(\mathbb S)\big)=d(d-\ell).
\end{equation*}

We now have sufficient shape functions to redistribute the DoFs face-wise. See the illustration in Fig.~\ref{fig:divS1}. The following result is a generalization of Lemma \ref{lem:vertexDoF} from a vertex to a sub-simplex, and the proof is a generalization of that of Lemma \ref{lem:Stensor1}.

\begin{lemma}\label{lem:QNf}
Take $f \in \Delta_{\ell}(T)$ with $\ell = 0,1,\ldots, d-2$. For $\bs \tau \in \mathbb P_0(T;\mathscr N^f(\mathbb S)) \oplus \Phi^f(\mathbb{S})$, if 
\[
\boldsymbol{\tau}\boldsymbol{n}_{F_i}|_f = 0 \quad \forall\, i\in f^*,
\]
then $\boldsymbol{\tau} = 0$.
\end{lemma}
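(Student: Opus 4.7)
My plan is to adapt the proof of Lemma~\ref{lem:Stensor1} with the sub-simplex $f$ playing the role of the barycenter $\texttt{v}_c$. Decompose $\boldsymbol{\tau} = \boldsymbol{\sigma}_0 + \sum_{i<j,\, i,j\in f^*} c_{ij}\boldsymbol{\phi}_{ij}^{f}$ with $\boldsymbol{\sigma}_0\in\mathscr N^f(\mathbb S)$, extending $c_{ji}=-c_{ij}$. Since $\boldsymbol{\phi}_{ij}^{f}$ is supported on $T_i\cup T_j$ with $i,j\in f^*$, on each $T_k$ with $k\in f^*$ one obtains the constant tensor
\[
\boldsymbol{\tau}|_{T_k} = \boldsymbol{\sigma}_0 + \sum_{m\in f^*\setminus\{k\}} c_{km}\sym(\boldsymbol{t}_{f(0),c}\otimes \boldsymbol{t}_{f(0),m}).
\]
Since $f\subset F_k$ and $\boldsymbol{\tau}|_{T_k}$ is constant, the hypothesis $\boldsymbol{\tau}\boldsymbol{n}_{F_k}|_f=0$ collapses to the linear identity $\boldsymbol{\tau}|_{T_k}\boldsymbol{n}_{F_k}=0$, i.e., $\boldsymbol{\tau}|_{T_k}\in\mathbb S(\mathscr T^{F_k})$, for every $k\in f^*$.

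To turn this into algebraic relations, I choose a tangential basis $\{\boldsymbol{t}_i^f\}$ of $\mathscr T^f$ together with the scaled tangential-normal basis $\{\hat{\boldsymbol{n}}_j^f\}_{j\in f^*}$ biorthogonal to the face normals, and write
\[
\boldsymbol{\sigma}_0 = \sum_{p,q\in f^*} a_{pq}\sym(\hat{\boldsymbol{n}}_p^f\otimes\hat{\boldsymbol{n}}_q^f) + \sum_{i,p} b_{ip}\sym(\boldsymbol{t}_i^f\otimes\hat{\boldsymbol{n}}_p^f),\quad a_{pq}=a_{qp}.
\]
Using $\boldsymbol{t}_i^f\cdot\boldsymbol{n}_{F_k}=0$ and, for $m\in f^*\setminus\{k\}$, $\boldsymbol{t}_{f(0),m}\cdot\boldsymbol{n}_{F_k}=0$ (both tangent to $F_k$, since $f\cup\{m\}\subset F_k$), and decomposing $\boldsymbol{t}_{f(0),m} = \boldsymbol{\alpha}_m + (\boldsymbol{t}_{f(0),m}\cdot\boldsymbol{n}_{F_m})\hat{\boldsymbol{n}}_m^f$ along $\mathscr T^f\oplus\mathscr N^f$, I split $\boldsymbol{\tau}|_{T_k}\boldsymbol{n}_{F_k}=0$ into $\mathscr N^f$ and $\mathscr T^f$ components. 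The normal part yields $a_{kk}=0$ directly and, for $j\neq k$ in $f^*$, $a_{jk} = -\tfrac{1}{2}(\boldsymbol{t}_{f(0),c}\cdot\boldsymbol{n}_{F_k})(\boldsymbol{t}_{f(0),j}\cdot\boldsymbol{n}_{F_j})c_{kj}$. Combining the symmetry $a_{jk}=a_{kj}$ with the skew-symmetry $c_{kj}=-c_{jk}$ produces
\[
\bigl[(\boldsymbol{t}_{f(0),c}\!\cdot\!\boldsymbol{n}_{F_k})(\boldsymbol{t}_{f(0),j}\!\cdot\!\boldsymbol{n}_{F_j})+(\boldsymbol{t}_{f(0),c}\!\cdot\!\boldsymbol{n}_{F_j})(\boldsymbol{t}_{f(0),k}\!\cdot\!\boldsymbol{n}_{F_k})\bigr]c_{kj}=0.
\]
A sign check -- each inner product is strictly negative, since $\texttt{v}_{f(0)}\in F_j\cap F_k$ while $\texttt{v}_c,\texttt{v}_j,\texttt{v}_k$ lie on the interior side -- shows the bracket is positive, hence $c_{kj}=0$, and therefore $a_{jk}=0$ for all $j,k\in f^*$. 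The tangential component of the identity then reduces to $\sum_i b_{ik}\boldsymbol{t}_i^f=0$, forcing $b_{ik}=0$, so $\boldsymbol{\sigma}_0=0$ and $\boldsymbol{\tau}=0$.

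The main obstacle is the non-degeneracy of the geometric coefficient in the symmetry/skew-symmetry pairing above. In Lemma~\ref{lem:Stensor1} the analogous matching between coefficients was supplied by normal continuity across the interior faces $F_{ij}$; here it comes instead from pairing the symmetry of $\boldsymbol{\sigma}_0\in\mathscr N^f(\mathbb S)$ against the skew-symmetric indexing in $\Phi^f(\mathbb S)$, and a small geometric sign computation is needed to certify that the bracket cannot cancel. All remaining steps are linear algebra once the scaled biorthogonal $t$-$n$ basis is in place.
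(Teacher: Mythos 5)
Your proof is correct, and it takes a genuinely different route from the paper's. The paper's argument is two-staged: it first invokes the normal continuity of $\boldsymbol{\tau}$ across the \emph{interior} faces $F_{ij}=T_i\cap T_j$ of the split (available because the functions in $\Phi^f(\mathbb{S})$ are div-conforming away from $T_{f(0)}$, cf.\ Lemma~\ref{lem:phiijv}) and runs the same cyclic symmetry/skew-symmetry index argument as in Lemma~\ref{lem:Stensor1} to conclude $Q_{\mathbb{S}(\mathcal{N}_f)}(\boldsymbol{\tau}|_{T_i})=0$; only then does it expand in the $\Phi^f$ basis and pair the symmetry of $\boldsymbol{\sigma}_0$ against the skew-symmetry of the coefficients to finish. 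You never touch the interior faces: all your equations come from the $d-\ell$ outer trace conditions $\boldsymbol{\tau}|_{T_k}\boldsymbol{n}_{F_k}=0$, which, written in the scaled biorthogonal basis, yield for each unordered pair $\{j,k\}\subset f^*$ a $2\times 2$ system in $(a_{jk},c_{kj})$ whose determinant is your bracket; your sign check is right (with the paper's normalization $\boldsymbol{t}_{f(0),c}\cdot\boldsymbol{n}_{F_k}=-h_k/(d+1)$ and $\boldsymbol{t}_{f(0),j}\cdot\boldsymbol{n}_{F_j}=-h_j$, the bracket equals $2h_jh_k/(d+1)>0$), so $c_{kj}=a_{jk}=0$ and the tangential part then kills the $b_{ik}$. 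A nice by-product of your version is that it exhibits the $d(d-\ell)$ scalar outer-face conditions as exactly unisolvent on the $d(d-\ell)$-dimensional space $\mathbb{P}_0(T;\mathscr{N}^f(\mathbb{S}))\oplus\Phi^f(\mathbb{S})$, whereas the paper consumes the interior normal continuity as an extra input; what the paper's route buys in exchange is the explicit structural parallel with Lemma~\ref{lem:Stensor1}, of which this lemma is presented as the generalization.
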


\begin{proof}
We choose scaled face-normal bases for $\mathcal{N}^f$ and its dual basis in $\mathcal{N}^f$:
\[
\{\nabla \lambda_i : i \in f^*\} \quad \text{and} \quad \{\hat{\bs n}_{F_i} : i \in f^*\},
\]
where
\[
\hat{\bs n}_{F_i} = \frac{\boldsymbol{n}^f_{f \cup \{i\}}}{\boldsymbol{n}^f_{f \cup \{i\}} \cdot \nabla \lambda_i} \quad \text{ and }\quad \hat{\bs n}_{F_i}\cdot \nabla\lambda_j = \delta_{i,j}, \, i,j\in f^*.
\]
Note that the tangential normal vector $\hat{\bs n}_{F_m} \in \mathscr{T}^F$ if $f \cup \{m\} \subseteq F$.

\begin{figure}[htbp]
\begin{center}
\includegraphics[width=4.15cm]{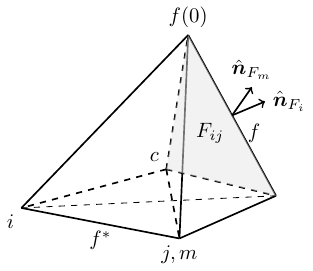}
\end{center}
\end{figure}

Let $\boldsymbol{\tau}_i = \boldsymbol{\tau}|_{T_i}\in\mathbb P_0(T_i;\mathbb S)$ for $i \in f^*$. Condition $(\boldsymbol{\tau}\boldsymbol{n}_{F_i})|_f = 0$ implies that 
$
\bs \tau_i \in \mathbb P_0(\mathscr T^{F_i}; \mathbb S).$
%
Let $Q_{\mathbb S(\mathcal{N}_f)}: \mathbb P_0(T_i;\mathbb S) \to\mathbb P_0(T_i;\mathbb S(\mathcal{N}_f))$ be the $L^2$ projection.
Then $Q_{\mathbb{S}(\mathcal{N}_f)}(\boldsymbol{\tau}_i)$ can be expressed as
\[
Q_{\mathbb{S}(\mathcal{N}_f)}(\boldsymbol{\tau}_i) = \sum_{m,n \in (f^* \cap \{i\}^*)} \tau_{i,m,n} \hat{\bs n}_{F_m} \otimes \hat{\bs n}_{F_n},
\]
where $\tau_{i,m,n} = (\nabla \lambda_m)^{\intercal} \bs \tau_i (\nabla \lambda_n)$, and by symmetry, $\tau_{i,m,n} = \tau_{i,n,m}$. As no normal component $\hat{\boldsymbol{n}}_{F_i}$ exists from $\bs \tau_i \in \mathbb P_0(\mathscr T^{F_i}; \mathbb S)$, the indices $m,n \in (f^* \cap \{i\}^*)$. 
Similarly, for $j\in f^*$, 
\[
Q_{\mathbb{S}(\mathcal{N}_f)}(\boldsymbol{\tau}_j) = \sum_{m,n \in (f^* \cap \{j\}^*)} \tau_{j,m,n} \hat{\bs n}_{F_m} \otimes \hat{\bs n}_{F_n},
\]
where $\tau_{j,m,n} = \tau_{j,n,m}$.

Let $F_{ij} = T_i \cap T_j$ and fix a unit normal vector $\boldsymbol{n}_{F_{ij}}$. 
Algebracially $F_{ij} = \{i, j\}^c$. For $f\in\Delta_{\ell}(F_{ij})$, the normal continuity $(\boldsymbol{\tau}_i\boldsymbol{n}_{F_{ij}})|_f = (\boldsymbol{\tau}_j\boldsymbol{n}_{F_{ij}})|_f$ implies that $$(Q_{\mathbb{S}(\mathcal{N}_f)}(\boldsymbol{\tau}_i)\boldsymbol{n}_{F_{ij}})|_f = (Q_{\mathbb{S}(\mathcal{N}_f)}(\boldsymbol{\tau}_j)\boldsymbol{n}_{F_{ij}})|_f.$$
It follows that
\begin{align*}
&\quad\;\tau_{i,j,j}(\hat{\bs n}_{F_j}\cdot\boldsymbol{n}_{F_{ij}})\hat{\bs n}_{F_j} + \sum_{m \in (f^* \cap \{i,j\}^c)} \tau_{i,m,j}(\hat{\bs n}_{F_j}\cdot\boldsymbol{n}_{F_{ij}})\hat{\bs n}_{F_m} \\
&= \tau_{j,i,i}(\hat{\bs n}_{F_i}\cdot\boldsymbol{n}_{F_{ij}})\hat{\bs n}_{F_i} + \sum_{m \in (f^* \cap \{i,j\}^c)} \tau_{j,m,i}(\hat{\bs n}_{F_i}\cdot\boldsymbol{n}_{F_{ij}})\hat{\bs n}_{F_m}.
\end{align*}
By comparing the coefficients in the basis $\{ \hat{\bs n}_{F_i}, \hat{\bs n}_{F_j}, \hat{\bs n}_{F_m}, m \in (f^* \cap \{i,j\}^c)\}$, we derive $\tau_{i,j,j}=\tau_{j,i,i}=0$ and the following relations among the coefficients:
\[
\tau_{i,m,j} \hat{\bs n}_{F_j} \cdot \boldsymbol{n}_{F_{ij}} = \tau_{j,m,i} \hat{\bs n}_{F_i} \cdot \boldsymbol{n}_{F_{ij}}, \quad \forall\, i,j,m \in f^*, \; i,j,m \text{ distinct}.
\]
Additionally, we expand the vector $\hat{\bs n}_{F_j}$ in $\mathscr T^{f\cup \{j\}}$ using basis $\{ \boldsymbol{t}_{j,s}, s\in f\}$
\[
\hat{\bs n}_{F_j} = \sum_{s\in f} (\hat{\bs n}_{F_j} \cdot \nabla \lambda_s) \boldsymbol{t}_{j,s},
\]
and since $\boldsymbol{t}_{c,s} \cdot \boldsymbol{n}_{F_{ij}} = 0$, it follows that
\[
\hat{\bs n}_{F_j} \cdot \boldsymbol{n}_{F_{ij}} = -(\boldsymbol{t}_{j,c} \cdot \boldsymbol{n}_{F_{ij}})(\hat{\bs n}_{F_j}\cdot \nabla \lambda_j).
\]
Substituting this into the relation for $\tau_{i,m,j}$, we get:
\[
\tau_{i,m,j}(\boldsymbol{t}_{j,c} \cdot \boldsymbol{n}_{F_{ij}}) = \tau_{j,m,i}(\boldsymbol{t}_{i,c} \cdot \boldsymbol{n}_{F_{ij}}), \quad \forall\, i,j,m \in f^*, \; i \neq j, \; i \neq m, \; j \neq m.
\]
Noting that $\boldsymbol{t}_{j,c} \cdot \boldsymbol{n}_{F_{ij}}=-\boldsymbol{t}_{i,c} \cdot \boldsymbol{n}_{F_{ij}}\neq0$, the last equation implies the skew-symmetric of the first two indices
\[
\tau_{i,j,m} = -\tau_{j,i,m}, \quad \forall\, i,j,m \in f^*, \; i \neq j, \; i \neq m, \; j \neq m.
\]

By symmetry and skew-symmetry of the coefficients $\tau_{i,j,m}$, it follows that
\[
\tau_{i,j,m} + \tau_{j,m,i} = 0, \quad \tau_{j,m,i} + \tau_{m,i,j} = 0, \quad \tau_{m,i,j} + \tau_{i,j,m} = 0.
\]
Solving these equations, we deduce $\tau_{i,j,m} = 0$ for all $i,j,m \in f^*$ with $i,j,m$ distinct. Consequently,
\begin{equation}\label{eq:20250521}
Q_{\mathbb{S}(\mathcal{N}_f)}(\boldsymbol{\tau}|_{T_i}) = 0 \quad \textrm{ for } i \in f^*.
\end{equation}

Next, we prove $\bs\tau=0$ following the argument of Lemma~\ref{lem:geodecP1}.
Expand $\bs \tau \in \mathbb P_0(T;\mathscr N^f(\mathbb S)) \oplus \Phi^f(\mathbb{S})$ as
\[
\bs \tau = \bs \sigma_0 + \sum_{m,j \in f^*} c_{mj}\boldsymbol{\phi}_{mj}^f,
\]
where $\bs \sigma_0\in \mathbb P_0(T;\mathscr N^f(\mathbb S))$ and the coefficients $c_{mj} \in \mathbb{R}$ satisfy the skew-symmetry condition: $c_{mj} = -c_{jm}$.
When restricted to the simplex $T_i$ with $i \in f^*$, this expansion becomes
\[
\bs \tau|_{T_i} = \bs \sigma_0 + 2\sum_{j \in  f^*\cap\{i\}^*} c_{ij}\sym(\boldsymbol{t}_{f(0), c}\otimes\boldsymbol{t}_{f(0), j}), \quad i \in f^*.
\]
Then multiplying $\nabla\lambda_j\otimes \nabla\lambda_i$ for $j \in  f^*\cap\{i\}^*$, by \eqref{eq:20250521}, it follows that
\[
c_{ij}=-(d+1)(\nabla\lambda_j)^{\intercal}\bs \sigma_0\nabla\lambda_i, \quad i,j \in f^*;\; i\neq j.
\]
By the symmetry of $\bs \tau$ and the skew-symmetry of $c_{ij}$, we have $c_{ij}=0$ for $i,j \in f^*$. Hence, $\bs \tau = \bs \sigma_0\in\mathbb P_0(T;\mathscr N^f(\mathbb S))$.
Finally, we conclude $\bs \tau = 0$ from the condition $(\bs \tau\boldsymbol{n}_{F})|_f = 0$ for all $F \in \Delta_{d-1}(T)$ with $f \subseteq F$.
\end{proof}

\subsection{Geometric decomposition}
Let \( f \in \Delta_{\ell}(T) \) with \( \ell = 0, 1, \ldots, d-2 \). Define the Lagrange bubble polynomial:
\[
b_f^{\rm R} = \lambda_{f(0)}^{\rm R} \lambda_{f(1)}^{\rm R} \cdots \lambda_{f(\ell)}^{\rm R}, \quad b_f = \lambda_{f(0)} \lambda_{f(1)} \cdots \lambda_{f(\ell)}.
\]
Notice that $b_f^{\rm R}|_f=b_f|_f$ but $b_f^{\rm R}$ and $b_f$ have different support.
By the fact $\boldsymbol{\phi}_{ij}^{f}|_{T\setminus T_{f(0)}}\in H(\div,T\setminus T_{f(0)}; \mathbb S)$, as Lemma~\ref{lem:phivijdivconformity}, we have the $H(\div,T)$-conformity of $b_{f}^{\rm R}\boldsymbol{\phi}_{ij}^{f}$.

\begin{lemma}
For each sub-simplex \( f \in \Delta_{\ell}(T) \) with \( 0 \leq \ell \leq d-2 \), and \( i, j \in f^*\), the function
\[
b_{f}^{\rm R}\boldsymbol{\phi}_{ij}^{f}=b_{f}^{\rm R}\big(\chi_{T_{i}} \sym(\boldsymbol{t}_{f(0), c} \otimes \boldsymbol{t}_{f(0), j}) - \chi_{T_{j}} \sym(\boldsymbol{t}_{f(0), c} \otimes \boldsymbol{t}_{f(0), i})\big)
\]
is in \( H(\div, T; \mathbb{S}) \cap \mathbb{P}_{\ell + 1}^{-1}(T^{\rm R}; \mathbb{S}) \).
\end{lemma}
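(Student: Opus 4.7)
The proof splits naturally into two assertions. The membership $b_f^{\rm R}\boldsymbol{\phi}_{ij}^f \in \mathbb{P}_{\ell+1}^{-1}(T^{\rm R}; \mathbb{S})$ is immediate: $b_f^{\rm R}$ is a product of $\ell+1$ hat functions in $V_1^L(T^{\rm R})$, hence piecewise polynomial of degree $\ell+1$ on $T^{\rm R}$, while $\boldsymbol{\phi}_{ij}^f$ is piecewise constant on $T^{\rm R}$ by construction. The substantive task is to verify $H(\div, T; \mathbb{S})$-conformity, which amounts to showing that $[b_f^{\rm R}\boldsymbol{\phi}_{ij}^f\bs n_F]|_F = 0$ for every interior face $F = F_{mn} \in \Delta_{d-1}(\mathring{T}^{\rm R})$, $0 \le m < n \le d$.

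The plan is to dichotomize the interior faces by whether $\{m,n\}\cap f$ is empty. The first key observation is that for any $s\in\{0,\dots,\ell\}$, the Lagrange basis function $\lambda_{f(s)}^{\rm R}$ vanishes at every vertex of $T_{f(s)}$ (since $\texttt{v}_{f(s)}\notin T_{f(s)}$ and $\lambda_{f(s)}^{\rm R}(\texttt{v}_c)=0$), hence $\lambda_{f(s)}^{\rm R}\big|_{T_{f(s)}}\equiv 0$, and so $b_f^{\rm R}\big|_{T_{f(s)}}\equiv 0$. Consequently, whenever $\{m,n\}\cap f\ne\emptyset$, the face $F_{mn}$ is a face of some $T_{f(s)}$, and the global continuity of $b_f^{\rm R}\in V_{\ell+1}^L(T^{\rm R})$ forces $b_f^{\rm R}|_{F_{mn}}=0$ from both sides. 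This handles Case 1 with no jump to verify.

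For Case 2, $\{m,n\}\subseteq f^*$, the continuity of $b_f^{\rm R}$ reduces the question to the jump of $\boldsymbol{\phi}_{ij}^f\bs n_{F_{mn}}$ across $F_{mn}$, which is exactly the analogue of Lemma~\ref{lem:phiijv} with the pivot vertex $0$ replaced by $f(0)$. I would follow the same three-step case analysis: if $\{m,n\}\cap\{i,j\}=\emptyset$, both restrictions of $\boldsymbol{\phi}_{ij}^f$ vanish. If $\{m,n\}\cap\{i,j\}$ is a singleton, say $m=i$ and $n\in f^*\setminus\{i,j\}$, then $\boldsymbol{\phi}_{ij}^f|_{T_n}=0$, while on $T_i$ the vertices $\texttt{v}_{f(0)},\texttt{v}_c,\texttt{v}_j$ all lie on $F_{in}=\{i,n\}^c$, so $\boldsymbol{t}_{f(0),c}$ and $\boldsymbol{t}_{f(0),j}$ are tangent to $F_{in}$, giving $\boldsymbol{\phi}_{ij}^f|_{T_i}\bs n_{F_{in}}=0$. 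Finally, if $\{m,n\}=\{i,j\}$, then $f(0)\in F_{ij}$ yields $\boldsymbol{t}_{f(0),c}\cdot\bs n_{F_{ij}}=0$, and after computing the jump we are left with the factor $(\boldsymbol{t}_{f(0),i}+\boldsymbol{t}_{f(0),j})\cdot\bs n_{F_{ij}}$, which equals $(\boldsymbol{t}_{c,i}+\boldsymbol{t}_{c,j})\cdot\bs n_{F_{ij}}$ after subtracting $2\boldsymbol{t}_{f(0),c}\cdot\bs n_{F_{ij}}=0$, and this vanishes by \eqref{eq:ti+tj}.

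The only place where the argument is not purely mechanical is Case 2 with $\{m,n\}=\{i,j\}$, where one must recognize that \eqref{eq:ti+tj} remains applicable once the $\boldsymbol{t}_{f(0),c}$ contribution is removed; all other subcases are either trivial or reduce to an orthogonality of tangent vectors of an enlarged subsimplex to a normal. Combining Cases 1 and 2 over all interior faces of $T^{\rm R}$ yields the claimed $H(\div,T;\mathbb{S})$-conformity.
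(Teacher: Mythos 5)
Your proof is correct and follows essentially the same route as the paper, which disposes of the lemma in one line by combining the jump analysis of $\boldsymbol{\phi}_{ij}^{f}$ on $T\setminus T_{f(0)}$ (Lemma~\ref{lem:phiijv}, with pivot $0$ replaced by $f(0)$) with the vanishing of the bubble factor on the remaining interior faces, exactly as in Lemma~\ref{lem:phivijdivconformity}. The only cosmetic difference is that you absorb all faces meeting $f$ (not just those of $T_{f(0)}$) into the ``bubble vanishes'' case, which is harmless and slightly streamlines the residual case analysis.
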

We extend the notation $\mathbb B_kV^f$ by replacing $b_f$ in \eqref{eq:BkV} with $b_f^{\rm R}$:
\[
\mathbb{B}_k\Phi^f(\mathbb{S}) := b_{f}^{\rm R}\mathbb{P}_{k-\ell-1}(f) \otimes \Phi^f(\mathbb S) = \mathbb{P}_{k-\ell-1}(f) \otimes \operatorname{span}\left\{ b_{f}^{\rm R} \boldsymbol{\phi}_{ij}^f : i, j \in f^*, i < j \right\},
\]
and understand \( \mathbb{B}_k\Phi^F(\mathbb{S}) = \{0\} \) for \( F \in \Delta_{d-1}(T) \). Notice that for a given $k\geq 1$, $\mathbb{B}_k\Phi^f(\mathbb{S})$ is non-empty only for $\ell = \dim f \leq k-1$. In particular, for $k=1$ and $\ell = 0$, it is consistent with the space $\lambda^{\rm R}_m \Phi^m(\mathbb{S})$ for linear element.

We decompose the split mesh \( T^{\rm R} \) into sub-simplices:
\[
\Delta (T^{\rm R}) = \Oplus_{\ell = 0}^{d} \Delta_{\ell}(T^{\rm R}) = \Oplus_{\ell = 0}^{d-1}\left[\Delta_{\ell}(T) \cup \Delta_{\ell}(\mathring{T}^{\rm R})\right] + \Delta_{d}(T^{\rm R}).
\]
For interior sub-simplex, we use $\mathbb{B}_k\mathscr{N}^f(\mathbb{S})$ and further split $\mathscr{N}^f(\mathbb{S})$ by \eqref{Sdecomp}. The tangential-normal component will be redistributed facewisely. 

For sub-simplex $f\in \Delta_{\ell}(T)$, we enrich $\mathbb{B}_k\mathscr{N}^f(\mathbb{S})$ by $\mathbb{B}_k\Phi^f(\mathbb{S})$. So we introduce the space with the following geometric structure:
\begin{equation}\label{eq:localsigmadivk}
\begin{aligned}
\Sigma_{k,\phi}^{\operatorname{div}}(T^{\rm R}; \mathbb{S}) &= \Oplus_{\ell = 0}^{d-1} \bigg[\Oplus_{f\in \Delta_{\ell}(T)} \big( \mathbb{B}_k\mathscr{N}^f(\mathbb{S}) + \mathbb{B}_k\Phi^f(\mathbb{S}) \big) \\
&\quad\quad\quad\quad + \Oplus_{f\in \Delta_{\ell}(\mathring{T}^{\rm R})}\mathbb{B}_k\mathbb{S}(\mathscr{N}^f) \bigg] \\
&+ \Oplus_{F\in \Delta_{d-1}(\mathring{T}^{\rm R})}\Oplus_{\ell=0}^{d-1}\Oplus_{f\in \Delta_{\ell}(F)} \mathbb{B}_k\sym (\mathscr T^f\otimes \mathscr N^F)  \\
&+ \Oplus_{i = 0}^{d} \mathbb B_k(\div, T_i;\mathbb S).
\end{aligned}
\end{equation}

\begin{lemma}
All the sums in the definition of \( \Sigma_{k,\phi}^{\operatorname{div}}(T^{\rm R}; \mathbb{S}) \) are direct sums. The space \( \Sigma_{k,\phi}^{\operatorname{div}}(T^{\rm R}; \mathbb{S}) \) is a finite-dimensional subspace of \( H(\div, T^{\rm R}; \mathbb{S}) \), and its dimension is given by
\begin{equation}\label{eq:dimSigmaDivk}
\begin{aligned}
\dim\Sigma_{k,\phi}^{\operatorname{div}}(T^{\rm R}; \mathbb{S})&=d(d+1){k+d-1\choose d-1} + \frac{1}{2}d(d+1){k+d-2\choose d-1} \\
&+ \frac{1}{2}d(d+1)(k-1){k+d-2\choose d-2} + \frac{1}{2}d(d+1)^2{k+d-2\choose d} \\
&= \frac{1}{2}(d+1)\binom{k+d-1}{k}\bigl((d+1)k + d\bigr).
\end{aligned} 
\end{equation}
\end{lemma}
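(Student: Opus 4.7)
The plan is to verify three things in order: (a) each summand lies in $H(\div, T; \mathbb S)$ so that the total sum is $H(\div)$-conforming on the refined mesh; (b) the listed sums are direct; and (c) the closed-form dimension identity. Given the preceding development, the argument is essentially an assembly of tools already in place: the polynomial $t$-$n$ decomposition recalled in Section~\ref{sec:tnS}, the preceding lemma establishing $b_f^{\rm R}\boldsymbol{\phi}_{ij}^f \in H(\div, T; \mathbb S)$, and the linear-order analyses of Lemmas~\ref{lem:geodecP1} and~\ref{lem:QNf}. For step (a), the blocks $\mathbb B_k\mathscr N^f(\mathbb S)$ with $f \in \Delta_\ell(T)$ are smooth polynomials on $T$; the enrichments $\mathbb B_k\Phi^f(\mathbb S)$ are conforming by the preceding lemma; the blocks indexed by $\Delta_\ell(\mathring T^{\rm R})$ and by interior faces $F \in \Delta_{d-1}(\mathring T^{\rm R})$ carry a bubble factor $b_f^{\rm R}$ that vanishes on $\partial T$, with cross-element normal continuity ensured by the $t$-$n$ framework of Theorem~\ref{thm:femStensor} applied on the refinement; and $\mathbb B_k(\div, T_i; \mathbb S)$ is in $H_0(\div, T_i; \mathbb S)$ by definition.

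For step (b), I would view the construction as a refinement of the global decomposition
\[
\mathbb P_k(T; \mathbb S) = \bigoplus_{\ell=0}^d \bigoplus_{f \in \Delta_\ell(T)}\big[\mathbb B_k\mathscr T^f(\mathbb S)\oplus \mathbb B_k\mathscr N^f(\mathbb S)\big],
\]
with the tangential div-bubble parts reorganized into $\bigoplus_{i=0}^d \mathbb B_k(\div, T_i; \mathbb S)$ on the refinement. Directness within the polynomial layer is then inherited. The key new directness, $\mathbb B_k\mathscr N^f(\mathbb S) \cap \mathbb B_k\Phi^f(\mathbb S) = \{0\}$, would be obtained by upgrading the argument of Lemma~\ref{lem:geodecP1}: restrict a putative equality to each sub-element $T_m$, contract against $\nabla\lambda_i \otimes \nabla\lambda_j$ and use the duality \eqref{eq:duality0}, and exploit the symmetry of the polynomial tensor part against the skew-symmetry of the expansion coefficients $c_{ij}$ of $\boldsymbol{\phi}_{ij}^f$. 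The blocks on $\Delta_\ell(\mathring T^{\rm R})$ and on interior faces are direct summands because their $b_f^{\rm R}$ factor vanishes on all lower-dimensional strata and on $\partial T$, decoupling them from the coarse-simplex blocks and from each other.

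For step (c), the four terms of \eqref{eq:dimSigmaDivk} arise from a combinatorial count using $|\Delta_\ell(T)| = \binom{d+1}{\ell+1}$, $|\Delta_\ell(\mathring T^{\rm R})| = \binom{d+1}{\ell}$, and $\dim \mathbb P_{k-\ell-1}(f) = \binom{k-1}{\ell}$ together with the explicit values of $\dim \mathscr N^f(\mathbb S)$, $\dim \Phi^f(\mathbb S)$, $\dim \mathbb S(\mathscr N^f)$, $\dim \sym(\mathscr T^f \otimes \mathscr N^F)$, and $\dim \mathbb B_k(\div, T_i; \mathbb S) = \frac{d(d+1)}{2}\binom{k+d-2}{d}$. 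The outer-face trace yields the first term $d(d+1)\binom{k+d-1}{d-1}$ from $d+1$ faces; the interior normal-normal component on $\binom{d+1}{2}$ interior faces yields the second; the tangential-normal redistribution together with the interior sub-simplex blocks contributes the third, which carries the factor $k-1$ and vanishes when $k=1$; and the $d+1$ div-bubble blocks on the $T_i$'s give the fourth. The closed form $\tfrac{1}{2}(d+1)\binom{k+d-1}{k}((d+1)k+d)$ follows by Pascal-type identities and $\binom{k+d-1}{k} = \binom{k+d-1}{d-1}$.

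The main obstacle is step (b), specifically the disjointness $\mathbb B_k\mathscr N^f(\mathbb S) \cap \mathbb B_k\Phi^f(\mathbb S) = \{0\}$ for $f\in\Delta_\ell(T)$ with $\ell \leq d-2$. The $k=1$ case of Lemma~\ref{lem:geodecP1} relied on nodal evaluation at the vertex $\texttt{v}_{f(0)}$ and the scalar duality $\nabla\lambda_i \cdot \boldsymbol{t}_{0,j} = \delta_{ij}$. For general $k$, I would promote this to a pointwise statement along $f$ by fixing a test polynomial in $\mathbb P_{k-\ell-1}(f)$, applying Lemma~\ref{lem:QNf} at each point of $f$, and then using the independence of the polynomial factor $b_f \mathbb P_{k-\ell-1}(f)$ on $T$ from the piecewise-polynomial combination $b_f^{\rm R} \sum c_{ij}\boldsymbol{\phi}_{ij}^f$ — whose jump across the interior face $F_{ij}$ cannot be matched by any element of $\mathbb P_k(T;\mathbb S)$. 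Once this disjointness is secured, the remaining directness claims and the dimension bookkeeping are routine.
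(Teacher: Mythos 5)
Your proposal is correct and follows essentially the same route as the paper: the directness is reduced, via the vanishing of $b_f^{\rm R}$ on lower-dimensional strata, to showing $\mathbb{B}_k\mathscr{N}^f(\mathbb{S}) \cap \mathbb{B}_k\Phi^f(\mathbb{S}) = \{0\}$ for each $f$, which the paper also obtains by restricting a vanishing combination to each $T_j$ and rerunning the symmetry-versus-skew-symmetry contraction argument of Lemma~\ref{lem:geodecP1}. Your dimension bookkeeping (which the paper states without derivation) is consistent with the four terms of \eqref{eq:dimSigmaDivk}, up to a slightly loose attribution of the second and third terms to the interior normal-normal and tangential-normal blocks.
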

\begin{proof}
For the bubble polynomial $b_f^{\rm R}$, it satisfies
\begin{equation}\label{eq:bfRprop}
b_f^{\rm R}\mid _e = 0 \quad \forall e\in \Delta(T^{\rm R}), \dim e\leq \dim f, e\neq f.
\end{equation}
Then it suffices to show that the space \( \mathbb{B}_k\mathscr{N}^f(\mathbb{S}) + \mathbb{B}_k\Phi^f(\mathbb{S}) \) is a direct sum for each \( f \in \Delta_{\ell}(T) \) with \( 0 \leq \ell \leq d-2 \).

Assume
\begin{equation}\label{eq20250525}
\boldsymbol{\tau} + \sum_{i,j\in f^*} q_{ij}\boldsymbol{\phi}_{ij}^{f} = 0,
\end{equation}
where \( \boldsymbol{\tau} \in \mathbb{B}_k\mathscr{N}^f(\mathbb{S}) \) and \( q_{ij} \in b_f^{\rm R}\mathbb P_{k-(\ell +1)}(f) \).
Next, we prove that \( \boldsymbol{\tau}=0 \) and \( q_{ij}=0 \) for $i,j\in f^*$.

Notice that in the expansion, we include all $i,j\in f^*$ and thus $(q_{ij})$ is skew-symmetric, that is, $q_{ij}=-q_{ji}$ for $i,j\in f^*$.
Restrict \eqref{eq20250525} to simplex \(T_j\) for $j \notin f$ to get
\begin{equation*}
\boldsymbol{\tau} |_{T_{j}} + 2\sum_{i\in f^*}(q_{ji}|_{T_{j}})\sym(\boldsymbol{t}_{f(0), c} \otimes \boldsymbol{t}_{f(0), i})=0,\quad \forall~j\in f^*.
\end{equation*}
Then we follow the same argument as in Lemma~\ref{lem:geodecP1} to deduce that \( \boldsymbol{\tau}=0 \) and \( q_{ij}=0 \) for $i,j\in f^*$.
\end{proof}

\begin{remark}\rm
As an extension of Lemma \ref{lem:P1Sequiv} for $k=1$, we conjecture that 
\begin{equation*}
\Sigma_{k,\phi}^{\operatorname{div}}(T^{\rm R}; \mathbb{S}) = H(\div, T^{\rm R}; \mathbb{S})\cap \mathbb P_k^{-1} (T^{\rm R}; \mathbb{S}), \quad k\geq 1.
\end{equation*}
As necessary, the dimension identity holds
\[
\dim \mathbb{P}_k^{-1}(T^{\rm R}; \mathbb{S}) - \# \text{constraints} = \dim \Sigma_{k,\phi}^{\div}(T^{\rm R}; \mathbb{S}),
\]
where $ \# \text{constraints}$ is $d\binom{k+d-1}{k} = \dim \mathbb P_k(F; \mathbb R^d)$ for $[\bs \sigma\boldsymbol{n}]|_F$ on $d(d+1)/2$ interior faces. 
\end{remark}
\begin{theorem}[$H(\div; \mathbb{S})$-conforming composite finite elements]\label{thm:compositefemStensor}
For each \( f \in \Delta_{\ell}(T) \), we use \( \{\boldsymbol{n}_{F_i} : i \in f^*\} \) as the basis of \( \mathscr{N}^f \). For each \( f \in \Delta_{\ell}(\mathring{T}^{\rm R}) \), we choose a global \( t \)-\( n \) basis \( \{\boldsymbol{t}^f_1, \ldots, \boldsymbol{t}^f_{\ell}, \boldsymbol{n}^f_1, \ldots, \boldsymbol{n}^f_{d-\ell}\} \). Then the {\rm DoFs}
\begin{subequations}\label{eq:divfemRSkdof}
\begin{align}
\label{eq:divfemRSkdof1}
\int_f \boldsymbol{\tau} \boldsymbol{n}_F|_F \cdot \boldsymbol{q} \,\mathrm{d}s, &\quad f \in \Delta_{\ell}(T), \; \boldsymbol{q} \in \mathbb{P}_{k-(\ell+1)}(f; \mathbb{R}^d), \\
&\quad F \in \Delta_{d-1}(T), \; f \subseteq F, \; \ell = 0, \ldots, d-1, \notag \\
\label{eq:divfemRSkdof2}
\int_f \big((\boldsymbol{n}^f_i)^{\intercal} \boldsymbol{\tau} \boldsymbol{n}^f_j\big) q \,\mathrm{d}s, &\quad f \in \Delta_{\ell}(\mathring{T}^{\rm R}), \; q \in \mathbb{P}_{k-(\ell+1)}(f), \\
&\quad 1 \leq i \leq j \leq d-\ell, \; \ell = 0, \ldots, d-1, \notag \\
\label{eq:divfemRSkdof4}
\int_f \big((\boldsymbol{t}^f_i)^{\intercal} \boldsymbol{\tau} \boldsymbol{n}_F\big)|_F q \,\mathrm{d}s, &\quad F \in \Delta_{d-1}(\mathring{T}^{\rm R}), \;  f\in\Delta_{\ell}(F), \; q \in \mathbb{P}_{k-(\ell+1)}(f), \\
&\quad i = 1, \ldots, \ell, \; \ell = 1, \ldots, d-1, \notag \\
\label{eq:divfemRSkdof5}
\int_{T_i} \boldsymbol{\tau}:\boldsymbol{q} \dx, &\quad   T_i\in T^{\rm R}, \; \bs q\in \mathbb B_k(\div, T_i;\mathbb S),
\end{align}
\end{subequations}
will determine the space \( \Sigma_{k,\phi}^{\operatorname{div}}(T^{\rm R}; \mathbb{S}) \). 
\end{theorem}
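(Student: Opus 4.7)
The plan is to prove unisolvence in the standard two-step manner: first match the cardinality of the DoF set \eqref{eq:divfemRSkdof} against $\dim\Sigma_{k,\phi}^{\operatorname{div}}(T^{\rm R};\mathbb S)$ given by \eqref{eq:dimSigmaDivk}, and then show that any $\boldsymbol\tau\in\Sigma_{k,\phi}^{\operatorname{div}}(T^{\rm R};\mathbb S)$ annihilating every DoF must vanish. The geometric decomposition \eqref{eq:localsigmadivk} is the structural backbone: each of its four blocks will be paired bijectively with one of the four groups of DoFs, and the unisolvence sweep walks through these blocks in order of increasing carrier dimension.

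For the dimension count, I would pair DoFs to blocks according to the indexing sub-simplex. For each coarse sub-simplex $f\in\Delta_\ell(T)$ with $\ell\leq d-2$, the group \eqref{eq:divfemRSkdof1} contributes $(d-\ell)\cdot d\cdot\dim\mathbb P_{k-\ell-1}(f)$ DoFs, which equals $\dim(\mathbb B_k\mathscr N^f(\mathbb S)+\mathbb B_k\Phi^f(\mathbb S))$ since $\dim(\mathscr N^f(\mathbb S)+\Phi^f(\mathbb S))=d(d-\ell)$ as already computed in the proof of \eqref{eq:dimSigmaDivk}; for $\ell=d-1$ only the classical $\mathbb B_k\mathscr N^F(\mathbb S)$ remains and the face trace $\boldsymbol\tau\boldsymbol n_F\in\mathbb P_k(F;\mathbb R^d)$ is matched by the same DoFs. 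The groups \eqref{eq:divfemRSkdof2} and \eqref{eq:divfemRSkdof4} are matched with the interior blocks $\mathbb B_k\mathbb S(\mathscr N^f)$ and $\mathbb B_k\sym(\mathscr T^f\otimes\mathscr N^F)$ via the biorthogonal/dual bases recalled in Section~\ref{sec:preliminary}, while \eqref{eq:divfemRSkdof5} equals $\sum_{i=0}^d\dim\mathbb B_k(\div,T_i;\mathbb S)$ by definition.

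For unisolvence, I would proceed by increasing dimension of the carrier sub-simplex, exploiting \eqref{eq:bfRprop} so that the factors $b_f$ and $b_f^{\rm R}$ vanish on lower- or equal-dimensional sub-simplices other than $f$. Starting from the vertices of $T$ and moving up to $(d-2)$-sub-simplices: by the inductive hypothesis, contributions from lower-dimensional sub-simplices are already killed, hence $\boldsymbol\tau\boldsymbol n_{F_i}|_f$ isolates the contribution attached to $f$ itself. Writing that contribution as $b_f^{\rm R}\, p\otimes\boldsymbol\eta$ with $p\in\mathbb P_{k-\ell-1}(f)$ and $\boldsymbol\eta\in\mathbb P_0(T;\mathscr N^f(\mathbb S))+\Phi^f(\mathbb S)$, the vanishing of \eqref{eq:divfemRSkdof1} against all test vectors $\boldsymbol q\in\mathbb P_{k-\ell-1}(f;\mathbb R^d)$ and all faces $F\supseteq f$ forces $\boldsymbol\eta\boldsymbol n_{F_i}|_f=0$ for every $i\in f^*$, whereupon Lemma~\ref{lem:QNf} kills $\boldsymbol\eta$. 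The coarse faces ($\ell=d-1$) are handled by the same trace match without the enrichment. The interior sub-simplices $f\in\Delta_\ell(\mathring T^{\rm R})$ are then swept analogously using the global $t$-$n$ bases: \eqref{eq:divfemRSkdof2} annihilates the normal-normal part $\mathbb B_k\mathbb S(\mathscr N^f)$ via the duality of $\{\boldsymbol n_i^f\}$ and $\{\hat{\boldsymbol n}_i^f\}$, and \eqref{eq:divfemRSkdof4} annihilates the tangential-normal pieces $\mathbb B_k\sym(\mathscr T^f\otimes\mathscr N^F)$ on each interior face $F$. Once every non-bubble block has been zeroed, $\boldsymbol\tau|_{T_i}\in\mathbb B_k(\div,T_i;\mathbb S)$, and \eqref{eq:divfemRSkdof5} tested against a basis dual to this bubble space closes the argument.

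The main obstacle I anticipate is the coupling of the enrichment $\mathbb B_k\Phi^f(\mathbb S)$ with the classical $\mathbb B_k\mathscr N^f(\mathbb S)$ at a single coarse sub-simplex $f$: the normal-trace DoFs \eqref{eq:divfemRSkdof1} mix the two components, so one cannot separate them face by face as in the classical construction of Theorem~\ref{thm:femStensor}. This is precisely the situation handled by Lemma~\ref{lem:QNf}, whose proof leverages the algebraic identity \eqref{eq:ti+tj} and the duality \eqref{eq:duality0} on the Alfeld split to force both components to vanish simultaneously. Once that lemma is invoked for every polynomial factor $p\in\mathbb P_{k-\ell-1}(f)$, the remaining steps reduce to the standard layered sweep and pose no new difficulty.
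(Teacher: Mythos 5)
Your proposal is correct and follows essentially the same route as the paper: exploit the vanishing property \eqref{eq:bfRprop} of the bubble factors to reduce the DoF--basis matrix to a block-triangular sweep over sub-simplices ordered by dimension, isolate the coupled block $\mathbb B_k\mathscr N^f(\mathbb S)+\mathbb B_k\Phi^f(\mathbb S)$ at each coarse $f$ via the trace DoFs \eqref{eq:divfemRSkdof1} and kill it with Lemma~\ref{lem:QNf} (using that $b_f$ and $b_f^{\rm R}$ agree on $f$), handle interior sub-simplices with the global $t$-$n$ dual bases, and finish the bubble blocks with \eqref{eq:divfemRSkdof5} and the characterization \eqref{eq:divbubble}. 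The only cosmetic difference is that you prepend an explicit dimension count, which the paper delegates to the preceding lemma.
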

\begin{proof}
Consider the DoF-Basis matrix sorted by the dimension of the sub-simplex. Due to the property \eqref{eq:bfRprop} of the bubble polynomial function $b_f^{\rm R}$, it is block diagonal. Thus, it suffices to consider one block, i.e., on one sub-simplex $f$ only. 
\begin{equation*}
\renewcommand{\arraystretch}{1.35}
\begin{array}{cc}
 &  
\begin{array}{ccccc}
\qquad \quad 0 \qquad& \quad 1 \qquad \quad& \ldots\quad\;\;\;	&  d-1\quad & \quad d\qquad\quad
\end{array}
\medskip
\\
\begin{array}{c}
0 \\ 1 \\ \vdots \\ d-1 \\ d
\end{array} 
& \left(
\begin{array}{>{\hfil$}m{1.2cm}<{$\hfil}|>{\hfil$}m{1.2cm}<{$\hfil}|>{\hfil$}m{1.2cm}<{$\hfil}|>{\hfil$}m{1.2cm}<{$\hfil}|>{\hfil$}m{1.2cm}<{$\hfil}}
\square & 0 & \cdots	& 0 & 0 \\
\hline
\square & \square & \cdots	& 0 & 0 \\
\hline
\vdots & \vdots & \ddots	& \vdots & \vdots \\
\hline
\square & \square & \cdots	& \square & 0 \\
\hline
\square & \square & \cdots	& \square& \square 
\end{array}
\right)
\end{array}
\end{equation*}

Consider $f\in \Delta_{\ell}(T)$ and $\boldsymbol{\tau}\in \mathbb B_k\mathscr N^f(\mathbb S) + \mathbb B_k\Phi^f(\mathbb S)+\mathbb B_k \mathscr T^f(\mathbb S)$. 
The vanishing DoF~\eqref{eq:divfemRSkdof1} implies 
$\bs \tau\bs n_F \mid_f = 0$. 
Expand $\bs \tau$ as
\begin{equation*}
\bs\tau=\bs\tau_0+\sum_{i=0}^{{k-1\choose\ell}}q_i(b_f\boldsymbol{\tau}_{1,i}+b_f^{\rm R}\boldsymbol{\tau}_{2,i}),
\end{equation*}
where $\bs\tau_0\in\mathbb B_k \mathscr T^f(\mathbb S)$, $\boldsymbol{\tau}_{1,i} \in \mathbb P_0(T;\mathscr N^f(\mathbb S))$, $\boldsymbol{\tau}_{2,i} \in \Phi^f(\mathbb{S})$, and $\{q_i: i=1,\ldots,{k-1\choose\ell} \}$ is a basis of space $\mathbb P_{k-(\ell+1)}(f)$.
By $\bs \tau\bs n_F \mid_f = 0$, we have
\begin{equation*}
\sum_{i=0}^{{k-1\choose\ell}}(q_i|_f)(\boldsymbol{\tau}_{1,i}+\boldsymbol{\tau}_{2,i})\bs n_F \mid_f=0.
\end{equation*}
This yields $(\boldsymbol{\tau}_{1,i}+\boldsymbol{\tau}_{2,i})\bs n_F |_f=0$ for $i=1,\ldots,{k-1\choose\ell}$.
By Lemma \ref{lem:QNf}, we obtain $\boldsymbol{\tau}_{1,i}=\boldsymbol{\tau}_{2,i}=0$, then $\bs \tau\in\mathbb B_k \mathscr T^f(\mathbb S)$. The vanishing DoF~\eqref{eq:divfemRSkdof5} implies $\bs \tau = 0$ by the characterization of div bubble polynomial \eqref{eq:divbubble}. 
\end{proof}

We then merge the DoFs and define the global finite element space. First, by the geometric decomposition of the Lagrange element, \eqref{eq:divfemRSkdof1} can be merged into \eqref{eq:globaldivfemRSkdof1}.  
Similarly, the interior tangential-normal components \eqref{eq:divfemRSkdof4} are merged into \eqref{eq:globaldivfemRSkdof4}.

\begin{theorem}
Let $\mathcal T_h^{\rm R}$ be the barycentric refinement of a triangulation $\mathcal T_h$. For each $f \in \Delta_{\ell}(\mathring{T}^{\rm R})$, select a global normal basis $\{\bs n^f_{1}, \ldots, \bs n^f_{d-\ell}\}$, where the vectors are linearly independent. The following DoFs:
\begin{subequations}\label{eq:globaldivfemRSkdof}
\begin{align}
\label{eq:globaldivfemRSkdof1}
\int_F \bs \tau \bs n_F\cdot \bs q \dd s, &\quad F\in \Delta_{d-1}(\mathcal T_h), \bs q\in \mathbb P_{k}(F;\mathbb R^d),\\
\label{eq:globaldivfemRSkdof2}
\int_f((\bs n^f_i)^{\intercal}\boldsymbol{\tau}\bs n^f_j)\,{q}\dd s,
&\quad f\in \Delta_{\ell}(\mathcal{\mathring{T}}_h^{\rm R}), {q}\in \mathbb P_{k-(\ell +1)}(f), \\
&\quad 1\leq i\leq j\leq d-\ell, \; \ell = 0,\ldots, d-1, \notag\\
\label{eq:globaldivfemRSkdof4}
\int_F (\Pi_F\boldsymbol{\tau}\bs n_F)\cdot\boldsymbol{q}\dd s, &\quad F\in \Delta_{d-1}(\mathcal{\mathring{T}}_h^{\rm R}), \boldsymbol{q}\in \mathrm{ND}_{k-2}(F),\\
\label{eq:globaldivfemRSkdof5}
\int_T\bs \tau : \bs q \dx, &\quad T\in \Delta_d(\mathcal T_h^{\rm R}), \; \bs q\in \mathbb B_{k}(\div,T;\mathbb S)
\end{align}
\end{subequations}
determine a space $\Sigma_{k,\phi}^{\operatorname{div}}(\mathcal T_h^{\rm R}; \mathbb{S}) \subset H(\div,\Omega; \mathbb S)$.
\end{theorem}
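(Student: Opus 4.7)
The plan is to derive the global theorem from the local one (Theorem~\ref{thm:compositefemStensor}) by three operations: (i) regrouping the boundary DoFs on $\partial T$ into per-face DoFs, (ii) recognizing that shared sub-simplices of $\mathcal T_h$ now carry coincident DoFs on neighboring macro elements so the conformity at $F\in\Delta_{d-1}(\mathcal T_h)$ reduces to the single-valuedness requirement, and (iii) observing that on interior faces of $T^{\rm R}$ lying inside $T$, normal continuity is already enforced by membership in $\Sigma_{k,\phi}^{\operatorname{div}}(T^{\rm R};\mathbb S)$. The interior sub-simplices $f\in\Delta_\ell(\mathring T^{\rm R})$ and the element bubble DoFs are untouched by the globalization, so \eqref{eq:globaldivfemRSkdof2} and \eqref{eq:globaldivfemRSkdof5} are inherited verbatim from \eqref{eq:divfemRSkdof2} and \eqref{eq:divfemRSkdof5}.

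First I would treat the boundary DoFs~\eqref{eq:divfemRSkdof1}. On a face $F\in\Delta_{d-1}(T)$ with unit normal $\bs n_F$, the trace $(\bs\tau\bs n_F)|_F$ is a vector-valued polynomial in $\mathbb P_k(F;\mathbb R^d)$. Applying the standard geometric decomposition of the vector Lagrange element,
\[
\mathbb P_k(F;\mathbb R^d) \;=\; \Oplus_{\ell=0}^{d-1}\Oplus_{f\in\Delta_\ell(F)} b_f\mathbb P_{k-\ell-1}(f)\otimes\mathbb R^d,
\]
the collection of DoFs $\int_f \bs\tau\bs n_F\cdot\bs q\dd s$ for $f\subseteq F$ and $\bs q\in\mathbb P_{k-\ell-1}(f;\mathbb R^d)$ is linearly equivalent to the single face DoF $\int_F\bs\tau\bs n_F\cdot\bs q\dd s$ with $\bs q\in\mathbb P_k(F;\mathbb R^d)$. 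This is the mechanism behind~\eqref{eq:globaldivfemRSkdof1}, and it is precisely how~\eqref{HdivSfemk1momentdof1} was obtained from~\eqref{HdivSfemk1dof1} in the linear case.

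Next I would merge the interior tangential-normal DoFs~\eqref{eq:divfemRSkdof4} on faces $F\in\Delta_{d-1}(\mathring T^{\rm R})$. Exactly as in Remark~\ref{rm:tangential-normal}, the space of tangential-normal traces is $\Pi_F\bs\tau\bs n_F\in\mathbb P_k(F;\mathscr T^F)$, and by the geometric decomposition of $\mathrm{ND}_{k-2}(F)$ the sub-simplex DoFs on $f\in\Delta(F)$ bundle into the single face DoF~\eqref{eq:globaldivfemRSkdof4}. After this step the only DoFs that remain to be interpreted globally are \eqref{eq:globaldivfemRSkdof1}: these arise from sub-simplices $f\in\Delta(T)\cap\Delta(F)$ in two neighboring macro elements sharing $F\in\Delta_{d-1}(\mathcal T_h)$, and requiring the value $\int_F\bs\tau\bs n_F\cdot\bs q\dd s$ to be single-valued is by construction exactly the condition $[\bs\tau\bs n_F]|_F=0$, giving $\bs\tau\in H(\div,\Omega;\mathbb S)$.

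Finally, a dimension count closes the argument: the total number of merged global DoFs on one macro element $T$ equals
\[
d\binom{k+d-1}{k}\cdot(d+1) \;+\; \#\eqref{eq:globaldivfemRSkdof2}\;+\;\#\eqref{eq:globaldivfemRSkdof4}\;+\;\sum_{i=0}^{d}\dim\mathbb B_k(\div,T_i;\mathbb S),
\]
which by the regrouping above coincides with the total count of \eqref{eq:divfemRSkdof}, hence with $\dim\Sigma_{k,\phi}^{\operatorname{div}}(T^{\rm R};\mathbb S)$ from~\eqref{eq:dimSigmaDivk}. Unisolvence of the global DoFs on $\Sigma_{k,\phi}^{\operatorname{div}}(\mathcal T_h^{\rm R};\mathbb S)$ therefore reduces, element-by-element, to the local unisolvence already established in Theorem~\ref{thm:compositefemStensor}. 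The main obstacle, and the only place that requires real care, is the first regrouping: one must check that the boundary DoFs \eqref{eq:divfemRSkdof1} indexed by $f\in\Delta(F)$ really span the dual of $\mathbb P_k(F;\mathbb R^d)$ as a vector-valued polynomial space on $F$, rather than merely having the correct cardinality; this is where the geometric decomposition of the Lagrange element is essential and where the choice of $\bs n_F$ (as opposed to a face-intrinsic normal basis) makes the merging face-wise rather than sub-simplex-wise.
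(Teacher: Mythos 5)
Your proposal is correct and follows essentially the same route as the paper, which justifies this theorem only by the brief remark that \eqref{eq:divfemRSkdof1} merges into \eqref{eq:globaldivfemRSkdof1} via the geometric decomposition of the Lagrange element and \eqref{eq:divfemRSkdof4} merges into \eqref{eq:globaldivfemRSkdof4} as in Remark~\ref{rm:tangential-normal}, with unisolvence inherited from Theorem~\ref{thm:compositefemStensor}. Your elaboration of the key point --- that the sub-simplex moments on $f\subseteq F$ span the full dual of $\mathbb P_k(F;\mathbb R^d)$ (which is well defined since each outer face $F\in\Delta_{d-1}(T)$ lies in a single subelement $T_i$), so that single-valuedness of \eqref{eq:globaldivfemRSkdof1} is exactly the jump condition $[\bs\tau\bs n_F]|_F=0$ --- is precisely the content the paper leaves implicit.
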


\subsection{Reduced finite element space}
If we are only interested in constructing a div-conforming finite element space on the original coarse mesh $\mathcal T_h$, the DoFs on $f$ interior to $T$ can be removed. Using another characterization of the div bubble polynomial~\cite{Hu2015a} 
$$\mathbb B_{k}(\div,T;\mathbb S) = \mathbb P_{k-2}(T; \mathbb S)\otimes {\rm span} \{\lambda_i\lambda_j \bs t_{ij}\otimes \bs t_{ij}, 0\leq i<j\leq d\},$$ the element-wise DoFs can be also simplified.

\begin{corollary}\label{cor:reducedspace}
For $k\geq2$, define the reduced finite element space: 
\begin{align*}
\Sigma_{k, \phi}^{\operatorname{div}}(\mathcal T_h; \mathbb{S}) &= \{\boldsymbol{\tau}_h\in H(\div,\Omega;\mathbb S): \boldsymbol{\tau}_h|_T\in \Sigma_{k, \phi}^{\operatorname{div}}(T; \mathbb{S}) \, \text{ for } T\in\mathcal T_h\},
\end{align*}
where the local space is defined as
\begin{equation*}
\begin{aligned}
\Sigma_{k, \phi}^{\operatorname{div}}(T; \mathbb{S}) &= \mathbb P_k(T; \mathbb S)\oplus \Oplus_{\ell = 0}^{d-2}\Oplus_{f\in \Delta_{\ell}(T)}\mathbb B_k\Phi^f(\mathbb S)\\
&= \mathbb B_{k}(\div, T;\mathbb S) \oplus \Oplus_{\ell = 0}^{d-1}\Oplus_{f\in \Delta_{\ell}(T)} \big( \mathbb{B}_k\mathscr{N}^f(\mathbb{S}) \oplus \mathbb{B}_k\Phi^f(\mathbb{S}) \big).    
\end{aligned}
\end{equation*}
Then 
\begin{equation*}
\dim\Sigma_{k, \phi}^{\operatorname{div}}(T; \mathbb{S}) =
\frac{1}{2}d(d+1){k+d\choose d} + \frac{1}{2}d(d+1){k+d-2\choose d-2},
\end{equation*}
and the following DoFs uniquely determine $\Sigma_{k, \phi}^{\operatorname{div}}(\mathcal T_h; \mathbb{S})$:
\begin{subequations}\label{eq:globaldivfemdof}
\begin{align}
\label{eq:globaldivfemdof1}
\int_F \bs \tau \bs n_F\cdot \bs q \dd s,&\quad F\in \Delta_{d-1}(\mathcal T_h), \bs q\in \mathbb P_{k}(F;\mathbb R^d),\\
\label{eq:globaldivfemdof5}
\int_T\bs \tau : \bs q \dx, &\quad T\in \Delta_d(\mathcal T_h), \; \bs q\in \mathbb P_{k-2}(T;\mathbb S). 
\end{align}
\end{subequations}
\end{corollary}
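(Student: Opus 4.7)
\textbf{Proof plan for Corollary \ref{cor:reducedspace}.}

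My plan is to establish four things in order: (i) the equivalence of the two characterizations of $\Sigma_{k,\phi}^{\operatorname{div}}(T;\mathbb{S})$ and that the sum is direct; (ii) the dimension formula; (iii) $H(\operatorname{div},T;\mathbb{S})$-conformity; and (iv) unisolvence of the DoFs \eqref{eq:globaldivfemdof}, from which the global characterization follows in the standard way. The first three are fairly mechanical; the real work is in (iv), which I expect to be the main obstacle.

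For (i), I would substitute the $t$-$n$ decomposition $\mathbb{P}_k(T;\mathbb{S}) = \mathbb{B}_k(\div, T;\mathbb{S}) \oplus \Oplus_{\ell=0}^{d-1}\Oplus_{f\in\Delta_\ell(T)} \mathbb{B}_k\mathscr{N}^f(\mathbb{S})$ into the first expression and note that $\mathbb{B}_k\Phi^F(\mathbb{S}) = \{0\}$ for $F\in\Delta_{d-1}(T)$ (which is why the enrichment only runs up to $\ell = d-2$). To conclude that $\mathbb{B}_k\mathscr{N}^f(\mathbb{S}) \oplus \mathbb{B}_k\Phi^f(\mathbb{S})$ is direct, I would repeat the argument from Lemma \ref{lem:geodecP1}: restrict any relation to each $T_j$ with $j \in f^*$, multiply by suitable $\nabla\lambda_i$ on each side, and use the duality \eqref{eq:duality0} together with the symmetry/skew-symmetry mismatch between $\bs\sigma_0 \in \mathbb{P}_0(T;\mathscr{N}^f(\mathbb{S}))$ and the skew coefficients $c_{ij}$.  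For (iii), the inclusion $\mathbb{P}_k(T;\mathbb{S}) \subset H(\operatorname{div},T;\mathbb{S})$ is trivial, and each $b_f^{\rm R}\bs\phi_{ij}^f \in H(\operatorname{div},T;\mathbb{S})$ is exactly the lemma stated just before \eqref{eq:localsigmadivk}.

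For (ii), the dimension reduces to verifying
\[
\sum_{\ell=0}^{d-2}\binom{d+1}{\ell+1}\binom{d-\ell}{2}\binom{k-1}{\ell} = \frac{d(d+1)}{2}\binom{k+d-2}{d-2},
\]
which I would obtain by the multinomial identity $\binom{d+1}{\ell+1}\binom{d-\ell}{2} = \binom{d+1}{2}\binom{d-1}{\ell+1}$ followed by Vandermonde $\sum_{j\geq 1}\binom{d-1}{j}\binom{k-1}{j-1} = \binom{k+d-2}{d-2}$. Before attacking unisolvence I would also record the dimension identity $2\binom{k+d-1}{d-1} + \binom{k+d-2}{d} = \binom{k+d}{d} + \binom{k+d-2}{d-2}$, so that the number of DoFs in \eqref{eq:globaldivfemdof1}--\eqref{eq:globaldivfemdof5} matches $\dim \Sigma_{k,\phi}^{\operatorname{div}}(T;\mathbb{S})$.

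For the unisolvence step (iv), let $\bs\tau\in \Sigma_{k,\phi}^{\operatorname{div}}(T;\mathbb{S})$ have all vanishing DoFs and write $\bs\tau = \bs\tau_p + \bs\tau_e$ with $\bs\tau_p \in \mathbb{P}_k(T;\mathbb{S})$ and $\bs\tau_e \in \Oplus_{\ell,f} \mathbb{B}_k\Phi^f(\mathbb{S})$.  The outer face $F_m = F_c(T_m)$ lies entirely in $T_m$, so $(\bs\tau\bs n_{F_m})|_{F_m} \in \mathbb{P}_k(F_m;\mathbb{R}^d)$ and the vanishing of \eqref{eq:globaldivfemdof1} gives $\bs\tau\bs n_{F_m}|_{F_m} = 0$ for every $m$. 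The hard part is to disentangle $\bs\tau_p$ from $\bs\tau_e$. On $F_m$ the enrichment only contributes through $(f,i,j)$ with $f \subset F_m$ and $m \in \{i,j\}$; in that case the tangential factor $\bs t_{f(0),j}$ (with $j\in F_m$) is tangent to $F_m$ while $\bs t_{f(0),c}\cdot \bs n_{F_m} = -h_m/(d+1) \ne 0$. Thus $(\bs\tau_e\bs n_{F_m})|_{F_m}$ is a completely explicit linear combination of the tangential vectors $\bs t_{f(0),j}$ weighted by $q_{ij}^f b_f^{\rm R}|_{F_m}$. Combining with the continuity of $\bs\tau\bs n$ across interior faces $F_{ij}$ — exactly as in the proof of Lemma \ref{lem:QNf}, by expanding in the dual basis $\{\hat{\bs n}_{F_i}\}$, using $\bs t_{\ell,c}\cdot \bs n_{F_{ij}} = 0$ for $\ell\in\{i,j\}^*$ and \eqref{eq:ti+tj}, and exploiting the symmetry/skew-symmetry of the coefficients in three indices — I would force all $q_{ij}^f = 0$, hence $\bs\tau_e = 0$ and $\bs\tau = \bs\tau_p \in \mathbb{P}_k(T;\mathbb{S})$ with vanishing normal trace on $\partial T$. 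Then $\bs\tau \in H_0(\div, T;\mathbb{S}) \cap \mathbb{P}_k(T;\mathbb{S}) = \mathbb{B}_k(\div, T;\mathbb{S})$ by \eqref{eq:divbubble}, and the vanishing of the interior DoFs \eqref{eq:globaldivfemdof5} against $\mathbb{P}_{k-2}(T;\mathbb{S}) \supset \mathbb{B}_k(\div,T;\mathbb{S})$ closes the argument. The main obstacle is precisely this disentangling step; once carried out, global unisolvence on $\mathcal{T}_h$ follows because the face DoFs \eqref{eq:globaldivfemdof1} alone pin down the normal trace, delivering $H(\div,\Omega;\mathbb{S})$-conformity.
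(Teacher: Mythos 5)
Your overall architecture---directness of the sums, the two dimension identities, conformity, then unisolvence via trace vanishing plus interior moments---matches what the paper intends (it states this result as a corollary of Theorem \ref{thm:compositefemStensor}, Lemma \ref{lem:QNf} and the characterization of $\mathbb B_k(\div,T;\mathbb S)$ from \cite{Hu2015a}, without writing out a proof), and your combinatorial identities are all correct. However, step (iv) has two genuine gaps.

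First, the disentangling mechanism you propose does not work as stated. You want to force $q_{ij}^f=0$ by ``combining with the continuity of $\bs\tau\bs n$ across interior faces $F_{ij}$,'' but every element of $\Sigma_{k,\phi}^{\operatorname{div}}(T;\mathbb S)$---the polynomial part and each enrichment function $b_f^{\rm R}\bs\phi_{ij}^f$ separately---already lies in $H(\div,T;\mathbb S)$, so the jump $[\bs\tau\bs n]|_{F_{ij}}$ vanishes identically and carries no information about the coefficients. The information that actually separates $\bs\tau_p$ from $\bs\tau_e$ comes from restricting the identity $\bs\tau\bs n_F|_F=0$ to the subsimplices $f\in\Delta_\ell(T)$ in order of increasing $\ell$: by \eqref{eq:bfRprop} all blocks $f'$ with $f'\not\subseteq f$ vanish on $f$, the lower-dimensional blocks are already killed by induction, and what survives on $f$ is exactly $(\bs\tau_{1}+\bs\tau_{2})\bs n_{F_i}|_f$ with $\bs\tau_1\in\mathbb P_0(T;\mathscr N^f(\mathbb S))$ and $\bs\tau_2\in\Phi^f(\mathbb S)$---the precise hypothesis of Lemma \ref{lem:QNf}. (The interior-face continuity you cite is used only \emph{inside} the proof of that lemma, applied to the piecewise-constant coefficient tensors, not to $\bs\tau$ itself.) Working on the whole face $F_m$ without this subsimplex-by-subsimplex reduction, your relation $\Pi_{F_m}\bs\tau_p\bs n_{F_m}+\Pi_{F_m}\bs\tau_e\bs n_{F_m}=0$ mixes the two contributions and you have no further equation with which to close the system.

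Second, the closing claim ``$\mathbb P_{k-2}(T;\mathbb S)\supset\mathbb B_k(\div,T;\mathbb S)$'' is false: $\mathbb B_k(\div,T;\mathbb S)$ consists of degree-$k$ polynomials (for $k=2$ it contains $\lambda_i\lambda_j\bs t_{ij}\otimes\bs t_{ij}\notin\mathbb P_0(T;\mathbb S)$), so you cannot test $\bs\tau$ against itself. What is needed---and what the paper imports from \cite{Hu2015a} via $\mathbb B_{k}(\div,T;\mathbb S)=\mathbb P_{k-2}(T;\mathbb S)\otimes{\rm span}\{\lambda_i\lambda_j\bs t_{ij}\otimes\bs t_{ij}\}$---is the nontrivial fact that the pairing $\int_T\bs\tau:\bs q\dx$ between $\mathbb B_k(\div,T;\mathbb S)$ and $\mathbb P_{k-2}(T;\mathbb S)$ is nondegenerate. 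The weaker conclusion $\div\bs\tau=0$, obtained by testing with $\boldsymbol{\varepsilon}(\mathbb P_{k-1}(T;\mathbb R^d))\subset\mathbb P_{k-2}(T;\mathbb S)$ and integrating by parts, does not suffice because $\div$ has a large kernel on the bubble space.
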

In view of the face DoF \eqref{eq:globaldivfemdof1}, the element $\Sigma_{k, \phi}^{\operatorname{div}}(\mathcal T_h; \mathbb{S})$ is the generalization of Brezzi-Douglas-Marini/N\'ed\'elec (2nd kind) div-conforming vector element~\cite{BrezziDouglasMarini1985,Nedelec:1986family,BrezziDouglasDuranFortin1987} to div-conforming symmetric stress element. Such a construction is not possible using $\mathbb P_k(T; \mathbb S)$ alone, but can be achieved by enriching it with $\mathbb B_k\Phi^f(\mathbb S)$.

Finally, by increasing the bubble space, a Raviart-Thomas (RT)-type element~\cite{RaviartThomas1977} with an enriched range can also be constructed. When the RT-type element is used to discretize the mixed elasticity problem, the approximation of the divergence of the discrete stress will be one order higher.
\begin{corollary}[RT-type element for symmetric tensors]
For $k\geq1$, the space of shape functions
$$
\Sigma_{k, \phi}^{\operatorname{div},+}(T; \mathbb{S}):=\mathbb B_{k+1}(\div, T;\mathbb S) \oplus \Oplus_{\ell = 0}^{d-1}\Oplus_{f\in \Delta_{\ell}(T)} \big( \mathbb{B}_k\mathscr{N}^f(\mathbb{S}) \oplus \mathbb{B}_k\Phi^f(\mathbb{S}) \big)
$$
is uniquely determined by the DoFs
\begin{align*}
\int_F \bs \tau \bs n_F\cdot \bs q \dd s,&\quad F\in \Delta_{d-1}(\mathcal T_h), \bs q\in \mathbb P_{k}(F;\mathbb R^d),\\
\int_T\bs \tau : \bs q \dx, &\quad T\in \Delta_d(\mathcal T_h), \; \bs q\in \mathbb P_{k-1}(T;\mathbb S). 
\end{align*}
\end{corollary}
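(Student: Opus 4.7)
The plan is to adapt the proof of Corollary~\ref{cor:reducedspace}, since the only structural change from the BDM-type element $\Sigma_{k,\phi}^{\operatorname{div}}(T;\mathbb{S})$ is that the bubble summand is enlarged from $\mathbb{B}_k(\div,T;\mathbb{S})$ to $\mathbb{B}_{k+1}(\div,T;\mathbb{S})$, and correspondingly the interior test space from $\mathbb{P}_{k-2}(T;\mathbb{S})$ to $\mathbb{P}_{k-1}(T;\mathbb{S})$. The non-bubble summand $\bigoplus_{\ell=0}^{d-1}\bigoplus_{f\in\Delta_\ell(T)}(\mathbb{B}_k\mathscr{N}^f(\mathbb{S})\oplus \mathbb{B}_k\Phi^f(\mathbb{S}))$ and the face DoFs are unchanged. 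The three steps are: dimension count, vanishing of the non-bubble part from the face DoFs, and vanishing of the bubble part from the enlarged interior DoF.

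For the dimension count, I would invoke the characterization of the div bubble polynomial recalled just before Corollary~\ref{cor:reducedspace} to obtain $\dim \mathbb{B}_{k+1}(\div,T;\mathbb{S}) = \dim\mathbb{P}_{k-1}(T;\mathbb{S}) = \tfrac{d(d+1)}{2}\binom{k+d-1}{d}$. Combined with the unchanged face contribution $(d+1)\,d\binom{k+d-1}{d-1}$ and the unchanged non-bubble dimension from Corollary~\ref{cor:reducedspace}, this matches the total number of DoFs.

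For unisolvence, assume all DoFs vanish. Since $\mathbb{B}_{k+1}(\div,T;\mathbb{S})$ has zero trace on $\partial T$ by definition and every non-bubble piece $\mathbb{B}_k\mathscr{N}^f(\mathbb{S})\oplus \mathbb{B}_k\Phi^f(\mathbb{S})$ contributes a polynomial of degree at most $k$ on each face, the full trace $(\boldsymbol{\tau}\boldsymbol{n}_F)|_F$ lies in $\mathbb{P}_k(F;\mathbb{R}^d)$, so the face DoFs force $(\boldsymbol{\tau}\boldsymbol{n})|_{\partial T}=0$. Splitting $\boldsymbol{\tau}=\boldsymbol{\tau}_0+\boldsymbol{\tau}_1$ along the direct sum, $\boldsymbol{\tau}_1$ also has zero trace, and the sub-simplex-wise analysis from the proof of Theorem~\ref{thm:compositefemStensor} applies verbatim: the DoF-basis matrix is block diagonal thanks to \eqref{eq:bfRprop}, and on each block Lemma~\ref{lem:QNf} yields $\boldsymbol{\tau}_1|_f=0$. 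Thus $\boldsymbol{\tau}=\boldsymbol{\tau}_0\in\mathbb{B}_{k+1}(\div,T;\mathbb{S})$.

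The remaining task---and the main potential obstacle---is to conclude $\boldsymbol{\tau}_0=0$ from $\int_T \boldsymbol{\tau}_0:\boldsymbol{q}\dx=0$ for all $\boldsymbol{q}\in\mathbb{P}_{k-1}(T;\mathbb{S})$. Since both spaces have equal dimension, this amounts to non-degeneracy of the $L^2$-pairing. The route I would take is to use Hu's representation $\boldsymbol{\tau}_0=\sum_{i<j}\lambda_i\lambda_j p_{ij}\,\boldsymbol{t}_{ij}\otimes\boldsymbol{t}_{ij}$ with $p_{ij}\in\mathbb{P}_{k-1}(T)$ and test against a family in $\mathbb{P}_{k-1}(T;\mathbb{S})$ dual to the $p_{ij}$. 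The subtle point is that this representation is not unique, so care is required either in extracting an actual basis of $\mathbb{B}_{k+1}(\div,T;\mathbb{S})$ from the spanning set or in setting up the duality without choosing a basis. This is precisely the argument that closes the proof of Corollary~\ref{cor:reducedspace} with $k$ shifted to $k+1$ in the bubble, and no ingredients beyond those in~\cite{Hu2015a} are needed.
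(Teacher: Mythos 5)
Your proof is correct and follows exactly the route the paper intends: this corollary is stated without a separate proof, as an immediate consequence of Corollary~\ref{cor:reducedspace} (the face DoFs and the non-bubble summand are unchanged, so the trace argument and the blockwise application of Lemma~\ref{lem:QNf} carry over verbatim) together with the characterization of the div bubble space from~\cite{Hu2015a}, which supplies both the dimension identity $\dim\mathbb B_{k+1}(\div,T;\mathbb S)=\dim\mathbb P_{k-1}(T;\mathbb S)$ and the non-degeneracy of the interior pairing. Your only hesitation---possible non-uniqueness of the representation $\sum_{i<j}\lambda_i\lambda_j p_{ij}\,\bs t_{ij}\otimes\bs t_{ij}$---is moot: the dimension match forces that spanning set to be a basis, and in any case the unisolvence of $\int_T\bs\tau:\bs q\dx$ over $\bs q\in\mathbb P_{k-1}(T;\mathbb S)$ on $\mathbb B_{k+1}(\div,T;\mathbb S)$ is precisely the cited result, so no further argument is needed.
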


The bubble space $\mathbb B_{k+1}(\div, T;\mathbb S)$ can be further reduced as no need to enrich $\ker(\div)$
$$
\mathbb B_{k}^+(\div, T;\mathbb S):=\{\boldsymbol{\tau}\in \mathbb B_{k+1}(\div, T;\mathbb S): \boldsymbol{\tau}\in \mathbb B_{k}(\div, T;\mathbb S) \textrm{ when }\div\boldsymbol{\tau}=0\}.
$$
When $k=1,2$, we have $\mathbb B_{k}^+(\div, T;\mathbb S)=\mathbb B_{k+1}(\div, T;\mathbb S)$.
As interior DoFs can be eliminated element-wise, such reduction is not necessary.

%

\section{Inf-sup Conditions}\label{sec:infsup}
In this section, we establish the inf-sup conditions on various $H(\div)$-conforming finite element spaces defined in the previous section.

\subsection{Existing inf-sup conditions}
For each $T \in \mathcal{T}_h$, the range of the divergence operator on the bubble space of symmetric tensors~\cite{Hu2015a,HuZhang2015} is
\begin{equation}\label{eq:SdivBrsurjection}
\div \mathbb{B}_k(\div, T; \mathbb{S}) = \mathbb{P}_{k-1}(T;\mathbb{R}^d) \cap {\rm RM}(T)^{\perp},
\end{equation}
where ${\rm RM}(T)^{\perp}$ is the $L^2$-orthogonal complement of ${\rm RM}(T)$ in $L^2(T; \mathbb{R}^d)$. To guarantee that the image of the divergence operator equals $\mathbb{P}_{k-1}^{-1}(\mathcal{T}_h;\mathbb{R}^d)$, by the div stability \eqref{eq:SdivBrsurjection} on bubble spaces, it suffices to include the following face degrees of freedom in order to handle the rigid motion space ${\rm RM}(T)$:
\begin{align*}
\int_F (\boldsymbol{\tau}\bs{n}_{F}) \cdot \boldsymbol{q} \dd s, \quad \boldsymbol{q} \in \mathbb{P}_{1}(F;\mathbb{R}^d), \quad F \in \Delta_{d-1}(\mathcal{T}_h).
\end{align*}
When $k \geq d+1$, the degrees of freedom \eqref{eq:divfemSkdof2} and \eqref{eq:divfemSkdof4} include these face terms, leading to the following inf-sup condition. 

\begin{lemma}[Proposition 4.10 in~\cite{ChenHuang2024}]\label{lem:Sdiscretedivinfsup}
Let $k \geq d+1$ and $\Sigma_k^{\operatorname{div}}(\mathcal{T}_h;\mathbb{S})$ be defined as in Theorem~\ref{thm:femStensor}. The following discrete inf-sup condition holds:
\begin{equation}\label{eq:Sdiscretedivinfsup}
\|\boldsymbol{v}_h\|_0 \lesssim \sup_{\boldsymbol{\tau}_h \in \Sigma_k^{\operatorname{div}}(\mathcal{T}_h;\mathbb{S})} \frac{(\div\boldsymbol{\tau}_h, \boldsymbol{v}_h)}{\|\boldsymbol{\tau}_h\|_0+\|\div\boldsymbol{\tau}_h\|_0} \quad \forall~\boldsymbol{v}_h \in \mathbb{P}_{k-1}^{-1}(\mathcal{T}_h;\mathbb{R}^d).
\end{equation}
\end{lemma}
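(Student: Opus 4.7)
I would follow the classical Fortin argument built on the continuous inf-sup for the stress--displacement pair. The starting point is that $\div : H(\div,\Omega;\mathbb{S}) \to L^2(\Omega;\mathbb{R}^d)$ admits a bounded right inverse (e.g., solve the Dirichlet elasticity problem $-\div\sym\nabla \bs u = \bs v$ and set $\bs \tau = \sym\nabla \bs u$). Consequently, for any $\bs v_h \in \mathbb{P}_{k-1}^{-1}(\mathcal{T}_h;\mathbb{R}^d)$ there exists $\bs \tau \in H(\div,\Omega;\mathbb{S})$ with $\div \bs \tau = \bs v_h$ and $\|\bs \tau\|_{H(\div)} \lesssim \|\bs v_h\|_0$.

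The core task is to construct a bounded Fortin interpolant $\Pi_h : H^1(\Omega;\mathbb{S}) \cap H(\div,\Omega;\mathbb{S}) \to \Sigma_k^{\operatorname{div}}(\mathcal{T}_h;\mathbb{S})$ satisfying the commuting property $\div \Pi_h \bs \tau = Q_{k-1,h}\div\bs \tau$. I would build $\Pi_h$ in two stages. Stage (a): define $\Pi_h^{(a)}\bs \tau$ by matching the subsimplex moment DoFs \eqref{eq:divfemSkdof2}--\eqref{eq:divfemSkdof4} of $\bs \tau$ and setting the bubble DoFs \eqref{eq:divfemSkdof5} to zero. Since $\Pi_h^{(a)}\bs \tau$ is symmetric and $\sym\nabla\bs q = 0$ for $\bs q \in \mathrm{RM}(T)$, integration by parts gives $(\div \Pi_h^{(a)}\bs \tau, \bs q)_T = \sum_{F \subset \partial T}\int_F(\Pi_h^{(a)}\bs \tau)\bs n_F \cdot \bs q \dd s$, and likewise for $\bs \tau$. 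The hypothesis $k \geq d+1$ enters precisely here: for every $f \in \Delta_\ell(F)$ with $0 \leq \ell \leq d-1$ the DoF polynomial degree satisfies $k-(\ell+1) \geq d-\ell \geq 1$, so the subsimplexwise geometric decomposition of $(\bs \tau\bs n_F)|_F \in \mathbb{P}_k(F;\mathbb{R}^d)$ includes moments of degree at least $1$ at every subsimplex, in particular at $F$ itself, which together reconstruct $\int_F (\bs \tau\bs n_F)\cdot \bs q \dd s$ for every $\bs q \in \mathbb{P}_1(F;\mathbb{R}^d) \supset \mathrm{RM}(T)|_F$. It follows that $\div\bs \tau - \div\Pi_h^{(a)}\bs \tau$ is $L^2$-orthogonal to $\mathrm{RM}(T)$ on every element. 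Stage (b): invoke the key identity \eqref{eq:SdivBrsurjection} to obtain $\bs b_T \in \mathbb{B}_k(\div,T;\mathbb{S})$ with $\div\bs b_T = Q_{k-1,T}(\div\bs \tau - \div\Pi_h^{(a)}\bs \tau)$, which is solvable because the right-hand side lies in $\mathbb{P}_{k-1}(T;\mathbb{R}^d) \cap \mathrm{RM}(T)^\perp$ by Stage (a). Setting $\Pi_h\bs \tau := \Pi_h^{(a)}\bs \tau + \sum_T \bs b_T$ then gives the commuting relation $\div \Pi_h\bs \tau = Q_{k-1,h}\div\bs \tau$.

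The principal obstacle is the boundedness of $\Pi_h$ on low-regularity inputs, since elements of $H(\div,\Omega;\mathbb{S})$ do not possess pointwise or facewise $L^2$ traces and the nodal DoFs of Theorem~\ref{thm:femStensor} are not directly defined on them. The standard remedy is to precompose with a Sch\"oberl or Falk--Winther type smoothing operator $R_h$ that maps $H(\div,\Omega;\mathbb{S})$ boundedly into $H^1(\Omega;\mathbb{S})$ uniformly in $h$, preserves $\Sigma_k^{\operatorname{div}}(\mathcal{T}_h;\mathbb{S})$, and commutes with $\div$ up to an error absorbable by the Stage (b) bubble correction; a reference-element scaling then yields $\|\Pi_h R_h \bs \tau\|_{H(\div)} \lesssim \|\bs \tau\|_{H(\div)}$. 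With such a bounded projector in hand, Fortin's criterion combined with the continuous inf-sup concludes: taking $\bs \tau_h := \Pi_h R_h \bs \tau$ gives $(\div \bs \tau_h, \bs v_h) = (Q_{k-1,h}\div\bs \tau, \bs v_h) = (\div\bs \tau, \bs v_h) = \|\bs v_h\|_0^2$ and $\|\bs \tau_h\|_{H(\div)} \lesssim \|\bs v_h\|_0$, yielding \eqref{eq:Sdiscretedivinfsup}.
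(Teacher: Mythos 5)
Your overall architecture matches the paper's: produce a right inverse of $\div$ at the continuous level, interpolate using the boundary DoFs so that the elementwise divergence error is orthogonal to $\mathrm{RM}(T)$, and correct with a div bubble via \eqref{eq:SdivBrsurjection}. You also correctly locate where $k\geq d+1$ enters (the face DoFs must contain moments against $\mathbb{P}_1(F;\mathbb{R}^d)$, which at $\ell=d-1$ forces $k-d\geq 1$). However, there is a genuine gap at the step you yourself flag as the principal obstacle, and the remedy you propose is not the one that works here. First, your right inverse ($\bs\tau=\sym\nabla\bs u$ from the Dirichlet elasticity problem) only yields $\bs\tau\in H(\div,\Omega;\mathbb{S})$, so none of the DoFs of Theorem~\ref{thm:femStensor} are defined on it. The paper instead invokes the stronger result of Arnold--Hu that $\div:H^1(\Omega;\mathbb{S})\to L^2(\Omega;\mathbb{R}^d)$ has a bounded right inverse, i.e., $\bs\tau\in H^1(\Omega;\mathbb{S})$ with $\|\bs\tau\|_1\lesssim\|\bs v_h\|_0$; this is the key input that makes the construction elementary. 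Second, even granting $H^1$ regularity, your Stage~(a) matches \emph{all} of the DoFs \eqref{eq:divfemSkdof2}--\eqref{eq:divfemSkdof4}, which at $\ell=0$ are point values at vertices and are not bounded on $H^1$ for $d\geq 2$. The paper's $\widetilde{\bs\tau}_h$ matches \emph{only} the face moments $\int_F(\bs\tau\bs n_F)\cdot\bs q\,{\rm d}s$ for $\bs q\in\mathbb{P}_1(F;\mathbb{R}^d)$ and sets every other DoF to zero; these moments are bounded functionals on $H^1(T)$ by the trace theorem, and they are exactly what is needed for the $\mathrm{RM}(T)^{\perp}$ orthogonality, so the scaling bound $\|\widetilde{\bs\tau}_h\|_0+\|\div\widetilde{\bs\tau}_h\|_0\lesssim\|\bs\tau\|_1$ follows directly.

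Your proposed escape route, precomposing with a Sch\"oberl or Falk--Winther type commuting smoother for $H(\div,\Omega;\mathbb{S})$, cannot simply be cited: such operators are constructed for the de Rham complex, and their extension to the elasticity (symmetric stress) complex is a nontrivial open construction, not an off-the-shelf tool. So as written the proof does not close. The fix is exactly the paper's two adjustments: use the $H^1$-bounded right inverse of $\div$ on symmetric tensors, and define the boundary interpolant through the low-order face moments only. With those changes your Stage~(b) bubble correction and the concluding Fortin argument go through verbatim, and indeed $\div\bs\tau_h=\bs v_h$ holds exactly rather than only after projection.
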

\begin{proof}
Given $\boldsymbol{v}_h \in \mathbb{P}_{k-1}^{-1}(\mathcal{T}_h;\mathbb{R}^d)$, there exists $\boldsymbol{\tau} \in H^1(\Omega;\mathbb{S})$ such that (cf.~\cite{ArnoldHu2021})
\begin{equation}\label{eq:20221009-1}
\div\boldsymbol{\tau} = \boldsymbol{v}_h, \quad \|\boldsymbol{\tau}\|_1 \lesssim \|\boldsymbol{v}_h\|_0.
\end{equation}

Using the degrees of freedom \eqref{eq:divfemSkdof2} and \eqref{eq:divfemSkdof4}, construct $\widetilde{\boldsymbol{\tau}}_h \in \Sigma_k^{\operatorname{div}}(\mathcal{T}_h;\mathbb{S})$ such that
\begin{align*}
\int_F (\widetilde{\boldsymbol{\tau}}_h\bs{n}_{F}) \cdot \boldsymbol{q} \dd s &= \int_F (\boldsymbol{\tau}\bs{n}_{F}) \cdot \boldsymbol{q} \dd s, \quad \boldsymbol{q} \in \mathbb{P}_{1}(F;\mathbb{R}^d), \quad F \in \Delta_{d-1}(\mathcal{T}_h),
\end{align*}
and other degrees of freedom vanish. By a scaling argument,
\begin{equation}\label{eq:20221009-2}
\|\widetilde{\boldsymbol{\tau}}_h\|_0 + \|\div\widetilde{\boldsymbol{\tau}}_h\|_0 \lesssim \|\boldsymbol{\tau}\|_1 \lesssim \|\boldsymbol{v}_h\|_0.
\end{equation}

Next, integration by parts shows that $\div(\widetilde{\boldsymbol{\tau}}_h - \boldsymbol{\tau})|_T \in \mathbb{P}_{k-1}(T;\mathbb{R}^d) \cap {\rm RM}(T)^{\perp}$ for each $T \in \mathcal{T}_h$. By \eqref{eq:SdivBrsurjection}, there exists $\bs{b}_h \in L^2(\Omega;\mathbb{S})$ with $\bs{b}_h|_T \in \mathbb{B}_k(\div, T; \mathbb{S})$ such that
\begin{equation}\label{eq:20221009-3}
\div \bs{b}_h = \div(\boldsymbol{\tau} - \widetilde{\boldsymbol{\tau}}_h), \quad \|\bs{b}_h\|_{0,T} \lesssim h_T\|\div(\widetilde{\boldsymbol{\tau}}_h - \boldsymbol{\tau})\|_{0,T}.
\end{equation}

Define $\boldsymbol{\tau}_h = \widetilde{\boldsymbol{\tau}}_h + \bs{b}_h \in \Sigma_k^{\operatorname{div}}(\mathcal{T}_h;\mathbb{S})$. By \eqref{eq:20221009-1} and \eqref{eq:20221009-3},
\begin{equation}\label{eq:20221009-4}
\div \boldsymbol{\tau}_h = \div\widetilde{\boldsymbol{\tau}}_h + \div \bs{b}_h = \div\boldsymbol{\tau} = \boldsymbol{v}_h.
\end{equation}

Finally, from \eqref{eq:20221009-2} and \eqref{eq:20221009-3},
\begin{align}
\|\boldsymbol{\tau}_h\|_0+\|\div\boldsymbol{\tau}_h\|_0&=\|\boldsymbol{\tau}_h\|_0+\|\boldsymbol{v}_h\|_0\leq \|\widetilde{\boldsymbol{\tau}}_h\|_0+\|\bs b_h\|_0+\|\boldsymbol{v}_h\|_0 \notag\\
&\lesssim \|\widetilde{\boldsymbol{\tau}}_h\|_0+h\|\div\widetilde{\boldsymbol{\tau}}_h\|_{0}+\|\boldsymbol{v}_h\|_0\lesssim\|\boldsymbol{v}_h\|_0. \label{eq:20221009-5}
\end{align}
Combining \eqref{eq:20221009-4} and \eqref{eq:20221009-5} yields \eqref{eq:Sdiscretedivinfsup}.
\end{proof}

The requirement $k\geq d+1$ is restrictive, especially in high dimensions. We now consider the case $k = 2, \ldots, d$. By redistribution of tangential-normal DoFs facewisely (see Remark \ref{rm:tangential-normal}), we do have enough DoFs for the tangential-normal component
\begin{equation}\label{eq:tndofinfsup}
\int_F (\Pi_F\boldsymbol{\tau}\bs n_F)\cdot\boldsymbol{q}\dd s, \quad F\in \Delta_{d-1}(\mathcal T_h), \boldsymbol{q}\in \textrm{ND}_{k-2}(F).
\end{equation}
For $k\geq 2$, we have DoFs for  {\rm RM}($F$). To cover  {\rm RM}($T$), we only need to add normal-normal DoFs on faces. 
Following the notation \eqref{eq:BkV}, we introduce the space
\begin{equation*}
\mathbb B_{d+1}\mathbb S(\mathscr N^F) = {\rm span} \{ b_Fp\bs n_F\otimes \bs n_F, p\in \mathbb P_1(F)\} \subset \mathbb P_{d+1}(T;\mathbb S),
\end{equation*}
which can be determined by the DoFs
$$
\int_F \bs n_F^{\intercal}\bs \tau \bs n_F \, p\dd s, \quad p\in \mathbb P_1(F).
$$
Together with \eqref{eq:tndofinfsup}, we can ensure ${\rm RM}(T)$ is in the range of div operator.

Notice that the degree of $\div \mathbb B_{d+1}\mathbb S(\mathscr N^F)$ is higher than $k-1$ as $k\leq d$. Following~\cite{ChristHu2018Generalized,GuzmanNeilan2018Inf-sup}, we can modify the normal-normal bubble function to reduce the degree of its range.

\begin{lemma}
For any $\bs b^{nn} \in \mathbb B_{d+1}\mathbb S(\mathscr N^F)$, there exists a $\bs \beta^{nn} = {\rm Ext}(\bs b^{nn})\in \mathbb P_{d+1}(T; \mathbb S)$ such that  
$$
\bs \beta^{nn}\bs n \mid_{\partial T} = \bs b^{nn}\bs n \mid_{\partial T}, \quad \div \bs \beta^{nn} \in {\rm RM}(T),
$$
and
$$
\|\bs \beta^{nn}\|_{H(\div,T)} \lesssim \|\bs b^{nn}\|_{H(\div,T)}.
$$ 
\end{lemma}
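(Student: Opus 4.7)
The shape function $\bs b^{nn} = b_F p\,\bs n_F\otimes\bs n_F$ already has the correct trace on $\partial T$ for free: since $b_F=\prod_{i\in F}\lambda_i$ vanishes on every face $F'\neq F$, we have $\bs b^{nn}\bs n|_{F'}=0$, while on $F$ itself the only contribution to $\bs b^{nn}\bs n$ is $b_F p\,\bs n_F$. Hence the strategy is to construct a correction $\bs c\in H_0(\div,T;\mathbb S)\cap\mathbb P_{d+1}(T;\mathbb S)$, which by definition leaves the trace untouched, and set $\bs\beta^{nn}:=\bs b^{nn}+\bs c$. The correction $\bs c$ must cancel the high-degree portion of $\div\bs b^{nn}$, leaving only a rigid motion.

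Concretely, a direct computation gives
\begin{equation*}
\div\bs b^{nn} \;=\; (\bs n_F\otimes\bs n_F)\nabla(b_F p) \;=\; \bigl(\bs n_F\cdot\nabla(b_F p)\bigr)\bs n_F \;\in\; \mathbb P_{d}(T;\mathbb R^d).
\end{equation*}
Let $Q_{\rm RM}:L^2(T;\mathbb R^d)\to{\rm RM}(T)$ denote the $L^2$-projection onto rigid motions, so that
\begin{equation*}
\div\bs b^{nn} - Q_{\rm RM}(\div\bs b^{nn}) \;\in\; \mathbb P_{d}(T;\mathbb R^d)\cap{\rm RM}(T)^{\perp}.
\end{equation*}
By the bubble surjection \eqref{eq:SdivBrsurjection}, applied with $k=d+1$,
\begin{equation*}
\div\,\mathbb B_{d+1}(\div,T;\mathbb S) \;=\; \mathbb P_{d}(T;\mathbb R^d)\cap{\rm RM}(T)^{\perp},
\end{equation*}
so there exists $\bs c\in\mathbb B_{d+1}(\div,T;\mathbb S)$ with $\div\bs c = Q_{\rm RM}(\div\bs b^{nn}) - \div\bs b^{nn}$. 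Because $\bs c$ lies in the div-bubble space $H_0(\div,T;\mathbb S)\cap\mathbb P_{d+1}(T;\mathbb S)$, its normal trace on $\partial T$ vanishes. Setting $\bs\beta^{nn}:=\bs b^{nn}+\bs c$ therefore yields $\bs\beta^{nn}\bs n|_{\partial T}=\bs b^{nn}\bs n|_{\partial T}$ and $\div\bs\beta^{nn}=Q_{\rm RM}(\div\bs b^{nn})\in{\rm RM}(T)$.

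For the stability estimate, the key is to select $\bs c$ with a norm controlled by $\|\div\bs b^{nn}\|_0$. On the reference element the divergence map
\begin{equation*}
\div:\mathbb B_{d+1}(\div,\widehat T;\mathbb S)\to \mathbb P_{d}(\widehat T;\mathbb R^d)\cap{\rm RM}(\widehat T)^{\perp}
\end{equation*}
is a surjection between finite-dimensional spaces, so it admits a bounded right inverse; the usual affine scaling then gives $\|\bs c\|_{0,T}\lesssim h_T\|\div\bs c\|_{0,T}$ and in particular
\begin{equation*}
\|\bs c\|_{H(\div,T)} \;\lesssim\; \|\div\bs b^{nn}\|_{0,T} \;\leq\; \|\bs b^{nn}\|_{H(\div,T)}.
\end{equation*}
Combining with the triangle inequality yields $\|\bs\beta^{nn}\|_{H(\div,T)}\lesssim\|\bs b^{nn}\|_{H(\div,T)}$, as required. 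The map $\bs b^{nn}\mapsto\bs\beta^{nn}$ can be made linear by fixing one bounded right inverse of $\div$, which defines the extension operator $\operatorname{Ext}$.

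The only nontrivial step is the existence of $\bs c$ with the desired divergence, which is exactly the content of the bubble surjection \eqref{eq:SdivBrsurjection}; once that is invoked the rest is bookkeeping and scaling. I anticipate no obstacle beyond writing the computation of $\div\bs b^{nn}$ cleanly and citing \eqref{eq:SdivBrsurjection} at the correct degree.
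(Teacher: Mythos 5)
Your proposal is correct and follows essentially the same route as the paper: correct $\bs b^{nn}$ by a div-bubble $\bs c\in\mathbb B_{d+1}(\div,T;\mathbb S)$ whose divergence equals $-(I-Q_{\rm RM})\div\bs b^{nn}$, which exists by the bubble surjection \eqref{eq:SdivBrsurjection} at degree $d+1$, and conclude stability by scaling. The paper's proof is identical in substance (it writes $\bs\beta^{nn}=\bs b^{nn}-\bs b_0$ with $\div\bs b_0=(I-Q_{\rm RM})\div\bs b^{nn}$); your explicit computation of $\div\bs b^{nn}$ and of the bounded right inverse on the reference element merely fills in details the paper leaves to "scaling arguments."
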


\begin{proof}
Consider the function
$$
\boldsymbol{p} = (I - Q_{\rm RM})\div \bs b^{nn} \bot \, {\rm RM}(T).
$$
Then, by \eqref{eq:SdivBrsurjection}, we can find a bubble polynomial $\bs b_0 \in \mathbb B_{d+1}(\div,T; \mathbb S)$ such that $\div \bs b_0 = \boldsymbol{p}$ and $\bs b_0 \bs n \mid_{\partial T} = 0$. Let $\bs \beta^{nn} = \bs b^{nn} - \bs b_0$. Then $\bs \beta^{nn} \bs n \mid_{\partial T} = \bs b^{nn} \bs n \mid_{\partial T}$, and 
$$
\div \bs \beta^{nn} = \div \bs b^{nn} - \div \bs b_0 = Q_{\rm RM} \div \bs b^{nn} \in {\rm RM}(T).
$$
The stability follows from the scaling arguments.
\end{proof}

For $F\in \Delta_{d-1}(\mathcal T_h)$, for each $T$ containing $F$, we extend $\mathbb B_{d+1}\mathbb S(\mathscr N^F)$ to $T$ by using ${\rm Ext}$ operator elementwise. Then normal-normal components of the bubble space are well-defined across adjacent elements and maintain consistency within the mesh.

We now establish the following inf-sup condition by adding the normal-normal bubble functions. Similar enrichment strategies can be found in~\cite{HuangZhangZhouZhu2024}.

\begin{proposition}\label{prop:divSigmabar}
Let $\Sigma_{k, nn}^{\operatorname{div}}(\mathcal T_h; \mathbb{S}) = \Sigma_k^{\operatorname{div}}(\mathcal T_h; \mathbb{S}) + \Oplus_{F \in \Delta_{d-1}(\mathcal T_h)} {\rm Ext}(\mathbb B_{d+1}\mathbb S(\mathscr N^F))$. For $k \geq 2$, the divergence operator  
$$
\div: \Sigma_{k, nn}^{\operatorname{div}}(\mathcal T_h; \mathbb{S}) \to \mathbb P_{k-1}^{-1}(\mathcal T_h; \mathbb R^d)
$$
is surjective, and the following inf-sup condition holds:
\begin{equation*}
\|\boldsymbol{v}_h\|_0 \lesssim \sup_{\boldsymbol{\tau}_h \in \Sigma_{k, nn}^{\operatorname{div}}(\mathcal T_h; \mathbb{S})} \frac{(\div\boldsymbol{\tau}_h, \boldsymbol{v}_h)}{\|\boldsymbol{\tau}_h\|_0 + \|\div\boldsymbol{\tau}_h\|_0} \quad \forall~\boldsymbol{v}_h \in \mathbb P_{k-1}^{-1}(\mathcal T_h; \mathbb R^d).
\end{equation*}
\end{proposition}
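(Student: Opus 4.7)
The plan is to emulate the three-step Fortin construction in the proof of Lemma~\ref{lem:Sdiscretedivinfsup}, namely regular decomposition, boundary-matching interpolation, and bubble correction, the difference being that the normal-normal enrichment allows one to drop the requirement $k\geq d+1$ and replace it by $k\geq 2$. Starting from $\boldsymbol{v}_h \in \mathbb{P}_{k-1}^{-1}(\mathcal{T}_h;\mathbb{R}^d)$, I would invoke the existence~\cite{ArnoldHu2021} of $\boldsymbol{\tau}\in H^1(\Omega;\mathbb{S})$ with $\div\boldsymbol{\tau}=\boldsymbol{v}_h$ and $\|\boldsymbol{\tau}\|_1\lesssim\|\boldsymbol{v}_h\|_0$. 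The target is to construct $\widetilde{\boldsymbol{\tau}}_h\in\Sigma_{k,nn}^{\div}(\mathcal{T}_h;\mathbb{S})$ whose boundary moments reproduce those of $\boldsymbol{\tau}$ against rigid motions, that is, for every $T\in\mathcal{T}_h$ and $\boldsymbol{q}\in\mathrm{RM}(T)$,
\begin{equation*}
\int_{\partial T}((\widetilde{\boldsymbol{\tau}}_h-\boldsymbol{\tau})\bs n)\cdot\boldsymbol{q}\,\mathrm{d}s=0.
\end{equation*}
Integration by parts, combined with the symmetry of $\widetilde{\boldsymbol{\tau}}_h-\boldsymbol{\tau}$ paired against the constant skew gradient of $\boldsymbol{q}$, then forces $\div(\widetilde{\boldsymbol{\tau}}_h-\boldsymbol{\tau})\mid_T\perp\mathrm{RM}(T)$, which enables a bubble correction via \eqref{eq:SdivBrsurjection}.

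For the boundary matching, I would use the split $\boldsymbol{q}\mid_F=\Pi_F\boldsymbol{q}+(\boldsymbol{q}\cdot\bs n_F)\bs n_F$ on each face $F\subset\partial T$. A short algebraic check (writing the skew-linear part of $\boldsymbol{q}$ and projecting) shows $\Pi_F\mathrm{RM}(T)\subseteq\mathrm{RM}(F)$, and $\mathrm{RM}(F)\subseteq\mathrm{ND}_{k-2}(F)$ precisely when $k\geq 2$, so the tangential contribution is controlled by the redistributed tangential-normal DoFs \eqref{eq:tndofinfsup}. The normal scalar $\boldsymbol{q}\cdot\bs n_F$ is linear on $F$, which is exactly the test space of the new moments $\int_F \bs n_F^{\intercal}\boldsymbol{\tau}\bs n_F\,p\,\mathrm{d}s$ with $p\in\mathbb{P}_1(F)$ that $\mathrm{Ext}(\mathbb{B}_{d+1}\mathbb{S}(\mathscr{N}^F))$ is designed to excite. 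Matching these two families of moments of $\widetilde{\boldsymbol{\tau}}_h$ to those of $\boldsymbol{\tau}$ and setting the remaining DoFs to zero yields $\widetilde{\boldsymbol{\tau}}_h\in\Sigma_{k,nn}^{\div}(\mathcal{T}_h;\mathbb{S})$ with $\|\widetilde{\boldsymbol{\tau}}_h\|_0+\|\div\widetilde{\boldsymbol{\tau}}_h\|_0\lesssim\|\boldsymbol{\tau}\|_1\lesssim\|\boldsymbol{v}_h\|_0$ by standard scaling, with the $H(\div)$-bound on the $\mathrm{Ext}$ contribution inherited from the extension lemma preceding the proposition.

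For the bubble correction, the decisive degree bookkeeping is that $\div\widetilde{\boldsymbol{\tau}}_h\mid_T\in\mathbb{P}_{k-1}(T;\mathbb{R}^d)$: the $\Sigma_k^{\div}$ part has divergence of degree $k-1$, while $\div\mathrm{Ext}(\mathbb{B}_{d+1}\mathbb{S}(\mathscr{N}^F))\subset\mathrm{RM}(T)\subset\mathbb{P}_1\subset\mathbb{P}_{k-1}$ since $k\geq 2$. Hence $\div(\widetilde{\boldsymbol{\tau}}_h-\boldsymbol{\tau})\mid_T\in\mathbb{P}_{k-1}(T;\mathbb{R}^d)\cap\mathrm{RM}(T)^{\perp}$, and \eqref{eq:SdivBrsurjection} delivers $\bs b_h\mid_T\in\mathbb{B}_k(\div,T;\mathbb{S})$ with $\div\bs b_h=\div(\boldsymbol{\tau}-\widetilde{\boldsymbol{\tau}}_h)$ and $\|\bs b_h\|_{0,T}\lesssim h_T\|\div(\widetilde{\boldsymbol{\tau}}_h-\boldsymbol{\tau})\|_{0,T}$. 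Setting $\boldsymbol{\tau}_h:=\widetilde{\boldsymbol{\tau}}_h+\bs b_h$ produces the Fortin operator, and the triangle inequality, as in \eqref{eq:20221009-5}, closes the inf-sup bound. The main obstacle I anticipate is the clean verification that the selected DoFs span the full restriction $\mathrm{RM}(T)\mid_{\partial T}$; this reduces to the two algebraic facts $\Pi_F\mathrm{RM}(T)\subseteq\mathrm{RM}(F)$ and the inclusion $\mathrm{RM}(F)\subseteq\mathrm{ND}_{k-2}(F)$, the latter of which fails for $k=1$ and is exactly why the threshold $k\geq 2$ is sharp.
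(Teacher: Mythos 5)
Your proposal is correct and follows exactly the route the paper intends: the paper states this proposition without a written proof, relying on the preceding discussion (the redistributed tangential-normal DoFs \eqref{eq:tndofinfsup} covering $\mathrm{RM}(F)$, the normal-normal moments $\int_F \bs n_F^{\intercal}\bs\tau\bs n_F\,p\,\mathrm{d}s$ covering the normal component, and the $\mathrm{Ext}$ lemma keeping the divergence in $\mathrm{RM}(T)$), all assembled by the same three-step Fortin argument as Lemma~\ref{lem:Sdiscretedivinfsup}. Your explicit verifications that $\Pi_F\mathrm{RM}(T)\subseteq\mathrm{RM}(F)\subseteq\mathrm{ND}_{k-2}(F)$ for $k\geq 2$ and that $\bs q\cdot\bs n_F\in\mathbb{P}_1(F)$ are precisely the facts the paper leaves implicit.
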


As noted, the degrees of freedom \eqref{eq:globaldivfemRSkdof2} of $\bs n_i^{\intercal} \bs \tau \bs n_j$ on sub-simplices $f$ introduce constraints that prevent hybridization. 

\subsection{Inf-sup condition on the barycentric refinement}
After redistributing the degrees of freedom to the faces of the coarse element, we only need to add the normal-normal bubble functions for the interior faces to retain the inf-sup condition.

\begin{theorem}\label{th:divSigmabar}
Let
\begin{align*}
\Sigma_{k, \phi, nn}^{\operatorname{div}}(\mathcal T_h^{\rm R}; \mathbb{S}) &= \Sigma_{k,\phi}^{\operatorname{div}}(\mathcal T_h^{\rm R}; \mathbb{S}) + \Oplus_{F\in \Delta_{d-1}(\mathring{\mathcal T}_h^{\rm R})}{\rm Ext}(\mathbb B_{d+1}\mathbb S(\mathscr N^F)) \\
&= \{\boldsymbol{\tau}_h\in H(\div,\Omega;\mathbb S): \boldsymbol{\tau}_h|_T\in\Sigma_{k, \phi, nn}^{\operatorname{div}}(T^{\rm R}; \mathbb{S})\; \textrm{ for } T\in\mathcal{T}_h\},
\end{align*}
where
\begin{equation}\label{eq:localsigmadivknn}
\Sigma_{k, \phi, nn}^{\operatorname{div}}(T^{\rm R}; \mathbb{S}) = \Sigma_{k,\phi}^{\operatorname{div}}(T^{\rm R}; \mathbb{S}) + \Oplus_{F\in \Delta_{d-1}(\mathring{T}^{\rm R})}{\rm Ext}(\mathbb B_{d+1}\mathbb S(\mathscr N^F)).
\end{equation}
For $k \geq 2$, the divergence operator 
$$
\div:  \Sigma_{k, \phi, nn}^{\operatorname{div}}(\mathcal T_h^{\rm R}; \mathbb{S}) \to \mathbb P_{k-1}^{-1}(\mathcal T_h^{\rm R}; \mathbb R^d)
$$
is surjective, and the following inf-sup condition holds:
\begin{equation}\label{eq:infsupSigmabar}
\|\boldsymbol{v}_h\|_0 \lesssim \sup_{\boldsymbol{\tau}_h \in \Sigma_{k, \phi, nn}^{\operatorname{div}}(\mathcal T_h^{\rm R}; \mathbb{S})} \frac{(\div\boldsymbol{\tau}_h, \boldsymbol{v}_h)}{\|\boldsymbol{\tau}_h\|_0 + \|\div\boldsymbol{\tau}_h\|_0} \quad \forall~\boldsymbol{v}_h \in \mathbb P_{k-1}^{-1}(\mathcal T_h^{\rm R};\mathbb R^d).
\end{equation}
\end{theorem}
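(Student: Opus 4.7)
The plan is to adapt the interpolate-plus-bubble construction used in the proofs of Lemma~\ref{lem:Sdiscretedivinfsup} and Proposition~\ref{prop:divSigmabar} to the refined mesh $\mathcal T_h^{\rm R}$. Given $\bs v_h \in \mathbb P_{k-1}^{-1}(\mathcal T_h^{\rm R}; \mathbb R^d)$, first lift it to $\bs \tau \in H^1(\Omega; \mathbb S)$ with $\div \bs \tau = \bs v_h$ and $\|\bs \tau\|_1 \lesssim \|\bs v_h\|_0$, exactly as in \eqref{eq:20221009-1}.

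Next I would construct an interpolant $\widetilde{\bs \tau}_h \in \Sigma_{k,\phi,nn}^{\operatorname{div}}(\mathcal T_h^{\rm R}; \mathbb S)$ by prescribing the face DoFs of \eqref{eq:globaldivfemRSkdof} as follows. On every coarse face $F \in \Delta_{d-1}(\mathcal T_h)$ match $\int_F (\bs \tau \bs n_F) \cdot \bs q \dd s$ for every $\bs q \in \mathbb P_1(F; \mathbb R^d) \subseteq \mathbb P_k(F; \mathbb R^d)$ via \eqref{eq:globaldivfemRSkdof1}. On every interior face $F \in \Delta_{d-1}(\mathring{\mathcal T}_h^{\rm R})$ match the tangential-normal moments against $\mathrm{RM}(F) \subseteq \mathrm{ND}_{k-2}(F)$ (using $k\geq 2$) via \eqref{eq:globaldivfemRSkdof4}, and the normal-normal moments $\int_F \bs n_F^{\intercal}\bs \tau \bs n_F \, p \dd s$ for every $p \in \mathbb P_1(F)$ via the enriched DoFs coming from $\mathrm{Ext}(\mathbb B_{d+1}\mathbb S(\mathscr N^F))$; all remaining DoFs of \eqref{eq:globaldivfemRSkdof} are set to zero. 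A standard scaling argument, identical to the one giving \eqref{eq:20221009-2}, then yields $\|\widetilde{\bs \tau}_h\|_0 + \|\div \widetilde{\bs \tau}_h\|_0 \lesssim \|\bs \tau\|_1 \lesssim \|\bs v_h\|_0$.

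The heart of the argument is to show $\div(\bs \tau - \widetilde{\bs \tau}_h)|_{T_i} \in \mathrm{RM}(T_i)^{\perp}$ for every refined simplex $T_i \in \mathcal T_h^{\rm R}$. For $\bs q = \bs c + \bs A \bs x \in \mathrm{RM}(T_i)$ with skew-symmetric $\bs A$, integration by parts gives $(\div(\bs \tau - \widetilde{\bs \tau}_h), \bs q)_{T_i} = \sum_{F \subset \partial T_i} ((\bs \tau - \widetilde{\bs \tau}_h)\bs n_F, \bs q)_F$, since $\sym(\nabla \bs q) = 0$ annihilates the symmetric tensor $\bs \tau - \widetilde{\bs \tau}_h$. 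On each face $F$, write $\bs q|_F = \Pi_F \bs q + (\bs q \cdot \bs n_F)\bs n_F$ and use $\bs x|_F = \Pi_F \bs x + c_F \bs n_F$, where $c_F = \bs x \cdot \bs n_F$ is constant on $F$. A direct computation shows that $\Pi_F \bs A \Pi_F$ is skew on $\mathscr T^F$, which places $\Pi_F \bs q \in \mathrm{RM}(F)$, while $\bs q \cdot \bs n_F \in \mathbb P_1(F)$. Hence each boundary integral vanishes: on coarse faces by $\bs q|_F \in \mathbb P_1(F;\mathbb R^d)$, and on interior faces by the tangential-normal match against $\mathrm{RM}(F)$ combined with the normal-normal match against $\mathbb P_1(F)$.

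Applying \eqref{eq:SdivBrsurjection} element-wise on each $T_i$ then furnishes a bubble correction $\bs b_h$ with $\bs b_h|_{T_i} \in \mathbb B_k(\div, T_i; \mathbb S) \subset \Sigma_{k,\phi}^{\operatorname{div}}(T^{\rm R}; \mathbb S)$, $\div \bs b_h|_{T_i} = \div(\bs \tau - \widetilde{\bs \tau}_h)|_{T_i}$, and $\|\bs b_h\|_{0,T_i} \lesssim h_{T_i}\|\div \bs b_h\|_{0,T_i}$; setting $\bs \tau_h := \widetilde{\bs \tau}_h + \bs b_h$ yields $\div \bs \tau_h = \bs v_h$ together with $\|\bs \tau_h\|_0 + \|\div \bs \tau_h\|_0 \lesssim \|\bs v_h\|_0$ as in \eqref{eq:20221009-5}, giving \eqref{eq:infsupSigmabar}. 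The main obstacle is the face-by-face rigid-motion decomposition: one must verify that $\Pi_F \bs q \in \mathrm{RM}(F)$ and $\bs q \cdot \bs n_F \in \mathbb P_1(F)$ so that both pieces are captured by the available DoFs. The extension $\mathrm{Ext}(\mathbb B_{d+1}\mathbb S(\mathscr N^F))$ is precisely what raises the normal-normal capacity at interior faces from constants to $\mathbb P_1(F)$, which is exactly what the normal component of $\bs q$ demands; without it, ${\rm RM}(T_i)$ would not lie in the range of $\div$ on the refined mesh.
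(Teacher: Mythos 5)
Your proposal is correct and follows essentially the same route as the paper, which constructs $\widetilde{\boldsymbol{\tau}}_h$ by matching the coarse-face moments, the interior-face tangential-normal moments \eqref{eq:tndofinfsup}, and the enriched normal-normal moments against $\mathbb{P}_1(F)$, and then corrects with div bubbles via \eqref{eq:SdivBrsurjection} exactly as in Lemma~\ref{lem:Sdiscretedivinfsup}. Your explicit verification that $\Pi_F\boldsymbol{q}\in\mathrm{RM}(F)$ and $\boldsymbol{q}\cdot\boldsymbol{n}_F\in\mathbb{P}_1(F)$ for $\boldsymbol{q}\in\mathrm{RM}(T_i)$ is precisely the step the paper leaves implicit when it says the enriched DoFs suffice to cover the rigid motions on each refined simplex.
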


\begin{proof}
For a face $F \in \Delta_{d-1}(T)$ of an element $T \in \mathcal T_h$, we have the degrees of freedom $\int_F \bs \tau \bs n \cdot \bs q \,\dd s$ for $\bs q \in \mathbb P_2(F; \mathbb R^d)$. For interior faces within each coarse element, we have the additional degrees of freedom $\int_F \bs n^{\intercal} \bs \tau \bs n \, q \,\dd s$ for $q \in \mathbb P_1(F)$ and DoF \eqref{eq:tndofinfsup}. With these enriched DoFs, we can follow the proof of Lemma \ref{lem:Sdiscretedivinfsup} to construct a suitable $\tilde{\bs \tau}_h$ that satisfies the inf-sup condition. The remainder of the proof proceeds in the same manner as in Lemma \ref{lem:Sdiscretedivinfsup}.
\end{proof}

By \eqref{eq:dimSigmaDivk} and $\dim\mathbb B_{d+1}\mathbb S(\mathscr N^F)=d$, we have
\begin{equation}\label{eq:dimSigmaDivknn}
\begin{aligned}
&\dim\Sigma_{k,\phi,nn}^{\operatorname{div}}(T^{\rm R}; \mathbb{S})=\frac{(d+1)(kd+k+d)}{2}{k+d-1\choose d-1} +\frac{1}{2}d^2(d+1).
\end{aligned} 
\end{equation}

\subsection{Inf-sup condition on the coarse mesh}
%

If we work on the original coarse mesh, in view of DoF \eqref{eq:globaldivfemdof}, we have the following inf-sup condition. 

\begin{theorem}\label{th:infsupSigmahat}
Let $\Sigma_{k, \phi}^{\operatorname{div}}(\mathcal T_h; \mathbb{S})$ be the space defined in Corollary \ref{cor:reducedspace}. For $k \geq 2$,
$$
Q_{k-1,h}\div: \Sigma_{k, \phi}^{\operatorname{div}}(\mathcal T_h; \mathbb{S}) \to \mathbb P_{k-1}^{-1}(\mathcal T_h; \mathbb R^d)
$$ 
is surjective, and 
 \begin{equation}\label{eq:infsupSigmahat}
\|\boldsymbol{v}_h\|_0\lesssim \sup_{\boldsymbol{\tau}_h\in \Sigma_{k, \phi}^{\operatorname{div}}(\mathcal T_h; \mathbb{S})}\frac{(\div\boldsymbol{\tau}_h, \boldsymbol{v}_h)}{\|\boldsymbol{\tau}_h\|_0+\|\div\boldsymbol{\tau}_h\|_0}\quad\forall~\boldsymbol{v}_h\in \mathbb P_{k-1}^{-1}(\mathcal T_h;\mathbb R^d).
\end{equation}
\end{theorem}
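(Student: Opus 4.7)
The plan is to mirror the proof of Lemma~\ref{lem:Sdiscretedivinfsup}, with the key change that the coarse-element interpolant $\tilde{\bs\tau}_h$ is constructed from the face DoFs \eqref{eq:globaldivfemdof1} alone, and the divergence is only corrected up to its $\mathbb{P}_{k-1}(T;\mathbb{R}^d)$ projection rather than pointwise. The main obstacle is that for $\bs\tau_h \in \Sigma_{k,\phi}^{\div}(T;\mathbb{S})$ the divergence is piecewise $\mathbb{P}_{k-1}$ on $T^{\rm R}$, not globally $\mathbb{P}_{k-1}(T;\mathbb{R}^d)$; this is exactly why the statement uses $Q_{k-1,h}\div$.

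Given $\bs v_h \in \mathbb{P}_{k-1}^{-1}(\mathcal T_h;\mathbb{R}^d)$, I would first invoke the $H^1$ right-inverse of $\div$ on symmetric tensors (see~\cite{ArnoldHu2021}) to obtain $\bs\tau \in H^1(\Omega;\mathbb{S})$ with $\div\bs\tau = \bs v_h$ and $\|\bs\tau\|_1 \lesssim \|\bs v_h\|_0$. Next, using the face DoFs \eqref{eq:globaldivfemdof1} and zero interior DoFs \eqref{eq:globaldivfemdof5}, I construct $\tilde{\bs\tau}_h \in \Sigma_{k,\phi}^{\div}(\mathcal T_h;\mathbb{S})$ with
$$
\int_F \tilde{\bs\tau}_h\bs n_F \cdot \bs q \dd s = \int_F \bs\tau\bs n_F\cdot \bs q \dd s, \quad \bs q \in \mathbb{P}_k(F;\mathbb{R}^d),\; F\in\Delta_{d-1}(\mathcal T_h).
$$
A standard scaling argument on the reference element yields $\|\tilde{\bs\tau}_h\|_0 + \|\div\tilde{\bs\tau}_h\|_0 \lesssim \|\bs\tau\|_1 \lesssim \|\bs v_h\|_0$.

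The key step is to correct the divergence using a bubble from $\mathbb{B}_k(\div,T;\mathbb{S}) \subset \mathbb{P}_k(T;\mathbb{S}) \subset \Sigma_{k,\phi}^{\div}(T;\mathbb{S})$. Define $\bs r|_T := \bs v_h|_T - Q_{k-1,T}\div\tilde{\bs\tau}_h \in \mathbb{P}_{k-1}(T;\mathbb{R}^d)$. For any $\bs w \in \mathrm{RM}(T)$ we have $\epsilon(\bs w) = 0$, and $\bs w|_F \in \mathbb{P}_1(F;\mathbb{R}^d) \subset \mathbb{P}_k(F;\mathbb{R}^d)$; integration by parts on $T$ together with the matching face DoFs gives
$$
(\bs r,\bs w)_T = (\div\bs\tau,\bs w)_T - (\div\tilde{\bs\tau}_h,\bs w)_T = \int_{\partial T}(\bs\tau - \tilde{\bs\tau}_h)\bs n \cdot \bs w \dd s = 0,
$$
so $\bs r|_T \in \mathbb{P}_{k-1}(T;\mathbb{R}^d) \cap \mathrm{RM}(T)^{\perp}$. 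By the bubble surjectivity \eqref{eq:SdivBrsurjection}, there exists $\bs b_h \in \mathbb{B}_k(\div,T;\mathbb{S})$ with $\div\bs b_h = \bs r|_T$ and $\|\bs b_h\|_{0,T} \lesssim h_T \|\bs r\|_{0,T} \lesssim h_T \|\bs v_h\|_{0,T}$.

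Setting $\bs\tau_h = \tilde{\bs\tau}_h + \bs b_h \in \Sigma_{k,\phi}^{\div}(\mathcal T_h;\mathbb{S})$, I verify $Q_{k-1,T}\div\bs\tau_h = Q_{k-1,T}\div\tilde{\bs\tau}_h + \div\bs b_h = \bs v_h|_T$, which in particular gives $(\div\bs\tau_h,\bs v_h) = (Q_{k-1,h}\div\bs\tau_h,\bs v_h) = \|\bs v_h\|_0^2$. The stability $\|\bs\tau_h\|_0 + \|\div\bs\tau_h\|_0 \lesssim \|\bs v_h\|_0$ follows from the bounds on $\tilde{\bs\tau}_h$ and $\bs b_h$ together with $\div\bs\tau_h = \bs v_h + (I - Q_{k-1,h})\div\tilde{\bs\tau}_h$. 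Combining yields \eqref{eq:infsupSigmahat} and the surjectivity of $Q_{k-1,h}\div$ on $\Sigma_{k,\phi}^{\div}(\mathcal T_h;\mathbb{S})$.
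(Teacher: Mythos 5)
Your proof is correct and follows exactly the route the paper intends: the paper states Theorem~\ref{th:infsupSigmahat} without a written proof, pointing only to the DoFs \eqref{eq:globaldivfemdof} and the template of Lemma~\ref{lem:Sdiscretedivinfsup}, and your argument fills this in faithfully — interpolate the $H^1$ right inverse via the full face moments \eqref{eq:globaldivfemdof1} (which contain $\mathrm{RM}(T)|_{\partial T}$ for every $k\geq 2$, removing the $k\geq d+1$ restriction), then correct with a div bubble from \eqref{eq:SdivBrsurjection}. You also correctly handle the one genuinely new point, namely that only $Q_{k-1,T}\div\tilde{\bs\tau}_h$ can be corrected since $\div\tilde{\bs\tau}_h$ is merely piecewise polynomial on $T^{\rm R}$, and that this suffices because the test function $\bs v_h$ lies in $\mathbb P_{k-1}^{-1}(\mathcal T_h;\mathbb R^d)$ so $(\div\bs\tau_h,\bs v_h)=(Q_{k-1,h}\div\bs\tau_h,\bs v_h)=\|\bs v_h\|_0^2$.
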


We include the projection $Q_{k-1,h}$ to the coarse mesh since $\div \mathbb B_k\Phi^f$ is a space of piecewise polynomials on the split mesh $\mathcal T_h^{\rm R}$. We shall modify the shape function so that the range of the div operator is a polynomial on the coarse mesh. 
\begin{lemma}\label{lem:bubbleTR}
We have
 \begin{align*}
\div\left(\Sigma_{k, \phi, nn}^{\operatorname{div}}(T^{\rm R}; \mathbb{S}) \cap H_0(\div,T;\mathbb S)\right) = \mathbb P_{k-1}^{-1}(T^{\rm R};\mathbb R^d)\cap {\rm RM}(T)^{\bot}.
\end{align*}
\end{lemma}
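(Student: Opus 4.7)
The plan is to establish the two inclusions separately.

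For the inclusion $\subseteq$, I will first observe that $\div\boldsymbol{\tau}\in\mathbb P_{k-1}^{-1}(T^{\rm R};\mathbb R^d)$ for any $\boldsymbol{\tau}$ in the left-hand side: on each $T_i$ the plain $\mathbb P_k$ part has divergence in $\mathbb P_{k-1}(T_i;\mathbb R^d)$, and the ${\rm Ext}(\mathbb B_{d+1}\mathbb S(\mathscr N^F))$ enrichment is engineered so that $\div\,{\rm Ext}(\boldsymbol{b}^{nn})\in{\rm RM}(T_i)\subset\mathbb P_1(T_i;\mathbb R^d)\subseteq\mathbb P_{k-1}(T_i;\mathbb R^d)$ for $k\geq 2$. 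Orthogonality to ${\rm RM}(T)$ then follows immediately from integration by parts:
\[
(\div\boldsymbol{\tau},\boldsymbol{r})_T=-(\boldsymbol{\tau},\sym\nabla\boldsymbol{r})_T+\int_{\partial T}\boldsymbol{\tau}\boldsymbol{n}\cdot\boldsymbol{r}\,\dd s=0
\]
for every $\boldsymbol{r}\in{\rm RM}(T)$, since $\sym\nabla\boldsymbol{r}=0$ and the boundary term vanishes by the $H_0(\div)$ hypothesis.

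For the substantive inclusion $\supseteq$, given $\boldsymbol{v}\in\mathbb P_{k-1}^{-1}(T^{\rm R};\mathbb R^d)\cap{\rm RM}(T)^{\bot}$, I would adapt the three-step template from the proof of Lemma~\ref{lem:Sdiscretedivinfsup} so that the resulting stress carries a homogeneous normal trace. In \emph{Step 1}, I invoke a Cosserat-type continuous existence: there is a $\boldsymbol{\sigma}\in H^1(T;\mathbb S)$ with $\boldsymbol{\sigma}\boldsymbol{n}|_{\partial T}=0$, $\div\boldsymbol{\sigma}=\boldsymbol{v}$, and $\|\boldsymbol{\sigma}\|_1\lesssim\|\boldsymbol{v}\|_0$; this is exactly the pure-traction linear elasticity problem on $T$ with body force $\boldsymbol{v}$, whose solvability is governed precisely by $\boldsymbol{v}\perp{\rm RM}(T)$. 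In \emph{Step 2}, I define $\tilde{\boldsymbol{\tau}}_h\in\Sigma_{k,\phi,nn}^{\div}(T^{\rm R};\mathbb S)$ as the canonical interpolant of $\boldsymbol{\sigma}$ via the face-type DoFs \eqref{eq:globaldivfemRSkdof1}, \eqref{eq:globaldivfemRSkdof2}, \eqref{eq:globaldivfemRSkdof4} together with the $\mathbb P_1$ normal-normal tests on interior faces contributed by the Ext enrichment, setting the interior-element DoFs \eqref{eq:globaldivfemRSkdof5} to zero. Because $\boldsymbol{\sigma}\boldsymbol{n}|_{\partial T}=0$, every DoF localized on a boundary face of $T$ evaluates to zero, so $\tilde{\boldsymbol{\tau}}_h\in H_0(\div,T;\mathbb S)$ automatically.

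In \emph{Step 3}, I apply a bubble correction on each $T_i\in T^{\rm R}$. The residual $\div(\boldsymbol{\sigma}-\tilde{\boldsymbol{\tau}}_h)|_{T_i}\in\mathbb P_{k-1}(T_i;\mathbb R^d)$ is orthogonal to ${\rm RM}(T_i)$: integration by parts on $T_i$ kills the bulk term, while the boundary contribution decomposes face by face — on the outer face $F_i(T)\subset\partial T$ both normal traces vanish, and on each interior face $F_{ij}$ the integral $\int_{F_{ij}}(\boldsymbol{\sigma}-\tilde{\boldsymbol{\tau}}_h)\boldsymbol{n}\cdot\boldsymbol{r}\,\dd s$ for $\boldsymbol{r}\in{\rm RM}(T_i)$ vanishes because the normal component $(\boldsymbol{n}^{\intercal}\boldsymbol{r})\boldsymbol{n}\in\mathbb P_1(F_{ij})\boldsymbol{n}_{F_{ij}}$ is absorbed by the Ext $\mathbb P_1$ normal-normal tests, and the tangential projection $\Pi_{F_{ij}}\boldsymbol{r}|_{F_{ij}}$ lies in ${\rm RM}(F_{ij})=\mathrm{ND}_0(F_{ij})\subseteq\mathrm{ND}_{k-2}(F_{ij})$ for $k\geq 2$, which is absorbed by \eqref{eq:globaldivfemRSkdof4}. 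Applying the bubble surjectivity \eqref{eq:SdivBrsurjection} on $T_i$ then produces $\boldsymbol{b}_i\in\mathbb B_k(\div,T_i;\mathbb S)$ with $\div\boldsymbol{b}_i=\div(\boldsymbol{\sigma}-\tilde{\boldsymbol{\tau}}_h)|_{T_i}$, and since each $\boldsymbol{b}_i$ vanishes on $\partial T_i$, the piecewise concatenation $\boldsymbol{b}$ lies in $H_0(\div,T;\mathbb S)\cap\Sigma_{k,\phi,nn}^{\div}(T^{\rm R};\mathbb S)$. Setting $\boldsymbol{\tau}:=\tilde{\boldsymbol{\tau}}_h+\boldsymbol{b}$ completes the construction, giving $\div\boldsymbol{\tau}=\boldsymbol{v}$.

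The principal obstacle is \emph{Step 1}: producing a continuous preimage with homogeneous normal trace. The Arnold--Hu surjectivity cited in the proof of Lemma~\ref{lem:Sdiscretedivinfsup} provides only an $H^1$ preimage with no boundary control, so one must either solve the pure-traction elasticity system directly (whose Fredholm solvability requires exactly $\boldsymbol{v}\perp{\rm RM}(T)$) or correct an arbitrary preimage by a divergence-free lifting realizing the prescribed trace. A secondary algebraic point underpinning Step 3 is the inclusion $\Pi_F{\rm RM}(T_i)|_F\subseteq{\rm RM}(F)$, which holds because the tangential projection of any skew-symmetric linear map on $\mathbb R^d$, restricted to $\mathscr T^F$, remains skew-symmetric on $\mathscr T^F$.
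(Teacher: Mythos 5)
Your proposal is correct and follows essentially the same route as the paper, whose proof is only the one-line instruction to repeat the argument of Theorem~\ref{th:divSigmabar} (hence of Lemma~\ref{lem:Sdiscretedivinfsup}) with $\boldsymbol{\tau}\in H_0(\div,T;\mathbb S)$; your three steps are precisely that argument localized to $T$, and you correctly identify the one genuine subtlety the paper leaves implicit, namely that the continuous preimage must now solve the pure-traction problem, whose solvability is exactly the condition $\boldsymbol{v}\perp{\rm RM}(T)$. The only cosmetic caveat is that in Step~2 it suffices (and is cleaner, since $\boldsymbol{\sigma}$ is only $H^1$) to match just the face-integral DoFs needed to absorb ${\rm RM}(T_i)$ on interior faces, rather than the full set including DoFs on lower-dimensional subsimplices.
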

\begin{proof}
Apply a similar proof as that for Theorem~\ref{th:divSigmabar} with $\bs \tau \in H_0(\div,T;\mathbb S)$.
\end{proof}

\begin{lemma}\label{lem:extendfacebubble}
For any $\bs \phi \in \mathbb B_k\Phi^f(\mathbb S)$, there exists $\bs \psi \in \Sigma_{k, \phi, nn}^{\operatorname{div}}(T^{\rm R}; \mathbb{S})$ s.t.
$$
\bs \psi\bs n \mid_{\partial T}= \bs \phi \bs n\mid_{\partial T}, \quad \div \bs \psi \mid_T \in  {\rm RM}(T)
$$ 
and 
$$
\|\bs \psi\|_{H(\div)}\lesssim \| \bs \phi\|_{H(\div)}.
$$ 
\end{lemma}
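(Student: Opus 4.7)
The plan is to construct $\bs\psi$ as a correction of $\bs\phi$ by an $H_0(\div)$-bubble chosen to kill the non-rigid-motion part of $\div\bs\phi$. First, note that $\bs\phi\in\mathbb B_k\Phi^f(\mathbb S)\subset \Sigma_{k,\phi}^{\operatorname{div}}(T^{\rm R};\mathbb S)\subset \Sigma_{k,\phi,nn}^{\operatorname{div}}(T^{\rm R};\mathbb S)$, and since $\bs\phi$ is piecewise $\mathbb P_k$ on $T^{\rm R}$ with $\bs\phi\in H(\div,T;\mathbb S)$, we have $\div\bs\phi\in \mathbb P_{k-1}^{-1}(T^{\rm R};\mathbb R^d)\subset L^2(T;\mathbb R^d)$ with $\|\div\bs\phi\|_0\le\|\bs\phi\|_{H(\div)}$.

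Define the residual $\bs r:=(I-Q_{\rm RM})\div\bs\phi$. By construction $\bs r\perp \mathrm{RM}(T)$ and $\bs r\in\mathbb P_{k-1}^{-1}(T^{\rm R};\mathbb R^d)$, hence
\[
\bs r\in \mathbb P_{k-1}^{-1}(T^{\rm R};\mathbb R^d)\cap \mathrm{RM}(T)^{\bot}.
\]
By Lemma~\ref{lem:bubbleTR} applied to this range characterization, there exists
\[
\bs b\in \Sigma_{k,\phi,nn}^{\operatorname{div}}(T^{\rm R};\mathbb S)\cap H_0(\div,T;\mathbb S) \quad\text{with}\quad \div\bs b=\bs r,
\]
together with the scaling bound $\|\bs b\|_{H(\div)}\lesssim \|\bs r\|_0\lesssim \|\div\bs\phi\|_0$; the latter stability follows from the usual scaling/compactness argument applied to the surjective divergence map on the finite-dimensional bubble space (which is essentially the same argument used in the proof of Proposition~\ref{prop:divSigmabar} and Theorem~\ref{th:divSigmabar}).

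Now set $\bs\psi:=\bs\phi-\bs b\in \Sigma_{k,\phi,nn}^{\operatorname{div}}(T^{\rm R};\mathbb S)$. Because $\bs b\in H_0(\div,T;\mathbb S)$, the normal trace is preserved: $\bs\psi\bs n|_{\partial T}=\bs\phi\bs n|_{\partial T}$. Moreover,
\[
\div\bs\psi=\div\bs\phi-\bs r=Q_{\rm RM}\div\bs\phi\in \mathrm{RM}(T),
\]
which gives the second required property. The stability estimate follows from the triangle inequality
\[
\|\bs\psi\|_{H(\div)}\le \|\bs\phi\|_{H(\div)}+\|\bs b\|_{H(\div)}\lesssim \|\bs\phi\|_{H(\div)}+\|\div\bs\phi\|_0\lesssim \|\bs\phi\|_{H(\div)}.
\]

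The only nontrivial ingredient is Lemma~\ref{lem:bubbleTR}, which provides a divergence-surjective bubble space whose range matches exactly $\mathbb P_{k-1}^{-1}(T^{\rm R};\mathbb R^d)\cap \mathrm{RM}(T)^{\bot}$; this is precisely the reason the enrichment with $\mathrm{Ext}(\mathbb B_{d+1}\mathbb S(\mathscr N^F))$ was added in the definition of $\Sigma_{k,\phi,nn}^{\operatorname{div}}(T^{\rm R};\mathbb S)$. Once this lemma is in hand, the construction is essentially a one-line bubble correction, so I expect no real obstacle beyond verifying the stability constant in Lemma~\ref{lem:bubbleTR} is independent of $h$ (which it is, by shape regularity and the standard scaling argument on the reference macro-element).
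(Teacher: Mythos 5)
Your proposal is correct and follows essentially the same route as the paper: define the residual $(I-Q_{\rm RM})\div\bs\phi$, use Lemma~\ref{lem:bubbleTR} to produce a normal-trace-free bubble correction with that divergence, and subtract it. The only cosmetic difference is that the paper cites the inf-sup condition \eqref{eq:infsupSigmabar} for the stability bound whereas you invoke a finite-dimensional scaling argument; both yield the same estimate.
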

\begin{proof}
Consider the function
$$
\boldsymbol{p}\mid_T = (I - Q_{\rm RM})\div \bs \phi\mid_T \perp  {\rm RM}(T).
$$
As $\boldsymbol{p}\in \mathbb P_{k-1}^{-1}(T^{\rm R};\mathbb R^d)\cap {\rm RM}(T)^{\bot}$, by apply Lemma \ref{lem:bubbleTR}, we can find a bubble polynomial $\bs b_0\in \Sigma_{k, \phi, nn}^{\operatorname{div}}(T^{\rm R}; \mathbb{S})$ s.t. $\div \bs b_0 = \boldsymbol{p}$ and $\bs b_0\bs n\mid_{\partial T} = 0$. Let $\bs \psi = \bs \phi - \bs b_0$. The stability follows from the inf-sup condition \eqref{eq:infsupSigmabar}.
\end{proof}

For \( f \in \Delta_{\ell}(T) \) with \( 0 \leq \ell \leq d-2 \), we modify the added shape function space to $\mathbb{B}_k\Psi^f(\mathbb{S})$, which is defined as the space of all the extended functions $\bs \psi$ for $\bs \phi$ running over $\mathbb B_k\Phi^f(\mathbb S)$ in Lemma~\ref{lem:extendfacebubble}.

\begin{theorem}
For $k \geq 2$, let
\begin{align*}
\Sigma_{k, \psi}^{\operatorname{div}}(\mathcal T_h; \mathbb{S}) &= \{\boldsymbol{\tau}_h\in H(\div,\Omega;\mathbb S): \boldsymbol{\tau}_h|_T\in \Sigma_{k, \psi}^{\operatorname{div}}(T; \mathbb{S}) \quad \text{for } T\in\mathcal T_h\},
\end{align*}
where
\begin{equation}\label{eq:Sigmakpsi}
\Sigma_{k, \psi}^{\operatorname{div}}(T; \mathbb{S})=\mathbb P_k(T; \mathbb S)\oplus \Oplus_{\ell = 0}^{d-2}\Oplus_{f\in \Delta_{\ell}(T)}\mathbb B_k\Psi^f(\mathbb S).
\end{equation}
Then the operator
$$
\div: \Sigma_{k, \psi}^{\operatorname{div}}(\mathcal T_h; \mathbb{S}) \to \mathbb P_{k-1}^{-1}(\mathcal T_h; \mathbb R^d)
$$ 
is surjective, and 
 \begin{equation}\label{eq:infsupSigmakpsi}
\|\boldsymbol{v}_h\|_0\lesssim \sup_{\boldsymbol{\tau}_h\in \Sigma_{k, \psi}^{\operatorname{div}}(\mathcal T_h; \mathbb{S})}\frac{(\div\boldsymbol{\tau}_h, \boldsymbol{v}_h)}{\|\boldsymbol{\tau}_h\|_0+\|\div\boldsymbol{\tau}_h\|_0}\quad\forall~\boldsymbol{v}_h\in \mathbb P_{k-1}^{-1}(\mathcal T_h;\mathbb R^d).
\end{equation}
\end{theorem}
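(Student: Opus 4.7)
The plan is to adapt the Fortin-type correction argument of Lemma~\ref{lem:Sdiscretedivinfsup} and Theorem~\ref{th:divSigmabar}, exploiting the two defining properties of the modification $\mathbb B_k\Phi^f \mapsto \mathbb B_k\Psi^f$ supplied by Lemma~\ref{lem:extendfacebubble}: the normal trace on $\partial T$ is preserved, and $\div\bs\psi \in \mathrm{RM}(T)$. In combination with $\div\,\mathbb P_k(T;\mathbb S) \subseteq \mathbb P_{k-1}(T;\mathbb R^d)$ and $\mathrm{RM}(T) \subseteq \mathbb P_1(T;\mathbb R^d) \subseteq \mathbb P_{k-1}(T;\mathbb R^d)$ for $k\geq 2$, this guarantees $\div\,\Sigma_{k,\psi}^{\div}(T;\mathbb S) \subseteq \mathbb P_{k-1}(T;\mathbb R^d)$, so the stated target space is of the correct type.

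Given $\bs v_h \in \mathbb P_{k-1}^{-1}(\mathcal T_h;\mathbb R^d)$, I first invoke the $H^1$-regular inverse of the divergence for symmetric tensors (\cite{ArnoldHu2021}, as used in Lemma~\ref{lem:Sdiscretedivinfsup}) to obtain $\bs\tau \in H^1(\Omega;\mathbb S)$ with $\div\bs\tau = \bs v_h$ and $\|\bs\tau\|_1 \lesssim \|\bs v_h\|_0$. Next, I construct an interpolant $\widetilde{\bs\tau}_h \in \Sigma_{k,\psi}^{\div}(\mathcal T_h;\mathbb S)$ matching the face moments
\[
\int_F \widetilde{\bs\tau}_h \bs n \cdot \bs q \dd s = \int_F \bs\tau \bs n \cdot \bs q \dd s, \quad \bs q \in \mathbb P_k(F;\mathbb R^d), \; F \in \Delta_{d-1}(\mathcal T_h),
\]
with all interior contributions set to zero. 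Concretely, I first build such an interpolant inside $\Sigma_{k,\phi}^{\div}(\mathcal T_h;\mathbb S)$ using the DoFs of Corollary~\ref{cor:reducedspace}, and then replace each $\mathbb B_k\Phi^f$-summand by its image under the extension of Lemma~\ref{lem:extendfacebubble}. Because the extension preserves the normal trace on $\partial T$, the replacement lives in $H(\div,\Omega;\mathbb S) \cap \prod_T \Sigma_{k,\psi}^{\div}(T;\mathbb S) = \Sigma_{k,\psi}^{\div}(\mathcal T_h;\mathbb S)$; combining the $H(\div)$-stability in Lemma~\ref{lem:extendfacebubble} with a standard scaling estimate yields $\|\widetilde{\bs\tau}_h\|_0 + \|\div\widetilde{\bs\tau}_h\|_0 \lesssim \|\bs v_h\|_0$.

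Integration by parts then shows $\div(\widetilde{\bs\tau}_h - \bs\tau)|_T \in \mathbb P_{k-1}(T;\mathbb R^d) \cap \mathrm{RM}(T)^{\perp}$: the boundary term vanishes because $\mathrm{RM}(T)|_F \subseteq \mathbb P_1(F;\mathbb R^d) \subseteq \mathbb P_k(F;\mathbb R^d)$ is covered by the matched face moments, and the volume term vanishes because $\widetilde{\bs\tau}_h - \bs\tau$ is symmetric while $\nabla \bs q_{\rm rm}$ is skew-symmetric for $\bs q_{\rm rm} \in \mathrm{RM}(T)$. The bubble surjection \eqref{eq:SdivBrsurjection} then produces $\bs b_h$ with $\bs b_h|_T \in \mathbb B_k(\div, T;\mathbb S) \subseteq \mathbb P_k(T;\mathbb S) \subseteq \Sigma_{k,\psi}^{\div}(T;\mathbb S)$ such that $\div\bs b_h = \div(\bs\tau - \widetilde{\bs\tau}_h)$ and $\|\bs b_h\|_0 \lesssim h_T\|\div(\widetilde{\bs\tau}_h - \bs\tau)\|_0$. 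Setting $\bs\tau_h := \widetilde{\bs\tau}_h + \bs b_h$ delivers $\div\bs\tau_h = \bs v_h$ and $\|\bs\tau_h\|_{H(\div)} \lesssim \|\bs v_h\|_0$, from which \eqref{eq:infsupSigmakpsi} follows by the usual sup duality.

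The main obstacle is the interpolation step: since $\mathbb B_k\Psi^f$ is defined only implicitly through the extension of Lemma~\ref{lem:extendfacebubble}, one has no direct set of DoFs at hand. The resolution is to recognize that the Fortin correction only requires the \emph{boundary trace} of the enrichment on each face, which Lemma~\ref{lem:extendfacebubble} preserves together with $H(\div)$-stability; any mismatch in the interior divergence is then absorbed by the bubble space $\mathbb B_k(\div,T;\mathbb S) \subseteq \mathbb P_k(T;\mathbb S)$. Once this is observed, the remainder of the argument is a verbatim copy of the template used in Lemma~\ref{lem:Sdiscretedivinfsup} and Theorem~\ref{th:divSigmabar}.
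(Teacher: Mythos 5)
Your proposal is correct and follows essentially the argument the paper intends: the paper states this theorem without a written proof, relying on the Fortin template of Lemma~\ref{lem:Sdiscretedivinfsup}, the trace/range properties of Lemma~\ref{lem:extendfacebubble}, and the bubble surjection \eqref{eq:SdivBrsurjection}, which is exactly what you assemble. Your resolution of the interpolation issue is sound, though it can be streamlined by noting (as the paper does) that $\Sigma_{k,\psi}^{\div}(T;\mathbb S)$ is unisolvent for the same DoFs \eqref{eq:globaldivfemdof} as $\Sigma_{k,\phi}^{\div}(T;\mathbb S)$, so the interpolant can be defined directly in the $\psi$-space without the intermediate replacement step.
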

Clearly, $\Sigma_{k, \psi}^{\operatorname{div}}(T; \mathbb{S})$ shares the same DoFs as $\Sigma_{k, \phi}^{\operatorname{div}}(T; \mathbb{S})$, and
\begin{equation}\label{eq:dimSigmaDivkpsi}
\dim\Sigma_{k, \psi}^{\operatorname{div}}(T; \mathbb{S}) = \dim\Sigma_{k, \phi}^{\operatorname{div}}(T; \mathbb{S}) =
\frac{1}{2}d(d+1)\left({k+d\choose d} + {k+d-2\choose d-2}\right).
\end{equation}

\subsection{Linear element on the coarse mesh}\label{sec:divlinear}
We need $k\geq 2$ to include $\mathbb B_k(\div,T;\mathbb S)$ as no div bubble function for $k=1$. 
%
Now we consider the inf-sup condition on the coarse mesh for $k=1$. 

Define the global finite element space for symmetric tensors
\begin{align*}
\Sigma_{1, \phi}^{\operatorname{div}}(\mathcal T_h^{\rm R}; \mathbb{S})=\{\boldsymbol{\tau}_h\in L^2(\Omega;\mathbb S)&: \boldsymbol{\tau}_h|_T\in\Sigma_{1,\phi}^{\div}(T^{\rm R}; \mathbb{S}) \textrm{ for } T\in\mathcal T_h, \textrm{ DoF \eqref{HdivSfemk1dof1} or \eqref{HdivSfemk1momentdof1}}\\
&\, \textrm{ is single-valued across $(d-1)$-dimensional faces of $\mathcal T_h$}\}.
\end{align*}
By DoF \eqref{HdivSfemk1dof1} or \eqref{HdivSfemk1momentdof1}, $\Sigma_{1, \phi}^{\operatorname{div}}(\mathcal T_h^{\rm R}; \mathbb{S})$ is $H(\div)$-conforming.

\begin{lemma}
We have 
\begin{equation}\label{divontok1}
\|\boldsymbol{v}_h\|_0\lesssim \sup_{\boldsymbol{\tau}_h\in\Sigma_{1, \phi}^{\operatorname{div}}(\mathcal T_h^{\rm R}; \mathbb{S})}\frac{(\div\boldsymbol{\tau}_h, \boldsymbol{v}_h)}{\|\boldsymbol{\tau}_h\|_{H(\div)}} \quad\forall~\boldsymbol{v}_h\in\mathbb P_{1}^{-1}(\mathcal T_h;\mathbb R^d).
\end{equation}
That is 
$$
\begin{aligned}
Q_{1,h}\div\Sigma_{1, \phi}^{\operatorname{div}}(\mathcal T_h^{\rm R}; \mathbb{S}) &=\mathbb P_{1}^{-1}(\mathcal T_h;\mathbb R^d).
\end{aligned}
$$
Furthermore,
$$\div\Sigma_{1, \phi}^{\operatorname{div}}(\mathcal T_h^{\rm R}; \mathbb{S}) =\mathbb P_{0}^{-1}(\mathcal T_h^{\rm R};\mathbb R^d).$$
%
\end{lemma}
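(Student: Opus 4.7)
The plan is to prove the strongest identity $\div\Sigma_{1,\phi}^{\operatorname{div}}(\mathcal T_h^{\rm R};\mathbb S)=\mathbb P_0^{-1}(\mathcal T_h^{\rm R};\mathbb R^d)$ first, and then deduce the $Q_{1,h}\div$ surjectivity and the inf-sup bound from it. The inclusion $\subseteq$ is immediate, since each shape function is piecewise linear on $\mathcal T_h^{\rm R}$. For the reverse direction, I will follow the template of Lemma~\ref{lem:Sdiscretedivinfsup}: given $\boldsymbol{w}_h\in\mathbb P_0^{-1}(\mathcal T_h^{\rm R};\mathbb R^d)$, lift to $\boldsymbol{\tau}\in H^1(\Omega;\mathbb S)$ with $\div\boldsymbol{\tau}=\boldsymbol{w}_h$ and $\|\boldsymbol{\tau}\|_1\lesssim\|\boldsymbol{w}_h\|_0$ via~\cite{ArnoldHu2021}, and construct $\widetilde{\boldsymbol{\tau}}_h\in\Sigma_{1,\phi}^{\operatorname{div}}(\mathcal T_h^{\rm R};\mathbb S)$ by matching only the face DoFs \eqref{HdivSfemk1momentdof1} and setting the interior DoF \eqref{HdivSfemk1momentdof2} to zero. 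Integration by parts combined with the symmetry of $\widetilde{\boldsymbol{\tau}}_h$ and the fact that $\mathrm{RM}(T)|_{\partial T}\subset\mathbb P_1(\partial T;\mathbb R^d)$ then yields $(\div(\boldsymbol{\tau}-\widetilde{\boldsymbol{\tau}}_h),\boldsymbol{q})_T=0$ for every $\boldsymbol{q}\in\mathrm{RM}(T)$, so $\div(\boldsymbol{\tau}-\widetilde{\boldsymbol{\tau}}_h)|_T\in\mathbb P_0^{-1}(T^{\rm R};\mathbb R^d)\cap\mathrm{RM}(T)^{\perp}$.

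The crucial ingredient is the following local bijection lemma: the map $\boldsymbol{\sigma}\mapsto\div(\lambda_c^{\rm R}\boldsymbol{\sigma})$ is an isomorphism from $\mathbb S$ onto $\mathbb P_0^{-1}(T^{\rm R};\mathbb R^d)\cap\mathrm{RM}(T)^{\perp}$. The range is correct: $\div(\lambda_c^{\rm R}\boldsymbol{\sigma})|_{T_i}=\boldsymbol{\sigma}\,\nabla\lambda_c^{\rm R}|_{T_i}$ is piecewise constant on $T^{\rm R}$, and for $\boldsymbol{q}\in\mathrm{RM}(T)$, integration by parts gives $(\div(\lambda_c^{\rm R}\boldsymbol{\sigma}),\boldsymbol{q})_T=-(\lambda_c^{\rm R}\boldsymbol{\sigma},\sym\nabla\boldsymbol{q})_T=0$, since $\sym\nabla\boldsymbol{q}=0$ on $\mathrm{RM}(T)$. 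For injectivity, $\div(\lambda_c^{\rm R}\boldsymbol{\sigma})=0$ forces $\boldsymbol{\sigma}\,\nabla\lambda_c^{\rm R}|_{T_i}=0$ for each $i=0,\ldots,d$; since $\nabla\lambda_c^{\rm R}|_{T_i}$ is parallel to $\nabla\lambda_i$ and any $d$ of the vectors $\{\nabla\lambda_i\}_{i=0}^d$ are linearly independent, this forces $\boldsymbol{\sigma}=0$. The dimension identity $\dim\mathbb S=d(d+1)/2=\dim\mathbb P_0^{-1}(T^{\rm R};\mathbb R^d)-\dim\mathrm{RM}(T)$ then delivers bijectivity. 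Correcting $\widetilde{\boldsymbol{\tau}}_h$ element by element by a bubble $\boldsymbol{b}_h|_T\in\lambda_c^{\rm R}\mathbb S$ with $\div\boldsymbol{b}_h|_T=\div(\boldsymbol{\tau}-\widetilde{\boldsymbol{\tau}}_h)|_T$, and using standard scaling for stability, produces $\boldsymbol{\tau}_h:=\widetilde{\boldsymbol{\tau}}_h+\boldsymbol{b}_h\in\Sigma_{1,\phi}^{\operatorname{div}}(\mathcal T_h^{\rm R};\mathbb S)$ with $\div\boldsymbol{\tau}_h=\boldsymbol{w}_h$ and $\|\boldsymbol{\tau}_h\|_{H(\div)}\lesssim\|\boldsymbol{w}_h\|_0$.

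To obtain \eqref{divontok1} and $Q_{1,h}\div\Sigma_{1,\phi}^{\operatorname{div}}(\mathcal T_h^{\rm R};\mathbb S)=\mathbb P_1^{-1}(\mathcal T_h;\mathbb R^d)$, I will verify that $Q_{1,T}:\mathbb P_0^{-1}(T^{\rm R};\mathbb R^d)\to\mathbb P_1(T;\mathbb R^d)$ is bijective. Both sides have dimension $d(d+1)$, so it suffices to establish injectivity; testing against $\boldsymbol{p}=\lambda_j\boldsymbol{e}_k$ reduces injectivity to the invertibility of $M_{ij}:=\int_{T_i}\lambda_j\,\mathrm dx$, and a direct vertex evaluation yields $M=\frac{|T|}{(d+1)^3}\bigl[(d+2)\mathbf{1}-(d+1)I\bigr]$, whose eigenvalues $(d+1)^2$ and $-(d+1)$ are nonzero. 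Given $\boldsymbol{v}_h\in\mathbb P_1^{-1}(\mathcal T_h;\mathbb R^d)$, pulling it back through this bijection and then through the $\div$-surjection above produces $\boldsymbol{\tau}_h\in\Sigma_{1,\phi}^{\operatorname{div}}(\mathcal T_h^{\rm R};\mathbb S)$ with $Q_{1,h}\div\boldsymbol{\tau}_h=\boldsymbol{v}_h$ and $\|\boldsymbol{\tau}_h\|_{H(\div)}\lesssim\|\boldsymbol{v}_h\|_0$, so that $(\div\boldsymbol{\tau}_h,\boldsymbol{v}_h)=(Q_{1,h}\div\boldsymbol{\tau}_h,\boldsymbol{v}_h)=\|\boldsymbol{v}_h\|_0^2$, establishing \eqref{divontok1}. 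The main obstacle is the local bijection above: at order $k=1$ there is no polynomial div-bubble on $T$, so the piecewise bubble $\lambda_c^{\rm R}\mathbb S$ supplied by the barycentric refinement is the only available mechanism to cover the $\mathrm{RM}(T)^{\perp}$ complement, and the decisive geometric fact is that the $d+1$ gradients $\nabla\lambda_c^{\rm R}|_{T_i}$ together encode all of $\boldsymbol{\sigma}\in\mathbb S$.
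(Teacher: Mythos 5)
Your proposal is correct, but it runs in the opposite direction from the paper's argument and rests on a different key lemma. The paper proves the inf-sup condition \eqref{divontok1} directly: it takes the canonical interpolant $\boldsymbol{\tau}_h$ of the $H^1$ lifting $\boldsymbol{\tau}$ matching \emph{all} DoFs \eqref{HdivSfemk1momentdof} --- the face moments \eqref{HdivSfemk1momentdof1} \emph{and} the interior average \eqref{HdivSfemk1momentdof2} --- so that integration by parts against any $\boldsymbol{q}\in\mathbb{P}_1(T;\mathbb{R}^d)$ (whose gradient is constant) gives $(\div\boldsymbol{\tau}_h,\boldsymbol{q})_T=(\div\boldsymbol{\tau},\boldsymbol{q})_T$ with no bubble correction whatsoever; the identity $\div\Sigma_{1,\phi}^{\operatorname{div}}(\mathcal{T}_h^{\rm R};\mathbb{S})=\mathbb{P}_0^{-1}(\mathcal{T}_h^{\rm R};\mathbb{R}^d)$ is then extracted by a pure dimension count from $\dim\mathbb{P}_1^{-1}(\mathcal{T}_h;\mathbb{R}^d)=\dim\mathbb{P}_0^{-1}(\mathcal{T}_h^{\rm R};\mathbb{R}^d)$ together with the trivial inclusion. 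You instead prove the strong surjectivity constructively first, through the local isomorphism $\boldsymbol{\sigma}\mapsto\div(\lambda_c^{\rm R}\boldsymbol{\sigma})$ from $\mathbb{S}$ onto $\mathbb{P}_0^{-1}(T^{\rm R};\mathbb{R}^d)\cap\mathrm{RM}(T)^{\perp}$ --- effectively a $k=1$ analogue of the bubble surjectivity \eqref{eq:SdivBrsurjection} that the paper never isolates --- and then recover \eqref{divontok1} by inverting $Q_{1,T}$ on piecewise constants. Your route is longer but buys an explicit stable local right inverse of $\div$ and the clean characterization $\div\bigl(\lambda_c^{\rm R}\mathbb{P}_0(T;\mathbb{S})\bigr)=\mathbb{P}_0^{-1}(T^{\rm R};\mathbb{R}^d)\cap\mathrm{RM}(T)^{\perp}$, which is of independent interest; your invertibility computation for $M$ is essentially the same calculation the paper performs in the lemma immediately following this one ($v_i=\frac{d+2}{d+1}\sum_j v_j$ forces $v=0$). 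One dependency to make explicit: the codimension count $\dim\bigl(\mathbb{P}_0^{-1}(T^{\rm R};\mathbb{R}^d)\cap\mathrm{RM}(T)^{\perp}\bigr)=d(d+1)-\tfrac12 d(d+1)$ presupposes that no nonzero rigid motion is $L^2$-orthogonal to all piecewise constants on $T^{\rm R}$; this nondegeneracy is exactly what the invertibility of your matrix $M$ delivers (since $\mathrm{RM}(T)\subset\mathbb{P}_1(T;\mathbb{R}^d)$), so that computation should be invoked before, not after, the bijection lemma.
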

\begin{proof}
There exists a $\boldsymbol{\tau}\in H^1(\Omega;\mathbb S)$ satisfying 
\begin{equation*}
 \div\boldsymbol{\tau}=\boldsymbol{v}_h,\quad \|\boldsymbol{\tau}\|_1\lesssim \|\boldsymbol{v}_h\|_0.   
\end{equation*}
Let $\boldsymbol{\tau}_h\in\Sigma_{1, \phi}^{\operatorname{div}}(\mathcal T_h^{\rm R}; \mathbb{S})$ be the nodal interpolation based on DoFs \eqref{HdivSfemk1momentdof}. Then using the integration by parts and the scaling argument, we have
\begin{equation*}
(\div\boldsymbol{\tau}_h, \boldsymbol{v}_h)=(\div\boldsymbol{\tau}, \boldsymbol{v}_h)=\|\boldsymbol{v}_h\|_0^2,\quad \|\boldsymbol{\tau}_h\|_{H(\div)}\lesssim \|\boldsymbol{\tau}\|_1\lesssim \|\boldsymbol{v}_h\|_0.   
\end{equation*}
Therefore, \eqref{divontok1} holds.

The inf-sup condition \eqref{divontok1} implies 
$$
\dim\div\Sigma_{1, \phi}^{\operatorname{div}}(\mathcal T_h^{\rm R}; \mathbb{S})\geq\dim\mathbb P_{1}^{-1}(\mathcal T_h;\mathbb R^d)=\dim\mathbb P_{0}^{-1}(\mathcal T_h^{\rm R};\mathbb R^d).
$$
We end the proof by using the fact $\div\Sigma_{1, \phi}^{\operatorname{div}}(\mathcal T_h^{\rm R}; \mathbb{S})\subseteq\mathbb P_{0}^{-1}(\mathcal T_h^{\rm R};\mathbb R^d)$.
\end{proof}

\begin{lemma}
For $\boldsymbol{\tau}_h\in\Sigma_{1, \phi}^{\operatorname{div}}(\mathcal T_h^{\rm R}; \mathbb{S})$ satisfying $(\div\boldsymbol{\tau}_h, \boldsymbol{v}_h)=0$ for $\boldsymbol{v}_h\in\mathbb P_{1}^{-1}(\mathcal T_h;\mathbb R^d)$, we have $\div\boldsymbol{\tau}_h=0$. That is 
\begin{equation*}
\Sigma_{1, \phi}^{\operatorname{div}}(\mathcal T_h^{\rm R}; \mathbb{S})\cap\ker(Q_{1,h}\div)=\Sigma_{1, \phi}^{\operatorname{div}}(\mathcal T_h^{\rm R}; \mathbb{S})\cap\ker(\div).
\end{equation*}
\end{lemma}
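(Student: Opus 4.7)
The hypothesis is equivalent to $Q_{1,h}\div\boldsymbol{\tau}_h = 0$, and the preceding lemma gives $\div\boldsymbol{\tau}_h \in \div\Sigma_{1,\phi}^{\operatorname{div}}(\mathcal{T}_h^{\rm R}; \mathbb{S}) = \mathbb{P}_0^{-1}(\mathcal{T}_h^{\rm R}; \mathbb{R}^d)$. So the claim reduces to injectivity of $Q_{1,h}$ restricted to $\mathbb{P}_0^{-1}(\mathcal{T}_h^{\rm R}; \mathbb{R}^d)$.

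The plan is a dimension-count argument that recycles the previous lemma. Since the barycentric refinement splits each coarse element into $d+1$ sub-simplices, $\dim \mathbb{P}_0^{-1}(\mathcal{T}_h^{\rm R}; \mathbb{R}^d) = d(d+1)|\mathcal{T}_h| = \dim \mathbb{P}_1^{-1}(\mathcal{T}_h; \mathbb{R}^d)$. The preceding lemma already states $Q_{1,h}\div\Sigma_{1,\phi}^{\operatorname{div}}(\mathcal{T}_h^{\rm R}; \mathbb{S}) = \mathbb{P}_1^{-1}(\mathcal{T}_h; \mathbb{R}^d)$, and the image of $\div$ on this space lies in $\mathbb{P}_0^{-1}(\mathcal{T}_h^{\rm R}; \mathbb{R}^d)$, so the restriction $Q_{1,h}: \mathbb{P}_0^{-1}(\mathcal{T}_h^{\rm R}; \mathbb{R}^d) \to \mathbb{P}_1^{-1}(\mathcal{T}_h; \mathbb{R}^d)$ is surjective. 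Between equidimensional spaces, surjectivity upgrades to bijectivity, and injectivity delivers $\div\boldsymbol{\tau}_h = 0$.

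An elementary self-contained alternative avoids invoking the previous lemma at the abstract level. Localize to one coarse element $T$ by testing against $\boldsymbol{v}_h$ supported in $T$; then $\boldsymbol{w} := \div\boldsymbol{\tau}_h|_T \in \mathbb{P}_0^{-1}(T^{\rm R}; \mathbb{R}^d)$ is $L^2$-orthogonal to $\mathbb{P}_1(T; \mathbb{R}^d)$. Writing $\boldsymbol{w} = \sum_{i=0}^d \boldsymbol{c}_i \chi_{T_i}$, using $|T_i| = |T|/(d+1)$ and the sub-simplex barycenter $\boldsymbol{m}_i = ((d+2)\texttt{v}_c - \texttt{v}_i)/(d+1)$, orthogonality against constants and linear fields on $T$ reduces to $\sum_i \boldsymbol{c}_i = 0$ and $\sum_i \boldsymbol{c}_i \otimes \texttt{v}_i = 0$; affine independence of $\{\texttt{v}_0,\ldots,\texttt{v}_d\}$ then forces every $\boldsymbol{c}_i = 0$.

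The main obstacle is minor: just recognizing the dimension match $d(d+1)|\mathcal{T}_h|$ and pairing it with the previous lemma's surjectivity cleanly; no new machinery is required. The elementary alternative is equally short, its only subtlety being the barycenter computation and the appeal to affine independence.
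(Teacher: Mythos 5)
Your proposal is correct, and your primary route is genuinely different from the paper's. The paper argues locally and directly: it restricts to a single coarse element $T$, notes $\div\boldsymbol{\tau}_h|_T\in\mathbb P_0^{-1}(T^{\rm R};\mathbb R^d)$, tests against the barycentric coordinates $q=\lambda_i$, computes $\int_{T_j}\lambda_i\dx$ explicitly to obtain $v_i=\tfrac{d+2}{d+1}\sum_{j=0}^d v_j$, and concludes $v=0$ by summing over $i$. Your main argument instead treats $Q_{1,h}$ as a linear map from $\mathbb P_0^{-1}(\mathcal T_h^{\rm R};\mathbb R^d)$ to $\mathbb P_1^{-1}(\mathcal T_h;\mathbb R^d)$, observes that the preceding lemma already gives surjectivity of this restriction, matches dimensions ($d(d+1)|\mathcal T_h|$ on both sides, an identity the paper itself invokes in the preceding lemma), and upgrades surjectivity to injectivity by rank--nullity. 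This is shorter and elegantly recycles prior work, at the cost of being less self-contained: it inherits its content from the inf-sup condition \eqref{divontok1} used to prove the preceding lemma, whereas the paper's computation stands on its own. Your ``elementary alternative'' is essentially the paper's proof with the monomial basis $\{1,x_1,\ldots,x_d\}$ of $\mathbb P_1(T)$ in place of $\{\lambda_i\}$; your barycenter formula $\boldsymbol{m}_i=((d+2)\texttt{v}_c-\texttt{v}_i)/(d+1)$ and the reduction to $\sum_i\boldsymbol{c}_i=0$, $\sum_i\boldsymbol{c}_i\otimes\texttt{v}_i=0$ followed by affine independence are all correct, and the paper's choice of barycentric coordinates merely makes the resulting linear system more symmetric. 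Both of your routes are valid.
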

\begin{proof}
Take $T\in\mathcal T_h$. Since $(\div\boldsymbol{\tau}_h)|_T\in\mathbb P_{0}^{-1}(T^{\rm R};\mathbb R^d)$, it suffices to prove $v=0$ for $v\in\mathbb P_{0}^{-1}(T^{\rm R})$ satisfying $(v, q)_T=0$ for $q\in\mathbb P_{1}(T)$. Choosing $q=\lambda_i$ with $i=0,1,\ldots, d$, we get
\begin{equation*}
\sum_{j=0}^dv_j\int_{T_j}\lambda_i\dx=0,
\end{equation*}
where $v_j=v|_{T_j}$. Hence,
$
v_i=\frac{d+2}{d+1}\sum_{j=0}^dv_j.
$ Therefore $v=0$.
\end{proof}

Similarly, we have the following inf-sup conditions for reduced linear elements.
\begin{lemma}\label{lem:infsupreducedP1}
We have the discrete inf-sup conditions
\begin{equation*}
\|\boldsymbol{v}_h\|_0\lesssim \sup_{\boldsymbol{\tau}_h\in\Sigma_{1, \phi}^{\div}(\mathcal T_h; \mathbb S)}\frac{(\div\boldsymbol{\tau}_h, \boldsymbol{v}_h)}{\|\boldsymbol{\tau}_h\|_{H(\div)}}\quad\forall~ \boldsymbol{v}_h\in{\rm RM}(\mathcal T_h),
\end{equation*}
\begin{equation*}
\|\boldsymbol{v}_h\|_0\lesssim \sup_{\boldsymbol{\tau}_h\in\Sigma_{\rm RM}^{\div}(\mathcal T_h; \mathbb S)}\frac{(\div\boldsymbol{\tau}_h, \boldsymbol{v}_h)}{\|\boldsymbol{\tau}_h\|_{H(\div)}}\quad\forall~\boldsymbol{v}_h\in{\rm RM}(\mathcal T_h).
\end{equation*}
\end{lemma}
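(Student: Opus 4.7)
Both inf-sup conditions can be established by the same lifting-and-interpolation recipe as in the proof of Lemma~\ref{lem:Sdiscretedivinfsup}, with the key simplification that testing against $\mathrm{RM}(\mathcal T_h)$ removes the need for the interior-bubble correction step used there. Given $\boldsymbol{v}_h \in \mathrm{RM}(\mathcal T_h)$, I would first invoke the $H^1$-regular right inverse of $\div$ on symmetric tensors~\cite{ArnoldHu2021} to pick $\boldsymbol{\tau} \in H^1(\Omega; \mathbb{S})$ with $\div \boldsymbol{\tau} = \boldsymbol{v}_h$ and $\|\boldsymbol{\tau}\|_1 \lesssim \|\boldsymbol{v}_h\|_0$, and then define $\tilde{\boldsymbol{\tau}}_h$ to be the canonical interpolant of $\boldsymbol{\tau}$ into the respective reduced space, specified only through its face DoFs. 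A standard scaling argument yields $\|\tilde{\boldsymbol{\tau}}_h\|_{H(\div)} \lesssim \|\boldsymbol{\tau}\|_1 \lesssim \|\boldsymbol{v}_h\|_0$.

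The core computation exploits that $\boldsymbol{v}_h|_T \in \mathrm{RM}(T)$ implies $\nabla \boldsymbol{v}_h \in \mathbb{K}$. Element-wise integration by parts gives
\[
(\div \tilde{\boldsymbol{\tau}}_h, \boldsymbol{v}_h)_T = -(\tilde{\boldsymbol{\tau}}_h, \nabla \boldsymbol{v}_h)_T + \int_{\partial T} (\tilde{\boldsymbol{\tau}}_h \boldsymbol{n}) \cdot \boldsymbol{v}_h \dd s = \int_{\partial T} (\tilde{\boldsymbol{\tau}}_h \boldsymbol{n}) \cdot \boldsymbol{v}_h \dd s,
\]
since $\tilde{\boldsymbol{\tau}}_h$ is symmetric while $\nabla \boldsymbol{v}_h$ is skew-symmetric, and the same identity holds for $\boldsymbol{\tau}$ itself. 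The problem therefore reduces to showing that the face moment of $\boldsymbol{\tau}\boldsymbol{n}$ against $\boldsymbol{v}_h|_F$ is preserved under the interpolation, which in turn requires $\boldsymbol{v}_h|_F$ to lie in the test space of the face DoFs defining the interpolant.

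For $\Sigma_{1,\phi}^{\div}(T; \mathbb{S})$, the face DoFs \eqref{HdivSfemk1momentdof1} test against all of $\mathbb{P}_1(F; \mathbb{R}^d)$, which trivially contains $\mathrm{RM}(T)|_F$. The substantive point is the second case, where the face test space shrinks to $\mathbb{P}_1(F)\boldsymbol{n}_F \oplus \mathrm{RM}(F)$. Writing $\boldsymbol{v}_h = \boldsymbol{a} + K\boldsymbol{x}$ with skew $K$, the normal component $(\boldsymbol{v}_h \cdot \boldsymbol{n}_F)\boldsymbol{n}_F$ is affine on $F$ and lies in $\mathbb{P}_1(F)\boldsymbol{n}_F$, while the tangential component $\Pi_F\boldsymbol{v}_h|_F$ decomposes as a constant vector in $\mathscr{T}^F$ plus $(\Pi_F K \Pi_F)\Pi_F\boldsymbol{x}|_F$, where $\Pi_F K \Pi_F$ is skew-symmetric on $\mathscr{T}^F$ because $K$ is skew on $\mathbb{R}^d$. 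Hence $\Pi_F\boldsymbol{v}_h|_F \in \mathrm{RM}(F)$, giving the trace inclusion $\mathrm{RM}(T)|_F \subset \mathbb{P}_1(F)\boldsymbol{n}_F \oplus \mathrm{RM}(F)$. I expect this trace inclusion to be the only nontrivial step; once in hand, summing the boundary identities over $T \in \mathcal T_h$ yields $(\div\tilde{\boldsymbol{\tau}}_h, \boldsymbol{v}_h) = (\div\boldsymbol{\tau}, \boldsymbol{v}_h) = \|\boldsymbol{v}_h\|_0^2$, which combined with the stability bound delivers both inf-sup inequalities.
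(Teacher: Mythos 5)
Your proposal is correct and follows essentially the same route the paper intends: the paper gives no explicit proof of Lemma~\ref{lem:infsupreducedP1}, merely saying ``Similarly'' with reference to the preceding lemma, whose proof is exactly your lifting-plus-canonical-interpolation argument. Your write-up in fact supplies the one detail the paper leaves implicit, namely the trace inclusion $\mathrm{RM}(T)|_F \subset \mathbb{P}_1(F)\boldsymbol{n}_F \oplus \mathrm{RM}(F)$ needed for the $\Sigma_{\mathrm{RM}}^{\div}$ case, and your verification of it (skewness of $\Pi_F K \Pi_F$ on $\mathscr{T}^F$ and constancy of $\boldsymbol{n}_F\cdot\boldsymbol{x}$ on $F$) is sound.
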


\section{Discretization of Linear Elasticity Equation}\label{sec:discretization}
In this section, we apply the finite element spaces to the mixed formulation of the linear elasticity equation. With the established discrete inf-sup conditions, stability and error analysis follow directly. With enriched subspaces on the split mesh, all new stress elements in this work are hybridizable on the coarse mesh.

The linear elasticity problem can be written as the following first-order system:
\begin{equation}
\left\{
\begin{aligned}
\mathcal{A}\boldsymbol{\sigma}&=\boldsymbol{\varepsilon}(\boldsymbol{u}) \; \,\,\;\,\text{in} \;\Omega,
\\
\div\boldsymbol{\sigma}&=-\boldsymbol{f} \;\; \quad\text{in} \;\Omega ,
\\
\boldsymbol{u}&=\boldsymbol{0} \,\;\;\;\;\, \quad\text{on} \;\partial\Omega,\label{equ_elasticity}
\end{aligned}
\right.
\end{equation}
where,  $\mu$ and $\lambda$ are Lam\'e constants and $\lambda$ may be large, and 
$$
\mathcal{A}\boldsymbol{\sigma}=\frac{1}{2\mu}\boldsymbol{\sigma}-\frac{\lambda}{2\mu(2\mu+d\lambda)}\tr(\boldsymbol{\sigma})\boldsymbol{I} = \frac{1}{2\mu}\dev\boldsymbol{\sigma}+\frac{1}{d(2\mu+d\lambda)}\tr(\boldsymbol{\sigma})\boldsymbol{I}
$$
with
$\tr(\boldsymbol{\sigma})$ being the trace of tensor $\boldsymbol{\sigma}$, and
$\dev\boldsymbol{\sigma}:=\boldsymbol{\sigma} - \frac{1}{d}\tr(\boldsymbol{\sigma})\boldsymbol{I}.$

\subsection{Stabilized mixed method}
The stability of $Q_{k-1,h} \div$ established in Theorem \ref{th:infsupSigmahat} is weaker in the sense that $Q_{k-1,h} \div \bs \sigma = 0$ does not imply $\div \bs \sigma = 0$ pointwise, which may cause trouble in the discretization of the linear elasticity. We will address this issue by adding an element-wise stabilization.

Following~\cite{BrezziFortinMarini1993finite,ChenHuHuang2017Stabilized}, the stabilized variational form is: for $k\geq 2$, find $\boldsymbol{\sigma}_h\in \Sigma_{k, \phi}^{\operatorname{div}}(\mathcal T_h; \mathbb{S})$ and $\boldsymbol{u}_h\in \mathbb P_{k-1}^{-1}(\mathcal T_h; \mathbb R^d)$ such that
\begin{subequations}\label{elassdg}
\begin{align}
\label{elassdg1}
a_h(\boldsymbol{\sigma}_h,\boldsymbol{\tau}_h)+b_h(\boldsymbol{\tau}_h,\boldsymbol{u}_h)&=-(\boldsymbol{f},\div \boldsymbol{\tau}_h),\quad\;\forall~\boldsymbol{\tau}_h\in \Sigma_{k, \phi}^{\operatorname{div}}(\mathcal T_h; \mathbb{S}),
\\
\label{elassdg2}
b_h(\boldsymbol{\sigma}_h,\boldsymbol{v}_h)&=-(\boldsymbol{f},\boldsymbol{v}_h),\quad\quad\;\;\;\forall~\boldsymbol{v}_h\in \mathbb P_{k-1}^{-1}(\mathcal T_h; \mathbb R^d),
\end{align}
\end{subequations}
where
\begin{align*}
a_h(\boldsymbol{\sigma}_h,\boldsymbol{\tau}_h) &:=(\mathcal{A}\boldsymbol{\sigma}_h,\boldsymbol{\tau}_h) + \sum_{T\in \mathcal T_h}(\div \bs \sigma_h, \div \bs \tau_h)_T
,\\
b_h(\boldsymbol{\sigma}_h,\boldsymbol{u}_h)&:= \sum_{T\in\mathcal T_h}(\div \bs \sigma_h, \bs v_h)_T.
\end{align*}

The mixed finite method \eqref{elassdg} is well-posed, and possesses the following optimal error estimate. 

\begin{theorem}
The mixed finite element method \eqref{elassdg} for $k\geq 2$ is well-posed and stable. Let $(\boldsymbol{\sigma},\boldsymbol{u})\in H^{k+1}(\Omega; \mathbb{S})\times H^k(\Omega; \mathbb R^d)$ be the solution of problem \eqref{equ_elasticity}, and $(\boldsymbol{\sigma}_h,\boldsymbol{u}_h)\in\Sigma_{k, \phi}^{\operatorname{div}}(\mathcal T_h; \mathbb{S})\times\mathbb P_{k-1}^{-1}(\mathcal T_h; \mathbb R^d)$ be the solution of mixed method \eqref{elassdg}.
We have
\begin{equation}\label{eq:discreteerror}
\|\boldsymbol{\sigma}-\boldsymbol{\sigma}_h\|_{\div} + \|\boldsymbol{u}-\boldsymbol{u}_h\|_{0}\lesssim h^{k}(\|\boldsymbol{\sigma}\|_{k+1}+\|\boldsymbol{u}\|_{k}).
\end{equation}
\end{theorem}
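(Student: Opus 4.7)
The plan is to check Brezzi's two conditions for the stabilized saddle system \eqref{elassdg}, use them to obtain well-posedness and a quasi-best-approximation estimate, and then invoke standard polynomial approximation to reach the rate \eqref{eq:discreteerror}.

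Set $\bs V_h = \Sigma_{k,\phi}^{\operatorname{div}}(\mathcal T_h; \mathbb S)$ equipped with $\|\cdot\|_{\div}$ and $\bs Q_h = \mathbb P_{k-1}^{-1}(\mathcal T_h; \mathbb R^d)$ equipped with $\|\cdot\|_0$. Since $b_h(\bs \tau_h, \bs v_h) = (\div \bs \tau_h, \bs v_h) = (Q_{k-1,h}\div \bs \tau_h, \bs v_h)$ for $\bs v_h \in \bs Q_h$, the required inf-sup condition
$$\|\bs v_h\|_0 \lesssim \sup_{\bs \tau_h \in \bs V_h} \frac{b_h(\bs \tau_h, \bs v_h)}{\|\bs \tau_h\|_{\div}}, \quad \bs v_h \in \bs Q_h,$$
is exactly Theorem~\ref{th:infsupSigmahat}. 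For coercivity of $a_h$, observe that $\mathcal A$ is pointwise positive definite with constant depending on $\lambda,\mu$, giving $(\mathcal A \bs \tau,\bs \tau) \geq C(\lambda,\mu)\|\bs \tau\|_0^2$; combined with the stabilization contribution $\sum_{T}\|\div \bs \tau_h\|_{0,T}^2$, this yields $a_h(\bs \tau_h,\bs \tau_h) \geq C \|\bs \tau_h\|_{\div}^2$ uniformly on all of $\bs V_h$, and in particular on the discrete kernel of $b_h$. Brezzi's theorem then delivers well-posedness of \eqref{elassdg} together with the quasi-best-approximation bound
$$\|\bs \sigma - \bs \sigma_h\|_{\div} + \|\bs u - \bs u_h\|_0 \lesssim \inf_{\bs \tau_h \in \bs V_h}\|\bs \sigma - \bs \tau_h\|_{\div} + \inf_{\bs v_h \in \bs Q_h}\|\bs u - \bs v_h\|_0.$$

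For the approximation infima, I would use the canonical interpolant $I_h : H^1(\Omega;\mathbb S) \to \bs V_h$ based on the DoFs \eqref{eq:globaldivfemdof}. Since $\mathbb P_k(T;\mathbb S) \subset \bs V_h|_T$, the interpolant preserves $\mathbb P_k$; together with $H^1$-stability of $I_h$ (from local DoF scaling) and the Bramble--Hilbert lemma, this yields $\|\bs \sigma - I_h\bs \sigma\|_0 \lesssim h^{k+1}\|\bs \sigma\|_{k+1}$ and $\|\div(\bs \sigma - I_h\bs \sigma)\|_0 \lesssim h^k\|\bs \sigma\|_{k+1}$. Integration by parts against $\bs v \in \mathbb P_{k-1}(T;\mathbb R^d)$, using the face DoF $\int_F \bs \tau\bs n \cdot \bs q$ for $\bs q \in \mathbb P_k(F;\mathbb R^d)$ and the interior DoF $\int_T \bs \tau : \bs q$ for $\bs q \in \mathbb P_{k-2}(T;\mathbb S)$ (noting $\bs \varepsilon(\bs v) \in \mathbb P_{k-2}(T;\mathbb S)$), also gives the commuting identity $Q_{k-1,h}\div I_h = Q_{k-1,h}\div$, which is useful for sharper variants. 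For $\bs u$, the $L^2$-projection onto $\bs Q_h$ gives $\|\bs u - Q_{k-1,h}\bs u\|_0 \lesssim h^k\|\bs u\|_k$. Substituting into the Brezzi estimate produces \eqref{eq:discreteerror}.

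The main obstacle is the coercivity step: although pointwise positivity of $\mathcal A$ together with stabilization is enough for the stated non-robust bound, the hidden constant deteriorates as $\lambda \to \infty$. A $\lambda$-robust argument would require an estimate of the form $\|\tr \bs \tau_h\|_0 \lesssim \|\dev \bs \tau_h\|_0 + \|\div \bs \tau_h\|_0$ on $\bs V_h$, proved via duality using a Bogovskii-type right-inverse $\bs w \in H^1_0(\Omega;\mathbb R^d)$ of $\div$ and integration by parts against $\bs \tau_h \in H(\div;\mathbb S)$. Since the theorem as stated does not claim $\lambda$-robustness, the simpler coercivity suffices, and the rest of the argument is routine mixed-method analysis combined with the inf-sup tool already established in Theorem~\ref{th:infsupSigmahat}.
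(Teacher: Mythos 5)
Your proposal is correct and follows essentially the same route as the paper: the discrete inf-sup condition of Theorem~\ref{th:infsupSigmahat} plus coercivity of $a_h$, fed into standard Brezzi theory, followed by interpolation estimates for the canonical interpolant based on the DoFs \eqref{eq:globaldivfemdof} and the $L^2$ projection for $\boldsymbol{u}$. The one substantive point of divergence is the coercivity step. The paper invokes the $\lambda$-robust bound \eqref{eq:discoercivity}, $\|\boldsymbol{\sigma}_h\|_0^2+\|\div\boldsymbol{\sigma}_h\|_0^2\lesssim a_h(\boldsymbol{\sigma}_h,\boldsymbol{\sigma}_h)$ with hidden constant independent of $\lambda$ (citing the standard argument via $\|\tr\boldsymbol{\tau}_h\|_0\lesssim\|\dev\boldsymbol{\tau}_h\|_0+\|\div\boldsymbol{\tau}_h\|_0$, exactly the estimate you sketch but defer), and explicitly records that \eqref{eq:discreteerror} is therefore robust with respect to the Lam\'e constant. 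You instead settle for the pointwise positivity $(\mathcal{A}\boldsymbol{\tau},\boldsymbol{\tau})\geq C(\lambda,\mu)\|\boldsymbol{\tau}\|_0^2$, whose constant degenerates like $\lambda^{-1}$ as $\lambda\to\infty$. This does prove the inequality \eqref{eq:discreteerror} as literally written, so there is no gap in the logic, but it yields a weaker result than the paper's proof: the locking-free (uniform-in-$\lambda$) character of the estimate, which is the main practical point of using the Hellinger--Reissner formulation, is lost. To match the paper you would need to carry out the duality/Bogovskii argument you mention rather than treat it as optional.
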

\begin{proof}
We conclude the result from the inf-sup condition \eqref{eq:infsupSigmahat} and the robust coercivity~\cite{BoffiBrezziFortin2013,ChenHuHuang2018}
\begin{equation}\label{eq:discoercivity}
\|\bs \sigma_h\|_0^2 + \|\div \bs \sigma_h\|_0^2 \lesssim a_h(\bs \sigma_h, \bs \sigma_h), \quad \forall~\bs \sigma_h\in \Sigma_{k, \phi}^{\operatorname{div}}(\mathcal T_h; \mathbb{S}).
\end{equation}
The hidden constant in \eqref{eq:discoercivity} is independent of the Lam\'e constant $\lambda$. This implies that the estimate \eqref{eq:discreteerror} is robust with respect to the Lam\'e constant $\lambda$.
\end{proof}

\subsection{Hybridization}
We employ a hybridization technique~\cite{fraeijs1965displacement,ArnoldBrezzi1985} that relaxes the continuity conditions, which is applied on the coarse mesh $\mathcal T_h$, not on the split mesh $\mathcal T_h^{\rm R}$.

For $k\geq2$, introduce two discontinuous finite element spaces
\begin{align*}
\Sigma_{k, \psi}^{-1}(\mathcal T_h; \mathbb{S}):=&\{\boldsymbol{\tau}_h \in L^2(\Omega ; \mathbb{S}): \boldsymbol{\tau}_h|_T\in \Sigma_{k, \psi}^{\operatorname{div}}(T; \mathbb{S}) \textrm{ for } T\in\mathcal T_h\},\\
	\mathbb{P}_{k}^{-1}(\mathring{\mathcal F}_h; \mathbb{R}^{d}):=&\{\boldsymbol{\mu}_h \in L^2(\mathcal{F}_h;\mathbb{R}^{d}):\boldsymbol{\mu}_h|_F \in \mathbb{P}_{k}(F; \mathbb{R}^{d}) \text{ for each }F \in \mathring{\mathcal{F}}_h, \\
         &\qquad\qquad\qquad\qquad\;\text{ and } \boldsymbol{\mu}_h=0\text{ on } \mathcal{F}_h \backslash \mathring{\mathcal{F}}_h\},
\end{align*}
where $\mathcal{F}_h:=\Delta_{d-1}(\mathcal{T}_h)$ and $\mathring{\mathcal{F}}_h:=\Delta_{d-1}(\mathring{\mathcal{T}}_h)$.
The hybridization of the mixed finite element method \eqref{elassdg} is to find
$(\boldsymbol{\sigma}_h, \boldsymbol{u}_h, \boldsymbol{\lambda}_h) \in \Sigma_{k, \psi}^{-1}(\mathcal T_h; \mathbb{S}) \times \mathbb P_{k-1}^{-1}(\mathcal T_h; \mathbb R^d) \times \mathbb{P}_{k}^{-1}(\mathring{\mathcal F}_h; \mathbb{R}^{d})$
such that
\begin{subequations}\label{eq:hy}
\begin{align}
\label{hybridelassdg1}
a_h(\boldsymbol{\sigma}_h,\boldsymbol{\tau}_h)+b_h(\boldsymbol{\tau}_h,\boldsymbol{u}_h)+ c_h(\boldsymbol{\tau}_{h}, \boldsymbol{\lambda}_h)&=0,
\\
\label{hybridelassdg2}
b_h(\boldsymbol{\sigma}_h,\boldsymbol{v}_h)+ c_h(\boldsymbol{\sigma}_{h}, \boldsymbol{\mu}_h)&=-(\boldsymbol{f},\boldsymbol{v}_h)
\end{align}
\end{subequations}
for $(\boldsymbol{\tau}_h, \boldsymbol{v}_h, \boldsymbol{\mu}_h) \in \Sigma_{k, \psi}^{-1}(\mathcal T_h; \mathbb{S}) \times \mathbb P_{k-1}^{-1}(\mathcal T_h; \mathbb R^d) \times \mathbb{P}_{k}^{-1}(\mathring{\mathcal F}_h; \mathbb{R}^{d})$,
where $a_h(\boldsymbol{\sigma}_h,\boldsymbol{\tau}_h) =(\mathcal{A}\boldsymbol{\sigma}_h,\boldsymbol{\tau}_h)$, and the bilinear form
$
c_h(\boldsymbol{\tau}_{h}, \boldsymbol{\lambda}_h):=-\sum_{F \in \mathring{\mathcal{F}}_h}\left( [\boldsymbol{\tau}_h\boldsymbol{n}_F], \boldsymbol{\lambda}_h\right)_{F}
$ 
is introduced to impose the normal continuity. 
By relaxing the normal continuity across the interior faces, we can eliminate $\bs \sigma_h$ element-wise and obtain a symmetric and positive definite system. 

We follow our recent work~\cite{ChenHuang2025div-div-conforming} to introduce the weak div operator and establish the weak div stability.
Let $M_h:=\mathbb P_{k-1}^{-1}(\mathcal T_h; \mathbb R^d) \times \mathbb{P}_{k}^{-1}(\mathring{\mathcal F}_h; \mathbb{R}^{d})$. For $\boldsymbol{u}_h=(\boldsymbol{u}_0,\boldsymbol{u}_b), \boldsymbol{v}_h=(\boldsymbol{v}_0,\boldsymbol{v}_b)\in M_h$, introduce the inner product
\begin{equation*}
(\boldsymbol{u}_h, \boldsymbol{v}_h)_{0,h}=(\boldsymbol{u}_0, \boldsymbol{v}_0)+\sum_{F\in\mathcal F_h}h_F(\boldsymbol{u}_b, \boldsymbol{v}_b)_F,
\end{equation*}
which induces an $L^2$-type norm $\|\boldsymbol{v}_h\|_{0,h}=(\boldsymbol{v}_h,\boldsymbol{v}_h)_{0,h}^{1/2}$.
Define the weak div operator $\div_w: \Sigma_{k, \psi}^{-1}(\mathcal T_h; \mathbb{S})\to M_h$ by
\begin{equation*}
\div_w\boldsymbol{\tau}=\{\div(\boldsymbol{\tau}|_T), -h_F^{-1}[\boldsymbol{\tau}\boldsymbol{n}]\}_{T\in\mathcal{T}_h, F\in\mathring{\mathcal F}_h}.
\end{equation*}
Define norm
$$
\| \bs \tau \|_{\div_w} = \left (\|\boldsymbol{\tau}\|_0^2+\|\div_w\boldsymbol{\tau}\|_{0,h}^2\right )^{1/2}.
$$
\begin{theorem}
For $k \geq 2$, we have $\div_w\Sigma_{k, \psi}^{-1}(\mathcal T_h; \mathbb{S})=M_h$, and the discrete inf-sup condition 
 \begin{equation}\label{eq:weakdivinfsupSigmakpsi}
\|\boldsymbol{v}_h\|_{0,h}\lesssim \sup_{\boldsymbol{\tau}_h\in \Sigma_{k, \psi}^{-1}(\mathcal T_h; \mathbb{S})}\frac{(\div_w\boldsymbol{\tau}_h, \boldsymbol{v}_h)_{0,h}}{\| \bs \tau_h \|_{\div_w}}\quad\forall~\boldsymbol{v}_h=(\boldsymbol{v}_0,\boldsymbol{v}_b)\in M_h.
\end{equation}
\end{theorem}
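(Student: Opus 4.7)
The plan is to construct a Fortin-type operator by splitting the right-hand side $\bs v_h = (\bs v_0, \bs v_b) \in M_h$ into an interior contribution and a jump contribution, handling each part separately and then combining.

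First, I would construct an element $\bs \tau_h^{(1)} \in \Sigma_{k,\psi}^{-1}(\mathcal T_h; \mathbb{S})$ that realizes the prescribed face jump $-h_F \bs v_b$. For each interior face $F \in \mathring{\mathcal F}_h$ shared by two adjacent elements $T_\pm$, the DoF \eqref{eq:globaldivfemdof1} pairs $\bs \tau \bs n_F$ against all of $\mathbb P_k(F; \mathbb R^d)$, which exactly matches the space containing $\bs v_b|_F$. I would therefore define $\bs \tau_h^{(1)}|_{T_+}$ through the DoFs of $\Sigma_{k,\psi}^{\div}(T_+; \mathbb S)$ by requiring that $\int_F (\bs \tau_h^{(1)} \bs n_F)\cdot \bs q \, \mathrm ds = -h_F\int_F \bs v_b \cdot \bs q \,\mathrm ds$ on this one face, setting $\bs \tau_h^{(1)}|_{T_-}\bs n_F = 0$ on $F$, and letting all other face and interior DoFs vanish. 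By a standard scaling argument (using $\dim\Sigma_{k,\psi}^{\div}(T; \mathbb S) < \infty$ and equivalence of norms on the reference element), this yields
\[
\|\bs \tau_h^{(1)}\|_{0,T}^2 \lesssim h_T^3 \|\bs v_b\|_{0,F}^2,\qquad
\|\div\bs \tau_h^{(1)}\|_{0,T}^2 \lesssim h_T \|\bs v_b\|_{0,F}^2,
\]
and by construction $-h_F^{-1}[\bs \tau_h^{(1)} \bs n]|_F = \bs v_b|_F$ on every $F \in \mathring{\mathcal F}_h$. Summing over faces gives $\|\bs \tau_h^{(1)}\|_{\div_w} \lesssim \|\bs v_b\|_{0,h}$.

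Second, I would correct the interior divergence. The interior component produced so far is $\div(\bs \tau_h^{(1)}|_T) \in \mathbb P_{k-1}^{-1}(\mathcal T_h; \mathbb R^d)$, which generally differs from $\bs v_0$. Set $\tilde{\bs v}_0 := \bs v_0 - \div(\bs \tau_h^{(1)}|_T) \in \mathbb P_{k-1}^{-1}(\mathcal T_h; \mathbb R^d)$, satisfying $\|\tilde{\bs v}_0\|_0 \lesssim \|\bs v_0\|_0 + \|\bs v_b\|_{0,h} \lesssim \|\bs v_h\|_{0,h}$. Invoking the already-established $H(\div)$ inf-sup condition \eqref{eq:infsupSigmakpsi} (and its consequence that $\div: \Sigma_{k,\psi}^{\div}(\mathcal T_h;\mathbb S) \to \mathbb P_{k-1}^{-1}(\mathcal T_h;\mathbb R^d)$ is surjective), I obtain $\bs \tau_h^{(0)} \in \Sigma_{k,\psi}^{\div}(\mathcal T_h; \mathbb S) \subset \Sigma_{k,\psi}^{-1}(\mathcal T_h; \mathbb S)$ with $\div\bs \tau_h^{(0)} = \tilde{\bs v}_0$ and $\|\bs \tau_h^{(0)}\|_{H(\div)} \lesssim \|\tilde{\bs v}_0\|_0 \lesssim \|\bs v_h\|_{0,h}$. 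Because $\bs \tau_h^{(0)}$ is $H(\div)$-conforming, $[\bs \tau_h^{(0)} \bs n] = 0$ across all interior faces.

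Setting $\bs \tau_h := \bs \tau_h^{(0)} + \bs \tau_h^{(1)}$ then gives $\div(\bs \tau_h|_T) = \tilde{\bs v}_0 + \div(\bs \tau_h^{(1)}|_T) = \bs v_0$ and $-h_F^{-1}[\bs \tau_h \bs n]|_F = \bs v_b|_F$, i.e.\ $\div_w \bs \tau_h = \bs v_h$, proving surjectivity. Moreover $\|\bs \tau_h\|_{\div_w} \lesssim \|\bs v_h\|_{0,h}$, so testing $(\div_w \bs \tau_h, \bs v_h)_{0,h} = \|\bs v_h\|_{0,h}^2$ yields \eqref{eq:weakdivinfsupSigmakpsi} at once. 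The main subtlety is the face construction in the first step, where one must verify that setting only the face-normal DoFs on a single face of one element (and zero on the neighbour) produces a well-defined element of $\Sigma_{k,\psi}^{\div}(T;\mathbb S)$ with the correct scaling; this works precisely because the DoFs in Corollary~\ref{cor:reducedspace} are facewise independent and the face-normal DoFs exhaust $\mathbb P_k(F;\mathbb R^d)$, matching the trial space for $\bs v_b$.
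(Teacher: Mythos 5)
Your proposal is correct and follows essentially the same two-step strategy as the paper's proof: first construct a local, facewise-defined $\boldsymbol{\tau}_b$ whose only nonvanishing DoFs are the face-normal moments chosen to realize the prescribed jump $-h_F\boldsymbol{v}_b$ (the paper distributes $-\tfrac{1}{2}h_F\boldsymbol{v}_b$ symmetrically to both sides of each interior face, while you place the full amount on one side; either choice gives $-h_F^{-1}[\boldsymbol{\tau}\boldsymbol{n}]=\boldsymbol{v}_b$ and the same scaling), then correct the interior divergence with a conforming $\boldsymbol{\tau}_0\in\Sigma_{k,\psi}^{\operatorname{div}}(\mathcal T_h;\mathbb{S})$ via the inf-sup condition \eqref{eq:infsupSigmakpsi}. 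Your scaling estimates and the final combination $\boldsymbol{\tau}_h=\boldsymbol{\tau}_h^{(0)}+\boldsymbol{\tau}_h^{(1)}$ match the paper's argument.
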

\begin{proof}
First, choose $\boldsymbol{\tau}_b\in\Sigma_{k, \psi}^{-1}(\mathcal T_h; \mathbb{S})$ such that
\begin{equation*}
(\boldsymbol{\tau}_b|_T)\boldsymbol{n}=-\frac{1}{2}h_F\boldsymbol{v}_b\quad\textrm{ on face } F\in\Delta_{d-1}(T), T\in\mathcal{T}_h,
\end{equation*} 
and all the other DoFs vanish. Then
\begin{equation*}
\div_w\boldsymbol{\tau}_b=\{\div(\boldsymbol{\tau}_b|_T), \boldsymbol{v}_b\}_{T\in\mathcal{T}_h},\;\; \textrm{ and }\;\; \|\boldsymbol{\tau}_b\|_{\div_w}\lesssim \|\boldsymbol{v}_h\|_{0,h}.
\end{equation*}
By the inf-sup condition~\eqref{eq:infsupSigmakpsi}, there exists a $\boldsymbol{\tau}_0\in\Sigma_{k, \psi}^{\operatorname{div}}(\mathcal T_h; \mathbb{S})$ such that 
$$
\div_w \boldsymbol{\tau}_0 = \boldsymbol{v}_h - \div_w \boldsymbol{\tau}_b,
\quad 
\|\boldsymbol{\tau}_0\|_{0} + \|\div \boldsymbol{\tau}_0\|_{0} \lesssim 
\|\boldsymbol{v}_h - \div_w \boldsymbol{\tau}_b\|_{0,h} \lesssim \|\boldsymbol{v}_h\|_{0,h}.
$$
Setting $\boldsymbol{\tau}_h=\boldsymbol{\tau}_0 + \boldsymbol{\tau}_{b}$ yields $\div_w\boldsymbol{\tau}_h= \boldsymbol{v}_h$, and $\|\boldsymbol{\tau}_h\|_{\div_w} \lesssim \|\boldsymbol{v}_h\|_{0,h}$, which verifies the inf-sup condition~\eqref{eq:weakdivinfsupSigmakpsi}.
\end{proof}

%
%
%

Using the weak div operator, the hybridized mixed finite element method \eqref{eq:hy} can be rewritten as follows: find
$(\boldsymbol{\sigma}_h, \boldsymbol{u}_h) \in \Sigma_{k, \psi}^{-1}(\mathcal T_h; \mathbb{S}) \times M_h$
such that
\begin{align*}
(\mathcal{A}\boldsymbol{\sigma}_h,\boldsymbol{\tau}_h)+(\div_w\boldsymbol{\tau}_h, \boldsymbol{u}_h)_{0,h}&=0, \qquad\qquad\;\forall~\boldsymbol{\tau}_h\in \Sigma_{k, \psi}^{-1}(\mathcal T_h; \mathbb{S}),
\\
(\div_w\boldsymbol{\sigma}_h, \boldsymbol{v}_h)_{0,h}&=-(\boldsymbol{f},\boldsymbol{v}_0),\quad \forall~\boldsymbol{v}_h=(\boldsymbol{v}_0,\boldsymbol{v}_b)\in M_h.
\end{align*}

\begin{lemma}
We have the discrete coercivity
\begin{equation}\label{eq:hdgahcoercive}
\|\boldsymbol{\tau}_h\|_{0,h}^2\lesssim a_h(\boldsymbol{\tau}_h,\boldsymbol{\tau}_h),\quad\forall~\boldsymbol{\tau}_h\in Z_h,
\end{equation}
where
$$
Z_h:=\left\{ \boldsymbol{\tau}_h\in \Sigma_{k, \psi}^{-1}(\mathcal T_h; \mathbb{S}): \tr(\boldsymbol{\tau}_h)\in L_0^2(\Omega), \textrm{ and } \div_w\boldsymbol{\tau}_h = 0\right\}.
$$
\end{lemma}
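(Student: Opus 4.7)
The key observation is that the condition $\div_w\bs\tau_h=0$ is very strong: it forces both $\div(\bs\tau_h|_T)=0$ for every $T\in\mathcal T_h$ and $[\bs\tau_h\bs n]|_F=0$ for every interior face $F\in\mathring{\mathcal F}_h$. Consequently any $\bs\tau_h\in Z_h$ actually lies in $H(\div,\Omega;\mathbb S)$ with $\div\bs\tau_h=0$ in $L^2(\Omega;\mathbb R^d)$, and in particular the broken norm $\|\bs\tau_h\|_{0,h}$ coincides with $\|\bs\tau_h\|_0$. This reduces the lemma to the classical coercivity of the compliance form $(\mathcal A\cdot,\cdot)$ on div-free symmetric tensors with mean-free trace, which is a standard ingredient in the analysis of mixed methods for linear elasticity~\cite{BoffiBrezziFortin2013,ChenHuHuang2018}.

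The plan is as follows. First I would compute
\[
a_h(\bs\tau_h,\bs\tau_h)=(\mathcal A\bs\tau_h,\bs\tau_h)=\frac{1}{2\mu}\|\dev\bs\tau_h\|_0^2+\frac{1}{d(2\mu+d\lambda)}\|\tr\bs\tau_h\|_0^2\geq \frac{1}{2\mu}\|\dev\bs\tau_h\|_0^2,
\]
so the task is to bound $\|\tr\bs\tau_h\|_0$ by $\|\dev\bs\tau_h\|_0$ with a constant independent of $\lambda$. Since $\tr\bs\tau_h\in L_0^2(\Omega)$, the surjectivity of the divergence on $H_0^1$~\cite{BoffiBrezziFortin2013} produces a vector field $\bs v\in H_0^1(\Omega;\mathbb R^d)$ with $\div\bs v=\tr\bs\tau_h$ and $\|\bs v\|_1\lesssim\|\tr\bs\tau_h\|_0$.

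Next, testing $\bs\tau_h$ against $\grad\bs v$ element-wise and integrating by parts gives
\[
\int_\Omega\bs\tau_h:\grad\bs v\dx=-\sum_{T\in\mathcal T_h}(\div\bs\tau_h,\bs v)_T+\sum_{F\in\mathring{\mathcal F}_h}\big([\bs\tau_h\bs n],\bs v\big)_F+\sum_{F\subset\partial\Omega}(\bs\tau_h\bs n,\bs v)_F=0,
\]
since all three terms vanish: the first two because $\bs\tau_h\in Z_h$, the third because $\bs v$ has zero trace. Using the symmetry of $\bs\tau_h$ and the identity $\bs\tau_h:\grad\bs v=\dev\bs\tau_h:\boldsymbol\varepsilon(\bs v)+\tfrac{1}{d}(\tr\bs\tau_h)(\div\bs v)$, this reads
\[
\frac{1}{d}\|\tr\bs\tau_h\|_0^2=-\int_\Omega\dev\bs\tau_h:\boldsymbol\varepsilon(\bs v)\dx\leq\|\dev\bs\tau_h\|_0\|\bs v\|_1\lesssim \|\dev\bs\tau_h\|_0\|\tr\bs\tau_h\|_0,
\]
whence $\|\tr\bs\tau_h\|_0\lesssim\|\dev\bs\tau_h\|_0$ and therefore $\|\bs\tau_h\|_0^2=\|\dev\bs\tau_h\|_0^2+\tfrac{1}{d}\|\tr\bs\tau_h\|_0^2\lesssim\|\dev\bs\tau_h\|_0^2\lesssim a_h(\bs\tau_h,\bs\tau_h)$ with a $\lambda$-independent hidden constant.

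The only delicate point is checking that the element-wise integration by parts really closes, i.e.\ that every boundary contribution disappears; this is exactly where the definition of $Z_h$ (the two components of $\div_w\bs\tau_h$ vanish simultaneously) is used, and the rest is routine. No new finite element machinery beyond what the excerpt already provides is needed.
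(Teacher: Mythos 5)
Your proposal is correct, and its first step is the entirety of the paper's proof: from $\div_w\bs\tau_h=0$ you deduce that $\bs\tau_h$ has vanishing normal jumps and vanishing elementwise divergence, i.e.\ $Z_h\subseteq \Sigma_{k,\psi}^{\operatorname{div}}(\mathcal T_h;\mathbb S)\cap\ker(\div)$, which is exactly what the paper observes before invoking the robust coercivity \eqref{eq:discoercivity} cited from the literature. Where you go further is in proving that cited coercivity from scratch: the identity $(\mathcal A\bs\tau_h,\bs\tau_h)=\tfrac{1}{2\mu}\|\dev\bs\tau_h\|_0^2+\tfrac{1}{d(2\mu+d\lambda)}\|\tr\bs\tau_h\|_0^2$, the right inverse of $\div:H_0^1(\Omega;\mathbb R^d)\to L_0^2(\Omega)$, the elementwise integration by parts (which is precisely where both vanishing components of $\div_w\bs\tau_h$ are consumed), and the splitting $\bs\tau_h:\grad\bs v=\dev\bs\tau_h:\boldsymbol{\varepsilon}(\bs v)+\tfrac1d(\tr\bs\tau_h)\div\bs v$ all check out, and the resulting bound $\|\tr\bs\tau_h\|_0\lesssim\|\dev\bs\tau_h\|_0$ is $\lambda$-robust. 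This is a sound, self-contained substitute for the citation.

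One small inaccuracy to fix: the norm $\|\cdot\|_{0,h}$ on $\Sigma_{k,\psi}^{-1}(\mathcal T_h;\mathbb S)$ is $\|\bs\tau_h\|_0^2+\sum_{T}h_T\|\bs\tau_h\bs n\|_{\partial T}^2$, and the face term does not vanish for $\bs\tau_h\in Z_h$ --- only the jumps do --- so $\|\bs\tau_h\|_{0,h}$ does not literally coincide with $\|\bs\tau_h\|_0$. You need one additional line: since $\bs\tau_h$ is a piecewise polynomial, the scaled trace inequality $h_T\|\bs\tau_h\bs n\|_{\partial T}^2\lesssim\|\bs\tau_h\|_{0,T}^2$ shows the two norms are equivalent on the discrete space, after which your estimate $\|\bs\tau_h\|_0^2\lesssim a_h(\bs\tau_h,\bs\tau_h)$ yields \eqref{eq:hdgahcoercive}.
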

\begin{proof}
By the definition of $\div_w\boldsymbol{\tau}_h$,
we find that $Z_h\subseteq \Sigma_{k, \psi}^{\operatorname{div}}(\mathcal T_h; \mathbb{S})\cap \ker(\div)$.
Thus, we end the proof by applying the coercivity~\eqref{eq:discoercivity}.
\end{proof}

Using the discrete inf-sup condition \eqref{eq:weakdivinfsupSigmakpsi} together with the discrete coercivity \eqref{eq:hdgahcoercive}, the well-posedness of the hybridized method \eqref{eq:hy} and its error estimates follow from standard arguments.

\begin{theorem}
 The hybridized formulation \eqref{eq:hy} is well-posed for $k\geq 2$. 
Let $\boldsymbol{\sigma}_h\in \Sigma_{k, \psi}^{-1}(\mathcal T_h; \mathbb{S})$ and $\boldsymbol{u}_h=\{\boldsymbol{u}_0,\boldsymbol{u}_b\}\in M_h$ be the solution of the hybridized formulation~\eqref{eq:hy}. Assume $\boldsymbol{\sigma}\in H^{k+1}(\Omega; \mathbb{S})$. We have $\boldsymbol{\sigma}_h\in\Sigma_{k, \psi}^{\operatorname{div}}(\mathcal T_h; \mathbb{S})$, and
\begin{equation*}
\|\boldsymbol{\sigma}-\boldsymbol{\sigma}_h\|_{0} + \|Q_{k-1,h}\div\boldsymbol{\sigma}-\div\boldsymbol{\sigma}_h\|_{0} + \|Q_{k-1,h}\boldsymbol{u}-\boldsymbol{u}_h\|_{0}\lesssim h^{k+1}\|\boldsymbol{\sigma}\|_{k+1}.
\end{equation*}
\end{theorem}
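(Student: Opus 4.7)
My plan is to argue in four substages: well-posedness, $H(\div)$-conformity of $\bs\sigma_h$, construction of a commuting interpolant, and quasi-optimal error bounds. For well-posedness, I would apply Brezzi's discrete theory directly to \eqref{eq:hy}: the weak-divergence inf-sup \eqref{eq:weakdivinfsupSigmakpsi} and the kernel coercivity \eqref{eq:hdgahcoercive} on $Z_h$ are the two ingredients, and the latter is already $\lambda$-robust, so the resulting stability bounds are $\lambda$-independent.

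Next I would recover $\bs\sigma_h\in H(\div,\Omega;\mathbb S)$ by testing the second equation of \eqref{eq:hy} with $\bs v_h=(0,\bs\mu_h)$ for arbitrary $\bs\mu_h\in\mathbb P_k^{-1}(\mathring{\mathcal F}_h;\mathbb R^d)$; the right-hand side vanishes and
\[
(\div_w\bs\sigma_h,(0,\bs\mu_h))_{0,h} = -\sum_{F\in\mathring{\mathcal F}_h}([\bs\sigma_h\bs n_F],\bs\mu_h)_F = 0,
\]
so $[\bs\sigma_h\bs n_F]|_F=0$ on every interior face and thus $\bs\sigma_h\in\Sigma_{k,\psi}^{\operatorname{div}}(\mathcal T_h;\mathbb S)$. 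The pair $(\bs\sigma_h,\bs u_0)$ then satisfies the conforming primal mixed system on $\Sigma_{k,\psi}^{\operatorname{div}}(\mathcal T_h;\mathbb S)\times\mathbb P_{k-1}^{-1}(\mathcal T_h;\mathbb R^d)$; testing against $\bs v_0$ and using Lemma~\ref{lem:extendfacebubble} (which guarantees $\div\bs\psi\in\mathrm{RM}(T)\subset\mathbb P_{k-1}(T;\mathbb R^d)$) gives $\div\bs\sigma_h\in\mathbb P_{k-1}^{-1}(\mathcal T_h;\mathbb R^d)$ and hence
\[
\div\bs\sigma_h=Q_{k-1,h}\div\bs\sigma,
\]
so the middle term of the error estimate vanishes identically.

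For the other two error terms I would introduce a canonical interpolation $I_h:H^1(\Omega;\mathbb S)\to\Sigma_{k,\psi}^{\operatorname{div}}(\mathcal T_h;\mathbb S)$ via the DoFs \eqref{eq:globaldivfemdof}. Integration by parts against any $\bs q\in\mathbb P_{k-1}(T;\mathbb R^d)$, exploiting the interior DoFs for $\sym\nabla\bs q\in\mathbb P_{k-2}(T;\mathbb S)$ and the face DoFs for $\bs q|_F\in\mathbb P_{k-1}(F;\mathbb R^d)\subset\mathbb P_k(F;\mathbb R^d)$, yields $(\div I_h\bs\tau,\bs q)_T=(\div\bs\tau,\bs q)_T$. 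Combined with $\div I_h\bs\tau|_T\in\mathbb P_{k-1}(T;\mathbb R^d)$, this gives the commuting diagram $\div I_h=Q_{k-1,h}\div$, and scaling delivers $\|\bs\sigma-I_h\bs\sigma\|_0\lesssim h^{k+1}\|\bs\sigma\|_{k+1}$. Since $\div(I_h\bs\sigma-\bs\sigma_h)=0$, the kernel coercivity yields $\|I_h\bs\sigma-\bs\sigma_h\|_0\lesssim\|\bs\sigma-I_h\bs\sigma\|_0$ and the triangle inequality closes the stress estimate. For the displacement I would combine the error equation $(\mathcal A(\bs\sigma-\bs\sigma_h),\bs\tau_h)+(\div\bs\tau_h,\bs u-\bs u_0)=0$ with the identity $(\div\bs\tau_h,Q_{k-1,h}\bs u)=(\div\bs\tau_h,\bs u)=-(\bs\tau_h,\mathcal A\bs\sigma)$ and the coarse-mesh inf-sup \eqref{eq:infsupSigmahat} to obtain $\|Q_{k-1,h}\bs u-\bs u_0\|_0\lesssim\|\bs\sigma-\bs\sigma_h\|_0\lesssim h^{k+1}\|\bs\sigma\|_{k+1}$.

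The main obstacle will be the commuting diagram property. The space $\Sigma_{k,\psi}^{\operatorname{div}}(T;\mathbb S)$ is piecewise polynomial on the refinement $T^{\mathrm R}$, so \emph{a priori} $\div I_h\bs\tau|_T$ is only piecewise polynomial on $T^{\mathrm R}$, yet the DoFs \eqref{eq:globaldivfemdof} live entirely on the coarse mesh; lifting $\div I_h\bs\tau$ into $\mathbb P_{k-1}(T;\mathbb R^d)$ on the whole coarse element -- the essential prerequisite for $Q_{k-1,h}\div$ at the coarse level to make sense -- relies squarely on Lemma~\ref{lem:extendfacebubble}, which was engineered precisely to deliver this coarse-range property. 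Once that point is secured, the remaining steps are standard mixed-FEM machinery.
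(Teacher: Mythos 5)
Your proposal is correct and follows essentially the same route as the paper, which itself only invokes the weak-div inf-sup condition \eqref{eq:weakdivinfsupSigmakpsi} and the kernel coercivity \eqref{eq:hdgahcoercive} and declares the rest ``standard arguments''; your recovery of $H(\div)$-conformity by testing with the multiplier, the exact identity $\div\boldsymbol{\sigma}_h=Q_{k-1,h}\div\boldsymbol{\sigma}$ via Lemma~\ref{lem:extendfacebubble}, and the commuting interpolant based on \eqref{eq:globaldivfemdof} are precisely those standard steps. The only points worth tightening are that the displacement bound should cite the inf-sup \eqref{eq:infsupSigmakpsi} for the $\psi$-space rather than \eqref{eq:infsupSigmahat}, and that the $\lambda$-robust coercivity \eqref{eq:hdgahcoercive} requires the mean-zero trace constraint, so the stress estimate needs the usual extra step of controlling $\tr(I_h\boldsymbol{\sigma}-\boldsymbol{\sigma}_h)$ through the inf-sup condition --- a gap the paper shares.
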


To derive a discrete $H^1$ error estimate for $\bs u_h$, we introduce the weak strain operator and establish another discrete inf-sup condition. 
Define $\boldsymbol{\varepsilon}_w: M_h\to\Sigma_{k, \psi}^{-1}(\mathcal T_h; \mathbb{S})$ as follows: for $\boldsymbol{v}_h=\{\boldsymbol{v}_0,\boldsymbol{v}_b\}\in M_h$, $\boldsymbol{\varepsilon}_w(\boldsymbol{v}_h)|_T\in \Sigma_{k, \psi}^{-1}(T; \mathbb{S})$ is determined by
\begin{align*}
(\boldsymbol{\varepsilon}_w(\boldsymbol{v}_h), \boldsymbol{\tau}_h)_T = - (\boldsymbol{v}_0, \div \boldsymbol{\tau}_h)_T + (\boldsymbol{v}_b, \boldsymbol{\tau}_h\boldsymbol{n})_{\partial T} \quad \forall~\boldsymbol{\tau}_h\in  \Sigma_{k, \psi}^{-1}(T; \mathbb{S}).
\end{align*}
Then by applying the integration by parts, we can easily show
$$(\boldsymbol{\varepsilon}_w(\boldsymbol{v}_h), \boldsymbol{\tau}_h) =\sum_{T\in\mathcal T_h}(\boldsymbol{\tau}_h, \boldsymbol{\varepsilon}(\boldsymbol{v}_0))_T + \sum_{T\in\mathcal T_h}(\boldsymbol{\tau}_h\boldsymbol{n}, \boldsymbol{v}_b-\boldsymbol{v}_0)_{\partial T}\quad\forall~\boldsymbol{\tau}_h\in\Sigma_{k, \psi}^{-1}(\mathcal T_h; \mathbb{S}),
$$
and the duality
\begin{equation*}
(\boldsymbol{\varepsilon}_w(\boldsymbol{v}_h), \boldsymbol{\tau}_h)  = - (\div_w\boldsymbol{\tau}_h, \boldsymbol{v}_h)_{0,h}\quad\forall~\boldsymbol{v}_h\in M_h, \boldsymbol{\tau}_h\in\Sigma_{k, \psi}^{-1}(\mathcal T_h; \mathbb{S}).
\end{equation*}

We respectively equip spaces $M_h$ and $\Sigma_{k, \psi}^{-1}(\mathcal T_h; \mathbb{S})$ with norms
\begin{align*}
\|\boldsymbol{v}_h\|_{1,h}^2&:=\sum_{T\in\mathcal{T}_h}\|\boldsymbol{\varepsilon}(\boldsymbol{v}_0)\|_T^2+\sum_{T\in\mathcal{T}_h}h_T^{-1}\|Q_{k,F}(\boldsymbol{v}_0-\boldsymbol{v}_b)\|_{\partial T}^2, \quad\;\; \boldsymbol{v}_h\in M_h, \\
\|\boldsymbol{\tau}_h\|_{0,h}^2&:=\|\boldsymbol{\tau}_h\|_0^2
+\sum_{T\in \mathcal{T}_h}h_T\|\boldsymbol{\tau}_h \boldsymbol{n}\|^2_{\partial T},
\qquad\qquad\qquad\qquad\quad\quad \boldsymbol{\tau}_h\in \Sigma_{k, \psi}^{-1}(\mathcal T_h; \mathbb{S}).
\end{align*}
It is easy to prove that
$\|\cdot\|_{1,h}$ is a norm on space $M_h$
and $\|\cdot\|_{0,h}$ is a norm on space $\Sigma_{k, \psi}^{-1}(\mathcal T_h; \mathbb{S})$ with $k\geq 2$.
Furthermore it is straightforward to verify the continuity
\begin{align*}
a_h(\boldsymbol{\sigma}_h,\boldsymbol{\tau}_h)&\lesssim \|\boldsymbol{\sigma}_h\|_{0,h}\|\boldsymbol{\tau}_h\|_{0,h}, \quad\forall~\boldsymbol{\sigma}_h,\boldsymbol{\tau}_h\in \Sigma_{k, \psi}^{-1}(\mathcal T_h; \mathbb{S}),
\\
(\boldsymbol{\varepsilon}_w(\boldsymbol{v}_h), \boldsymbol{\tau}_h)&\lesssim \|\boldsymbol{\tau}_h\|_{0,h}\|\boldsymbol{v}_h\|_{1,h}, \quad\forall~\boldsymbol{\tau}_h\in \Sigma_{k, \psi}^{-1}(\mathcal T_h; \mathbb{S}), \boldsymbol{v}_h\in M_h.
\end{align*}

\begin{lemma}
For $k\geq 2$, we have the discrete inf-sup condition
\begin{equation}\label{eq:hdgdiscinfsup}
\|\boldsymbol{v}_h\|_{1,h}\lesssim \sup_{\boldsymbol{\tau}_h\in\Sigma_{k, \psi}^{-1}(\mathcal T_h; \mathbb{S})}\frac{(\boldsymbol{\varepsilon}_w(\boldsymbol{v}_h), \boldsymbol{\tau}_h)}{\|\boldsymbol{\tau}_h\|_{0,h}}\quad\forall~\boldsymbol{v}_h\in M_h.
\end{equation}
\end{lemma}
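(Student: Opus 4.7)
The plan is to prove the inf-sup condition by constructing, for each $\boldsymbol{v}_h=(\boldsymbol{v}_0,\boldsymbol{v}_b)\in M_h$, a test tensor $\boldsymbol{\tau}_h = \boldsymbol{\tau}_h^{(1)}+\alpha\boldsymbol{\tau}_h^{(2)}\in\Sigma_{k,\psi}^{-1}(\mathcal T_h;\mathbb S)$ built from two element-local pieces that together control the two contributions in $\|\boldsymbol{v}_h\|_{1,h}^2$. The analysis will use the alternative representation of $\boldsymbol{\varepsilon}_w$ displayed just above the lemma, namely $(\boldsymbol{\varepsilon}_w(\boldsymbol{v}_h),\boldsymbol{\tau}_h)=\sum_T(\boldsymbol{\tau}_h,\boldsymbol{\varepsilon}(\boldsymbol{v}_0))_T+\sum_T(\boldsymbol{\tau}_h\boldsymbol{n},\boldsymbol{v}_b-\boldsymbol{v}_0)_{\partial T}$, together with the identity $Q_{k,F}(\boldsymbol{v}_0-\boldsymbol{v}_b)=\boldsymbol{v}_0-\boldsymbol{v}_b$, which holds because $\boldsymbol{v}_0|_F\in\mathbb P_{k-1}(F;\mathbb R^d)$ and $\boldsymbol{v}_b|_F\in\mathbb P_k(F;\mathbb R^d)$.

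For the strain piece I would set $\boldsymbol{\tau}_h^{(1)}|_T:=\boldsymbol{\varepsilon}(\boldsymbol{v}_0|_T)\in\mathbb P_{k-2}(T;\mathbb S)\subset\Sigma_{k,\psi}^{\operatorname{div}}(T;\mathbb S)$, which is legitimate for $k\geq 2$ since $\mathbb P_k(T;\mathbb S)\subset\Sigma_{k,\psi}^{\operatorname{div}}(T;\mathbb S)$. This makes $\sum_T(\boldsymbol{\tau}_h^{(1)},\boldsymbol{\varepsilon}(\boldsymbol{v}_0))_T=\sum_T\|\boldsymbol{\varepsilon}(\boldsymbol{v}_0)\|_T^2$, and the polynomial trace inequality yields $\|\boldsymbol{\tau}_h^{(1)}\|_{0,h}\lesssim\|\boldsymbol{v}_h\|_{1,h}$.

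For the jump piece I would use the DoFs \eqref{eq:globaldivfemdof} to define $\boldsymbol{\tau}_h^{(2)}|_T\in\Sigma_{k,\psi}^{\operatorname{div}}(T;\mathbb S)$ by prescribing $(\boldsymbol{\tau}_h^{(2)}\boldsymbol{n}_T)|_F=-h_T^{-1}(\boldsymbol{v}_0|_T-\boldsymbol{v}_b)|_F\in\mathbb P_k(F;\mathbb R^d)$ on each face $F\subset\partial T$ and vanishing interior moments against $\mathbb P_{k-2}(T;\mathbb S)$. Unisolvence from Corollary~\ref{cor:reducedspace} makes this well defined. The crucial observation is the resulting orthogonality $(\boldsymbol{\tau}_h^{(2)},\boldsymbol{\varepsilon}(\boldsymbol{v}_0))_T=0$, which follows because $\boldsymbol{\varepsilon}(\boldsymbol{v}_0)\in\mathbb P_{k-2}(T;\mathbb S)$ and the interior DoFs of $\boldsymbol{\tau}_h^{(2)}$ vanish; hence $\boldsymbol{\tau}_h^{(2)}$ contributes only the boundary term, which equals $\sum_T h_T^{-1}\|\boldsymbol{v}_0-\boldsymbol{v}_b\|_{\partial T}^2$. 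A scaling argument on the shape-function basis yields $\|\boldsymbol{\tau}_h^{(2)}\|_{0,h}^2\lesssim\sum_T h_T^{-1}\|\boldsymbol{v}_0-\boldsymbol{v}_b\|_{\partial T}^2\leq\|\boldsymbol{v}_h\|_{1,h}^2$.

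Combining, I would expand $(\boldsymbol{\varepsilon}_w(\boldsymbol{v}_h),\boldsymbol{\tau}_h^{(1)}+\alpha\boldsymbol{\tau}_h^{(2)})$ into $\sum_T\|\boldsymbol{\varepsilon}(\boldsymbol{v}_0)\|_T^2+\alpha\sum_T h_T^{-1}\|\boldsymbol{v}_0-\boldsymbol{v}_b\|_{\partial T}^2$ plus the cross term $\sum_T(\boldsymbol{\tau}_h^{(1)}\boldsymbol{n},\boldsymbol{v}_b-\boldsymbol{v}_0)_{\partial T}$, which Cauchy--Schwarz and Young's inequality absorb at the cost of a small multiple of each positive term. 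Taking $\alpha$ sufficiently large yields $(\boldsymbol{\varepsilon}_w(\boldsymbol{v}_h),\boldsymbol{\tau}_h)\gtrsim\|\boldsymbol{v}_h\|_{1,h}^2$ while $\|\boldsymbol{\tau}_h\|_{0,h}\lesssim\|\boldsymbol{v}_h\|_{1,h}$, giving \eqref{eq:hdgdiscinfsup}. The only genuine technical point is the scaling estimate $\|\boldsymbol{\tau}_h^{(2)}\|_{0,T}^2\lesssim h_T\|\boldsymbol{\tau}_h^{(2)}\boldsymbol{n}\|_{\partial T}^2$ on the enriched space, whose shape functions are piecewise polynomials on the barycentric split $T^{\rm R}$; this is nonetheless a routine reference-element equivalence thanks to the unisolvence of the DoFs \eqref{eq:globaldivfemdof}.
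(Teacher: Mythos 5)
Your construction is correct and is essentially the paper's argument: both build a Fortin-type test tensor via the unisolvent DoFs \eqref{eq:globaldivfemdof}, prescribing the normal trace to be $h^{-1}(\boldsymbol{v}_b-\boldsymbol{v}_0)$ on faces and matching $\boldsymbol{\varepsilon}(\boldsymbol{v}_0)$ through the interior moments, followed by a scaling estimate. The only difference is bookkeeping: the paper sets the interior DoFs of the single interpolant equal to $(\boldsymbol{\varepsilon}(\boldsymbol{v}_0),\boldsymbol{q})_T$ so that $(\boldsymbol{\varepsilon}_w(\boldsymbol{v}_h),\boldsymbol{\tau}_h)=\|\boldsymbol{v}_h\|_{1,h}^2$ exactly with no cross term, whereas your two-piece splitting $\boldsymbol{\tau}_h^{(1)}+\alpha\boldsymbol{\tau}_h^{(2)}$ produces a boundary cross term from $\boldsymbol{\tau}_h^{(1)}=\boldsymbol{\varepsilon}(\boldsymbol{v}_0)$ that you then (validly) absorb with Young's inequality and a large $\alpha$.
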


\begin{proof}
Let $\boldsymbol{\tau}_h\in\Sigma_{k, \psi}^{-1}(\mathcal T_h; \mathbb{S})$ satisfy
\begin{align*}
(\boldsymbol{\tau}_h\boldsymbol{n},\boldsymbol{q})_F&=(h_F^{-1}Q_{k,F}(\boldsymbol{v}_b-\boldsymbol{v}_0), \boldsymbol{q})_F,\quad \forall~\boldsymbol{q}\in \mathbb P_k(F; \mathbb R^d), F\in\partial T, \\
(\boldsymbol{\tau}_h,\boldsymbol{q})_T&=(\boldsymbol{\varepsilon}( \boldsymbol{v}_0),\boldsymbol{q})_T, \qquad\qquad\quad\quad\;\, \forall~\boldsymbol{q}\in \boldsymbol{\varepsilon}(\mathbb P_{k-1}(T; \mathbb R^d)),
\end{align*}
and the rest DoFs vanish on each $T\in\mathcal{T}_h$. We have
\begin{equation*}
\|\boldsymbol{\tau}_h\|_{0,h}\lesssim \|\boldsymbol{v}_h\|_{1,h},
\end{equation*}
\begin{equation*}
(\boldsymbol{\varepsilon}_w(\boldsymbol{v}_h), \boldsymbol{\tau}_h) = \sum_{T\in\mathcal{T}_h}\|\boldsymbol{\varepsilon}(\boldsymbol{v}_0)\|_T^2+\sum_{T\in\mathcal{T}_h}h_T^{-1}\|Q_{k,F}(\boldsymbol{v}_0-\boldsymbol{v}_b)\|_{\partial T}^2=\|\boldsymbol{v}_h\|_{1,h}^2.
\end{equation*}
Therefore, the discrete inf-sup condition \eqref{eq:hdgdiscinfsup} follows.
\end{proof}

We follow the argument in~\cite{ChenHuHuang2018,LedererStenberg2023,LedererStenberg2024} to derive estimate \eqref{eq:energyestimate}, especially the superconvergence of $\|Q_h^M\boldsymbol{u}-\boldsymbol{u}_h\|_{1,h}$. 
The use of mesh-dependent norms in the analysis traces back to~\cite{BabuskaOsbornPitkaeranta1980} for the biharmonic equation,~\cite{PitkaerantaStenberg1983,Stenberg1986} for elasticity problems, and~\cite{LovadinaStenberg2006} for the Poisson equation.

\begin{theorem}\label{thm:hyerrestimate}
Let $\boldsymbol{\sigma}_h\in \Sigma_{k, \psi}^{-1}(\mathcal T_h; \mathbb{S})$ and $\boldsymbol{u}_h=\{\boldsymbol{u}_0,\boldsymbol{u}_b\}\in M_h$ be the solution of the hybridized formulation~\eqref{eq:hy} for $k\geq 2$. Assume $\boldsymbol{\sigma}\in H^{k+1}(\Omega; \mathbb{S})$. We have
\begin{equation}\label{eq:energyestimate}
\|\boldsymbol{\sigma}-\boldsymbol{\sigma}_h\|_{0} + \|Q_h^M\boldsymbol{u}-\boldsymbol{u}_h\|_{1,h}\lesssim h^{k+1}\|\boldsymbol{\sigma}\|_{k+1},
\end{equation}
where $Q_h^M\boldsymbol{u}=\{Q_{k-1,T}\boldsymbol{u}, Q_{k,F}\boldsymbol{u}\}_{T\in\mathcal{T}_h, F\in\Delta_{d-1}(\mathcal{T}_h)}$ is the $L^2$ projection of $\boldsymbol{u}$.
\end{theorem}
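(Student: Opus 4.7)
The plan is to use a standard Brezzi--Fortin style mixed-method analysis, adapted to the hybridized, piecewise polynomial setting. The key ingredients available are the Galerkin orthogonality coming from subtracting the discrete system from the continuous one, the coercivity on the constrained kernel $Z_h$ stated in \eqref{eq:hdgahcoercive}, and the weak-strain inf-sup condition \eqref{eq:hdgdiscinfsup}. I would obtain the stress error first and then derive the displacement error from it.

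The first step is to write Galerkin orthogonality intrinsically in terms of the projection $Q_h^M\bs u$. Since $\bs\sigma\in H(\div,\Omega;\mathbb S)$ is continuous across faces, $\div_w\bs\sigma=\{\div\bs\sigma,0\}$, and an integration by parts together with the defining identity for $\bs\varepsilon_w$ yields
\[
(\mathcal A\bs\sigma,\bs\tau_h) + (\div_w\bs\tau_h, Q_h^M\bs u)_{0,h}=0
\qquad\forall\,\bs\tau_h\in\Sigma_{k,\psi}^{-1}(\mathcal T_h;\mathbb S),
\]
where the polynomial degrees in the projection $Q_h^M$ match those in the DoFs so that $Q_{k-1,T}\bs u$ pairs exactly with $\div\bs\tau_h|_T$ and $Q_{k,F}\bs u$ with $\bs\tau_h\bs n|_F$. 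Subtracting \eqref{hybridelassdg1} gives
\[
(\mathcal A(\bs\sigma-\bs\sigma_h),\bs\tau_h) + (\div_w\bs\tau_h, Q_h^M\bs u-\bs u_h)_{0,h}=0,
\]
while subtracting \eqref{hybridelassdg2} and testing against $\bs v_b=0$ and $\bs v_0=0$ separately shows $\div\bs\sigma_h=Q_{k-1,h}\div\bs\sigma$ and $[\bs\sigma_h\bs n]|_F=0$ on every interior face. The latter gives $\bs\sigma_h\in\Sigma_{k,\psi}^{\operatorname{div}}(\mathcal T_h;\mathbb S)$ as claimed.

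Next I introduce the canonical interpolant $I_h\bs\sigma\in\Sigma_{k,\psi}^{\operatorname{div}}(\mathcal T_h;\mathbb S)$ defined via the DoFs \eqref{eq:globaldivfemdof}. Integration by parts against $\bs q\in\mathbb P_{k-1}(T;\mathbb R^d)$, combined with the symmetry of $\bs\sigma$ and the fact that $\sym(\nabla\bs q)\in\mathbb P_{k-2}(T;\mathbb S)$ is covered by the interior DoFs, yields the commuting identity $\div I_h\bs\sigma=Q_{k-1,h}\div\bs\sigma$. Consequently $\bs\tau_h^\ast:=\bs\sigma_h-I_h\bs\sigma$ satisfies $\div_w\bs\tau_h^\ast=0$; replacing $\bs\tau_h^\ast$ by $\bs\tau_h^\ast-c\bs I$ for a suitable scalar $c$ (which alters neither $\div_w$ nor $\bs\sigma_h-I_h\bs\sigma$ in the stress error) places the adjusted element in $Z_h$. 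Using Galerkin orthogonality with this test function kills the $(\div_w\bs\tau_h,\cdot)_{0,h}$ term and produces
\[
(\mathcal A(\bs\sigma_h-I_h\bs\sigma),\bs\sigma_h-I_h\bs\sigma)=(\mathcal A(\bs\sigma-I_h\bs\sigma),\bs\sigma_h-I_h\bs\sigma).
\]
Apply coercivity \eqref{eq:hdgahcoercive} on the left and Cauchy--Schwarz on the right, then use the standard Bramble--Hilbert interpolation estimate $\|\bs\sigma-I_h\bs\sigma\|_0\lesssim h^{k+1}\|\bs\sigma\|_{k+1}$ (the boundary contributions in $\|\cdot\|_{0,h}$ being controlled by the same power via a trace/scaling argument), and finish with the triangle inequality to obtain $\|\bs\sigma-\bs\sigma_h\|_0\lesssim h^{k+1}\|\bs\sigma\|_{k+1}$.

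For the displacement bound I invoke the weak-strain inf-sup \eqref{eq:hdgdiscinfsup}: for any $\bs\tau_h\in\Sigma_{k,\psi}^{-1}(\mathcal T_h;\mathbb S)$, the duality $(\bs\varepsilon_w(\bs v_h),\bs\tau_h)=-(\div_w\bs\tau_h,\bs v_h)_{0,h}$ together with the Galerkin orthogonality gives
\[
(\bs\varepsilon_w(Q_h^M\bs u-\bs u_h),\bs\tau_h)=(\mathcal A(\bs\sigma-\bs\sigma_h),\bs\tau_h)\lesssim \|\bs\sigma-\bs\sigma_h\|_0\,\|\bs\tau_h\|_{0,h},
\]
so dividing by $\|\bs\tau_h\|_{0,h}$ and taking the supremum yields $\|Q_h^M\bs u-\bs u_h\|_{1,h}\lesssim\|\bs\sigma-\bs\sigma_h\|_0\lesssim h^{k+1}\|\bs\sigma\|_{k+1}$. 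The main delicate point is the construction of the commuting interpolant $I_h\bs\sigma$ on the enriched macro-element $\Sigma_{k,\psi}^{\operatorname{div}}$, which must both respect the symmetry of $\bs\sigma$ and interact correctly with the added face bubbles $\mathbb B_k\Psi^f(\mathbb S)$; once the DoF unisolvence and the commuting identity are verified, the rest is routine Brezzi--Fortin analysis with mesh-dependent norms.
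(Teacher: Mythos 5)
Your proposal is correct and takes essentially the same route as the paper: the same canonical interpolant $\boldsymbol{\sigma}_I=I_h\boldsymbol{\sigma}$ defined by the DoFs \eqref{eq:globaldivfemdof}, the same two consistency identities (your Galerkin orthogonality and commuting property are precisely the paper's identities $(\boldsymbol{\tau}_h,\boldsymbol{\varepsilon}(\boldsymbol{u}))=(\boldsymbol{\tau}_h,\boldsymbol{\varepsilon}_w(Q_h^M\boldsymbol{u}))$ and $(\div\boldsymbol{\sigma},\boldsymbol{v}_0)+(\boldsymbol{\sigma}_I,\boldsymbol{\varepsilon}_w(\boldsymbol{v}_h))=0$), and the same two ingredients \eqref{eq:hdgahcoercive} and \eqref{eq:hdgdiscinfsup}. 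The paper merely packages the two steps into the single combined stability estimate \eqref{eq:hybridstability} tested with $(\boldsymbol{\sigma}_I-\boldsymbol{\sigma}_h,\,Q_h^M\boldsymbol{u}-\boldsymbol{u}_h)$, whereas you run the classical sequential version (kernel coercivity for the stress, then inf-sup for the displacement); your trace normalization by $c\boldsymbol{I}$ is exactly the point the paper leaves implicit in asserting \eqref{eq:hybridstability}, so the two arguments are at the same level of rigor and reach the same conclusion.
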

\begin{proof}
By applying the discrete inf-sup condition \eqref{eq:hdgdiscinfsup} and the discrete coercivity \eqref{eq:hdgahcoercive}, 
it holds the discrete stability
\begin{equation}\label{eq:hybridstability}
\begin{aligned}
&\quad\; \|\widetilde{\boldsymbol{\sigma}}_h\|_{0,h}+\|\widetilde{\boldsymbol{u}}_h\|_{1,h}\\
&\lesssim\sup_{\boldsymbol{\tau}_h\in \Sigma_{k, \psi}^{-1}(\mathcal T_h; \mathbb{S}),\boldsymbol{v}_h\in M_h}\frac{a_h(\widetilde{\boldsymbol{\sigma}}_h,\boldsymbol{\tau}_h)-(\boldsymbol{\tau}_h, \boldsymbol{\varepsilon}_w(\widetilde{\boldsymbol{u}}_h)) -(\widetilde{\boldsymbol{\sigma}}_h, \boldsymbol{\varepsilon}_w(\boldsymbol{v}_h))}{\|\boldsymbol{\tau}_h\|_{0,h}+\|\boldsymbol{v}_h\|_{1,h}}
\end{aligned}
\end{equation}
for any $\widetilde{\boldsymbol{\sigma}}_h\in \Sigma_{k, \psi}^{-1}(\mathcal T_h; \mathbb{S})$ and $\widetilde{\boldsymbol{u}}_h\in M_h$.

Take $\widetilde{\boldsymbol{\sigma}}_h=\boldsymbol{\sigma}_I-\boldsymbol{\sigma}_h$ and $\widetilde{\boldsymbol{u}}_h=Q_h^M\boldsymbol{u}-\boldsymbol{u}_h$, where $\boldsymbol{\sigma}_I\in\Sigma_{k, \psi}^{\operatorname{div}}(\mathcal T_h; \mathbb{S})$ is the nodal interpolation of $\boldsymbol{\sigma}$ based on DoFs \eqref{eq:globaldivfemdof}.
Employing the integration by parts and the definition of $\boldsymbol{\varepsilon}_w$, we have
\begin{align*}
(\boldsymbol{\tau}_h, \boldsymbol{\varepsilon}(\boldsymbol{u}))-(\boldsymbol{\tau}_h, \boldsymbol{\varepsilon}_w(Q_h^M\boldsymbol{u}))&=0, \qquad\forall~\boldsymbol{\tau}_h\in \Sigma_{k, \psi}^{-1}(\mathcal T_h; \mathbb{S}), \\
(\div\boldsymbol{\sigma}, \boldsymbol{v}_0)+(\boldsymbol{\sigma}_I, \boldsymbol{\varepsilon}_w(\boldsymbol{v}_h))&=0, \qquad\forall~\boldsymbol{v}_h=\{\boldsymbol{v}_0,\boldsymbol{v}_b\}\in M_h.
\end{align*}
Then
\begin{align*}
a_h(\widetilde{\boldsymbol{\sigma}}_h,\boldsymbol{\tau}_h)-(\boldsymbol{\tau}_h, \boldsymbol{\varepsilon}_w(\widetilde{\boldsymbol{u}}_h))= a_h(\boldsymbol{\sigma}_I,\boldsymbol{\tau}_h)-(\boldsymbol{\tau}_h, \boldsymbol{\varepsilon}_w(Q_h^M\boldsymbol{u})) =a_h(\boldsymbol{\sigma}_I-\boldsymbol{\sigma},\boldsymbol{\tau}_h),
\end{align*}
and 
\begin{equation*}
-(\widetilde{\boldsymbol{\sigma}}_h, \boldsymbol{\varepsilon}_w(\boldsymbol{v}_h))= (\boldsymbol{f},\boldsymbol{v}_0)-(\boldsymbol{\sigma}_I, \boldsymbol{\varepsilon}_w(\boldsymbol{v}_h))=0.
\end{equation*}
Now substituting the above two equations into \eqref{eq:hybridstability} gives
\begin{equation}\label{eq:superconvergence}
\|\boldsymbol{\sigma}_I-\boldsymbol{\sigma}_h\|_{0,h}+\|Q_h^M\boldsymbol{u}-\boldsymbol{u}_h\|_{1,h}\lesssim \|\boldsymbol{\sigma}-\boldsymbol{\sigma}_I\|_{0,h}.   
\end{equation}
Therefore, the estimate \eqref{eq:energyestimate} follows from the estimate of operator $Q_h^M$.
\end{proof}

Due to the exact divergence-free property, the error estimate \eqref{eq:superconvergence} depends only on $\|\boldsymbol{\sigma}-\boldsymbol{\sigma}_I\|$, independent of the error for $\boldsymbol{u}_h$. As a result, $\|Q_h^M\boldsymbol{u}-\boldsymbol{u}_h\|_{1,h}$ is one order higher than $\|\boldsymbol{u}-\boldsymbol{u}_h\|_{1,h}$, which is known as superconvergence.

\subsection{Postprocessing}

We will construct a new superconvergent approximation to the displacement $\boldsymbol{u}$ in virtue of the superconvergence $\|Q_h^M\boldsymbol{u}-\boldsymbol{u}_h\|_{1,h}$ in \eqref{eq:energyestimate}. 

Define a new approximation $\boldsymbol{u}_h^{\ast}\in \mathbb P_{k+1}^{-1}(\mathcal T_h; \mathbb R^d)$ to $\boldsymbol{u}$ piecewisely as a solution of the following problem: for each $T\in\mathcal{T}_h$,
\begin{subequations}\label{eq:postprocess}
\begin{align}
\label{eq:postprocess1}
(\boldsymbol{u}_h^{\ast}, \boldsymbol{q})_T&=((\boldsymbol{u}_h)_0, \boldsymbol{q})_T, \qquad\quad\, \forall~\boldsymbol{q}\in\textrm{RM}(T), \\
\label{eq:postprocess2}
(\boldsymbol{\varepsilon}(\boldsymbol{u}_h^{\ast}), \boldsymbol{\varepsilon}(\boldsymbol{q}))_T&=(\mathcal{A}\boldsymbol{\sigma}_h, \boldsymbol{\varepsilon}(\boldsymbol{q}))_T, \quad\quad\; \forall~\boldsymbol{q}\in\mathbb P_{k+1}(T;\mathbb R^d). 
\end{align}
\end{subequations}


\begin{theorem}\label{thm:uuhstar1}
Assume $\boldsymbol{\sigma}\in H^{k+1}(\Omega; \mathbb{S})$ and $\boldsymbol{u}\in H^{k+2}(\Omega;\mathbb R^d)$ for $k\geq 2$. Then
\[
\|\boldsymbol{\varepsilon}_h(\boldsymbol{u}-\boldsymbol{u}_h^{\ast})\|_{0}\lesssim h^{k+1}(\|\boldsymbol{\sigma}\|_{k+1}+\|\boldsymbol{u}\|_{k+2}),
\]
where $\boldsymbol{\varepsilon}_h$ is the elementwise strain operator with respect to $\mathcal T_h$.
\end{theorem}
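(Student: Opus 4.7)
The plan is to compare the postprocessed $\boldsymbol{u}_h^{\ast}$ against a suitable local elliptic projection of the exact displacement $\boldsymbol{u}$, and then bootstrap the resulting strain error to the stress error already controlled by Theorem~\ref{thm:hyerrestimate}. On each element $T \in \mathcal T_h$, I would introduce an auxiliary $\boldsymbol{u}_I \in \mathbb P_{k+1}(T; \mathbb R^d)$ defined by
\begin{align*}
(\boldsymbol{u}_I, \boldsymbol{q})_T &= (\boldsymbol{u}, \boldsymbol{q})_T, & & \forall~\boldsymbol{q} \in \mathrm{RM}(T), \\
(\boldsymbol{\varepsilon}(\boldsymbol{u}_I), \boldsymbol{\varepsilon}(\boldsymbol{q}))_T &= (\boldsymbol{\varepsilon}(\boldsymbol{u}), \boldsymbol{\varepsilon}(\boldsymbol{q}))_T, & & \forall~\boldsymbol{q} \in \mathbb P_{k+1}(T; \mathbb R^d).
\end{align*}
Since $\boldsymbol{\varepsilon}$ is an isomorphism on $\mathbb P_{k+1}(T; \mathbb R^d)/\mathrm{RM}(T)$, this problem is uniquely solvable, and a Bramble--Hilbert argument combined with Korn's inequality on the reference element yields the optimal local approximation estimate $\|\boldsymbol{\varepsilon}(\boldsymbol{u} - \boldsymbol{u}_I)\|_T \lesssim h_T^{k+1}\|\boldsymbol{u}\|_{k+2,T}$.

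Next I would exploit the continuous constitutive relation $\boldsymbol{\varepsilon}(\boldsymbol{u}) = \mathcal A \boldsymbol{\sigma}$ to rewrite the defining identity for $\boldsymbol{u}_I$ as $(\boldsymbol{\varepsilon}(\boldsymbol{u}_I), \boldsymbol{\varepsilon}(\boldsymbol{q}))_T = (\mathcal A \boldsymbol{\sigma}, \boldsymbol{\varepsilon}(\boldsymbol{q}))_T$. Subtracting the defining equation~\eqref{eq:postprocess2} of $\boldsymbol{u}_h^{\ast}$ produces the key error identity
\[
(\boldsymbol{\varepsilon}(\boldsymbol{u}_I - \boldsymbol{u}_h^{\ast}), \boldsymbol{\varepsilon}(\boldsymbol{q}))_T = (\mathcal A(\boldsymbol{\sigma} - \boldsymbol{\sigma}_h), \boldsymbol{\varepsilon}(\boldsymbol{q}))_T, \quad \forall~\boldsymbol{q} \in \mathbb P_{k+1}(T; \mathbb R^d).
\]
Testing with $\boldsymbol{q} = (I - P_{\mathrm{RM}})(\boldsymbol{u}_I - \boldsymbol{u}_h^{\ast})$, where $P_{\mathrm{RM}}$ denotes the $L^2$-projection onto $\mathrm{RM}(T)$ (so that $\boldsymbol{\varepsilon}(\boldsymbol{q}) = \boldsymbol{\varepsilon}(\boldsymbol{u}_I - \boldsymbol{u}_h^{\ast})$), and applying Cauchy--Schwarz together with the boundedness of $\mathcal A$, yields the local bound $\|\boldsymbol{\varepsilon}(\boldsymbol{u}_I - \boldsymbol{u}_h^{\ast})\|_T \lesssim \|\boldsymbol{\sigma} - \boldsymbol{\sigma}_h\|_T$.

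The proof is then completed by the triangle inequality $\|\boldsymbol{\varepsilon}(\boldsymbol{u} - \boldsymbol{u}_h^{\ast})\|_T \le \|\boldsymbol{\varepsilon}(\boldsymbol{u} - \boldsymbol{u}_I)\|_T + \|\boldsymbol{\varepsilon}(\boldsymbol{u}_I - \boldsymbol{u}_h^{\ast})\|_T$, summing over $T \in \mathcal T_h$, and invoking the stress error estimate $\|\boldsymbol{\sigma} - \boldsymbol{\sigma}_h\|_0 \lesssim h^{k+1}\|\boldsymbol{\sigma}\|_{k+1}$ from Theorem~\ref{thm:hyerrestimate}. Observe that the rigid-motion constraint~\eqref{eq:postprocess1} plays no role in this strain-energy estimate, because $\boldsymbol{\varepsilon}$ annihilates $\mathrm{RM}(T)$; it merely pins the rigid-motion component of $\boldsymbol{u}_h^{\ast}$ and would instead be crucial for a subsequent $L^2$ superconvergence bound on $\boldsymbol{u} - \boldsymbol{u}_h^{\ast}$. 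The only mild technicality I anticipate is the uniform-in-$\lambda$ handling of $\mathcal A$, which follows from the deviatoric/trace splitting $\mathcal A\boldsymbol{\tau} = (2\mu)^{-1}\dev\boldsymbol{\tau} + [d(2\mu + d\lambda)]^{-1}\tr(\boldsymbol{\tau})\boldsymbol I$ already exploited in~\eqref{eq:discoercivity}, so that the hidden constants remain independent of the Lamé parameter $\lambda$.
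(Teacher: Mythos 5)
Your proof is correct and follows essentially the same route as the paper: test the defining relation \eqref{eq:postprocess2} against the difference between a local projection of $\boldsymbol{u}$ and $\boldsymbol{u}_h^{\ast}$, use $\boldsymbol{\varepsilon}(\boldsymbol{u})=\mathcal{A}\boldsymbol{\sigma}$ and Cauchy--Schwarz to reduce to $\|\boldsymbol{\sigma}-\boldsymbol{\sigma}_h\|_0$, then conclude by the triangle inequality and Theorem~\ref{thm:hyerrestimate}. The only (harmless) difference is that you compare against a local strain-energy projection $\boldsymbol{u}_I$, which makes the defect term vanish, whereas the paper compares against $Q_{k+1,h}\boldsymbol{u}$ and carries the extra term $(\boldsymbol{\varepsilon}(Q_{k+1,h}\boldsymbol{u}-\boldsymbol{u}),\boldsymbol{\varepsilon}(\boldsymbol{w}))_T$, bounded by the same interpolation estimate.
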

\begin{proof}
Let $\boldsymbol{w}=Q_{k+1,h}\boldsymbol{u}-\boldsymbol{u}_h^{\ast}$ for simplicity.
It follows from \eqref{eq:postprocess2} with $\boldsymbol{v}=\boldsymbol{w}$ that
\[
(\boldsymbol{\varepsilon}(\boldsymbol{u}-\boldsymbol{u}_h^{\ast}), \boldsymbol{\varepsilon}(\boldsymbol{w}))_T=(\mathcal{A}(\boldsymbol{\sigma}-\boldsymbol{\sigma}_h), \boldsymbol{\varepsilon}(\boldsymbol{w}))_T.
\]
By the definition of $\boldsymbol{w}$,
\begin{align*}
\|\boldsymbol{\varepsilon}(\boldsymbol{w})\|_{0,T}^2 &= (\boldsymbol{\varepsilon}(Q_{k+1,h}\boldsymbol{u}-\boldsymbol{u}), \boldsymbol{\varepsilon}(\boldsymbol{w}))_T + (\mathcal{A}(\boldsymbol{\sigma}-\boldsymbol{\sigma}_h), \boldsymbol{\varepsilon}(\boldsymbol{w}))_T.
\end{align*}
This implies
\begin{equation*}
\|\boldsymbol{\varepsilon}(\boldsymbol{w})\|_{0,T}\lesssim \|\boldsymbol{\varepsilon}(Q_{k+1,h}\boldsymbol{u}-\boldsymbol{u})\|_{0,T} + \|\boldsymbol{\sigma}-\boldsymbol{\sigma}_h\|_{0,T}.
\end{equation*}
Finally, we end the proof by the triangle inequality and \eqref{eq:energyestimate}.
\end{proof}

\begin{remark}\rm
For the finite element pair $\Sigma_{1, \phi}^{\operatorname{div}}(\mathcal T_h^{\rm R}; \mathbb{S})\times \mathbb P_{1}^{-1}(\mathcal T_h;\mathbb R^d)$, by the discrete inf-sup condition \eqref{divontok1}, we have the following error estimate
\begin{equation*}
\|\boldsymbol{\sigma}-\boldsymbol{\sigma}_h\|_{0}  + \|\boldsymbol{u}-\boldsymbol{u}_h\|_{0}\lesssim h^2(\|\boldsymbol{\sigma}\|_{2}+\|\boldsymbol{u}\|_{2}).
\end{equation*}
For the finite element pairs $\Sigma_{1, \phi}^{\div}(\mathcal T_h; \mathbb S)\times {\rm RM}(\mathcal T_h)$ and $\Sigma_{\rm RM}^{\div}(\mathcal T_h; \mathbb S)\times {\rm RM}(\mathcal T_h)$, by the discrete inf-sup conditions in Lemma~\ref{lem:infsupreducedP1}, we have the following error estimates
\begin{equation*}
\|\boldsymbol{\sigma}-\boldsymbol{\sigma}_h\|_{0}  + \|\boldsymbol{u}-\boldsymbol{u}_h\|_{0}\lesssim h(\|\boldsymbol{\sigma}\|_{1}+\|\boldsymbol{u}\|_{1}).
\end{equation*}
\end{remark}

\bibliographystyle{abbrv}
\bibliography{./references,./paper}

\begin{thebibliography}{10}

\bibitem{Adams;Cockburn:2005Finite}
S.~Adams and B.~Cockburn.
\newblock A mixed finite element method for elasticity in three dimensions.
\newblock {\em J. Sci. Comput.}, 3(25):515--521, 2005.

\bibitem{ArnoldAwanouWinther2008}
D.~N. Arnold, G.~Awanou, and R.~Winther.
\newblock Finite elements for symmetric tensors in three dimensions.
\newblock {\em Math. Comp.}, 77(263):1229--1251, 2008.

\bibitem{ArnoldBrezzi1985}
D.~N. Arnold and F.~Brezzi.
\newblock Mixed and nonconforming finite element methods: implementation, postprocessing and error estimates.
\newblock {\em RAIRO Mod\'el. Math. Anal. Num\'er.}, 19(1):7--32, 1985.

\bibitem{ArnoldDouglasGupta1984}
D.~N. Arnold, J.~Douglas, Jr., and C.~P. Gupta.
\newblock A family of higher order mixed finite element methods for plane elasticity.
\newblock {\em Numer. Math.}, 45(1):1--22, 1984.

\bibitem{ArnoldHu2021}
D.~N. Arnold and K.~Hu.
\newblock Complexes from complexes.
\newblock {\em Found. Comput. Math.}, 21(6):1739--1774, 2021.

\bibitem{ArnoldQin1992}
D.~N. Arnold and J.~Qin.
\newblock Quadratic velocity/linear pressure {S}tokes elements.
\newblock In {\em Advances in Computer Methods for Partial Differential Equations-VII}, IMACS, pages 28---34, 1992.

\bibitem{ArnoldWinther2002}
D.~N. Arnold and R.~Winther.
\newblock Mixed finite elements for elasticity.
\newblock {\em Numer. Math.}, 92(3):401--419, 2002.

\bibitem{BabuskaOsbornPitkaeranta1980}
I.~Babu\v{s}ka, J.~Osborn, and J.~Pitk\"aranta.
\newblock Analysis of mixed methods using mesh dependent norms.
\newblock {\em Math. Comp.}, 35(152):1039--1062, 1980.

\bibitem{BoffiBrezziFortin2013}
D.~Boffi, F.~Brezzi, and M.~Fortin.
\newblock {\em Mixed finite element methods and applications}.
\newblock Springer, Heidelberg, 2013.

\bibitem{BrezziDouglasDuranFortin1987}
F.~Brezzi, J.~Douglas, Jr., R.~Dur{\'a}n, and M.~Fortin.
\newblock Mixed finite elements for second order elliptic problems in three variables.
\newblock {\em Numer. Math.}, 51(2):237--250, 1987.

\bibitem{BrezziDouglasMarini1985}
F.~Brezzi, J.~Douglas, Jr., and L.~D. Marini.
\newblock Two families of mixed finite elements for second order elliptic problems.
\newblock {\em Numer. Math.}, 47(2):217--235, 1985.

\bibitem{BrezziFortinMarini1993finite}
F.~Brezzi, M.~Fortin, and L.~D. Marini.
\newblock Mixed finite element methods with continuous stresses.
\newblock {\em Math. Models Methods Appl. Sci.}, 3(02):275--287, 1993.

\bibitem{ChenChenHuangWei2024}
C.~Chen, L.~Chen, X.~Huang, and H.~Wei.
\newblock Geometric decomposition and efficient implementation of high order face and edge elements.
\newblock {\em Commun. Comput. Phys.}, 35(4):1045--1072, 2024.

\bibitem{ChenHuHuang2017Stabilized}
L.~Chen, J.~Hu, and X.~Huang.
\newblock Stabilized mixed finite element methods for linear elasticity on simplicial grids in {$\mathbb R^n$}.
\newblock {\em Comput. Methods Appl. Math.}, 17(1):17--31, 2017.

\bibitem{ChenHuHuang2018}
L.~Chen, J.~Hu, and X.~Huang.
\newblock Fast auxiliary space preconditioners for linear elasticity in mixed form.
\newblock {\em Math. Comp.}, 87(312):1601--1633, 2018.

\bibitem{ChenHuang2022}
L.~Chen and X.~Huang.
\newblock Finite elements for div- and divdiv-conforming symmetric tensors in arbitrary dimension.
\newblock {\em SIAM J. Numer. Anal.}, 60(4):1932--1961, 2022.

\bibitem{ChenHuang2024}
L.~Chen and X.~Huang.
\newblock {$H ({\rm div})$}-conforming finite element tensors with constraints.
\newblock {\em Results Appl. Math.}, 23:Paper No. 100494, 33, 2024.

\bibitem{ChenHuang2024Tangential-Normal}
L.~Chen and X.~Huang.
\newblock Tangential-normal decompositions of finite element differential forms.
\newblock {\em arXiv preprint arXiv:2410.20408}, 2024.

\bibitem{ChenHuang2025div-div-conforming}
L.~Chen and X.~Huang.
\newblock A new div-div-conforming symmetric tensor finite element space with applications to the biharmonic equation.
\newblock {\em Math. Comp.}, 94(351):33--72, 2025.

\bibitem{ChristiansenGopalakrishnanGuzmanHu2024}
S.~H. Christiansen, J.~Gopalakrishnan, J.~Guzm\'an, and K.~Hu.
\newblock A discrete elasticity complex on three-dimensional {A}lfeld splits.
\newblock {\em Numer. Math.}, 156(1):159--204, 2024.

\bibitem{ChristHu2018Generalized}
S.~H. Christiansen and K.~Hu.
\newblock Generalized finite element systems for smooth differential forms and {S}tokes' problem.
\newblock {\em Numer. Math.}, 140:327--371, 2018.

\bibitem{ChristiansenHu2023}
S.~H. Christiansen and K.~Hu.
\newblock Finite element systems for vector bundles: elasticity and curvature.
\newblock {\em Found. Comput. Math.}, 23(2):545--596, 2023.

\bibitem{DassiLovadinaVisinoni2020}
F.~Dassi, C.~Lovadina, and M.~Visinoni.
\newblock A three-dimensional {H}ellinger-{R}eissner virtual element method for linear elasticity problems.
\newblock {\em Comput. Methods Appl. Mech. Engrg.}, 364:112910, 17, 2020.

\bibitem{fraeijs1965displacement}
B.~Fraeijs~de Veubeke.
\newblock Displacement and equilibrium models in the finite element method.
\newblock In O.~Zienkiewicz and G.~Holister, editors, {\em Stress analysis}. John Wiley \& Sons, 1965.

\bibitem{GongGopalakrishnanGuzmanNeilan2023}
S.~Gong, J.~Gopalakrishnan, J.~Guzm\'{a}n, and M.~Neilan.
\newblock Discrete elasticity exact sequences on {W}orsey-{F}arin splits.
\newblock {\em ESAIM Math. Model. Numer. Anal.}, 57(6):3373--3402, 2023.

\bibitem{GongWuXu2019}
S.~Gong, S.~Wu, and J.~Xu.
\newblock New hybridized mixed methods for linear elasticity and optimal multilevel solvers.
\newblock {\em Numer. Math.}, 141(2):569--604, 2019.

\bibitem{GopalakrishnanGuzmanLee2024}
J.~Gopalakrishnan, J.~Guzm\'{a}n, and J.~J. Lee.
\newblock The {Johnson}-{Mercier}-{K\v r\'i\v zek} elasticity element in any dimensions.
\newblock {\em arXiv preprint arXiv:2403.13189}, 2024.

\bibitem{GuzmanNeilan2014}
J.~Guzm\'an and M.~Neilan.
\newblock Symmetric and conforming mixed finite elements for plane elasticity using rational bubble functions.
\newblock {\em Numer. Math.}, 126(1):153--171, 2014.

\bibitem{GuzmanNeilan2018Inf-sup}
J.~Guzm{\'a}n and M.~Neilan.
\newblock Inf-sup stable finite elements on barycentric refinements producing divergence--free approximations in arbitrary dimensions.
\newblock {\em SIAM J. Numer. Anal.}, 56(5):2826--2844, 2018.

\bibitem{hatcher2005algebraic}
A.~Hatcher.
\newblock {\em Algebraic topology}.
\newblock Cambridge University Press, Cambridge, 2002.

\bibitem{Hu2015a}
J.~Hu.
\newblock Finite element approximations of symmetric tensors on simplicial grids in {$\Bbb R^n$}: the higher order case.
\newblock {\em J. Comput. Math.}, 33(3):283--296, 2015.

\bibitem{HuZhang2015}
J.~Hu and S.~Zhang.
\newblock A family of symmetric mixed finite elements for linear elasticity on tetrahedral grids.
\newblock {\em Sci. China Math.}, 58(2):297--307, 2015.

\bibitem{HuZhang2016}
J.~Hu and S.~Zhang.
\newblock Finite element approximations of symmetric tensors on simplicial grids in {$\Bbb{R}^n$}: the lower order case.
\newblock {\em Math. Models Methods Appl. Sci.}, 26(9):1649--1669, 2016.

\bibitem{HuangZhangZhouZhu2024}
X.~Huang, C.~Zhang, Y.~Zhou, and Y.~Zhu.
\newblock New low-order mixed finite element methods for linear elasticity.
\newblock {\em Adv. Comput. Math.}, 50(2):Paper No. 17, 31, 2024.

\bibitem{JohnsonMercier1978}
C.~Johnson and B.~Mercier.
\newblock Some equilibrium finite element methods for two-dimensional elasticity problems.
\newblock {\em Numer. Math.}, 30(1):103--116, 1978.

\bibitem{Krizek1982}
M.~{K\v r\'i\v zek}.
\newblock An equilibrium finite element method in three-dimensional elasticity.
\newblock {\em Apl. Mat.}, 27(1):46--75, 1982.
\newblock With a loose Russian summary.

\bibitem{LaiSchuma2007Spline}
M.-J. Lai and L.~L. Schumaker.
\newblock {\em Spline functions on triangulations}.
\newblock Number 110. Cambridge University Press, 2007.

\bibitem{LedererStenberg2023}
P.~L. Lederer and R.~Stenberg.
\newblock Energy norm analysis of exactly symmetric mixed finite elements for linear elasticity.
\newblock {\em Math. Comp.}, 92(340):583--605, 2023.

\bibitem{LedererStenberg2024}
P.~L. Lederer and R.~Stenberg.
\newblock Analysis of weakly symmetric mixed finite elements for elasticity.
\newblock {\em Math. Comp.}, 93(346):523--550, 2024.

\bibitem{LovadinaStenberg2006}
C.~Lovadina and R.~Stenberg.
\newblock Energy norm a posteriori error estimates for mixed finite element methods.
\newblock {\em Math. Comp.}, 75(256):1659--1674, 2006.

\bibitem{Nedelec1980}
J.-C. N{\'e}d{\'e}lec.
\newblock Mixed finite elements in {${\bf R}^{3}$}.
\newblock {\em Numer. Math.}, 35(3):315--341, 1980.

\bibitem{Nedelec:1986family}
J.-C. N{\'e}d{\'e}lec.
\newblock A new family of mixed finite elements in {${\bf R}^3$}.
\newblock {\em Numer. Math.}, 50(1):57--81, 1986.

\bibitem{PechsteinSchoberl2011}
A.~Pechstein and J.~Sch{\"o}berl.
\newblock Tangential-displacement and normal-normal-stress continuous mixed finite elements for elasticity.
\newblock {\em Math. Models Methods Appl. Sci.}, 21(8):1761--1782, 2011.

\bibitem{PechsteinSchoeberl2018}
A.~S. Pechstein and J.~Sch\"{o}berl.
\newblock An analysis of the {TDNNS} method using natural norms.
\newblock {\em Numer. Math.}, 139(1):93--120, 2018.

\bibitem{PitkaerantaStenberg1983}
J.~Pitk\"aranta and R.~Stenberg.
\newblock Analysis of some mixed finite element methods for plane elasticity equations.
\newblock {\em Math. Comp.}, 41(164):399--423, 1983.

\bibitem{RaviartThomas1977}
P.-A. Raviart and J.~M. Thomas.
\newblock A mixed finite element method for 2nd order elliptic problems.
\newblock In {\em Mathematical aspects of finite element methods ({P}roc. {C}onf., {C}onsiglio {N}az. delle {R}icerche ({C}.{N}.{R}.), {R}ome, 1975)}, pages 292--315. Lecture Notes in Math., Vol. 606. Springer, Berlin, 1977.

\bibitem{ScottVogelius1985}
L.~R. Scott and M.~Vogelius.
\newblock Norm estimates for a maximal right inverse of the divergence operator in spaces of piecewise polynomials.
\newblock {\em RAIRO Mod\'el. Math. Anal. Num\'er.}, 19(1):111--143, 1985.

\bibitem{Stenberg1986}
R.~Stenberg.
\newblock On the construction of optimal mixed finite element methods for the linear elasticity problem.
\newblock {\em Numer. Math.}, 48(4):447--462, 1986.

\bibitem{Zhang2005}
S.~Zhang.
\newblock A new family of stable mixed finite elements for the 3{D} {S}tokes equations.
\newblock {\em Math. Comp.}, 74(250):543--554, 2005.

\end{thebibliography}

\end{document}